\newcommand{\specialcell}[1]{\ifmeasuring@#1\else\omit$\displaystyle#1$\ignorespaces\fi}
\definecolor{darkblue}{rgb}{0.0,0,0.7} 
\newcommand{\darkblue}{\color{darkblue}} 
\definecolor{darkred}{rgb}{0.7,0,0} 
\definecolor{lightgrey}{rgb}{0.7,0.7,0.7} 
\definecolor{meet}{RGB}{255,205,111}
\definecolor{join}{RGB}{0,77,178}
\newtheorem{theorem}{Theorem}[section]
\newtheorem{proposition}[theorem]{Proposition}
\newtheorem{corollary}[theorem]{Corollary}
\newtheorem{lemma}[theorem]{Lemma}
\theoremstyle{definition}
\newtheorem{definition}[theorem]{Definition}
\newtheorem{example}[theorem]{Example}
\newtheorem{conjecture}[theorem]{Conjecture}
\newtheorem{question}[theorem]{Question}
\newenvironment{remark}
  {\pushQED{\qed}\remarkx}
  {\popQED\endremarkx}
\Crefname{conjecture}{Conjecture}{Conjectures}
\newcommand{\defn}[1]{\emph{\darkblue #1}}
\newcommand{\row}{\mathsf{Row}}
\newcommand{\popnone}{\mathsf{Pop}}
\newcommand{\pop}{\mathsf{Pop}^\downarrow}
\newcommand{\popdown}{\mathsf{Pop}^\downarrow}
\newcommand{\popup}{\mathsf{Pop}^\uparrow}
\newcommand{\D}{\mathcal{D}}
\newcommand{\U}{\mathcal{U}}
\newcommand{\x}{v}
\newcommand{\J}{\mathcal{J}}
\newcommand{\I}{\mathrm{Ind}}
\newcommand{\M}{\mathcal{M}}
\newcommand{\In}{\mathrm{In}}
\newcommand{\Out}{\mathrm{Out}}
\newcommand{\face}{\mathrm{Face}}
\newcommand{\shard}{\mathrm{Shard}}
\newcommand{\popshard}{\mathrm{Shard}_{\mathsf{Pop}}}
\newcommand{\rowshard}{\mathrm{Shard}_\row}
\title{Semidistrim Lattices}
\author[C.~Defant]{Colin Defant}
\address[C.~Defant]{Princeton University}
\email{cdefant@princeton.edu}
\author[N.~Williams]{Nathan Williams}
\address[N.~Williams]{University of Texas at Dallas}
\email{nathan.williams1@utdallas.edu}
\begin{document}


\begin{abstract}
We introduce \emph{semidistrim lattices}, a simultaneous generalization of semidistributive and trim lattices that preserves many of their common properties.  We prove that the elements of a semidistrim lattice correspond to the independent sets in an associated graph called the \emph{Galois graph}, that products and intervals of semidistrim lattices are semidistrim, and that the order complex of a semidistrim lattice is either contractible or homotopy equivalent to a sphere.

Semidistrim lattices have a natural \emph{rowmotion} operator, which simultaneously generalizes Barnard's $\overline\kappa$ map on semidistributive lattices as well as Thomas and the second author's rowmotion on trim lattices.  Every lattice has an associated \emph{pop-stack sorting} operator that sends an element $x$ to the meet of the elements covered by $x$. For semidistrim lattices, we are able to derive several intimate connections between rowmotion and pop-stack sorting, one of which involves independent dominating sets of the Galois graph. 
\end{abstract}
\maketitle

\section{Introduction}
In this paper, all lattices are assumed to be finite.  Two families of lattices that extend the family of distributive lattices are the family of \emph{semidistributive} lattices and the family of \emph{trim} lattices (see~\Cref{sec:lattices}). The union of these two families contains several well-studied classes of lattices such as weak orders of finite Coxeter groups, facial weak orders of simplicial hyperplane arrangements \cite{Dermenjian}, finite Cambrian lattices \cite{readingCambrian}, biCambrian lattices \cite{BarnardBiCambrian}, $\nu$-Tamari lattices \cite{PrevilleViennot}, Grid-Tamari lattices \cite{McConvilleGrid}, and lattices of torsion classes of artin algebras \cite{Demonet, GarverMcConville, thomas2019rowmotion}.
Although these two families are distinct (a semidistributive lattice that is not trim and a trim lattice that is not semidistributive are illustrated on the left and in the middle of~\Cref{fig:semidistrim}), they share many common properties. For example, the following hold for each lattice $L$ in each of these families:
\begin{itemize}
    \item there is a canonical bijection between join-irreducible and meet-irreducible elements of $L$;
    \item cover relations of $L$ are canonically labeled by join-irreducible elements;
    \item each element in $L$ is uniquely determined by the labels of its down-covers and also by the labels of its up-covers;
    \item the collection of down-cover label sets of the elements of $L$ equals the collection of up-cover label sets of the elements of $L$, and each of these collections is equal to the collection of independent sets in a certain graph called the \emph{Galois graph};
    \item every interval of $L$ is also in the family;
   \item there is a natural way of defining a certain bijective operator called \emph{rowmotion} on $L$;
   \item $L$ is crosscut simplicial; in particular, its order complex is either contractible or homotopy equivalent to a sphere.
\end{itemize}

In this paper, we develop a theory of \emph{semidistrim lattices}, which we propose as a common generalization of semidistributive and trim lattices.  An example of a semidistrim lattice that is neither semidistributive nor trim is illustrated on the right of~\Cref{fig:semidistrim}.

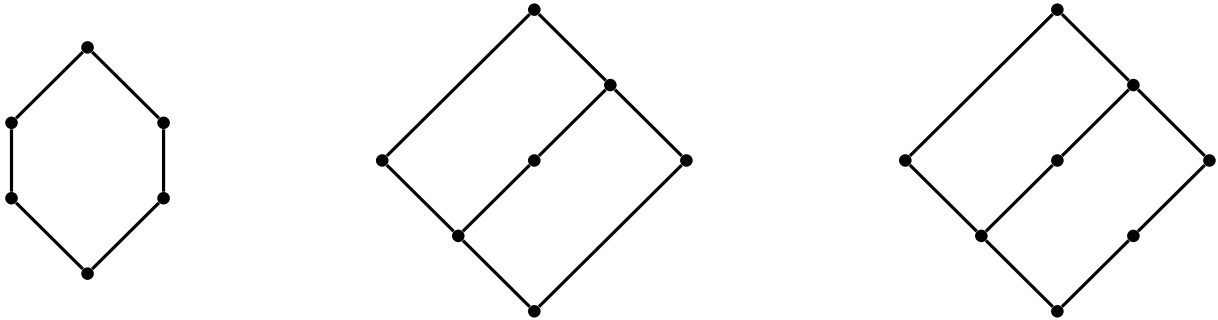
\begin{figure}[htbp]
\begin{center}
\raisebox{-.5\height}{\scalebox{1}{\begin{tikzpicture}[scale=1]
\node[shape=circle,fill=black, scale=0.5] (0) at (0,0) {};
\node[shape=circle,fill=black, scale=0.5] (1) at (-1,1) {};
\node[shape=circle,fill=black, scale=0.5] (2) at (1,1) {};
\node[shape=circle,fill=black, scale=0.5] (3) at (-1,2) {};
\node[shape=circle,fill=black, scale=0.5] (4) at (1,2) {};
\node[shape=circle,fill=black, scale=0.5] (5) at (0,3) {};
\draw[very thick] (0) to (1) to (3) to (5) to (4) to (2) to (0);
\end{tikzpicture}}}
\hfill
\raisebox{-.5\height}{\scalebox{1}{\begin{tikzpicture}[scale=1]
\node[shape=circle,fill=black, scale=0.5] (0) at (0,0) {};
\node[shape=circle,fill=black, scale=0.5] (1) at (-1,1) {};
\node[shape=circle,fill=black, scale=0.5] (2) at (2,2) {};
\node[shape=circle,fill=black, scale=0.5] (3) at (0,2) {};
\node[shape=circle,fill=black, scale=0.5] (4) at (1,3) {};
\node[shape=circle,fill=black, scale=0.5] (5) at (-2,2) {};
\node[shape=circle,fill=black, scale=0.5] (6) at (0,4) {};
\draw[very thick] (0) to (1) to (5) to (6) to (4) to (2) to (0);
\draw[very thick] (1) to (3) to (4);
\end{tikzpicture}}}
\hfill
\raisebox{-.5\height}{\scalebox{1}{\begin{tikzpicture}[scale=1]
\node[shape=circle,fill=black, scale=0.5] (0) at (0,0) {};
\node[shape=circle,fill=black, scale=0.5] (1) at (-1,1) {};
\node[shape=circle,fill=black, scale=0.5] (2) at (2,2) {};
\node[shape=circle,fill=black, scale=0.5] (3) at (0,2) {};
\node[shape=circle,fill=black, scale=0.5] (4) at (1,3) {};
\node[shape=circle,fill=black, scale=0.5] (5) at (-2,2) {};
\node[shape=circle,fill=black, scale=0.5] (6) at (0,4) {};
\node[shape=circle,fill=black, scale=0.5] (7) at (1,1) {};
\draw[very thick] (0) to (1) to (5) to (6) to (4) to (2) to (7) to (0);
\draw[very thick] (1) to (3) to (4);
\end{tikzpicture}}}
\end{center}
\caption{{\it Left}: A semidistributive lattice that is not trim.  {\it Middle}: A trim lattice that is not semidistributive.  {\it Right}: A semidistrim lattice that is neither trim nor semidistributive.}
\label{fig:semidistrim}
\end{figure}

\subsection{Semidistrim lattices}
In \Cref{sec:lattices}, we recall basic notions from lattice theory, including the definitions of semidistributive lattices and trim lattices.   If $L$ is an arbitrary lattice, then the set $\J_L$ of join-irreducible elements of $L$ might not have the same cardinality as the set $\M_L$ of meet-irreducible elements of $L$. However, these two sets do have the same size for several interesting lattices $L$. In \Cref{sec:paired}, we define a \emph{pairing} on $L$ to be a bijection $\J_L\to \M_L$ satisfying some additional natural desiderata. We say $L$ is \emph{uniquely paired} if it has a unique pairing; in this case, we let $\kappa_L$ denote its unique pairing. We also define the \emph{Galois graph} $G_L$ of a uniquely paired lattice $L$ to be the directed graph with vertex set $\J_L$ in which there is an edge $j\to j'$ whenever $j\neq j'$ and $j\not\leq\kappa_L(j')$. We prove that semidistributive lattices and trim lattices are uniquely paired. 

The purpose of \Cref{sec:rowmotable} is to define and establish basic properties about a new family of lattices that we call \emph{compatibly dismantlable}. Roughly speaking, a uniquely paired lattice is compatibly dismantlable if it can be broken into two disjoint intervals that are each compatibly dismantlable so that the join-irreducible elements, the meet-irreducible elements, and the unique pairing of the entire lattice are compatible with those of the two intervals in a precise sense. Our terminology is inspired by the notion of an interval-dismantlable lattice~\cite{adaricheva2018interval}. We also prove that semidistributive lattices and trim lattices are compatibly dismantlable, which is not obvious from the definitions.

In \Cref{sec:independent}, we define a uniquely paired lattice $L$ to be \emph{overlapping} if for each cover relation $x\lessdot y$, there is a unique join-irreducible element $j_{xy}$ such that $j_{xy}\leq y$ and $\kappa_L(j_{xy})\geq x$. If $L$ is overlapping and $x\in L$, then we define the \emph{downward label set} $\D_L(x) = \{j_{yx} : y \lessdot x\}$ and the \emph{upward label set} $\U_L(x)=\{j_{xy} : x \lessdot y\}$. These label sets are crucial for defining semidistrim lattices and their rowmotion operators. We also prove that every compatibly dismantlable lattice is overlapping. Moreover, we show that every element of a compatibly dismantlable lattice is uniquely determined by its downward label set and also by its upward label set.

\Cref{sec:semidistrim} begins with the central definition of the paper: that of a semidistrim lattice. A lattice $L$ is \emph{semidistrim} if it is compatibly dismantlable and if for every $x\in L$, the label sets $\D_L(x)$ and $\U_L(x)$ are independent sets in the Galois graph $G_L$ (this is an analogue of canonical join or meet representations for elements of semidistributive lattices).  We prove that this class of lattices contains all semidistributive lattices and all trim lattices, thereby justifying the name \emph{semidistrim}. Imposing the additional condition about independent sets on compatibly dismantlable lattices leads to several pleasant properties that we explore throughout the rest of the article. For instance, we prove that each of the maps $\D_L$ and $\U_L$ is actually a bijection from $L$ to the collection of independent sets of $G_L$. We also describe how the sets $\D_L(x)$ and $\U_L(x)$ fit together inside $G_L$ by proving that $(\D_L(x),\U_L(x))$ is a \emph{tight orthogonal pair}. 

In \Cref{sec:products_intervals}, we prove that the class of semidistrim lattices is closed under taking products and intervals. While it is relatively straightforward to show that products of semidistrim lattices are semidistrim, the corresponding fact about intervals requires quite a bit of work to prove. However, the payoff is well worth the effort: knowing that intervals of semidistrim lattices are semidistrim allows us to establish several further results via induction. One of the other pleasant corollaries of the proof of this result states that if $[u,v]$ is an interval in a semidistrim lattice $L$, then the unique pairing, the Galois graph, and the edge labels of $L$ are compatible in a precise way with those of $[u,v]$. Intervals of compatibly dismantlable lattices need not be compatibly dismantlable; thus, the additional condition about independent sets is essential.

\Cref{sec:crosscut} is brief and is devoted to proving that semidistrim lattices are crosscut simplicial. This implies, in particular, that the order complex of a semidistrim lattice is contractible or homotopy equivalent to a sphere.

\subsection{Rowmotion and pop-stack sorting}
One of the most well-studied operators in the field of \emph{dynamical algebraic combinatorics} is \emph{rowmotion}, a certain invertible operator on the set of order ideals of a finite poset~\cite{brouwer1974period,cameron1995orbits,striker2012promotion}. Equivalently (by Birkhoff's representation theorem), one can view rowmotion as a bijective operator on a distributive lattice. In recent years, there has been interest in extending the definition of rowmotion to more general classes of lattices. Barnard \cite{barnard19canonical} showed how to define rowmotion on semidistributive lattices, while Thomas and the second author \cite{thomas2019rowmotion} defined rowmotion on trim lattices. We refer to \cite{striker2018rowmotion,thomas2019rowmotion} for a more thorough historical account of rowmotion. In \Cref{sec:pop_and_row}, we define rowmotion for semidistrim lattices, thereby generalizing and unifying all previous definitions of rowmotion on lattices. 

Given a lattice $L$, we define the \defn{pop-stack sorting operator} $\popdown_L\colon L\to L$ and the \defn{dual pop-stack sorting} operator $\popup_L\colon L\to L$ by 
\[\popdown_L(x)=x\wedge\bigwedge\{y\in L:y\lessdot x\}\quad\text{and}\quad\popup_L(x)=x\vee\bigvee\{y\in L:x\lessdot y\}.\] When $L$ is the right weak order on the symmetric group $S_n$, the pop-stack sorting operator coincides with the \emph{pop-stack sorting map}, which acts by reversing the descending runs of a permutation. Recently, the pop-stack sorting map has received significant attention by  combinatorialists~\cite{Asinowski, Asinowski2, Elder, ClaessonPop, Pudwell}. The first author has previously studied pop-stack sorting operators on weak orders of arbitrary Coxeter groups in \cite{defant2021stack} and on $\nu$-Tamari lattices in \cite{defant21meeting}. M\"uhle \cite{MuhleCore} studied $\pop_L$ when $L$ is congruence-uniform, where he called $\pop_L(x)$ the \emph{nucleus} of $x$. The dual pop-stack sorting operator on the lattice of order ideals of a type $A$ root poset is equivalent to the \emph{filling} operator on Dyck paths analyzed in \cite{Sapounakis}. The authors have defined other variants of pop-stack sorting in \cite{DefantWilliamsTorsing, DefantWilliamsCrystal}. 

The main purpose of \Cref{sec:pop_and_row} is to show that rowmotion, pop-stack sorting, and dual pop-stack sorting on a semidistrim lattice $L$ are intimately connected. For example, we will show in \Cref{thm:pop_and_row} that if $x\in L$, then \[\pop_L(x)=x\wedge\row_L(x).\] In fact, we will prove that $\row_L(x)$ is a maximal element of the set $\{z\in L:\pop_L(x)=x\wedge z\}$. This naturally leads to a definition of rowmotion on meet-semidistributive lattices that need not be semidistrim; we prove that such a rowmotion operator is \emph{not} invertible whenever the lattice is not semidistributive. It remains completely open to investigate the basic properties of these noninvertible operators. 

Strengthening the links among rowmotion, pop-stack sorting, and dual pop-stack sorting on a semidistrim lattice $L$, we ask how many times rowmotion on $L$ ``goes down.'' More precisely, we are interested in the number of elements $x\in L$ such that $\row_L(x)\leq x$.
It turns out that this quantity is equal to the size of the image of $\pop_L$, which is also equal to the size of the image of $\popup_L$: \[|\{ x \in L : \row_L(x) \leq x\}|=|\pop_L(L)|=|\popup_L(L)|;\] moreover, each of these three quantities is equal to the number of \emph{independent dominating sets} of (the undirected version of) the Galois graph $G_L$. Even the equality $|\pop_L(L)|=|\popup_L(L)|$ is interesting and nontrivial here because this equality does \emph{not} hold for arbitrary lattices. This result motivates the study of the sizes of the images of the pop-stack sorting operators on interesting classes of lattices. This investigation was already initiated in \cite{Asinowski2, ClaessonPop2} for the weak order on $S_n$ (where $\pop_L$ is the classical pop-stack sorting map) and in \cite{Sapounakis} for the lattice of order ideals of a type $A$ root poset.  In \Cref{{sec:enumeration}}, we state several enumerative conjectures about the sizes of the images of pop-stack sorting operators on other specific lattices such as Tamari lattices, bipartite Cambrian lattices, and distributive lattices of order ideals in positive root posets.

\subsection{Further directions}
In \Cref{sec:shards}, we show that there is analogy between join-prime elements of semidistrim lattices and the basic hyperplanes in Reading's theory of shards.  It would be interesting if this analogy could be tightened, and we indicate a few possible directions to pursue in \Cref{sec:core_label}.      In \Cref{sec:further}, we collect several open questions about semidistrim lattices to help guide future research.

\section{Background}
\label{sec:lattices}

In this section, we review notions from lattice theory, and we briefly discuss the two families of lattices that we will later unify with a common generalization.

\subsection{Posets and lattices}
We assume basic familiarity with standard terminology from the theory of posets, as discussed in~\cite[Chapter 3]{stanley11enumerative_vol1}). For example, we write $x\lessdot y$ (or $y\gtrdot x$) to indicate that an element $y$ covers an element $x$. Given elements $x$ and $y$ in a poset $P$ with $x\leq y$, the \defn{interval} $[x,y]$ is defined to be the set $[x,y]=\{z\in P:x\leq z\leq y\}$. The \defn{dual} of the poset $P$ is the poset $P^*$ that has the same underlying set as $P$ but with all order relations reversed; that is, $x\leq y$ in $P$ if and only if $y\leq x$ in $P^*$. We write $\min(P)$ and $\max(P)$ for the set of minimal elements of $P$ and the set of maximal elements of $P$, respectively. The \defn{order complex} of $P$ is the abstract simplicial complex whose faces are the chains of $P$.

A \defn{lattice} is a poset $L$ such that any two elements $x,y\in L$ have a unique greatest lower bound, which is called their \defn{meet} and denoted $x\wedge y$, and a unique least upper bound, which is called their \defn{join} and denoted $x\vee y$. The meet and join operations are associative and commutative, so it makes sense to consider the meet and join of an arbitrary subset $X\subseteq L$; we denote these by $\bigwedge X$ and $\bigvee X$, respectively. Each lattice has a unique minimal element, which we denote by $\hat 0$, and a unique maximal element, which we denote by $\hat 1$. An \defn{atom} of $L$ is an element that covers $\hat 0$, and a \defn{coatom} of $L$ is an element that is covered by $\hat 1$. 

\subsection{Irreducibles and primes}

Let $L$ be a lattice. An element $j\in L$ is called \defn{join-irreducible} if it covers exactly one element; if this is the case, we denote by $j_*$ the unique element covered by $j$. An element $m\in L$ is called \defn{meet-irreducible} if it is covered by exactly one element; if this is the case, we denote by $m^*$ the unique element covering $m$. We write $\J_L$ and $\M_L$ for the set of join-irreducible elements of $L$ and the set of meet-irreducible elements of $L$, respectively. An element $j \in L$ is called \defn{join-prime} if for all $x,y \in L$ satisfying $x\vee y\geq j$, we have $x \geq j$ or $y \geq j$.  Similarly, an element $m \in L$ is called \defn{meet-prime} if for all $x,y \in L$ satisfying $x\wedge y\leq m$, we have $x \leq m$ or $y \leq m$.  Join-prime elements are necessarily join-irreducible, and meet-prime elements are necessarily meet-irreducible.

\begin{proposition}[{\cite[Theorem 6]{markowsky92primes}}]
\label{prop:join_decomp}

Let $L$ be a lattice. An element $j_0\in L$ is join-prime if and only if there exists $m_0\in L$ such that $L=[\hat 0,m_0]\sqcup [j_0,\hat 1]$. An element $m_0\in L$ is meet-prime if and only if there exists $j_0\in L$ such that $L=[\hat 0,m_0]\sqcup [j_0,\hat 1]$.
\end{proposition}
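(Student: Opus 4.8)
The plan is to prove the first biconditional directly and then obtain the second by passing to the dual lattice $L^*$. For the first statement, I would isolate the set $D=\{x\in L:x\not\geq j_0\}$. This set is always a down-set, since $z\leq x$ together with $z\geq j_0$ would force $x\geq j_0$. Its complement is exactly the principal filter $[j_0,\hat 1]=\{x\in L:x\geq j_0\}$, so the asserted decomposition $L=[\hat 0,m_0]\sqcup[j_0,\hat 1]$ is equivalent to saying that $D$ is a principal ideal $[\hat 0,m_0]$. Thus the first biconditional reduces to: $j_0$ is join-prime if and only if $D$ is a principal ideal.

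For the forward direction, suppose $j_0$ is join-prime. The contrapositive of the defining condition says precisely that $x,y\in D$ implies $x\vee y\in D$, so $D$ is closed under join. Since join-prime elements are join-irreducible, $j_0\neq\hat 0$, whence $\hat 0\in D$ and $D$ is nonempty. As $L$ is finite, $m_0=\bigvee D$ is a finite join of elements of $D$ and therefore lies in $D$; being an upper bound for $D$, it is the maximum of $D$, so $D=[\hat 0,m_0]$ and the decomposition follows. For the backward direction, suppose $L=[\hat 0,m_0]\sqcup[j_0,\hat 1]$. Then $[\hat 0,m_0]=L\setminus[j_0,\hat 1]=D$, so membership in $D$ is equivalent to lying below $m_0$. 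If $x,y\in D$ then $x,y\leq m_0$, hence $x\vee y\leq m_0$ and $x\vee y\in D$; reading this as the contrapositive of the definition shows $j_0$ is join-prime.

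For the second statement I would apply the first statement inside $L^*$. The element $m_0$ is meet-prime in $L$ exactly when it is join-prime in $L^*$, because $\wedge_L=\vee_{L^*}$ and the order reverses. Moreover, the decomposition in question is governed by the pair of sets $\{x:x\leq m_0\}$ and $\{x:x\geq j_0\}$; in $L^*$ these become the principal filter of $m_0$ and the principal ideal of $j_0$, respectively, so the same pair of sets realizes in $L^*$ a decomposition of the shape ``principal ideal $\sqcup$ principal filter'' with the roles of $m_0$ and $j_0$ interchanged. Hence the first statement, applied in $L^*$ with $m_0$ as the candidate prime element, yields exactly the equivalence claimed for meet-prime elements of $L$.

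The proof is short, and I expect no serious obstacle. The only points requiring care are the appeal to finiteness, which guarantees that the join-closed set $D$ contains its own join and hence has a maximum; the edge case $j_0=\hat 0$, disposed of by the already-recorded fact that join-prime elements are join-irreducible; and the bookkeeping in the duality step, where one must correctly track that dualization exchanges $\hat 0$ and $\hat 1$ and converts an ideal into a filter, so that the two intervals swap roles in $L^*$.
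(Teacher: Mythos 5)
Your proposal is correct and takes essentially the same route as the paper: both directions reduce to showing that the complement $D=L\setminus[j_0,\hat 1]$ is a principal ideal, with join-primality of $j_0$ encoding exactly the join-closure of $D$, and the meet-prime statement handled by duality. The only cosmetic difference is that you obtain the maximum of $D$ as $\bigvee D$ via finiteness and join-closure (explicitly disposing of the edge case $j_0=\hat 0$), whereas the paper argues that $D$ cannot have two distinct maximal elements; the substance is identical.
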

\begin{proof}
We prove only the first statement since a completely analogous dual argument handles the second. Assume first that $L = [\hat{0},m_0] \sqcup [j_0,\hat{1}]$.  Suppose $x,y\in L$ satisfy $x\vee y\geq j_0$. If we had $x\not\geq j_0$ and $y\not\geq j_0$, then we would have $x\leq m_0$ and $y\leq m_0$. However, this would force $x\vee y\leq m_0$, contradicting the fact that $x\vee y\geq j_0$. This shows that we must have $x\geq j_0$ or $y\geq j_0$. Hence, $j_0$ is join-prime. 

To prove the converse, assume $j_0$ is join-prime. 
We claim that $L \setminus [j_0, \hat{1}]$ has a unique maximal element $m_0$.  To see this, suppose instead that this set has two distinct maximal elements, say $m_1$ and $m_2$.  Then $m_1 \vee m_2 \in [j_0, \hat{1}]$, so the assumption that $j_0$ is join-prime forces us to have $m_1 \geq j_0$ or $m_2 \geq j_0$, contradicting the fact that $m_1,m_2 \in L \setminus [j_0, \hat{1}]$.  This proves the claim, which implies that $L=[\hat{0},m_0] \sqcup [j_0,\hat{1}]$.  
\end{proof}

If we can write $L=[\hat 0,m_0]\sqcup[j_0,\hat 1]$ (so that $j_0$ is join-prime and $m_0$ is meet-prime), then we call the pair $(j_0,m_0)$ a \defn{prime pair} for $L$.  

\subsection{Semidistributive lattices}
\label{sec:semidistributive}

A lattice $L$ is \defn{join-semidistributive} if for all $x,y,z \in L$ satisfying $x \vee y = x\vee z$, we have $x \vee (y \wedge z) = x \vee y$.  Equivalently, $L$ is join-semidistributive if for all $a,b\in L$ with $a\leq b$, the set $\{w\in L:w\vee a=b\}$ has a unique minimal element.  Join-semidistributive lattices are characterized among finite lattices as those lattices having a certain \defn{canonical join representation} for their elements~\cite[Theorem~2.24]{freese95free},\cite[Theorem~3.1]{barnard19canonical}; the canonical join representations $x=\bigvee A$ are \defn{irredundant} in the sense that $\bigvee A' < \bigvee A$ for every proper subset $A' \subset A$, and the elements of $A$ are taken to be as small as possible in the partial order.

Dually, $L$ is called \defn{meet-semidistributive} if for all $x,y,z \in L$ satisfying $x \wedge y = x\wedge z$, we have $x \wedge (y \vee z) = x \wedge y$. The lattice $L$ is meet-semidistributive if and only if for all $a,b\in L$ with $a\leq b$, the set $\{w\in L:w\wedge b=a\}$ has a unique maximal element.  Meet-semidistributive lattices are characterized among finite lattices as having \defn{canonical meet representations} $x=\bigwedge A$---these representations are irredundant, and the elements of $A$ are taken to be as large as possible.

A lattice is \defn{semidistributive} if it is both join-semidistributive and meet-semidistributive.  It is known that in a semidistributive lattice, every atom is join-prime and every coatom is meet-prime~\cite{gaskill1981join}.

\subsection{Trim lattices} 
We say a lattice $L$ is \defn{extremal} if it has a maximum-length chain $\hat 0=x_0\lessdot x_1\lessdot x_2\lessdot \cdots \lessdot x_n=\hat 1$ such that $|\J_L|=|\M_L|=n$~\cite{markowsky92primes}.   An element $x\in L$ is called \defn{left modular} if for all $y,z\in L$ with $y \leq z$, we have the equality
$(y \vee x) \wedge z = y \vee (x \wedge z)$.  A lattice is called \defn{left modular} if it has a maximal chain of left modular elements~\cite{liu1999left}.  A lattice is called \defn{trim} if it is extremal and left modular~\cite{thomas2006analogue,thomas2019rowmotion}.  Every trim lattice has at least one atom that is join-prime. 

\medskip

\Cref{fig:semidistrim} shows a semidistributive lattice that is not trim and a trim lattice that is not semidistributive.  It was shown in~\cite[Theorem 1.4]{thomas2019rowmotion} that an extremal semidistributive lattice is necessarily trim.

\section{Uniquely Paired Lattices and Galois Graphs}
\label{sec:paired}

As before, we write $\J_L$ and $\M_L$ for the set of join-irreducible elements and the set of meet-irreducible elements, respectively, of a lattice $L$.  

\subsection{Uniquely paired lattices}
\label{sec:uniquely_paired_lattices}
For $j\in \J_L$ and $m\in \M_L$, let us write $\M_L(j)=\max\{z\in L : j_*=j \wedge z\}$ and $\J_L(m)=\min\{z\in L : m^*=m \vee z\}$. Note that $\M_L(j)$ is nonempty because it contains $j_*$ and that $\J_L(m)$ is nonempty because it contains $m^*$. One can show that $L$ is meet-semidistributive if and only if $\M_L(j)$ is a singleton set for every $j\in \J_L$ (see \cite[Theorem~2.56]{freese95free}). Similarly, $L$ is join-semidistributive if and only if $\J_L(m)$ is a singleton set for every $m\in\M_L$. 

\begin{lemma}\label{lem:contained_in_J} For $j\in \J_L$ and $m\in \M_L$, we have $\J_L(m)\subseteq \J_L$ and $\M_L(j)\subseteq\M_L$.
\end{lemma}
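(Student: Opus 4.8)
The plan is to prove $\M_L(j)\subseteq\M_L$ and then obtain the dual inclusion $\J_L(m)\subseteq\J_L$ by the order-reversing symmetry between the two statements. Fix $j\in\J_L$ and write $S=\{w\in L:j\wedge w=j_*\}$, so that $\M_L(j)=\max(S)$. I would take an arbitrary $z\in\M_L(j)$ and show that $z$ is meet-irreducible, i.e., that $z$ is covered by exactly one element.

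First I would record that $j\not\leq z$: if $j\leq z$, then $j\wedge z=j\neq j_*$ (the inequality holds because $j$ covers $j_*$), contradicting $z\in S$. In particular $z\neq\hat 1$, so $z$ has at least one cover, and it remains to rule out two distinct covers. Suppose $z\lessdot z_1$ and $z\lessdot z_2$ with $z_1\neq z_2$. By the maximality of $z$ in $S$, neither $z_1$ nor $z_2$ lies in $S$.

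The crucial step uses that $j$ covers $j_*$. Since $z<z_i$, we have $j_*=j\wedge z\leq j\wedge z_i\leq j$; because $z_i\notin S$, this chain is strict at the bottom, so $j_*<j\wedge z_i\leq j$. As $j_*\lessdot j$ leaves no element strictly between $j_*$ and $j$, we are forced to conclude $j\wedge z_i=j$, that is, $j\leq z_i$, for both $i=1,2$. Hence $j\leq z_1\wedge z_2$.

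Finally I would use that two distinct covers of $z$ meet in $z$: we have $z_1\wedge z_2\geq z$, and if $z_1\wedge z_2>z$ then $z<z_1\wedge z_2\leq z_1$ together with $z\lessdot z_1$ forces $z_1\leq z_2$, which (using $z\lessdot z_2$) gives $z_1=z_2$, a contradiction; thus $z_1\wedge z_2=z$. Combined with the previous paragraph this yields $j\leq z$, contradicting $j\not\leq z$. Therefore $z$ has a unique cover and is meet-irreducible. I expect the main obstacle to be the ``rounding up'' step: the whole argument hinges on the observation that $j\wedge z_i$ lives in the two-element interval from $j_*$ to $j$ and must therefore equal $j$, while the remaining steps are routine cover-relation bookkeeping.
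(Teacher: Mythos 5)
Your proof is correct and, up to lattice duality, is essentially the paper's argument: the paper proves $\J_L(m)\subseteq\J_L$ by squeezing $x\vee m$ into the cover interval $[m,m^*]$ (using minimality of $y\in\J_L(m)$) and then dualizes, while you prove $\M_L(j)\subseteq\M_L$ by squeezing $j\wedge z_i$ into $[j_*,j]$ (using maximality of $z\in\M_L(j)$) and then dualize. The only cosmetic difference is that you unfold non-meet-irreducibility via two distinct covers $z_1,z_2$ together with the standard fact that $z_1\wedge z_2=z$, whereas the paper unfolds non-join-irreducibility via a representation $y=\bigvee X$ with $y\notin X$; these amount to the same step.
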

\begin{proof} Suppose $y \in \J_L(m)$ is not join-irreducible. Then $y=\bigvee X$ for some set $X\subseteq L$ with $y \not \in X$.  Consider some $x\in X$. We have $y > x$, so $m^*=y \vee m \geq x\vee m \geq m$. This implies that $x\vee m$ is either $m$ or $m^*$. But we know that $x\vee m\neq m^*$ because $y$ is a minimal element of $\{z\in L: m\vee z = m^*\}$.  Hence, $x\leq m$. As $x$ was arbitrary, it follows that $y=\bigvee X\leq m$. However, this implies that $y\vee m=m$, which contradicts the fact that $y\vee m=m^*$. This proves that $\J_L(m)\subseteq \J_L$. A dual argument proves that $\M_L(j)\subseteq \M_L$. 
\end{proof}

\begin{definition}
A \defn{pairing} on a lattice $L$ is a bijection $\kappa: \J_L \to \M_L$ such that $\kappa(j) \in \M_L(j)$ for every $j\in \J_L$ and $\kappa^{-1}(m) \in \J_L(m)$ for every $m\in\M_L$.  We say $L$ is \defn{paired} if it has a pairing, and we say $L$ is \defn{uniquely paired} if it has a unique pairing.
\label{def:uniquely_paired}
\end{definition}

When $L$ is uniquely paired, we will use the symbol $\kappa_L$ for its unique pairing. \Cref{fig:nc3} gives an example of a paired lattice that is not uniquely paired.

\begin{figure}[htbp]
\begin{center}
\raisebox{-.5\height}{\scalebox{1}{\begin{tikzpicture}[scale=1]
\node[shape=circle,fill=black, scale=0.5] (0) at (0,0) {};
\node[shape=circle,fill=black, scale=0.5] (1) at (-1,1) {};
\node[shape=circle,fill=black, scale=0.5] (2) at (0,1) {};
\node[shape=circle,fill=black, scale=0.5] (3) at (1,1) {};
\node[shape=circle,fill=black, scale=0.5] (4) at (0,2) {};
\draw[very thick] (0) to (1) to (4) to (2) to (0) to (3) to (4);
\end{tikzpicture}}}
\end{center}
\caption{A paired lattice that is not uniquely paired.}
\label{fig:nc3}
\end{figure}
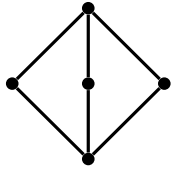

If $\kappa$ is a pairing on $L$ and $j\in\J_L$, then $j \not \leq \kappa(j)$. Indeed, this is immediate from the fact that $\kappa(j)\wedge j =j_*\neq j$. In fact, the elements $m=\kappa(j)$ and $j$ can only be comparable if $m = j_*$ and $j = m^*$.

\begin{lemma}
Let $L$ be a lattice. A bijection $\kappa\colon\J_L\to\M_L$ is a pairing if and only if the following hold for all $j\in \J_L$ and $m\in\M_L$ with $m=\kappa(j)$:
\begin{itemize}
\item $m\geq j_*$;
\item $m^*\geq j$;
\item $m\not\geq j$.
\end{itemize}
\label{lem:paired_check}
\end{lemma}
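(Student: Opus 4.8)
The plan is to prove the equivalence by unpacking the definition of a pairing and showing it is captured exactly by the three listed conditions. Recall that $\kappa\colon\J_L\to\M_L$ is a pairing precisely when $\kappa(j)\in\M_L(j)$ for all $j$ and $\kappa^{-1}(m)\in\J_L(m)$ for all $m$; since $\kappa$ is already assumed bijective, the second condition is just the statement $j\in\J_L(\kappa(j))$ for all $j$. So writing $m=\kappa(j)$, the pairing conditions become $m\in\M_L(j)$ and $j\in\J_L(m)$, and I want to show these two memberships together are equivalent to the conjunction of $m\geq j_*$, $m^*\geq j$, and $m\not\geq j$.

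First I would analyze what $m\in\M_L(j)=\max\{z\in L:j_*=j\wedge z\}$ demands. Membership in this set requires at minimum that $j\wedge m=j_*$, which forces $m\geq j_*$ (since $j_*=j\wedge m\leq m$) together with $m\not\geq j$ (since if $m\geq j$ then $j\wedge m=j$, not $j_*$). Conversely, given $m\geq j_*$ and $m\not\geq j$, I claim $j\wedge m=j_*$: indeed $j\wedge m\leq j$ and $j\wedge m\geq j_*$, but $j$ covers $j_*$, so $j\wedge m$ is either $j_*$ or $j$; the latter is ruled out by $m\not\geq j$. The subtle point is that $m\in\M_L(j)$ requires $m$ to be a \emph{maximal} such $z$, not merely a $z$ with $j\wedge z=j_*$. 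Here is where I would exploit that $m\in\M_L$ is meet-irreducible: I would argue that any $z$ satisfying $j\wedge z=j_*$ and $z\geq m$ must equal $m$, using the symmetric third condition $m^*\geq j$. Dually, the condition $j\in\J_L(m)=\min\{z:m^*=m\vee z\}$ unpacks to $m^*\geq j$ and $m\not\geq j$ (for minimality, using that $j$ is join-irreducible and $j_*$ is the unique element it covers).

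The key mechanism tying everything together is the covering structure: $j\gtrdot j_*$ and $m^*\gtrdot m$. The three bulleted conditions assert $j_*\leq m<m^*$ and $m\leq m^*$ with $j\leq m^*$ but $j\not\leq m$, which geometrically places $j$ and $m$ in a ``perspective'' configuration across the cover $m\lessdot m^*$ and the cover $j_*\lessdot j$. I expect the main obstacle to be verifying the maximality in $\M_L(j)$ and the minimality in $\J_L(m)$ from the three pointwise inequalities, rather than just the defining equalities $j\wedge m=j_*$ and $m\vee j=m^*$. Concretely, to show $m$ is maximal among $\{z:j\wedge z=j_*\}$, I would suppose $z\geq m$ with $j\wedge z=j_*$ and derive a contradiction unless $z=m$: since $m$ is meet-irreducible, any $z>m$ satisfies $z\geq m^*\geq j$, whence $j\wedge z=j$, contradicting $j\wedge z=j_*$ (as $j\neq j_*$). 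The dual argument, using join-irreducibility of $j$ and the inequality $m\geq j_*$, handles minimality in $\J_L(m)$. Assembling these four implications (two for each direction of each membership) yields the equivalence; the forward direction extracts the three inequalities from the memberships as sketched above, and the reverse direction reconstructs both memberships, with the irreducibility of $j$ and $m$ supplying the extremality that the raw inequalities do not obviously give.
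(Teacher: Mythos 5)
Your proposal is correct and follows essentially the same route as the paper's proof: both directions hinge on the covering relations $j_* \lessdot j$ and $m \lessdot m^*$, with meet-irreducibility of $m$ (any $z > m$ satisfies $z \geq m^* \geq j$) supplying the maximality in $\M_L(j)$ and the dual argument supplying minimality in $\J_L(m)$, exactly as in the paper. The only cosmetic difference is that you extract $m^* \geq j$ directly from the membership $j \in \J_L(m)$ (i.e., from $m \vee j = m^*$), whereas the paper derives it from the maximality of $m$ in $\M_L(j)$ via $m^* \wedge j > j_*$; both are valid and interchangeable.
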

\begin{proof}
Let $\kappa\colon\J_L\to \M_L$ be a bijection. Let $j\in\J_L$, and write $m=\kappa(j)$.  Suppose $m\geq j_*$, $m^*\geq j$, and $m\not\geq j$.  Since $j \geq j \wedge m \geq j_*$ and $m \not \geq j$, we must have $j \wedge m=j_*$.  The element $m$ is maximal among elements $z$ with $j \wedge z = j_*$ because we have $j\wedge x\geq j \wedge m^* = j$ for all $x>m$.  Therefore $m \in \M_L(j)$.  An analogous argument shows that $j \in \J_L(m)$. Hence, $\kappa$ is a pairing.

Conversely, suppose that $\kappa$ is a pairing.  Fix $j\in\J_L$, and let $m=\kappa(j)$. Since $m \in \M_L(j)$, we have $j \wedge m = j_*$.  This implies that $m \geq j_*$ and $m \not \geq j$. We also have $m^*\wedge j\geq m\wedge j=j_*$. The maximality of $m$ coming from the definition of $\M_L(j)$ guarantees that $m^*\wedge j\neq j_*$, so $m^*\wedge j\geq j$. Hence, $m^* \geq j$.
\end{proof}

\begin{proposition}
\label{prop:pairing}
If $\kappa$ is a pairing on a lattice $L$ and $(j_0,m_0)$ is a prime pair for $L$, then $\kappa(j_0)=m_0$.
\end{proposition}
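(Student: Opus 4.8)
The plan is to argue by contradiction, using only two ingredients: the disjoint decomposition attached to a prime pair and the inequalities that characterize a pairing. By \Cref{prop:join_decomp}, the hypothesis that $(j_0,m_0)$ is a prime pair is equivalent to the decomposition $L=[\hat 0,m_0]\sqcup[j_0,\hat 1]$. The key consequence I will exploit is that every element of $L$ lies in exactly one of these two blocks; concretely, an element fails to be $\leq m_0$ if and only if it is $\geq j_0$. This dichotomy is the only structural input I need from the primeness assumption.

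Suppose toward a contradiction that $\kappa(j_0)\neq m_0$, and put $j=\kappa^{-1}(m_0)$, so that $j\in\J_L$ and $j\neq j_0$. I apply \Cref{lem:paired_check} to the pair $(j,m_0)$ (legitimate since $\kappa$ is a pairing and $\kappa(j)=m_0$), extracting the two inequalities $m_0\geq j_*$ and $m_0\not\geq j$. Now I locate $j$ and its unique lower cover $j_*$ in the decomposition. From $m_0\not\geq j$, i.e. $j\not\leq m_0$, the dichotomy forces $j\geq j_0$, and since $j\neq j_0$ we get $j>j_0$. From $m_0\geq j_*$, i.e. $j_*\leq m_0$, the same dichotomy forces $j_0\not\leq j_*$.

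These two conclusions are incompatible. Because $j$ is join-irreducible, its unique lower cover is $j_*$, so every element strictly below $j$ lies at or below $j_*$ (follow a saturated chain up to $j$; its penultimate element is a lower cover of $j$, hence equals $j_*$). Applying this to $j_0<j$ gives $j_0\leq j_*$, which directly contradicts $j_0\not\leq j_*$. Hence $\kappa(j_0)=m_0$.

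The step I expect to require the most care is the translation in the second paragraph: one must convert the inequalities produced by \Cref{lem:paired_check} into statements about which block of the disjoint union contains $j$ and $j_*$, the crucial point being that disjointness is exactly what turns ``$\not\leq m_0$'' into ``$\geq j_0$.'' Everything else is formal. I also note that the argument is self-dual, so one could instead set $m=\kappa(j_0)$, use \Cref{lem:paired_check} to place $m\leq m_0$ and $m^*\geq j_0$, and contradict the meet-irreducibility of $m$ via $m_0\wedge m^*=m$; I would present whichever direction reads more smoothly.
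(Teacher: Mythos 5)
Your proof is correct, but it takes a genuinely different route from the paper's. The paper argues directly: from the decomposition $L=[\hat 0,m_0]\sqcup[j_0,\hat 1]$ it checks $m_0\vee j_0=m_0^*$ and $m_0\vee (j_0)_*=m_0$, so $j_0\in\J_L(m_0)$, then observes that any $z$ with $m_0\vee z=m_0^*$ must lie in $[j_0,\hat 1]$, whence $\J_L(m_0)=\{j_0\}$; the defining condition $\kappa^{-1}(m_0)\in\J_L(m_0)$ then forces $\kappa(j_0)=m_0$. You instead argue by contradiction through the inequality characterization of pairings (\Cref{lem:paired_check}) and never compute a join: setting $j=\kappa^{-1}(m_0)\neq j_0$, the dichotomy turns $m_0\not\geq j$ into $j>j_0$ and $m_0\geq j_*$ into $j_0\not\leq j_*$, and these clash with the standard fact that every element strictly below the join-irreducible $j$ lies below its unique lower cover $j_*$ (your saturated-chain argument for this is correct). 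Both proofs draw their structural input solely from the disjointness of the two intervals, but the finishing moves differ: the paper's yields the stronger byproduct $\J_L(m_0)=\{j_0\}$ (and dually $\M_L(j_0)=\{m_0\}$), a local semidistributivity statement at a prime pair, while yours is more economical, using only two of the three inequalities from \Cref{lem:paired_check} plus an elementary covering argument. One small point you might make explicit: forming $\kappa^{-1}(m_0)$ requires $m_0\in\M_L$, which holds because meet-prime elements are meet-irreducible, as the paper notes (the paper's proof uses this implicitly too, via $m_0^*$). Your sketched dual variant via $m=\kappa(j_0)$ is likewise sound.
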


\begin{proof}
Since $m_0^* \not \in [\hat{0},m_0]$, we have $m_0^* \in [j_0,\hat{1}]$, so $m_0^* \geq j_0$.  It follows that $m_0 \vee j_0 = m_0^*$. Since $m_0\vee (j_0)_*=m_0$, we find that $j_0 \in \J_L(m_0)$.  Since $L=[\hat{0},m_0]\sqcup [j_0,\hat{1}]$, any $z$ satisfying $m_0^*=m_0\vee z $ must satisfy $z \geq j_0$.  Therefore, $\J_L(m) = \{j_0\}$.  By the definition of a pairing, we must have $\kappa(j_0)=m_0$.   
\end{proof}

\subsection{Galois graphs}
In this subsection, we specialize Markowsky's \emph{poset of irreducibles} for general lattices to uniquely paired lattices. 

\label{sec:galois}
\begin{definition} Let $L$ be uniquely paired with unique pairing $\kappa_L\colon\J_L\to\M_L$. 
The \defn{Galois graph} of $L$, denoted $G_L$, is the directed graph with vertex set $\J_L$ such that for all $j,j'\in \J_L$, there is an edge $j \to j'$ if and only if $j \not \leq \kappa_L(j')$ and $j\neq j'$.
\end{definition}

Slightly abusing notation, we write $j \to j'$ to mean that there is a directed edge from $j$ to $j'$ in $G_L$, and we define 
\begin{align*}\Out(j) &= \{ j' \in \J_L : j \to j'\} \quad \text{and} \\ \In(j) &= \{ j' \in \J_L : j' \to j \}.\end{align*} 

\begin{proposition}[{\cite[Remark 2]{markowsky92primes}}]
\label{prop:no_prob}
Let $L$ be a uniquely paired lattice with Galois graph $G_L$.  A join-irreducible element $j$ of $L$ is join-prime if and only if $j'\to j''$ whenever we have $j' \to j$ and $j \to j''$.  
\end{proposition}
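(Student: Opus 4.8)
The plan is to deduce both implications from the decomposition characterization of join-primeness in \Cref{prop:join_decomp}, combined with \Cref{prop:pairing}: together these show that a join-irreducible $j$ is join-prime if and only if $L=[\hat 0,m]\sqcup[j,\hat 1]$, where $m=\kappa_L(j)$. (If $j$ is join-prime, \Cref{prop:join_decomp} supplies some $m_0$ with $L=[\hat 0,m_0]\sqcup[j,\hat 1]$, and \Cref{prop:pairing} forces $m_0=\kappa_L(j)$; the converse is immediate from \Cref{prop:join_decomp}.) The bridge between this lattice-theoretic decomposition and the combinatorics of $G_L$ is the standard fact that, writing $S(a)=\{m\in\M_L:a\not\leq m\}$, one has $a\leq b$ if and only if $S(a)\subseteq S(b)$, which holds because every element is the meet of the meet-irreducibles above it. I will also use the elementary identity $\kappa_L(\Out(a))=S(a)\setminus\{\kappa_L(a)\}$, immediate from the definition $a\to a'\iff a\not\leq\kappa_L(a')$ together with the bijectivity of $\kappa_L$ and the fact $a\not\leq\kappa_L(a)$.

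For the converse direction (the graph condition implies $j$ is join-prime), I would assume the condition and prove $L=[\hat 0,m]\sqcup[j,\hat 1]$. The engine is the claim that $j'\to j$ forces $j'>j$. Indeed, fixing $j'$ with $j'\to j$, the graph condition gives $\Out(j)\subseteq\Out(j')$; applying $\kappa_L$ and using the identity above yields $S(j)\setminus\{\kappa_L(j)\}\subseteq S(j')$. Since $j'\to j$ means exactly $j'\not\leq\kappa_L(j)$, i.e.\ $\kappa_L(j)\in S(j')$, we recover $S(j)\subseteq S(j')$ and hence $j\leq j'$, with $j\neq j'$. This claim closes the argument: if $x\not\leq m$, then some join-irreducible $j'\leq x$ satisfies $j'\not\leq m=\kappa_L(j)$, and such a $j'$ is either $j$ itself or satisfies $j'\to j$; in both cases $j\leq x$. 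Thus every element lies in $[\hat 0,m]$ or $[j,\hat 1]$, and these intervals are disjoint because $j\not\leq m$. \Cref{prop:join_decomp} then gives join-primeness.

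For the forward direction, I would assume $L=[\hat 0,m]\sqcup[j,\hat 1]$ and verify the graph condition for $j',j''$ with $j'\to j$ and $j\to j''$. Since $j'\not\leq m$ forces $j'\geq j$ (in fact $j'>j$, as $j'\neq j$) and $j\not\leq\kappa_L(j'')$, we get $j'\not\leq\kappa_L(j'')$ at once: otherwise $j\leq j'\leq\kappa_L(j'')$. The remaining obligation, namely $j'\neq j''$, is the step I expect to be the main obstacle, since it amounts to ruling out a length-two cycle through a join-prime vertex and cannot be obtained from the monotonicity bookkeeping alone. Here I would argue by contradiction using \Cref{lem:paired_check}: if $j'=j''$, then $j\to j'$, so $\kappa_L(j')\not\geq j$, whence $\kappa_L(j')\leq m$, while $\kappa_L(j')^*\geq j'>j$ places $\kappa_L(j')^*$ in $[j,\hat 1]$. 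Because $\kappa_L(j')\neq m$ (otherwise $j'=j$), we have $\kappa_L(j')<m$, so some element covers $\kappa_L(j')$ within $[\kappa_L(j'),m]$; as $\kappa_L(j')$ is meet-irreducible, its only cover is $\kappa_L(j')^*$, forcing $\kappa_L(j')^*\leq m$ and therefore $j\leq\kappa_L(j')^*\leq m$, contradicting $j\not\leq m$. Hence $j'\neq j''$ and $j'\to j''$, completing the proof.
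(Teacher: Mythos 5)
Your proof is correct; note, however, that the paper itself does not prove \Cref{prop:no_prob} --- it is imported from Markowsky's work on the poset of irreducibles --- so there is no internal proof to compare against, and your argument should be judged as a self-contained derivation from results the paper does prove. On that score it holds up. The reduction of join-primeness to the decomposition $L=[\hat 0,\kappa_L(j)]\sqcup[j,\hat 1]$ via \Cref{prop:join_decomp} and \Cref{prop:pairing} is exactly right, and your dictionary between the order and the Galois graph --- $a\leq b$ iff $S(a)\subseteq S(b)$, where $S(a)=\{m\in\M_L:a\not\leq m\}$, together with $\kappa_L(\Out(a))=S(a)\setminus\{\kappa_L(a)\}$ --- is valid in any finite lattice, using only that every element is a meet of the meet-irreducibles above it, the bijectivity of $\kappa_L$, and $a\not\leq\kappa_L(a)$. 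I checked the key steps: in the converse direction, the graph condition gives $\Out(j)\subseteq\Out(j')$ for any $j'\to j$, and adding back $\kappa_L(j)\in S(j')$ (which is precisely the statement $j'\to j$) recovers $S(j)\subseteq S(j')$ and hence $j<j'$; the covering step then needs only that every $x\not\leq\kappa_L(j)$ lies above some join-irreducible $j'\not\leq\kappa_L(j)$, which follows since $x$ is a join of join-irreducibles. In the forward direction, the step you flag as the main obstacle, $j'\neq j''$, is exactly the content of \Cref{prop:no_double_edges}, which the paper proves independently of \Cref{prop:no_prob}, so you could simply have cited it. Your direct argument is nonetheless sound: since covers within an interval are covers in $L$, the strict inequality $\kappa_L(j')<\kappa_L(j)$ forces the unique cover $\kappa_L(j')^*$ of the meet-irreducible $\kappa_L(j')$ to satisfy $\kappa_L(j')^*\leq\kappa_L(j)$, and then $j<j'\leq\kappa_L(j')^*$ (via \Cref{lem:paired_check}) contradicts $j\not\leq\kappa_L(j)$. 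This differs only mildly from the paper's proof of \Cref{prop:no_double_edges}, which reaches the same contradiction through the maximality of $\kappa_L(j')$ in $\M_L(j')$ rather than through its unique cover; both routes are fine, and yours has the small advantage of staying entirely inside the axioms isolated in \Cref{lem:paired_check}.
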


\begin{proposition}
\label{prop:no_double_edges}
Let $L$ be a uniquely paired lattice with Galois graph $G_L$.  If $j\in L$ is join-prime, then there is no join-irreducible element $j'$ with $j \to j'$ and $j' \to j$.
\end{proposition}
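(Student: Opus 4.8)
The plan is to derive a contradiction by exploiting the characterization of join-primeness in \Cref{prop:no_prob}, which says that a join-irreducible $j$ is join-prime precisely when every in-neighbor of $j$ is connected to every out-neighbor of $j$ in $G_L$. The key observation is that this statement makes sense even when we feed it the \emph{same} vertex as both an in-neighbor and an out-neighbor, and doing so immediately produces a forbidden self-loop.

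Concretely, I would argue by contradiction. Suppose there is a join-irreducible element $j'$ with $j\to j'$ and $j'\to j$. Since $j$ is join-prime, \Cref{prop:no_prob} tells us that $j'' \to j'''$ holds whenever $j''\to j$ and $j\to j'''$. Applying this with $j''=j'$ (valid because $j'\to j$) and $j'''=j'$ (valid because $j\to j'$), I conclude that $j'\to j'$.

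The final step is to note that $G_L$ has no self-loops: the definition of the Galois graph only posits an edge $j \to j'$ when $j\neq j'$, so $j'\to j'$ can never occur. This contradicts the conclusion of the previous paragraph, and hence no such $j'$ exists.

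I do not expect a genuine obstacle here, since the result follows by a direct application of \Cref{prop:no_prob}. The only point requiring care is the (perhaps slightly unexpected) move of instantiating both the in-neighbor and the out-neighbor in the criterion to be the single candidate vertex $j'$; once one allows that, the absence of self-loops in $G_L$ closes the argument.
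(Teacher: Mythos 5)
Your deduction rests entirely on the degenerate instantiation $j''=j'$ of \Cref{prop:no_prob}, and that instantiation is logically equivalent to the proposition being proved, so the argument is circular in substance. \Cref{prop:no_prob} is quoted from Markowsky without proof, and its content is really about the relation $j'\not\leq\kappa_L(j'')$: for $j'\neq j''$ this is the same as the edge $j'\to j''$, but when $j''=j'$ the relational conclusion $j'\not\leq\kappa_L(j')$ holds \emph{automatically} for any pairing (the paper observes that $j\not\leq\kappa_L(j)$ always), so in that formulation the degenerate case is vacuous and yields no contradiction. Only after translating into the irreflexive graph $G_L$, where edges require $j'\neq j''$ by fiat, does the case $j''=j'$ acquire force---and the assertion it then encodes, namely that a join-prime $j$ admits no $j'$ with $j'\to j$ and $j\to j'$, is exactly \Cref{prop:no_double_edges} itself. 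You cannot take this extra strength on faith from the citation; verifying that the biconditional remains true under your literal reading requires an independent argument, which is presumably why the paper proves the proposition directly rather than deducing it in the one line you propose.

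The paper's proof supplies the missing content: from $j\to j'$, i.e.\ $j\not\leq\kappa_L(j')$, and the prime-pair decomposition $L=[\hat 0,\kappa_L(j)]\sqcup[j,\hat 1]$ (via \Cref{prop:pairing}), one gets $\kappa_L(j')<\kappa_L(j)$; then the maximality of $\kappa_L(j')$ in $\{z\in L: j'\wedge z=j'_*\}$ rules out $j'\wedge\kappa_L(j)=j'_*$, and since $j'_*\lessdot j'$ this forces $j'\wedge\kappa_L(j)=j'$, i.e.\ $j'\leq\kappa_L(j)$, so the edge $j'\to j$ cannot exist. If you want to salvage your route, you must prove the degenerate case of \Cref{prop:no_prob} yourself; a short way: if $j'\to j$, then $j'\not\leq\kappa_L(j)$ places $j'$ in $[j,\hat 1]$ with $j'\neq j$, so $j<j'$, and since $j'_*$ is the unique element covered by the join-irreducible $j'$, this gives $j\leq j'_*\leq\kappa_L(j')$ (using \Cref{lem:paired_check}), contradicting $j\to j'$. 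Either way, an argument at the level of the lattice, and not merely a formal manipulation of the graph, is unavoidable.
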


\begin{proof}
Suppose that $j$ is join-prime and that $j'\in\J_L$ satisfies $j\to j'$. Write $m=\kappa_L(j)$ and $m'=\kappa_L(j')$.  Then $j\not\leq m'$. \Cref{prop:pairing} tells us that $(j,m)$ is a prime pair, so $L=[\hat 0,m]\sqcup[j,\hat 1]$.  Consequently, $m'< m$.  Because $m'$ is a maximal element of $\{z\in L: j'_*=j'\wedge z\}$, we cannot have $j'\wedge m=j'_*$. We know that $j' \wedge m \geq j' \wedge m' = j'_*$, so we must have $j' \wedge m > j'_*$. Hence, $j'\wedge m=j'$. This means that $j'\leq m$, so there is no edge $j'\to j$ in $G_L$.
\end{proof}

\subsection{Semidistributive and extremal lattices are uniquely paired}

\begin{proposition}\label{prop:semidistributive_uniquely_paired}
Semidistributive lattices are uniquely paired.
\end{proposition}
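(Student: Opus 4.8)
The plan is to show that a semidistributive lattice $L$ has exactly one pairing by exhibiting a pairing and then arguing it is forced. The natural candidate comes directly from the definitions in \Cref{sec:uniquely_paired_lattices}: since $L$ is meet-semidistributive, $\M_L(j)$ is a singleton for every $j \in \J_L$, and since $L$ is join-semidistributive, $\J_L(m)$ is a singleton for every $m \in \M_L$. So I would define $\kappa\colon \J_L \to \M_L$ by letting $\kappa(j)$ be the unique element of $\M_L(j)$, and dually define $\kappa'\colon \M_L \to \J_L$ by letting $\kappa'(m)$ be the unique element of $\J_L(m)$. By \Cref{lem:contained_in_J} these really do land in $\M_L$ and $\J_L$ respectively, so the maps are well-defined.

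\medskip

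First I would verify that $\kappa$ and $\kappa'$ are mutually inverse bijections. The key step is to show $\kappa'(\kappa(j)) = j$ for every $j \in \J_L$, and dually. Writing $m = \kappa(j)$, I know $j \wedge m = j_*$ and $m$ is the unique maximal such element; I want to deduce that $j$ is the unique minimal element $z$ with $m \vee z = m^*$. Using \Cref{lem:paired_check}, the conditions $m \geq j_*$, $m^* \geq j$, and $m \not\geq j$ hold, and in particular $m \vee j = m^*$ (since $m^* \geq j$ forces $m \vee j \geq m^*$, while $m \vee j > m$ forces $m \vee j = m^*$). Thus $j \in \J_L(m)$, and since $\J_L(m)$ is a singleton, $\kappa'(m) = j$. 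This establishes that $\kappa$ is a bijection with inverse $\kappa'$, and the characterization in \Cref{lem:paired_check} then immediately certifies that $\kappa$ is a pairing.

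\medskip

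**The main obstacle** is uniqueness: I must show no other bijection can be a pairing. Here I would argue that any pairing $\tilde\kappa$ is forced to agree with $\kappa$. Given a pairing $\tilde\kappa$ and $j \in \J_L$, by definition $\tilde\kappa(j) \in \M_L(j)$; but meet-semidistributivity makes $\M_L(j)$ a singleton, whose unique element is $\kappa(j)$. Hence $\tilde\kappa(j) = \kappa(j)$ for all $j$, so $\tilde\kappa = \kappa$. This is the crux, and it is actually short once the singleton observation is in hand — the real content is recognizing that meet-semidistributivity pins down $\M_L(j)$ completely, so the pairing condition leaves no freedom. I would therefore structure the proof so that the bulk of the work is the well-definedness and bijectivity of $\kappa$ (invoking \Cref{lem:contained_in_J} and \Cref{lem:paired_check}), after which both existence and uniqueness of the pairing follow from the singleton property of $\M_L(j)$ and $\J_L(m)$.
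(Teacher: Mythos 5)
Your proposal follows essentially the same route as the paper: semidistributivity makes $\M_L(j)$ and $\J_L(m)$ singletons, you define $\kappa(j)$ to be the unique element of $\M_L(j)$, and uniqueness is immediate because any pairing must send $j$ into the singleton $\M_L(j)$. The paper compresses the mutual-inverse verification into ``one readily checks''; you correctly identify this as the place where the actual work lives and set out to supply it.

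However, your verification that $\kappa'(\kappa(j)) = j$ has a genuine (though easily repaired) gap. From $m \vee j = m^*$ you conclude ``thus $j \in \J_L(m)$,'' but $\J_L(m)$ is by definition the set of \emph{minimal} elements of $\{z \in L : m^* = m \vee z\}$; membership of $j$ in that underlying set, together with the singleton property $\J_L(m) = \{j'\}$, yields only $j \geq j'$, not $j = j'$. As written, the step fails. The fix takes one line: if $z < j$ satisfied $m \vee z = m^*$, then $z \leq j_*$ (since $j$ is join-irreducible), and because $m \geq j_*$ this would give $m \vee z = m \neq m^*$, a contradiction; hence $j$ is minimal in the set, so $j \in \J_L(m)$ and $j = \kappa'(m)$. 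Two smaller remarks. First, your parenthetical ``$m^* \geq j$ forces $m \vee j \geq m^*$'' has the inequality backwards: $m^* \geq j$ and $m^* > m$ force $m \vee j \leq m^*$, and combined with $m \vee j > m$ (from $m \not\geq j$) and the cover relation $m \lessdot m^*$, this gives $m \vee j = m^*$. Second, \Cref{lem:paired_check} as stated is a criterion for a \emph{bijection} to be a pairing; the fact that $m \in \M_L(j)$ alone implies the three conditions $m \geq j_*$, $m^* \geq j$, $m \not\geq j$ is established inside the proof of that lemma (its converse direction uses only $m \in \M_L(j)$), so you should reproduce that short argument or cite it explicitly rather than invoking the lemma statement itself.
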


\begin{proof}
Let $L$ be a semidistributive lattice. Then $\M_L(j)$ and $\J_L(m)$ are singleton sets for all $j\in\J_L$ and $m\in \M_L$. Furthermore, one readily checks that $m$ is the unique element of $\M_L(j)$ if and only if $j$ is the unique element of $\J_L(m)$. Therefore, we obtain a pairing $\kappa\colon\J_L\to M_L$ by declaring $\kappa(j)$ to be the unique element of $\M_L(j)$. This is the only pairing on $L$.  
\end{proof}

Even though $\J_L(m)$ and $\M_L(j)$ can contain multiple elements in extremal lattices, such lattices are still uniquely paired.  An example is given in~\Cref{fig:extremal_paired}.

\begin{figure}[htbp]
\begin{center}
\scalebox{1}{\begin{tikzpicture}[scale=3]
\node[shape=circle,fill=black,scale=0.5,label={[xshift=0ex, yshift=-4.5ex]$\hat{0}$}] (a) at (0,0) {};
\node[shape=circle,fill=black,scale=0.5,label={[xshift=-2.5ex, yshift=-2ex]$j_1$}] (1) at (-1,1) {};
\node[shape=circle,fill=black,scale=0.5,label={[xshift=2ex, yshift=-2ex]$j_2$}]  (2) at (0,1) {};
\node[shape=circle,fill=black,scale=0.5,label={[xshift=2.5ex, yshift=-2ex]$j_3$}] (3) at (1,1) {};
\node[shape=circle,fill=black,scale=0.5,label={[xshift=-2.5ex, yshift=-2ex]$m_4$}] (e) at (-1,2) {};
\node[shape=circle,fill=black,scale=0.5,label={[xshift=-2.5ex, yshift=-3ex]$j_4$},label={[xshift=-2.5ex, yshift=-1ex]$m_3$}]  (4) at (0,1.75) {};
\node[shape=circle,fill=black,scale=0.5,label={[xshift=2.5ex, yshift=-2ex]$m_2$}]  (g) at (1,2) {};
\node[shape=circle,fill=black,scale=0.5,label={[xshift=-2.5ex, yshift=-2ex]$m_1$}] (h) at (0,2.5) {};
\node[shape=circle,fill=black,scale=0.5,label={[xshift=0ex, yshift=0ex]$\hat{1}$}] (i) at (0,3) {};
\draw[very thick] (a) to (1);
\draw[very thick] (a) to (2);
\draw[very thick] (a) to (3);
\draw[very thick] (1) to  (e);
\draw[very thick] (2) to  (e);
\draw[very thick] (2) to (4);
\draw[very thick] (4) to (h);
\draw[very thick] (h) to (i);
\draw[very thick] (1) to  (g);
\draw[very thick] (3) to (g);
\draw[very thick] (3) to (h);
\draw[very thick] (e) to (i);
\draw[very thick] (g) to (i);
\end{tikzpicture}}
\end{center}
\caption{An extremal (but not trim) lattice; by~\Cref{prop:extremal_paired}, this lattice is uniquely paired.  Even though both $m_3$ and $m_4$ are maximal elements of the set $\{z : (j_3)_*=j_3 \wedge z\}$, the element $m_4$ must be paired with $j_4$ because $m_4$ is the only element of $\{z : (j_4)_*=j_4 \wedge z\}$; this then forces $m_3$ to be paired with $j_3$.}
\label{fig:extremal_paired}
\end{figure}
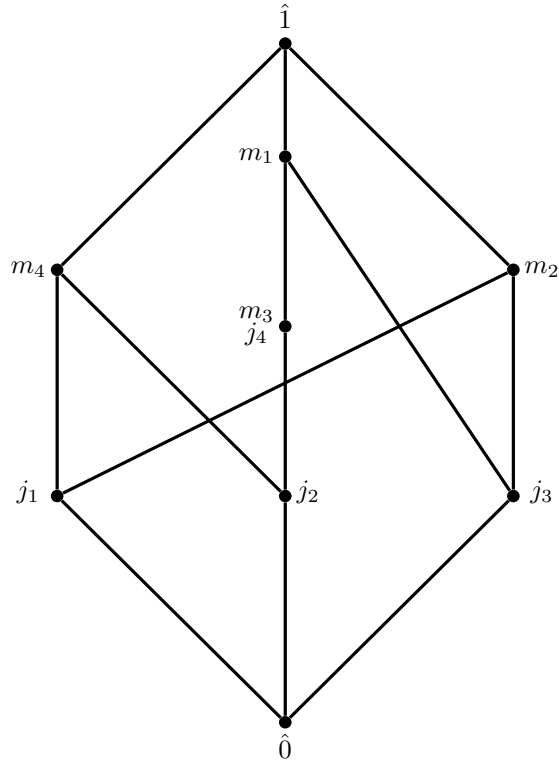

\begin{proposition}
\label{prop:extremal_paired}
Extremal lattices are uniquely paired.
\end{proposition}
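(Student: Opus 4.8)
The plan is to use the maximum-length chain supplied by extremality to build a canonical index labeling of both $\J_L$ and $\M_L$, show that any pairing is forced to respect these labelings, and finally verify that the resulting candidate really is a pairing. First I would fix a maximum-length chain $\hat 0 = x_0 \lessdot x_1 \lessdot \cdots \lessdot x_n = \hat 1$ with $|\J_L| = |\M_L| = n$. Since every element is the join of the join-irreducibles beneath it, the nested sets $\{j \in \J_L : j \le x_i\}$ run from $\emptyset$ to $\J_L$, each cover $x_{i-1}\lessdot x_i$ contributing at least one new join-irreducible; as there are exactly $n$ join-irreducibles spread over $n$ covers, each cover contributes exactly one. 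This yields a bijection $\gamma\colon\J_L\to\{1,\dots,n\}$, where $\gamma(j)$ is the unique index with $j\le x_{\gamma(j)}$ and $j\not\le x_{\gamma(j)-1}$; writing $j_i$ for the representative at step $i$, we have $j_i\vee x_{i-1}=x_i$. Dually I would obtain a bijection $\mu\colon\M_L\to\{1,\dots,n\}$, with $\mu(m)=i$ iff $m\ge x_{i-1}$ and $m\not\ge x_i$, representatives $m_i$ satisfying $m_i\wedge x_i=x_{i-1}$. The candidate pairing is then $\kappa\defs\mu^{-1}\circ\gamma$, that is, $j_i\mapsto m_i$.

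For uniqueness I would show that \emph{every} pairing coincides with $\kappa$. Given an arbitrary pairing $\kappa'$ and $j\in\J_L$ with $\gamma(j)=a$, set $m=\kappa'(j)$ and $b=\mu(m)$. Because $\kappa'(j)\not\ge j$ (noted just after \Cref{def:uniquely_paired}) and $j\le x_a$, we cannot have $m\ge x_a$; hence $b\le a$. Thus the permutation $\sigma\defs\mu\circ\kappa'\circ\gamma^{-1}$ of $\{1,\dots,n\}$ satisfies $\sigma(a)\le a$ for all $a$, which forces $\sigma=\mathrm{id}$ by the usual triangularity induction ($\sigma(1)=1$, then $\sigma(k)=k$). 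Therefore $\kappa'=\mu^{-1}\circ\gamma$, so there is at most one pairing.

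For existence I would verify, via \Cref{lem:paired_check}, that $\kappa\colon j_i\mapsto m_i$ satisfies $m_i\ge(j_i)_*$, $m_i^*\ge j_i$, and $m_i\not\ge j_i$. The third is immediate: if $m_i\ge j_i$, then $m_i\ge j_i\vee x_{i-1}=x_i$, contradicting $m_i\not\ge x_i$. The first rests on the claim $(j_i)_*\le x_{i-1}$: if not, some join-irreducible $j'\le(j_i)_*$ fails to lie below $x_{i-1}$, and since $j'\le(j_i)_*<j_i\le x_i$ this forces $\gamma(j')=i=\gamma(j_i)$ with $j'\ne j_i$, contradicting injectivity of $\gamma$; hence $m_i\ge x_{i-1}\ge(j_i)_*$. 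The dual argument (now using injectivity of $\mu$, applied to the meet-irreducibles above $m_i^*$) gives $m_i^*\ge x_i\ge j_i$.

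The main obstacle I anticipate lies in the first paragraph: pinning down that the two chain-crossing labelings are genuine bijections and then feeding the injectivity of $\gamma$ and $\mu$ into the verification of the pairing inequalities $(j_i)_*\le x_{i-1}$ and $m_i^*\ge x_i$. By contrast, the uniqueness half collapses cleanly once one observes the triangularity $\sigma(a)\le a$, so the real content is the structural bookkeeping of extremality rather than the final matching.
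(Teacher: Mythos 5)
Your proof is correct and takes essentially the same route as the paper's: both build the pairing $j_i\mapsto m_i$ from a maximum-length chain, verify it via \Cref{lem:paired_check} with the same key step $(j_i)_*\leq x_{i-1}$ (and its dual $m_i^*\geq x_i$), and obtain uniqueness from the observation that any pairing can only move chain indices downward---which the paper phrases as a direct contradiction from $\kappa'(j_i)=m_k$ with $i<k$, and you phrase equivalently as triangularity of the permutation $\sigma$. The only substantive addition on your side is that you spell out the counting argument showing each cover of the chain absorbs exactly one join-irreducible and one meet-irreducible (bijectivity of $\gamma$ and $\mu$), a standard fact about extremal lattices that the paper asserts without proof.
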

\begin{proof} 
Let $L$ be an extremal lattice, and fix a maximum-length chain $\hat{0}=x_0\lessdot x_1\lessdot x_2\lessdot \cdots \lessdot x_n=\hat{1}$. For each $i\in[n]$, there is a unique $j_i\in\J_L$ such that $j_i \vee x_{i-1}=x_i$, and there is a unique $m_i\in\M_L$ such that $m_i \wedge x_i = x_{i-1}$. This gives rise to a bijection $\kappa\colon\J_L\to\M_L$ defined by $\kappa(j_i)=m_i$. 
Let us check that $\kappa$ is a pairing. Fix $i\in[n]$. First, suppose $j$ is a join-irreducible with $j\leq (j_i)_*$. Then $j\vee x_{i-1}\leq j_i\vee x_{i-1}=x_i$. By the uniqueness of $j_i$, we know that $j\vee x_{i-1}\neq x_i$. This shows that $j\vee x_{i-1}=x_{i-1}$, so $j\leq x_{i-1}$. As this is true for every join-irreducible $j$ satisfying $j\leq (j_i)_*$, we conclude that $(j_i)_*\leq x_{i-1}$. Because $j_i\vee x_{i-1}=x_i$, we have $j_i\not\leq x_{i-1}$, so \[j_i\neq x_{i-1} \wedge j_i = (m_i \wedge x_i) \wedge j_i = m_i \wedge (x_i \wedge j_i) = m_i \wedge j_i.\] Consequently, $m_i\wedge j_i<j_i$. But since $(j_i)_* \leq x_{i-1}$, we have $m_i \wedge j_i = x_{i-1}\wedge j_i= (j_i)_*$.  This shows that $m_i\geq (j_i)_*$ and that $m_i\not\geq j_i$. A similar argument shows that $m_i^*\geq j_i$. As this is true for all $i$, \Cref{lem:paired_check} tells us that $\kappa$ is a pairing.

We now show that $\kappa$ is the only pairing on $L$. Suppose instead that there is some pairing $\kappa'\colon\J_L\to\M_L$ with $\kappa'\neq \kappa$. There must be some $i,k\in[n]$ with $i<k$ such that $\kappa'(j_i)=m_k$. Then $x_k > x_{k-1} \geq x_i \geq j_i$, so $m_k \wedge j_i = m_k \wedge (x_k \wedge j_i) = (m_k\wedge x_k)\wedge j_i=x_{k-1} \wedge j_i = j_i$. This contradicts the fact that $\kappa'(j_i)\wedge j_i=(j_i)_*$ by the definition of a pairing. 
\end{proof}

Since trim lattices are extremal by definition, we have the following corollary. 

\begin{corollary}
Trim lattices are uniquely paired. 
\end{corollary}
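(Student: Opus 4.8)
The plan is to observe that this corollary is an immediate consequence of the definition of trimness together with \Cref{prop:extremal_paired}, which has already established unique pairing for the strictly larger class of extremal lattices. Recall from \Cref{sec:lattices} that a lattice is \defn{trim} precisely when it is both extremal and left modular; in particular, every trim lattice is extremal. Thus no genuinely new work is required: the left modularity hypothesis plays no role here, and we simply discard it and appeal to the extremal case.

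Concretely, I would let $L$ be a trim lattice. By definition $L$ is extremal, so it possesses a maximum-length chain $\hat{0}=x_0\lessdot x_1\lessdot \cdots \lessdot x_n=\hat{1}$ with $|\J_L|=|\M_L|=n$. This is exactly the hypothesis invoked in \Cref{prop:extremal_paired}, whose proof constructs the bijection $\kappa\colon\J_L\to\M_L$ from the data $j_i\vee x_{i-1}=x_i$ and $m_i\wedge x_i=x_{i-1}$, verifies via \Cref{lem:paired_check} that it is a pairing, and then shows uniqueness by a comparability argument. Applying that proposition to $L$ yields directly that $L$ is uniquely paired.

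Since every step of the argument is a single invocation of a previously proved result, there is no substantive obstacle to overcome; the content of the corollary lies entirely in the work already done for \Cref{prop:extremal_paired}. The only thing to take care with is the logical scaffolding: one must note that \emph{extremal} is part of the definition of \emph{trim}, so that the hypothesis of \Cref{prop:extremal_paired} is genuinely met. Accordingly, I expect the written proof to consist of essentially one sentence, as the preceding remark in the text (``Since trim lattices are extremal by definition, we have the following corollary'') already anticipates.
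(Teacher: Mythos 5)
Your proposal is correct and is exactly the paper's argument: the paper derives the corollary immediately from \Cref{prop:extremal_paired} with the single remark that trim lattices are extremal by definition. Nothing more is needed.
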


\section{Compatibly Dismantlable Lattices}
\label{sec:rowmotable}

As a next step toward semidistrim lattices, we impose additional structure on uniquely paired lattices to obtain the family of \emph{compatibly dismantlable} lattices. This new structure is an analogue of interval-dismantlability (see \cite{adaricheva2018interval}) that additionally requires a certain compatibility condition for join-irreducible elements and for meet-irreducible elements. 

\subsection{Compatibly dismantlable lattices}
\begin{definition}
\label{def:semidistrim}
A uniquely paired lattice $L$ is \defn{compatibly dismantlable} if it has cardinality $1$ or if it contains a prime pair $(j_0,m_0)$ such that the following compatibility conditions hold:
\begin{itemize}
    \item $[j_0,\hat{1}]$ is compatibly dismantlable, and there is a bijection
    \[\alpha\colon\{j\in \J_L : j_0 \leq \kappa_L(j)\} \to \J_{[j_0,1]}\]
     given by $\alpha(j)=j_0\vee j$ such that $\kappa_{[j_0,\hat 1]}(\alpha(j))=\kappa_L(j)$ for all $j\in \J_L$ with $j_0\leq \kappa_L(j)$;
    \item $[\hat{0},m_0]$ is compatibly dismantlable, and there is a bijection \[\beta\colon\{m\in \M_L : \kappa_L^{-1}(m) \leq m_0\} \to \M_{[0,m_0]}\] given by $\beta(m)=m_0\wedge m$ such that $\kappa_{[\hat 0,m_0]}^{-1}(\beta(m))=\kappa_L^{-1}(m)$ for all $m\in\M_L$ with $\kappa_L^{-1}(m)\leq m_0$.
\end{itemize}
We call such a prime pair $(j_0,m_0)$ a \defn{dismantling pair} for $L$ and write $L_0 = [\hat{0},m_0]$ and $L^0 = [j_0,\hat{1}]$.
\end{definition}

\begin{figure}[htbp]
\begin{center}
\raisebox{-.5\height}{\scalebox{1}{\begin{tikzpicture}[scale=2]
\node[shape=circle,fill=black,scale=0.5,label={[xshift=0ex, yshift=-4.5ex]$\hat{0}$}] (0) at (0,0) {};
\node[shape=circle,fill=black,scale=0.5,label={[xshift=-2ex, yshift=-2ex]$j_0$}] (j2) at (-1,1) {};
\node[shape=circle,fill=black,scale=0.5,label={[xshift=-2ex, yshift=-2ex]$j_1$}] (j1) at (0,1) {};
\node[shape=circle,fill=black,scale=0.5,label={[xshift=2.5ex, yshift=-1ex]$m_1$},label={[xshift=2.5ex, yshift=-3ex]$j_2$}] (j0) at (1,1) {};
\node[shape=circle,fill=black,scale=0.5,label={[xshift=-2ex, yshift=-3ex]$j_3$},label={[xshift=-2ex, yshift=-1ex]$m_4$}] (j3) at (-1,2) {};
\node[shape=circle,fill=black,scale=0.5,label={[xshift=2.5ex, yshift=-3ex]$j_4$},label={[xshift=2.5ex, yshift=-1ex]$m_3$}] (j4) at (-.5,2) {};
\node[shape=circle,fill=black,scale=0.5,label={[xshift=2.5ex, yshift=-2ex]$m_0$}] (m2) at (1,2) {};
\node[shape=circle,fill=black,scale=0.5,label={[xshift=-2.5ex, yshift=-2ex]$m_2$}] (m0) at (0,3) {};
\node[shape=circle,fill=black,scale=0.5,label={[xshift=0ex, yshift=0ex]$\hat{1}$}] (1) at (1,4) {};
\draw[very thick] (0) to (j2) to (j3) to (m0) to (1) to (m2) to (j0) to (0);
\draw[very thick] (0) to (j1) to (m0) to (j4) to (j2);
\draw[very thick] (j1) to (m2);
\end{tikzpicture}}}
\hspace{1in}
\raisebox{-.5\height}{\scalebox{1}{\begin{tikzpicture}[scale=2]
\node[shape=circle,draw=black,very thick] (2) at (0,0) {$j_0$};
\node[shape=circle,draw=black,very thick] (0) at (0,-1) {$j_2$};
\node[shape=circle,draw=black,very thick] (1) at (0,1) {$j_1$};
\node[shape=circle,draw=black,very thick] (3) at (1,0) {$j_3$};
\node[shape=circle,draw=black,very thick] (4) at (-1,0) {$j_4$};
\draw[->,very thick] (0) to (4);
\draw[->,very thick] (0) to (3);
\draw[->,very thick] (4) to (2);
\draw[->,very thick] (3) to (2);
\draw[->,very thick] (2) to (1);
\draw[<->,very thick] (4) to (1);
\draw[<->,very thick] (3) to (1);
\end{tikzpicture}}}
\end{center}

\caption{{\it Left:} A compatibly dismantlable lattice.  {\it Right:} The corresponding Galois graph.}
\label{fig:reduced}
\end{figure}
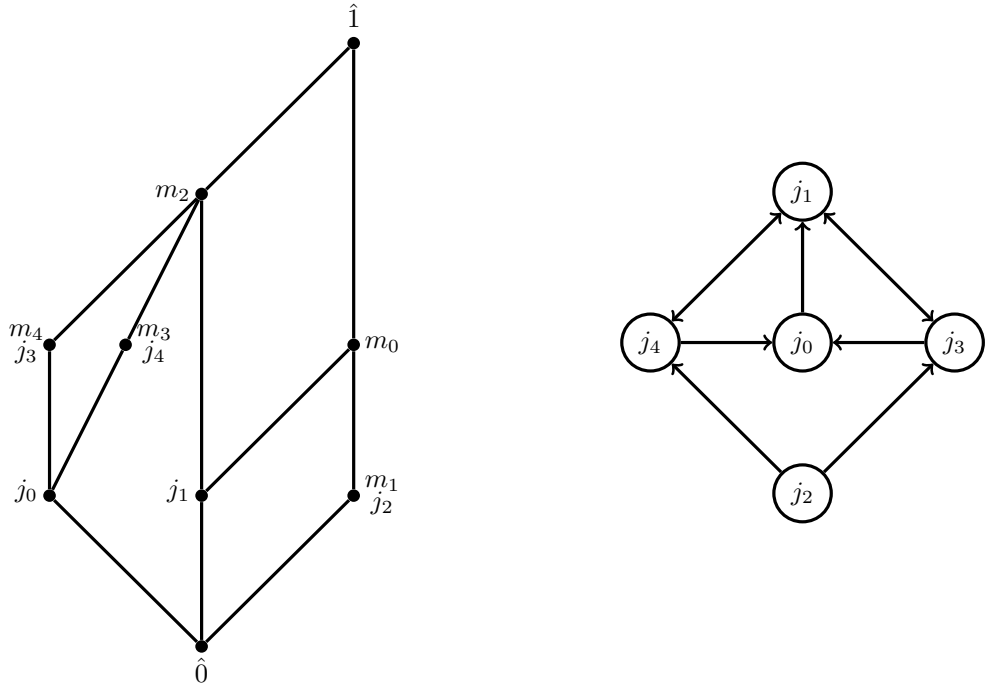
\begin{example}

\Cref{fig:reduced} illustrates a compatibly dismantlable lattice $L$ with dismantling pair $(j_0,m_0)$. The join-irreducible and meet-irreducible elements are named in such a way that $\kappa_L(j_i)=m_i$ for all $0\leq i\leq 4$. The join-irreducible elements $j\in \J_L$ satisfying $j_0\leq \kappa_L(j)$ are $j_2,j_3,j_4$. These elements correspond bijectively to the join-irreducible elements of $L^0$: we have $\alpha(j_2)=j_0\vee j_2=\hat 1$, $\alpha(j_3)=j_0\vee j_3=j_3$, and $\alpha(j_4)=j_0\vee j_4=j_4$. The meet-irreducible elements $m\in \M_L$ satisfying $\kappa_L^{-1}(m)\leq m_0$ are $m_1$ and $m_2$. These elements correspond bijectively to the meet-irreducible elements of $L_0$: we have $\beta(m_1)=m_0\wedge m_1=m_1$ and $\beta(m_2)=m_0\wedge m_2=j_1$. Notice how $\alpha$ and $\beta$ are compatible with the pairings $\kappa_L$, $\kappa_{[\hat 0, m_0]}$, and $\kappa_{[j_0,\hat 1]}$, as required by \Cref{def:semidistrim}. For example, $\kappa_{[j_0,\hat 1]}(\alpha(j_2))=\kappa_{[j_0,\hat 1]}(\hat 1)=m_2=\kappa_L(j_2)$. 
\end{example}

\begin{remark}
The lattice in \Cref{fig:reduced} has the undesirable property that the cover relations on its elements do not yield irredundant (as in~\Cref{sec:semidistributive}) join or meet representations: for example, $\hat{0}$ is covered by $j_0$, $j_1$, and $j_2$ so that $\hat{0} = m_0 \wedge m_1 \wedge m_2$---but it also has the meet representation $\hat{0} = m_1 \wedge m_2$.  

In~\Cref{sec:semidistrim}, we will define semidistrim lattices by imposing a mild additional condition on compatibly dismantlable lattices to eliminate such occurrences.  This extra condition is shared by both semidistributive and trim lattices, and it will have drastic consequences concerning intervals and compatibility conditions. It will turn out that every prime pair in a semidistrim lattice is a dismantling pair. More generally, we will find that every interval $[u,v]$ in a semidistrim lattice $L$ is semidistrim and that there is a compatibility condition for its join-irreducible and meet-irreducible elements generalizing the condition from \Cref{def:semidistrim} for the intervals $[j_0,\hat 1]$ or $[\hat 0,m_0]$. 
\end{remark}

Suppose $L$ is a compatibly dismantlable lattice with dismantling pair $(j_0,m_0)$. Let us write $L_0=[\hat 0,m_0]$ and $L^0=[j_0,\hat 1]$. The first compatibility condition in \Cref{def:semidistrim} allows us to use the bijection $\alpha$ to identify $\J_{L^0}$, which is the vertex set of the Galois graph $G_{L^0}$, with the subset $\{j\in\J_L:j_0\leq\kappa_L(j)\}$ of the vertex set of $G_L$. Furthermore, we can naturally view $\J_{L_0}$, the vertex set of $G_{L_0}$, as a subset of the vertex set of $G_L$ (since every join-irreducible element of $L_0$ is also join-irreducible in $L$).
The next proposition tells us that $G_{L^0}$ and $G_{L_0}$ can be identified with genuine induced subgraphs of $G_L$.

\begin{proposition}
\label{prop:galois}
Let $L$ be a compatibly dismantlable lattice with a dismantling pair $(j_0,m_0)$, and let $L_0=[\hat 0,m_0]$ and $L^0=[j_0,\hat 1]$.   
\begin{itemize}
\item Under the bijection $\alpha\colon\{j\in \J_L : j_0 \leq \kappa_L(j)\}\to\J_{L^0}$ from \Cref{def:semidistrim}, the Galois graph $G_{L^0}$ corresponds to the induced subgraph of $G_L$ on the vertex set $\J_L\setminus(\{j_0\}\cup\Out(j_0))$.
\item The Galois graph $G_{L_0}$ is the induced subgraph of $G_L$ on the vertex set $\J_L\setminus(\{j_0\}\cup\In(j_0))$. 
\end{itemize}
\end{proposition}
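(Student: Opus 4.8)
The plan is to verify, for each of the two intervals, that the claimed vertex set is correct and that the edges of the smaller Galois graph agree with the induced edges of $G_L$. Throughout I will use that $(j_0,m_0)$ is a prime pair, so $\kappa_L(j_0)=m_0$ by \Cref{prop:pairing} and $L=[\hat{0},m_0]\sqcup[j_0,\hat{1}]$; in particular $j_0\not\leq m_0$. I will also repeatedly use the general fact, noted after \Cref{def:uniquely_paired}, that $j\not\leq\kappa_L(j)$ for every $j\in\J_L$.

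First I would pin down the two vertex sets. Unwinding the definition of $\Out(j_0)$ gives $\{j\in\J_L: j_0\not\leq\kappa_L(j)\}=\{j_0\}\cup\Out(j_0)$, where $j_0$ itself lands in the left-hand set because $j_0\not\leq\kappa_L(j_0)=m_0$; complementing shows $\{j\in\J_L:j_0\leq\kappa_L(j)\}=\J_L\setminus(\{j_0\}\cup\Out(j_0))$, which is exactly the domain of $\alpha$. Dually, unwinding $\In(j_0)$ and using $\kappa_L(j_0)=m_0$ gives $\{j\in\J_L:j\not\leq m_0\}=\{j_0\}\cup\In(j_0)$, so $\J_L\setminus(\{j_0\}\cup\In(j_0))=\{j\in\J_L:j\leq m_0\}$. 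To match this with $\J_{L_0}$, I would observe that $L_0=[\hat{0},m_0]$ is a principal order ideal, so covers inside $L_0$ coincide with covers of $L$ between elements of $L_0$; hence the join-irreducibles of $L_0$ are precisely the join-irreducibles of $L$ lying weakly below $m_0$, i.e.\ $\J_{L_0}=\{j\in\J_L:j\leq m_0\}$. This is the identification implicit in the statement.

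Next I would compare edges. For $L^0$, take $j,j'$ in the domain of $\alpha$, so $j_0\leq\kappa_L(j')$. Since the order on $L^0$ is the restriction of the order on $L$ and $\kappa_{L^0}(\alpha(j'))=\kappa_L(j')$ by the compatibility condition, we have $\alpha(j)\leq\kappa_{L^0}(\alpha(j'))$ iff $j_0\vee j\leq\kappa_L(j')$ iff $j\leq\kappa_L(j')$, the last equivalence using $j_0\leq\kappa_L(j')$. As $\alpha$ is a bijection, $\alpha(j)\neq\alpha(j')$ iff $j\neq j'$; combining, $\alpha(j)\to\alpha(j')$ in $G_{L^0}$ iff $j\to j'$ in $G_L$. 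For $L_0$ the argument is parallel, but requires one extra step: from the compatibility condition for $\beta$ I would first extract the formula $\kappa_{L_0}(j')=m_0\wedge\kappa_L(j')$ for $j'\in\J_{L_0}$. Indeed, setting $m=\kappa_L(j')$, the hypothesis $j'\leq m_0$ places $m$ in the domain of $\beta$, and $\kappa_{L_0}^{-1}(\beta(m))=\kappa_L^{-1}(m)=j'$ forces $\kappa_{L_0}(j')=\beta(m)=m_0\wedge\kappa_L(j')$. Then for $j,j'\in\J_{L_0}$, using $j\leq m_0$ we get $j\leq\kappa_{L_0}(j')=m_0\wedge\kappa_L(j')$ iff $j\leq\kappa_L(j')$, and again $j\to j'$ in $G_{L_0}$ iff $j\to j'$ in $G_L$.

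The routine bookkeeping (the vertex-set complements, the bijectivity of $\alpha$) is immediate. The one place deserving care is the second bullet: its compatibility condition is phrased entirely in terms of meet-irreducibles and $\kappa_L^{-1}$, so the crux is translating it into the join-irreducible identity $\kappa_{L_0}(j')=m_0\wedge\kappa_L(j')$ before any edge comparison can be made. Once that formula is in hand, the edge check collapses to the same one-line computation as in the $L^0$ case. I would also take care over the order-ideal identification $\J_{L_0}=\{j\in\J_L:j\leq m_0\}$, since the statement silently relies on it when asserting that $G_{L_0}$ is \emph{literally} an induced subgraph of $G_L$ rather than merely isomorphic to one through a transport map.
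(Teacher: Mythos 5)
Your proof is correct and takes essentially the same route as the paper's: identify the two vertex sets directly from the definition of the edges of the Galois graph, then reduce each edge comparison to the compatibility formulas $\kappa_{L^0}(\alpha(j))=\kappa_L(j)$ and $\kappa_{L_0}(j)=m_0\wedge\kappa_L(j)$. Your extra steps---extracting the latter formula explicitly from the $\beta$-condition and justifying $\J_{L_0}=\{j\in\J_L:j\leq m_0\}$ via the order-ideal observation---merely spell out details the paper leaves implicit.
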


\begin{proof}
It is immediate from the definition of the edges in the Galois graph that $\{j\in\J_L:j_0\leq\kappa_L(j)\}=\J_L\setminus(\{j_0\}\cup\Out(j_0))$. Now suppose $j',j''\in\J_L$ are such that $j_0\leq\kappa_L(j'),\kappa_L(j'')$. Then we have $j'\to j''$ in $G_{L}$ if and only if $j'\not\leq\kappa_L(j'')$, and this occurs if and only if $j_0\vee j'\not\leq \kappa_L(j'')$. The first compatibility condition in \Cref{def:semidistrim} tells us that $\kappa_L(j'')=\kappa_{L^0}(j_0\vee j'')$, so it follows that $j'\to j''$ in $G_{L}$ if and only if $\alpha(j')\to\alpha(j'')$ in $G_{L^0}$. 

Now observe that $\J_{L_0}=\{j\in \J_L: j\leq m_0\}=\{j\in \J_L:j\leq\kappa_L(j_0)\}=\J_L\setminus(\{j_0\}\cup\In(j_0))$. Suppose $j',j''\in\J_{L_0}$. We have $j'\to j''$ in $G_L$ if and only if $j'\not\leq\kappa_L(j'')$, and this occurs if and only if $j'\not\leq m_0\wedge \kappa_L(j'')$. The second compatibility condition in \Cref{def:semidistrim} tells us that $\kappa_{L_0}(j'')=m_0\wedge \kappa_{L}(j'')$, so it follows that $j'\to j''$ in $G_{L}$ if and only if $j'\to j''$ in $G_{L_0}$. 
\end{proof}

\subsection{Semidistributive and trim lattices are compatibly dismantlable}\label{subsec:semidistributive_trim_compatibly}

\begin{proposition}
\label{thm:semidistributive_semidistrim}
Semidistributive lattices are compatibly dismantlable.
\end{proposition}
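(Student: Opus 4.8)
The plan is to induct on $|L|$. The base case $|L|=1$ is immediate from the first clause of \Cref{def:semidistrim}. For the inductive step, I first record that every interval of a semidistributive lattice is again semidistributive: an interval is a sublattice, and the join- and meet-semidistributive implications, being phrased purely in terms of $\vee$ and $\wedge$, are inherited by sublattices. Hence by \Cref{prop:semidistributive_uniquely_paired} every interval is uniquely paired, and any proper interval is compatibly dismantlable by the inductive hypothesis.

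To produce a dismantling pair, I choose an atom $j_0$ of $L$ (which exists since $|L|>1$). Since every atom of a semidistributive lattice is join-prime, \Cref{prop:join_decomp} supplies a meet-prime $m_0$ with $L=[\hat 0,m_0]\sqcup[j_0,\hat 1]$, so $(j_0,m_0)$ is a prime pair and $\kappa_L(j_0)=m_0$ by \Cref{prop:pairing}. Write $L_0=[\hat 0,m_0]$ and $L^0=[j_0,\hat 1]$; both are proper, hence compatibly dismantlable by induction. It remains to verify the two compatibility conditions of \Cref{def:semidistrim}. These conditions are dual to one another: passing to the dual lattice $L^*$ (also semidistributive, with $\kappa_{L^*}=\kappa_L^{-1}$) turns the prime pair $(j_0,m_0)$ into the prime pair $(m_0,j_0)$ and interchanges the two conditions. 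I will therefore only establish the first condition, concerning $L^0$ and the map $\alpha(j)=j_0\vee j$; notably the argument below uses only that $(j_0,m_0)$ is a prime pair, not that $j_0$ is an atom, so it applies verbatim to $L^*$.

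Set $S=\{j\in\J_L:j_0\leq\kappa_L(j)\}$. The easy half is a cardinality count. Because every upper cover in $L$ of an element $m\geq j_0$ automatically lies in $L^0$, the meet-irreducibles of $L^0$ are exactly the meet-irreducibles of $L$ lying in $L^0$, i.e. $\M_{L^0}=\{m\in\M_L:m\geq j_0\}$; since $\kappa_L$ carries $S$ bijectively onto this set and $\kappa_{L^0}$ is a bijection, we get $|S|=|\M_{L^0}|=|\J_{L^0}|$. Thus it suffices to show that $\alpha$ maps $S$ into $\J_{L^0}$ and satisfies $\kappa_{L^0}(\alpha(j))=\kappa_L(j)$; injectivity then follows formally from this pairing identity together with injectivity of $\kappa_L$, and bijectivity from the count.

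The heart of the proof---and the step I expect to be the main obstacle---is showing, for $j\in S$ with $m=\kappa_L(j)$ and $y=j_0\vee j$, that $y$ is join-irreducible in $L^0$ with unique lower cover $y\wedge m$. First, $j\not\leq j_0$ (otherwise $j\leq\kappa_L(j)$, which is impossible), so $y>j_0$. Using that $m=\kappa_L(j)$ gives $m\geq j_*$, $m^*=m\vee j$, $m\wedge j=j_*$, and $m\not\geq y$, I plan to analyze an arbitrary lower cover $w$ of $y$ in $L^0$ as follows: $w\vee j=y$ and $j\not\leq w$, so $w\wedge j\leq j_*\leq m$; since $m\leq w\vee m\leq m^*$ and $m\lessdot m^*$, feeding $w\vee m\in\{m,m^*\}$ into join-semidistributivity rules out $w\vee m=m^*$ and forces $w\leq m$; a second cover argument (if $j_*\not\leq w$ then $w\vee j_*=y\leq m$, a contradiction) then forces $j_*\leq w$, so every lower cover lies in $[j_0\vee j_*,\,y\wedge m]$. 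As $y\wedge m$ is itself a proper element of $[j_0\vee j_*,y)$ dominating every such $w$, it is the unique lower cover, giving $y\in\J_{L^0}$. Finally, the triple $m\geq y\wedge m$, $m^*\geq y$, $m\not\geq y$ verifies the hypotheses of (the $L^0$-version of) \Cref{lem:paired_check}, exhibiting $m$ as an element of $\M_{L^0}(y)$; as $L^0$ is meet-semidistributive this set is the singleton $\{\kappa_{L^0}(y)\}$, whence $\kappa_{L^0}(\alpha(j))=m=\kappa_L(j)$. Combining this with the count completes the first compatibility condition, and duality completes the second.
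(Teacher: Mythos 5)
Your proof is correct, and while its skeleton (atom $j_0$, join-primality via \cite{gaskill1981join}, the prime pair $(j_0,m_0)$ with $m_0=\kappa_L(j_0)$, induction on intervals) matches the paper, the verification of the compatibility condition takes a genuinely different route. The paper works entirely on the join side: it uses join-semidistributivity to realize $j$ as the unique minimal element of $\{z\in L:m^*=m\vee z\}$, shows $j_0\vee j$ is the unique minimal element of the corresponding set in $L^0$ (hence join-irreducible by \Cref{lem:contained_in_J}), and then proves injectivity and surjectivity of $\alpha$ by separate order-theoretic arguments, with surjectivity handled constructively by starting from $j'\in\J_{L^0}$ and pulling back through $\kappa_L^{-1}(\kappa_{L^0}(j'))$. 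You instead analyze the lower covers of $y=j_0\vee j$ in $L^0$ directly, invoking join-semidistributivity once (to rule out $w\vee m=m^*$) to force every lower cover below $y\wedge m$, which pins down $y\wedge m$ as the unique lower cover; you then identify $\kappa_{L^0}(y)=m$ via the meet-side singleton $\M_{L^0}(y)$, and you replace the paper's surjectivity argument entirely by the count $|S|=|\M_{L^0}|=|\J_{L^0}|$ (using that $\M_{L^0}=\M_L\cap[j_0,\hat 1]$ because $[j_0,\hat 1]$ is an up-set), with injectivity falling out formally from the pairing identity. Your counting shortcut is slicker but less constructive, and your cover analysis buys the explicit extra fact that the unique lower cover of $\alpha(j)$ in $L^0$ is $\alpha(j)\wedge\kappa_L(j)$; you also make explicit the duality handling of the second compatibility condition (noting, correctly, that only prime-pair-ness and not atomicity of $j_0$ is used), which the paper leaves implicit. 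Two cosmetic points: your step forcing $j_*\leq w$ is redundant, since $w\leq m$ and $w<y$ already give $w\leq y\wedge m<y$ and hence $w=y\wedge m$; and your appeal to ``the $L^0$-version of \Cref{lem:paired_check}'' really uses the implication established inside that lemma's proof (that the three conditions place $m$ in $\M_{L^0}(y)$, given $m\in\M_{L^0}$) rather than the lemma's statement about bijections, so it would be cleaner to cite or rederive that two-line argument directly. Neither affects correctness.
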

\begin{proof}
Let $L$ be a semidistributive lattice of cardinality at least $2$. We know by \Cref{prop:semidistributive_uniquely_paired} that $L$ is uniquely paired; let $\kappa_L\colon\J_L\to\M_L$ be its unique pairing. Let $j_0$ be an atom of $L$. By~\cite[Lemma 1]{gaskill1981join}, $j_0$ is necessarily join-prime.  Let $L_0=[\hat 0,m_0]$ and $L^0=[j_0,\hat 1]$, where $m_0=\kappa(j_0)$ so that $(j_0,m_0)$ is a prime pair. Since intervals of semidistributive lattices are semidistributive, it follows by induction on the size of $L$ that $L_0$ and $L^0$ are both compatibly dismantlable. Suppose $j\in\J_L$ is such that $j_0\leq \kappa_L(j)$, and let $m=\kappa_L(j)$. We want to show that $j_0\vee j$ is join-irreducible in $L^0$.  Because $L$ is semidistributive, $j$ is the unique element of the singleton set $\J_L(m)=\min\{z\in L: m^*=m\vee z\}$. Since $L^0$ is semidistributive, the set $\{z\in L^0 : m^*=m \vee z \}$ has a unique minimal element. Certainly $j_0 \vee j$ is in this set since $m \vee (j_0 \vee j) = m \vee j = m^*$.  Suppose there is some $x\in L^0$ with $x< j\vee j_0$ and $m\vee x=m^*$. We have $m\vee x=j\vee x=m^*$, so it follows from the fact that $L$ is join-semidistributive that $m\vee (x\wedge j)= m^*$. Now $j\in\J_L(m)$, so we must have $x \wedge j = j$, meaning $x\geq j$.
But $x\geq j_0$, so this contradicts the assumption that $x<j\vee j_0$. This shows that no such $x$ exists, so $j_0\vee j$ is the unique minimal element of $\{z\in L^0 : m^*=m \vee z \}$. In other words, $j_0\vee j\in\J_{L^0}(m)$. By \Cref{lem:contained_in_J}, $j_0\vee j\in\J_{L^0}$. 

We now have a map $\alpha\colon\{j\in\J_L:j_0\leq \kappa_L(j)\}\to\J_{L^0}$ given by $\alpha(j)=j_0\vee j$. Let us show that $\alpha$ is injective. Suppose $j,j'\in \J_L$ are such that $j_0\leq\kappa_L(j),\kappa_L(j')$ and $j_0\vee j=j_0\vee j'$. Let $m=\kappa_L(j)$ and $m'=\kappa_L(j')$. Then $(j_0\vee j)\vee m=j\vee (j_0\vee m)=j\vee m=m^*$, so $j'\vee m=j'\vee (j_0\vee m)=(j_0\vee j')\vee m=(j_0\vee j)\vee m=m^*$. Since $j$ is the unique minimal element of $\{z\in L:m^*=m\vee z\}$, we must have $j'\geq j$.  Reversing the roles of $j$ and $j'$ shows that $j\geq j'$. Hence, $j=j'$.

We now show that $\alpha$ is surjective and that $\kappa_{L^0}(\alpha(j))=\kappa_L(j)$ for every $j\in\J_L$ with $j_0\leq\kappa_L(j)$. Suppose $j'\in\J_{L^0}$. Let $m'=\kappa_{L^0}(j')$. Then $m'$ is meet-irreducible in $L^0$, so it is meet-irreducible in $L$. Let $j=\kappa_L^{-1}(m')$. We aim to show that $j_0\vee j=j'$. Since $m'\vee j'=(m')^*$ and $j$ is the unique minimal element of $\{z\in L:(m')^*=m'\vee z\}$, we have $j'\geq j$. Thus, $j'\geq j_0\vee j$. On the other hand, we have $(j_0\vee j)\vee m'=(m')^*$. Since $j'$ is the unique minimal element of the set $\{z\in L^0:(m')^*=m'\vee z\}$, we must have $j'\leq j_0\vee j$. This proves that $j'=j_0\vee j$. Moreover, $\kappa_L(j)=m'=\kappa_{L^0}(j')=\kappa_{L^0}(\alpha(j))$.  
\end{proof}

\begin{proposition}
\label{thm:trim_semidistrim}
Trim lattices are compatibly dismantlable.
\end{proposition}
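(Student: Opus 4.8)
The plan is to adapt the proof of \Cref{thm:semidistributive_semidistrim} almost verbatim, replacing each appeal to join- or meet-semidistributivity by an appeal to left modularity. I argue by induction on $|L|$, the case $|L|=1$ being trivial. For the inductive step, let $L$ be trim with $|L|\geq 2$. Then $L$ is uniquely paired, and $L$ has a join-prime atom, which may moreover be chosen to lie on a left-modular maximal chain and hence to be left modular \cite{thomas2006analogue,thomas2019rowmotion}; call it $j_0$. Setting $m_0=\kappa_L(j_0)$, \Cref{prop:join_decomp} and \Cref{prop:pairing} show that $(j_0,m_0)$ is a prime pair with $L=[\hat 0,m_0]\sqcup[j_0,\hat 1]$; write $L_0=[\hat 0,m_0]$ and $L^0=[j_0,\hat 1]$. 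Since intervals of trim lattices are trim \cite{thomas2006analogue,thomas2019rowmotion}, both $L_0$ and $L^0$ are trim, hence compatibly dismantlable by induction; in particular each is uniquely paired.

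The backbone of the verification of the first compatibility condition of \Cref{def:semidistrim} rests on two observations that use only the decomposition $L=L_0\sqcup L^0$, not left modularity. First, $\M_{L^0}=\{m\in\M_L:m\geq j_0\}$: if $m\in\M_L$ and $m\geq j_0$, then $m$ is covered in $L^0$ only by its unique $L$-cover $m^*$ (which lies in $L^0$), while conversely any meet-irreducible of $L^0$ has a single cover in $L$ because every $L$-element strictly above it already dominates that cover. Since $L^0$ is extremal, this gives $|\J_{L^0}|=|\M_{L^0}|=|\{m\in\M_L:m\geq j_0\}|=|\{j\in\J_L:j_0\leq\kappa_L(j)\}|$, the final equality via the bijection $\kappa_L$. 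Consequently it suffices to prove that $\alpha(j)=j_0\vee j$ lands in $\J_{L^0}$ and is injective, for then $\alpha$ is a bijection by counting; the compatibility $\kappa_{L^0}(\alpha(j))=\kappa_L(j)$ then follows by checking that $j'\mapsto\kappa_L(\alpha^{-1}(j'))$ satisfies the three conditions of \Cref{lem:paired_check} in $L^0$ and invoking uniqueness of the pairing (\Cref{prop:extremal_paired}). Two of those conditions are immediate: writing $m=\kappa_L(j)\geq j_0$, the relations $m\not\geq j$ and $m^*\geq j$ in $L$ yield $m\not\geq j_0\vee j$ and $m^*\geq j_0\vee j$, and $m^*$ is simultaneously the cover of $m$ in $L^0$.

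This leaves the genuinely trim-specific input: that $j_0\vee j\in\J_{L^0}$, that $\alpha$ is injective, and that $m$ dominates the lower cover of $j_0\vee j$ in $L^0$. In the semidistributive case these facts came from the set $\{z:m\vee z=m^*\}$ having a unique minimal element. Here I would instead use the left modularity of $j_0$ to control the map $x\mapsto j_0\vee x$: because $j_0$ is a left-modular atom, the identity $(y\vee j_0)\wedge z=y\vee(j_0\wedge z)$ constrains how this map acts on the cover $j_*\lessdot j$ and on the fibers of $\alpha$. The main obstacle is precisely this bookkeeping, and it is more delicate than it first appears---the naive guess that $j_0\vee j$ covers $j_0\vee j_*$ in $L^0$ is false in general (already for the pentagon $N_5$), so the lower cover of $j_0\vee j$ must be identified indirectly.

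Finally, the second compatibility condition (the map $\beta$ on the lower interval $L_0$) is the order-dual statement, and the dual of the two clean observations again reduces it to a well-definedness, an injectivity, and a single \Cref{lem:paired_check} check, now for $m\mapsto m_0\wedge m$ into $\M_{L_0}$. Here one must be careful: a prime pair of a trim lattice cannot in general be chosen left-modular on both ends (in $N_5$ no prime pair has both $j_0$ and $m_0=\kappa_L(j_0)$ left modular), so the dual argument cannot simply invoke left modularity of $m_0$; it must instead be driven by the left modularity of $j_0$ together with the prime-pair decomposition $L=L_0\sqcup L^0$. Managing this asymmetry between $\alpha$ and $\beta$ is the subtlest point of the proof.
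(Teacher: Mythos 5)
Your skeleton is sound and in fact mirrors the paper's: induct on $|L|$, take a join-prime atom $j_0$ (the paper takes a sink of the acyclic Galois graph, which is such an atom), form the prime pair $(j_0,m_0)$ with $m_0=\kappa_L(j_0)$, and use the fact that intervals of trim lattices are trim to get the inductive hypothesis on $L_0=[\hat 0,m_0]$ and $L^0=[j_0,\hat 1]$. Your reductions are also correct and genuinely useful: the identification $\M_{L^0}=\{m\in\M_L:m\geq j_0\}$, the cardinality count via extremality of $L^0$ that reduces bijectivity of $\alpha$ to well-definedness plus injectivity, and the device of checking that $j'\mapsto\kappa_L(\alpha^{-1}(j'))$ satisfies \Cref{lem:paired_check} in $L^0$ and then invoking unique pairedness (\Cref{prop:extremal_paired}) to conclude $\kappa_{L^0}(\alpha(j))=\kappa_L(j)$; and indeed two of the three conditions of \Cref{lem:paired_check} fall out immediately, as you say.

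But the proposal has a genuine gap, and you name it yourself: you never actually prove that $j_0\vee j\in\J_{L^0}$, that $\alpha$ is injective, or that $\kappa_L(j)$ lies above the lower cover of $j_0\vee j$ in $L^0$, and the entire dual verification for $\beta$ is likewise deferred. The phrases ``I would instead use the left modularity of $j_0$,'' ``more delicate than it first appears,'' and ``must be identified indirectly'' mark exactly the places where a proof is required but only a plan is offered; your own (correct) observation that in $N_5$, with the prime pair at the left-modular atom, $j_0\vee j$ need not cover $j_0\vee j_*$ in $L^0$ shows that this is not routine bookkeeping but the actual mathematical content of the statement. These missing steps are precisely what the paper disposes of by citation: its proof invokes the proof of \cite[Proposition 3.13]{thomas2019rowmotion} (following that of \cite[Theorem 1]{thomas2006analogue}), which is where the compatibility of join- and meet-irreducibles with the decomposition $L=[\hat 0,m_0]\sqcup[j_0,\hat 1]$ of a trim lattice is established. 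As written, your argument is a correct reduction wrapped around an unproven core; to complete it you must either carry out the left-modularity argument in full (including identifying the lower cover of $j_0\vee j$ in $L^0$ and handling the asymmetry on the $\beta$ side, where, as you note, $m_0$ need not be left modular) or cite the trim-lattice literature for these facts, as the paper does.
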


\begin{proof}
By~\Cref{prop:extremal_paired}, trim lattices are uniquely paired.  \cite[Theorem 2.4]{thomas2019rowmotion} states that the Galois graph $G_L$ of a trim lattice is acyclic. Let $j_0$ be a sink of $G_L$. Then~\cite[Theorem 15]{markowsky92primes} tells us that $j_0$ is a join-prime atom of $L$; let $m_0$ be the corresponding meet-prime element so that $(j_0,m_0)$ is a prime pair. The proof of~\cite[Proposition 3.13]{thomas2019rowmotion} (following the proof of ~\cite[Theorem 1]{thomas2006analogue}) shows that the compatibility conditions in~\Cref{def:semidistrim} hold for this choice of prime pair. 
\end{proof}

\begin{remark}
\label{rem:recurrence}
Depending on the lattice $L$, the inductive dismantling $L=[\hat{0},m_0] \sqcup [j_0,\hat{1}]$ has been called various names:
\begin{itemize}
    \item if $L$ is the weak order of a finite Coxeter group $W$ and $j_0$ is an atom (a simple reflection), then $L$ is semidistributive and $[\hat{0},m_0]$ is a \emph{maximal parabolic quotient};
    \item if $L$ is a $c$-Cambrian lattice for a finite Coxeter group $W$ with Coxeter element $c$, then $L$ is both trim and semidistributive.  If $s$ is a simple reflection that is initial in $c$, then for $j_0=s$, the interval $[\hat{0},m_0]$ is the Cambrian lattice for a parabolic subgroup of $W$, and we obtain the \emph{Cambrian recurrence}~\cite{reading2007sortable};
    \item this recurrence was used in~\cite{thomas2019rowmotion} with $j_0$ taken to be a join-prime atom (i.e., a sink of the Galois graph) to prove many structural properties for trim lattices. \qedhere 
\end{itemize}  
\end{remark}

\begin{remark}
Although intervals of semidistributive lattices are again semidistributive and intervals of trim lattices are again trim, the same does not hold for compatibly dismantlable lattices.  An example of this failure is given in~\Cref{fig:not_intervals}.  We will address this deficiency in~\Cref{sec:semidistrim} by imposing one final condition on compatibly dismantlable lattices to form our titular \emph{semidistrim lattices}. The fact that the family of semidistrim lattices is closed under taking intervals will have several notable consequences. 
\end{remark}

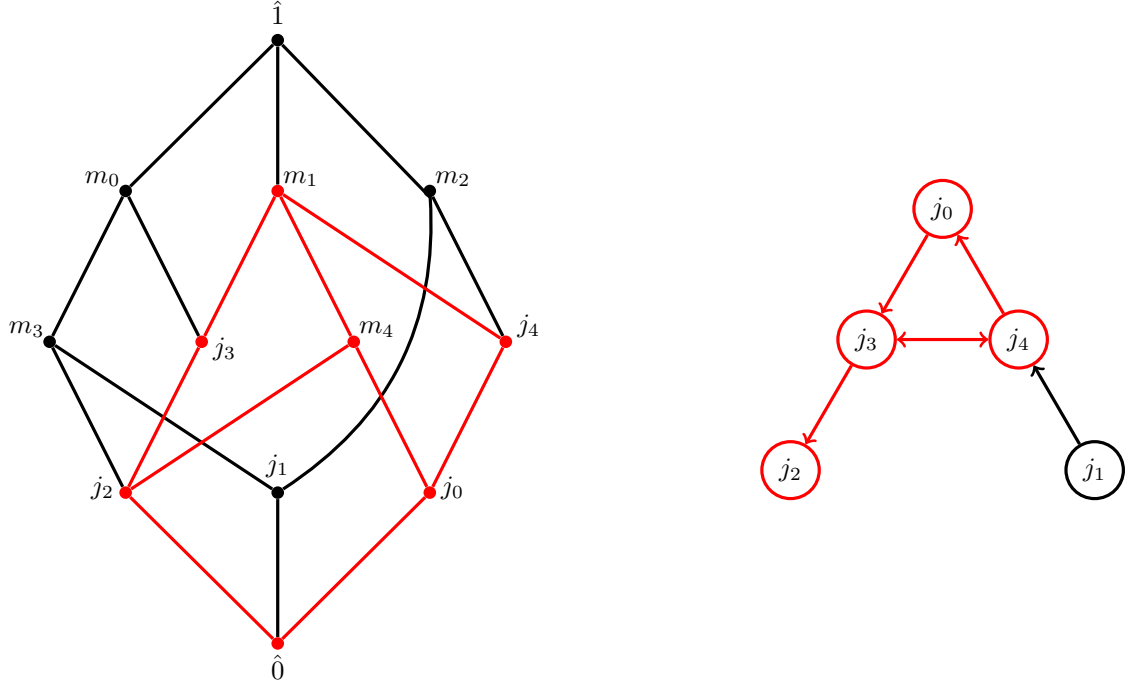
\begin{figure}[htbp]
\begin{center}
\raisebox{-.5\height}{\scalebox{1}{\begin{tikzpicture}[scale=2]
\node[shape=circle,fill=red, scale=0.5,label={[xshift=0ex, yshift=-4.5ex]$\hat{0}$}] (0) at (0,0) {};
\node[shape=circle,fill=red, scale=0.5,label={[xshift=-2ex, yshift=-2ex]$j_2$}] (1) at (-1,1) {};
\node[shape=circle,fill=black, scale=0.5,label={[xshift=0ex, yshift=0ex]$j_1$}] (2) at (0,1) {};
\node[shape=circle,fill=red, scale=0.5,label={[xshift=2ex, yshift=-2ex]$j_0$}] (3) at (1,1) {};
\node[shape=circle,fill=black, scale=0.5,label={[xshift=-2ex, yshift=-1ex]$m_3$}] (4) at (-1.5,2) {};
\node[shape=circle,fill=red, scale=0.5,label={[xshift=2ex, yshift=-3ex]$j_3$}] (5) at (-.5,2) {};
\node[shape=circle,fill=red, scale=0.5,label={[xshift=2ex, yshift=-1ex]$m_4$}] (6) at (.5,2) {};
\node[shape=circle,fill=red, scale=0.5,label={[xshift=2ex, yshift=-1ex]$j_4$}] (7) at (1.5,2) {};
\node[shape=circle,fill=black, scale=0.5,label={[xshift=-2ex, yshift=-1ex]$m_0$}] (8) at (-1,3) {};
\node[shape=circle,fill=red, scale=0.5,label={[xshift=2ex, yshift=-1ex]$m_1$}] (9) at (0,3) {};
\node[shape=circle,fill=black, scale=0.5,label={[xshift=2ex, yshift=-1ex]$m_2$}] (10) at (1,3) {};
\node[shape=circle,fill=black, scale=0.5,label={[xshift=0ex, yshift=0ex]$\hat{1}$}] (11) at (0,4) {};
\draw[very thick] (0) to (2) to (4) to (8) to (11) to (9);
\draw[very thick] (1) to (4);
\draw[very thick] (7) to (10);
\draw[very thick] (5) to (8);
\draw[very thick] (2) to [bend right] (10) to (11);
\draw[very thick,red] (0) to (1) to (5) to (9) to (6) to (1);
\draw[very thick,red] (0) to (3) to (6);
\draw[very thick,red] (3) to (7) to (9);
\end{tikzpicture}}}
\hspace{1in}
\raisebox{-.5\height}{\scalebox{1}{\begin{tikzpicture}[scale=2]
\node[shape=circle,very thick,fill=white,draw=red] (0) at (.5,-.866) {$j_2$};
\node[shape=circle,very thick,fill=white,draw=red] (1) at (1.5,.866) {$j_0$};
\node[shape=circle,very thick,fill=white,draw=black] (2) at (2.5,-.866) {$j_1$};
\node[shape=circle,very thick,fill=white,draw=red] (3) at (1,0) {$j_3$};
\node[shape=circle,very thick,fill=white,draw=red] (4) at (2,0) {$j_4$};
\draw[very thick,->,red] (3) to (0);
\draw[very thick,->,red] (1) to (3);
\draw[very thick,->,red] (4) to (1);
\draw[very thick,->] (2) to (4);
\draw[very thick,<->,red] (3) to (4);
\end{tikzpicture}}}
\end{center}

\caption{{\it Left:} A compatibly dismantlable lattice $L$ with a lower interval (in red) that is not compatibly dismantlable.    {\it Right:} The corresponding Galois graph $G_L$ with the Galois graph of the interval indicated as a subgraph (in red).}
\label{fig:not_intervals}
\end{figure}

\section{Overlapping Lattices}
\label{sec:independent}

Throughout this section, we assume $L$ is uniquely paired with unique pairing $\kappa_L\colon\J_L\to\M_L$.  Recall that for  $x\in L$, we write $J_L(x)=\{j\in\J_L:j\leq x\}$ and $M_L(x)=\{j\in\J_L:\kappa_L(j)\geq x\}$. Note that $M_L(x)\cap J_L(x)=\emptyset$ because if there were some $j\in M_L(x)\cap J_L(x)$, then we would have $j\leq \kappa_L(j)$, which is impossible.

\begin{definition}\label{def:overlapping}
A uniquely paired lattice $L$ is \defn{overlapping} if for every cover $x \lessdot y$ in $L$, $M_L(x) \cap J_L(y)$ contains a single element, which we denote $j_{xy}$.  If $L$ is overlapping and $x\in L$, then we define the \defn{downward label set} $\D_L(x) = \{j_{yx} : y \lessdot x\}$ and the \defn{upward label set} $\U_L(x)=\{j_{xy} : x \lessdot y\}$.
\end{definition}

By~\cite[Lemma 4.4]{reading2019fundamental} and~\cite[Theorem 3.4]{thomas2019rowmotion}, both semidistributive and trim lattices are overlapping. In fact, it was shown in~\cite{thomas2019rowmotion} that an extremal lattice is trim if and only if it is overlapping; in particular, extremal semidistributive lattices are trim.  \Cref{fig:not_defining} gives an example of an overlapping uniquely paired lattice that is not compatibly dismantlable.

\begin{lemma}
\label{prop:covers}
Let $L$ be an overlapping lattice with unique pairing $\kappa_L$.  If a cover relation $x \lessdot y$ is labeled by $j_{xy}$, then $x \vee j_{xy} = y$ and $y \wedge \kappa_L(j_{xy}) = x$.
\end{lemma}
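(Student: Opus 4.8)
The plan is to set $j = j_{xy}$ and $m = \kappa_L(j)$, and to unpack the definition of the label $j_{xy}$ as the unique element of $M_L(x) \cap J_L(y)$. This membership records exactly two facts: $j \leq y$ (from $j \in J_L(y)$) and $m \geq x$ (from $j \in M_L(x)$). The entire argument then hinges on the elementary observation made just after~\Cref{def:uniquely_paired}, namely that $j \not\leq \kappa_L(j) = m$ for every $j \in \J_L$.

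To prove $x \vee j = y$, I would note that $x \leq y$ and $j \leq y$ give $x \leq x \vee j \leq y$. Since $x \lessdot y$ is a cover relation, the interval $[x,y]$ consists only of $x$ and $y$, so $x \vee j \in \{x,y\}$. If we had $x \vee j = x$, then $j \leq x \leq m$, contradicting $j \not\leq m$; hence $x \vee j = y$. The meet statement is dual: from $x \leq y$ and $x \leq m$ we get $x \leq y \wedge m \leq y$, so $y \wedge m \in \{x,y\}$ by the same cover relation. If $y \wedge m = y$, then $j \leq y \leq m$, again contradicting $j \not\leq m$; hence $y \wedge m = x$.

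There is no genuine obstacle in this lemma: both halves are driven by the single inequality $j \not\leq \kappa_L(j)$, which rules out the degenerate alternative ($x \vee j = x$ in one case, $y \wedge m = y$ in the other) in an entirely symmetric fashion. The only point requiring care is that a cover relation collapses $[x,y]$ to a two-element set, which is precisely what forces the join and the meet to land on one of the two endpoints $x$ or $y$.
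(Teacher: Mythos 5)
Your proof is correct and takes essentially the same route as the paper: the paper deduces $x \vee j_{xy} = y$ from the disjointness $M_L(x) \cap J_L(x) = \emptyset$, which is exactly your inequality $j_{xy} \not\leq \kappa_L(j_{xy})$ in disguise, and then handles the meet statement by the dual argument. Your version merely makes explicit the cover-relation step (that $x \vee j_{xy}$ and $y \wedge \kappa_L(j_{xy})$ must land in the two-element interval $[x,y]$), which the paper leaves implicit.
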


\begin{proof}
Since $j_{xy}$ is in $M_L(x)$, it is not in $J_L(x)$. But $j_{xy}\in J_L(y)$, so $x\vee j_{xy}=y$. A similar argument establishes that $y \wedge \kappa_L(j_{xy}) = x$.
\end{proof}

We now show that every compatibly dismantlable lattice is overlapping. This will provide the edge-labeling needed for the definition of a semidistrim lattice; it will also be crucial for defining rowmotion on semidistrim lattices. 

\begin{proposition}
Compatibly dismantlable lattices are overlapping.
\end{proposition}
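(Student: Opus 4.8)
The plan is to prove that a compatibly dismantlable lattice $L$ is overlapping by induction on $|L|$, using the recursive structure provided by a dismantling pair $(j_0,m_0)$. The base case $|L|=1$ is vacuous since there are no cover relations. For the inductive step, I fix a dismantling pair $(j_0,m_0)$ and decompose $L = L_0 \sqcup L^0$ where $L_0 = [\hat 0, m_0]$ and $L^0 = [j_0, \hat 1]$; both are compatibly dismantlable, hence overlapping by the inductive hypothesis. The goal is to show that for every cover $x \lessdot y$ in $L$, the set $M_L(x) \cap J_L(y)$ is a singleton. I would partition the cover relations of $L$ into three types according to how they interact with the decomposition: (i) covers lying entirely in $L_0$, (ii) covers lying entirely in $L^0$, and (iii) covers that ``cross'' between $L_0$ and $L^0$ (i.e., $x \in L_0$ and $y \in L^0$, since $L_0$ is a down-set and $L^0$ is an up-set).

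For covers of type (i) and (ii), the strategy is to transfer the singleton condition from the sublattice to $L$ using \Cref{prop:galois} together with the compatibility conditions on $\kappa$ from \Cref{def:semidistrim}. For a cover $x \lessdot y$ in $L_0$, I would show that $M_L(x) \cap J_L(y)$ coincides with $M_{L_0}(x) \cap J_{L_0}(y)$. The containment $J_{L_0}(y) = J_L(y)$ holds because every join-irreducible of $L_0$ is join-irreducible in $L$ and every $j \in \J_L$ with $j \leq y \leq m_0$ lies below $m_0$, hence in $L_0$. For the $M$ side, the second compatibility condition, which gives $\kappa_{L_0}(j) = m_0 \wedge \kappa_L(j)$ for the relevant $j$, ensures that the condition $\kappa_{L_0}(j) \geq x$ in $L_0$ matches $\kappa_L(j) \geq x$ in $L$; one must check the join-irreducibles $j$ below $m_0$ that fail $\kappa_L^{-1}(m) \leq m_0$ cannot newly enter $M_L(x) \cap J_L(y)$, which should follow from \Cref{prop:galois} since these are exactly the vertices in $\In(j_0)$ removed when passing to $G_{L_0}$. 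The argument for type (ii) covers in $L^0$ is dual, using the bijection $\alpha$ and the first compatibility condition.

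The main obstacle will be the crossing covers of type (iii), where $x \in L_0$, $y \in L^0$, and $x \lessdot y$. Here the natural candidate for the label is $j_0$ itself: since $(j_0, m_0)$ is a prime pair with $L = [\hat 0, m_0] \sqcup [j_0, \hat 1]$, the element $j_0$ is join-prime and $m_0 = \kappa_L(j_0)$ is meet-prime, and I would expect $j_0 \in M_L(x) \cap J_L(y)$ because $x \leq m_0 = \kappa_L(j_0)$ gives $j_0 \in M_L(x)$ while $j_0 \leq y$ (as $y \in L^0$) gives $j_0 \in J_L(y)$. The delicate part is uniqueness: I must rule out any other $j \in \J_L$ lying below $y$ with $\kappa_L(j) \geq x$. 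Suppose such a $j \neq j_0$ exists. If $j \leq m_0$ then $j \in L_0$, and combining $j \leq y$ with $x \lessdot y$ and $x \in L_0$ should force $j \leq x$ (using that $x = y \wedge m_0$ for a crossing cover), contradicting $\kappa_L(j) \geq x$ via the incompatibility $j \not\leq \kappa_L(j)$. If instead $j \not\leq m_0$, then $j \in L^0$, and the join-primeness of $j_0$ together with the edge structure of $G_L$ described in \Cref{prop:galois} should constrain $\kappa_L(j)$ enough to contradict $\kappa_L(j) \geq x$. Verifying that every crossing cover satisfies $x = y \wedge m_0$ and $y = x \vee j_0$ is the key geometric input, and establishing uniqueness cleanly is where the bulk of the careful case analysis will lie.
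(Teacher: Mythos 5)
Your proposal is correct and follows essentially the same route as the paper's proof: the same induction using the dismantling pair, the same three-way split into covers in $L_0$, covers in $L^0$, and crossing covers, the same transfer arguments via the compatibility conditions of \Cref{def:semidistrim}, and the same identification of $j_0$ as the unique label of a crossing cover. The one step you leave vague---ruling out $j \neq j_0$ with $j \not\leq m_0$---closes exactly as you anticipate via join-primeness: $j \not\leq m_0 = \kappa_L(j_0)$ gives the edge $j \to j_0$, while $y \wedge \kappa_L(j) = x \not\geq j_0$ forces $j_0 \not\leq \kappa_L(j)$ and hence the edge $j_0 \to j$, contradicting \Cref{prop:no_double_edges}.
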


\begin{proof}
Suppose $L$ is compatibly dismantlable, and let $(j_0,m_0)$ be a dismantling pair for $L$. Write $L_0=[\hat 0,m_0]$ and $L^0=[j_0,\hat 1]$. Then $L_0$ and $L^0$ are compatibly dismantlable, and $L=L_0\sqcup L^0$. By induction on the size of $L$, we know that $L_0$ and $L^0$ are overlapping. Let $x \lessdot y$ be a cover relation in $L$. 

First, suppose $x,y\in L_0$. Then $J_{L_0}(y)=J_L(y)$, and we can use the second compatibility condition in \Cref{def:semidistrim} to see that $\kappa_{L_0}(j)=m_0\wedge \kappa_L(j)$ for every $j\in J_{L_0}(y)$. It follows that $M_L(x)\cap J_L(y)=M_{L_0}(x)\cap J_{L_0}(y)$, and this set is a singleton because $L_0$ is overlapping. 

Now suppose $x,y\in L^0$. An argument similar to the one used in the previous paragraph allows us to see that the map $\alpha$ from \Cref{def:semidistrim} yields a bijection from $M_L(x)\cap J_L(y)$ to $M_{L^0}(x)\cap J_{L^0}(y)$. As $L^0$ is overlapping, we have $|M_L(x)\cap J_L(y)|=|M_{L^0}(x)\cap J_{L^0}(y)|=1$.

We may now assume that $x \in L_0$ and $y \in L^0$. Since $\kappa_L(j_0)=m_0$ by \Cref{prop:pairing}, we have that $j_0 \in M_L(x) \cap J_L(y)$. Suppose by way of contradiction that there is some $j \in M_L(x) \cap J_L(y)$ with $j\neq j_0$. Let $m=\kappa_L(j)$ so that $m \geq x$ and $j \leq y$.  We have $x \leq x \vee j \leq y$ and $y \geq y \wedge m \geq x$.  If $x \vee j = x$, then $j\leq x \leq m$, which is impossible.  Therefore $x \vee j = y$.  Similarly, $y \wedge m = x$.  We have $x \leq m_0$, so if $j \leq m_0$, then $x \vee j = y \leq m_0$, which is a contradiction. Consequently, $j\not\leq m_0=\kappa_L(j_0)$.  A similar argument shows that $\kappa_L(j)=m\not\geq j_0$. This shows that we have edges $j\to j_0$ and $j_0\to j$ in the Galois graph $G_L$, which contradicts~\Cref{prop:no_double_edges} because $j_0$ is join-prime. Hence, $M_L(x)\cap J_L(y)=\{j_0\}$. 
\end{proof}

\begin{corollary}
\label{cor:down_labels}
Suppose $L$ is a compatibly dismantlable lattice with dismantling pair $(j_0,m_0)$. Let $x\lessdot y$ be a cover relation in $L$. If $x\leq m_0$ and $y\geq j_0$, then the label $j_{xy}$ of the cover $x\lessdot y$ is $j_0$.
\end{corollary}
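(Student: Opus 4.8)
The plan is to avoid re-running any case analysis and instead exhibit $j_0$ directly as the unique label. By the preceding proposition, $L$ is overlapping, so \Cref{def:overlapping} tells us that $j_{xy}$ is by definition the unique element of the intersection $M_L(x)\cap J_L(y)$, where $M_L(x)=\{j\in\J_L:\kappa_L(j)\geq x\}$ and $J_L(y)=\{j\in\J_L:j\leq y\}$. Thus it suffices to show that $j_0$ is a member of this intersection; uniqueness then forces $j_{xy}=j_0$.

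First I would record that $\kappa_L(j_0)=m_0$. This is immediate from \Cref{prop:pairing}, since a dismantling pair is in particular a prime pair and $\kappa_L$ is the unique pairing of $L$. Next I would translate the two hypotheses into membership statements: the assumption $y\geq j_0$ says precisely that $j_0\in J_L(y)$, while the assumption $x\leq m_0=\kappa_L(j_0)$ says precisely that $j_0\in M_L(x)$. Combining these gives $j_0\in M_L(x)\cap J_L(y)$, and since this set is a singleton (by overlapping), its unique element $j_{xy}$ must equal $j_0$.

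There is essentially no obstacle to overcome here, since the substance was already established in proving that compatibly dismantlable lattices are overlapping: the third case of that argument is exactly the situation $x\in L_0$, $y\in L^0$, that is, $x\leq m_0$ and $y\geq j_0$, and it identified the label as $j_0$. The only point requiring any care is the elementary observation that the inequalities $x\leq m_0$ and $y\geq j_0$ are nothing other than the defining membership conditions $j_0\in M_L(x)$ and $j_0\in J_L(y)$, which is what lets us read off the label without any further work.
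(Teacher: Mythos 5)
Your proposal is correct and takes essentially the same route as the paper: the paper leaves the corollary without a separate proof precisely because the third case of the proof that compatibly dismantlable lattices are overlapping already shows $\kappa_L(j_0)=m_0$ (via \Cref{prop:pairing}) and hence $j_0\in M_L(x)\cap J_L(y)$, with uniqueness of the label forcing $j_{xy}=j_0$. Your observation that the hypotheses $x\leq m_0$ and $y\geq j_0$ are literally the membership conditions $j_0\in M_L(x)$ and $j_0\in J_L(y)$ is exactly the intended reading.
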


\begin{corollary}\label{cor:label_sets_in_intervals}
Suppose $L$ is a compatibly dismantlable lattice with dismantling pair $(j_0,m_0)$. If $x \leq m_0$, then $\D_{L_0}(x) = \D_{L}(x)$, and $\U_{L_0}(x)=\U_L(x)\setminus \{j_0\}$. If $x\geq j_0$, then the map $\alpha$ given by $\alpha(j)=j_0\vee j$ induces bijections $\D_L(x) \setminus \{j_0\}\to\D_{L^0}(x)$ and $\U_{L}(x)\to\U_{L^0}(x)$.
\end{corollary}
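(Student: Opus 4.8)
The plan is to exploit the disjoint decomposition $L=L_0\sqcup L^0$ and to classify the cover relations incident to $x$ according to whether they stay inside one of the two pieces or cross between them. Because $L_0=[\hat 0,m_0]$ is a lower interval and $L^0=[j_0,\hat 1]$ is an upper interval, a cover relation $a\lessdot b$ with both endpoints in $L_0$ (respectively $L^0$) is also a cover relation of $L_0$ (respectively $L^0$), and conversely; so the covers of $x$ internal to a given piece are precisely the covers of $x$ computed in that piece. The crossing covers are controlled by \Cref{cor:down_labels}: every cover $a\lessdot b$ with $a\leq m_0$ and $b\geq j_0$ carries the label $j_0$.

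First I would treat the case $x\leq m_0$, so that $x\in L_0$. Every down-cover $y\lessdot x$ satisfies $y<x\leq m_0$, hence is internal to $L_0$; and for such a cover the proof that compatibly dismantlable lattices are overlapping already establishes $M_L(y)\cap J_L(x)=M_{L_0}(y)\cap J_{L_0}(x)$, so the label $j_{yx}$ is literally the same whether computed in $L$ or in $L_0$. This gives $\D_{L_0}(x)=\D_L(x)$. For the up-covers $x\lessdot y$ there are two kinds: those with $y\leq m_0$ (internal to $L_0$), whose labels again agree with the $L_0$-labels by the same identity, and those with $y\geq j_0$ (crossing up into $L^0$), which all carry the label $j_0$ by \Cref{cor:down_labels}. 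Thus $\U_L(x)$ is $\U_{L_0}(x)$ together with $j_0$ whenever a crossing up-cover exists. To conclude $\U_{L_0}(x)=\U_L(x)\setminus\{j_0\}$ it remains to observe that $j_0$ is never an internal $L_0$-label: any such label lies in $\J_{L_0}$ and is therefore $\leq m_0$, whereas $j_0\not\leq m_0$ since $(j_0,m_0)$ is a prime pair.

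The case $x\geq j_0$ is dual in spirit but requires translating labels through $\alpha$, since the join-irreducibles of $L^0$ are the elements $j_0\vee j$ rather than the $j$ themselves. Here every up-cover of $x$ remains in $L^0$, and for a cover with both endpoints in $L^0$ the overlapping proof shows that $\alpha$ restricts to a bijection $M_L(a)\cap J_L(b)\to M_{L^0}(a)\cap J_{L^0}(b)$; applying this to the covers $x\lessdot y$ shows $\alpha$ carries each $L$-label $j_{xy}$ to the corresponding $L^0$-label, yielding the bijection $\U_L(x)\to\U_{L^0}(x)$. Each such $j_{xy}$ lies in the domain of $\alpha$ because $\kappa_L(j_{xy})\geq x\geq j_0$. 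For the down-covers $y\lessdot x$, those with $y\geq j_0$ are internal to $L^0$ and their $L$-labels are carried by $\alpha$ onto $\D_{L^0}(x)$ in the same way, while those with $y\leq m_0$ cross down into $L_0$ and are labeled $j_0$ by \Cref{cor:down_labels}. Since $j_0$ cannot be an internal $L^0$-label---such a label $j$ would satisfy $\kappa_L(j)\geq y\geq j_0$, and $j=j_0$ would force $m_0\geq j_0$, contradicting the prime pair condition---the set $\D_L(x)\setminus\{j_0\}$ consists exactly of the $L$-labels of the internal down-covers, and $\alpha$ maps it bijectively onto $\D_{L^0}(x)$.

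I expect the main obstacle to be organizational rather than conceptual: the substantive label identities (the equality $M_L\cap J_L=M_{L_0}\cap J_{L_0}$ and the $\alpha$-bijection onto $M_{L^0}\cap J_{L^0}$) were already proved when establishing that compatibly dismantlable lattices are overlapping, so the work here is to cleanly partition each cover set into internal and crossing parts, invoke \Cref{cor:down_labels} on the crossing ones, and verify in each regime that $j_0$ arises only from crossing covers. The one point demanding genuine care is this last verification---pinning down exactly when $j_0$ does and does not appear as a label---since it is precisely what makes the placement of the ``$\setminus\{j_0\}$'' correct.
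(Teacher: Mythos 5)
Your proof is correct and follows essentially the same route as the paper's: both partition the covers of $x$ into those internal to $L_0$ or $L^0$ and those crossing between the two pieces, dispatch the crossing covers via \Cref{cor:down_labels}, and use the compatibility of $\alpha$ with the pairings to match internal labels (you cite this from the overlapping proof, while the paper re-derives it in one line via $\kappa_{L^0}(\alpha(j))=\kappa_L(j)$). Your explicit verification that $j_0$ can only arise from crossing covers is a point the paper treats as immediate, so this is a slightly more detailed write-up of the same argument.
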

\begin{proof}
We prove the case in which $x\geq j_0$; the proof when $x\leq m_0$ is similar. It follows from \Cref{cor:down_labels} that $\D_L(x)\setminus\{j_0\}$ is the set of labels of covers of the form $y\lessdot x$ with $y\in L^0$. If $j$ is the label of such a cover $y\lessdot x$ in $L$, then $j\leq x$ and $\kappa_L(j)\geq y$. This implies that $\alpha(j)\leq x$ and $\kappa_{L^0}(\alpha(j))=\kappa_L(j)\geq y$, so $\alpha(j)$ is the label of $y\lessdot x$ in $L^0$. This proves that $\alpha$ induces a bijection $\D_L(x)\setminus\{j_0\}\to\D_{L^0}(x)$. A similar argument shows that $\alpha$ induces a bijection $\U_L(x)\to\U_{L^0}(x)$. 
\end{proof}

When we define rowmotion on semidistrim lattices in \Cref{sec:pop_and_row}, we will use the downward and upward label sets of the elements. In order for the definition to work, we will need to know that every element is uniquely determined by its downward label set and also by its upward label set; this is the content of the next theorem. 

\begin{theorem}
\label{thm:labels}
Let $L$ be a compatibly dismantlable lattice.  Every element $x \in L$ is uniquely determined by its downward label set $\D_L(x)$, and it is also uniquely determined by its upward label set $\U_L(x)$. More precisely, \[x = \bigvee \D_L(x) = \bigwedge\kappa_L(\U_L(x)).\]
\end{theorem}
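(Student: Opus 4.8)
The plan is to prove the two displayed identities $x=\bigvee\D_L(x)$ and $x=\bigwedge\kappa_L(\U_L(x))$ directly; the uniqueness assertions then follow for free, since if $\D_L(x)=\D_L(x')$ then $x=\bigvee\D_L(x)=\bigvee\D_L(x')=x'$, and similarly for $\U_L$. A pleasant observation is that neither identity seems to require the full strength of compatible dismantlability or any induction: both hold for every overlapping lattice, and compatibly dismantlable lattices are overlapping by the preceding proposition. So I would establish the identities at the level of overlapping lattices and then invoke that implication.

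The engine is the following elementary fact about finite lattices, which I would isolate first: if $w\leq x$ and $w\not\leq y$ for every $y\lessdot x$, then $w=x$. Indeed, were $w<x$, a saturated chain $w=z_0\lessdot z_1\lessdot\cdots\lessdot z_k=x$ (with $k\geq 1$) would give $w\leq z_{k-1}$ with $z_{k-1}\lessdot x$, contradicting the hypothesis. To deduce $x=\bigvee\D_L(x)$, set $w=\bigvee\D_L(x)$. Each label $j_{yx}\in\D_L(x)$ lies in $J_L(x)$, so $j_{yx}\leq x$ and hence $w\leq x$. On the other hand, \Cref{prop:covers} gives $y\vee j_{yx}=x\neq y$ for each $y\lessdot x$, so $j_{yx}\not\leq y$ and therefore $w\geq j_{yx}\not\leq y$; thus $w\not\leq y$ for every $y\lessdot x$. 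The elementary fact then forces $w=x$. The boundary case is fine, since $\D_L(\hat{0})=\emptyset$ and $\bigvee\emptyset=\hat{0}$.

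The identity $x=\bigwedge\kappa_L(\U_L(x))$ is handled by the order-dual argument. Here the relevant fact is that $m\geq x$ together with $m\not\geq y$ for every $y\gtrdot x$ forces $m=x$, again via a saturated chain; and \Cref{prop:covers} supplies $y\wedge\kappa_L(j_{xy})=x$ for each cover $x\lessdot y$, giving $\kappa_L(j_{xy})\geq x$ and $\kappa_L(j_{xy})\not\geq y$, so $m=\bigwedge\kappa_L(\U_L(x))$ satisfies the hypotheses. I expect no serious obstacle; the only care needed is the bookkeeping with the two directions of \Cref{prop:covers} and the empty-meet/empty-join conventions at $\hat{1}$ and $\hat{0}$. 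If one instead prefers to respect the inductive scaffolding of this section, the same conclusion can be reached by induction on $|L|$, using \Cref{cor:label_sets_in_intervals} to transport label sets between $L$ and the intervals $L_0$ and $L^0$; but the direct argument above is shorter and makes transparent that being overlapping is the only property actually used.
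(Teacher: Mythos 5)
Your proof is correct, and it takes a genuinely different route from the paper's. The paper proves \Cref{thm:labels} by induction on $|L|$: it fixes a dismantling pair $(j_0,m_0)$, transports label sets between $L$ and the intervals $L_0=[\hat 0,m_0]$ and $L^0=[j_0,\hat 1]$ via \Cref{cor:label_sets_in_intervals}, and resolves the delicate case where $x\in L^0$ but $j_0\notin\D_L(x)$ by locating a cover labeled $j_0$ between $L_0$ and $L^0$ (\Cref{cor:down_labels}) to force a contradiction. You argue directly instead: each $j_{yx}$ lies in $M_L(y)\cap J_L(x)$, so $j_{yx}\leq x$, while $j_{yx}\leq y$ would give $j_{yx}\leq y\leq\kappa_L(j_{yx})$, which is impossible for a pairing; hence $w=\bigvee\D_L(x)$ satisfies $w\leq x$ and $w\not\leq y$ for every $y\lessdot x$, and your saturated-chain observation (covers in the convex interval $[w,x]$ are covers of $L$, so $w<x$ would yield some $z\lessdot x$ with $w\leq z$) forces $w=x$; the dual computation with $\bigwedge\kappa_L(\U_L(x))$ is equally clean, and the empty-set conventions at $\hat 0$ and $\hat 1$ are correctly disposed of. Your route buys two things: it is non-inductive, and it establishes the identities in the strictly greater generality of overlapping lattices, since the only inputs are the defining property $j_{xy}\in M_L(x)\cap J_L(y)$ and $j\not\leq\kappa_L(j)$ --- this in fact illuminates the remark accompanying \Cref{fig:not_defining}, where an overlapping but non-compatibly-dismantlable lattice still has the join map behaving bijectively on label sets. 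What the paper's approach buys is coherence with the section's inductive scaffolding: its proof exercises the compatibility bijections of \Cref{def:semidistrim} that are reused in later arguments (e.g., \Cref{thm:independent}), whereas your argument, though shorter, isolates the weakest hypothesis actually needed for this particular theorem.
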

\begin{proof}
We show that $x = \bigvee\D_L(x)$; the proof that $x = \bigwedge\kappa_L(\U_L(x))$ follows from a completely analogous dual argument. Let $(j_0,m_0)$ be a dismantling pair for $L$. By \Cref{def:semidistrim}, the intervals $L_0=[\hat 0,m_0]$ and $L^0=[j_0,\hat 1]$ are compatibly dismantlable. We will make use of the containment $\J_{L_0}\subseteq\J_L$ and the bijection $\alpha\colon\{j\in\J_L:j_0\leq \kappa_L(j)\}\to\J_{L^0}$. If $x\in L_0$, then we can use \Cref{cor:label_sets_in_intervals} and induction on the size of the lattice to see that $x=\bigvee\D_{L_0}(x)=\bigvee\D_L(x)$. Therefore, in what follows, we may assume $x \in L^0$.  

If $j_0 \in \D_L(x)$, then we can use \Cref{cor:label_sets_in_intervals} and induction on the size of the lattice to see that \[x=\bigvee\D_{L^0}(x)=\bigvee_{j\in \D_L(x)\setminus\{j_0\}}\alpha(j)=\bigvee_{j\in \D_L(x)\setminus\{j_0\}}(j_0\vee j)=j_0\vee\bigvee(\D_L(x)\setminus\{j_0\})=\bigvee\D_L(x).\]

Now suppose $j_0\not\in\D_L(x)$. In particular, $x\neq j_0$, so there is some element of $L^0$ covered by $x$. Let $y = \bigvee\D_L(x)$. Our goal is to show that $y=x$, so assume otherwise. As in the previous case, we can use \Cref{cor:label_sets_in_intervals} and induction to find that \[x=\bigvee\D_{L^0}(x)=\bigvee_{j\in \D_L(x)}\alpha(j)=\bigvee_{j\in \D_L(x)}(j_0\vee j)=j_0\vee\bigvee\D_L(x)=j_0\vee y.\] Since $x\neq y$, this implies that $y\not\geq j_0$. Hence, $y\in L_0$. There exist elements $a\in L_0$ and $b\in L^0$ such that $y\leq a\lessdot b\leq x$. The label $j_{ab}$ of the cover $a\lessdot b$ is equal to $j_0$ by \Cref{cor:down_labels}. Since $j_0\not\in\D_L(x)$, this implies that $b<x$. However, this shows that $y\vee j_0\leq b<x$, which contradicts the fact that $y\vee j_0=x$.  
\end{proof}

The next lemma will be handy when we prove that intervals in semidistrim lattices are semidistrim. 

\begin{lemma}
\label{lem:meets_are_labeled}
Let $L$ be a compatibly dismantlable lattice, and let $x \in L$. Let $j\in\J_L$, and let $m=\kappa_L(j)\in\M_L$. If $j \leq x$, then $j \in \U_L(x \wedge m)$.  If $m \geq x$, then $j \in \D_L(x \vee j)$.
\end{lemma}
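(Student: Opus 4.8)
The plan is to prove the first assertion (if $j \le x$ then $j \in \U_L(x\wedge m)$) by induction on $|L|$ and to deduce the second from a dual argument, since the pairing, the edge labels, and the class of compatibly dismantlable lattices are all stable under passing to the dual lattice. Throughout write $w = x\wedge m$. One clean reformulation drives everything: since $\kappa_L(j)=m\ge x\wedge m=w$, we always have $j\in M_L(w)$; hence, because $L$ is overlapping, $j\in\U_L(w)$ will follow the moment I produce a cover $w\lessdot y$ with $j\le y$, as then $j$ is forced to equal the unique element $j_{wy}$ of $M_L(w)\cap J_L(y)$. For the base case $|L|=1$ there is nothing to check, so I fix a dismantling pair $(j_0,m_0)$ and set $L_0=[\hat 0,m_0]$, $L^0=[j_0,\hat 1]$; by \Cref{def:semidistrim} these are compatibly dismantlable and strictly smaller, and \Cref{cor:label_sets_in_intervals} records how upward label sets transfer between $L$ and each piece.

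I would organize the induction by the position of $x$ and the nature of $j$. If $x\le m_0$, then $j\le m_0$ puts $j$ in $\J_{L_0}$, the second compatibility condition gives $\kappa_{L_0}(j)=m_0\wedge m$, and $w=x\wedge_{L_0}\kappa_{L_0}(j)$; induction in $L_0$ yields $j\in\U_{L_0}(w)$, which \Cref{cor:label_sets_in_intervals} promotes to $j\in\U_L(w)$. If $x\ge j_0$ and $j\ne j_0$, I split on whether $j_0\le m$. When $j_0\le m$, the element $j'=j_0\vee j=\alpha(j)$ is join-irreducible in $L^0$ with $\kappa_{L^0}(j')=m$, lies below $x$, and satisfies $w=x\wedge_{L^0}\kappa_{L^0}(j')$; induction in $L^0$ together with the $\alpha$-compatibility of upward labels returns $j\in\U_L(w)$. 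When $j_0\not\le m$ there is an edge $j_0\to j$ in $G_L$; here I invoke that $j_0$ is join-prime and \Cref{prop:no_double_edges} to exclude a reverse edge $j\to j_0$, which forces $j\le m_0$ and hence $j\in\J_{L_0}$, after which induction in $L_0$ applied to $\tilde x=x\wedge m_0$ (for which $w=\tilde x\wedge_{L_0}\kappa_{L_0}(j)$) completes the case.

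The main obstacle is the single remaining case $j=j_0$ with $x\ge j_0$: here $w=x\wedge m_0$ lands in $L_0$ while $j_0\notin L_0$, so neither inductive hypothesis is available and I must exhibit an upward cover of $w$ labeled $j_0$ by hand. The crux is the elementary observation that $u:=w\vee j_0$ satisfies $u\le x$, whence $m_0\wedge u\le m_0\wedge x=w$ and therefore $m_0\wedge u=w$. With this in hand the covering $w\lessdot u$ drops out: any $z\in[w,u]$ lies either in $L^0$, forcing $z\ge w\vee j_0=u$, or in $L_0$, forcing $z\le m_0\wedge u=w$; so $[w,u]=\{w,u\}$. Finally \Cref{cor:down_labels} identifies the label of $w\lessdot u$ as $j_0$, giving $j_0=j\in\U_L(w)$ and closing the induction. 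The second assertion then follows verbatim by dualizing.
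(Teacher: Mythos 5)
Your proof is correct and follows essentially the same route as the paper's: the same opening reduction (it suffices to exhibit a cover $w \lessdot y$ with $j\leq y$, since $j\in M_L(w)$ and the overlapping property then forces $j_{wy}=j$), the same induction via a dismantling pair with the same case split ($x\leq m_0$; $x\geq j_0$ with $j_0\leq m$; $j_0\not\leq m$ handled via \Cref{prop:no_double_edges} to get $j\leq m_0$ and descend to $L_0$; and $j=j_0$), and the same appeal to duality for the second assertion. The only difference is cosmetic: in the $j=j_0$ case the paper locates a crossing cover $z\lessdot y$ on a chain from $x\wedge m_0$ to $x$ and shows $z=x\wedge m_0$, whereas you construct the cover directly as $w\lessdot w\vee j_0$ using the decomposition $L=L_0\sqcup L^0$ — an equivalent verification.
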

\begin{proof}
We only prove the first statement, as the second follows from an analogous dual argument. Thus, suppose $j\leq x$. Our goal is to show that there exists an element $y$ covering $x\wedge m$ such that $j\leq y$. Indeed, if we can do this, then (since $m\geq x\wedge m$) the cover relation $x\wedge m\lessdot y$ will have label $j$. Let $(j_0,m_0)$ be a dismantling pair for $L$, and let $L_0=[\hat 0,m_0]$ and $L^0=[j_0,\hat 1]$. Then $L_0$ and $L^0$ are compatibly dismantlable. 

Assume first that $x\in L_0$. By \Cref{def:semidistrim}, we have $\kappa_{L_0}(j)=\kappa_{L_0}(\kappa_L^{-1}(m))=\beta(m)=m_0\wedge m$, so we can use induction on the size of the lattice to see that $j\in\U_{L_0}(x\wedge(m_0\wedge m))=\U_{L_0}(x\wedge m)$. This means that there exists an element $y\in L_0$ covering $x\wedge m$ such that the cover relation $x\wedge m\lessdot y$ is labeled by $j$ in $L_0$. But then $j\leq y$, as desired. 

We may now assume that $x\in L^0$.  If $m\in L^0$, then we can use the first compatibility condition in \Cref{def:semidistrim} to see that $\alpha(j)$ is a join-irreducible element of the compatibly dismantlable lattice $L^0$ with $\kappa_{L^0}(\alpha(j))=m$ and $\alpha(j)\leq x$. By induction on the size of the lattice, we have $\alpha(j)\in\U_{L^0}(x\wedge m)$, so there exists an element $y$ covering $x\wedge m$ such that the cover relation $x\wedge m\lessdot y$ is labeled by $\alpha(j)$ in $L^0$. But then $j\leq j_0\vee j= \alpha(j)\leq y$. 

If $m=m_0$ (so $j=j_0$), then there exist $y,z$ such that $x\wedge m\leq z\lessdot y\leq x$ with $z\in L_0$ and $y\in L^0$. But then $z = x \wedge m_0$ and $j_0\leq y$, as desired.

Finally, suppose $m < m_0$. This means that the edge $j_0\to j$ appears in $G_L$, so \Cref{prop:no_double_edges} guarantees that the edge $j\to j_0$ does not appear. That is, $j \leq m_0$.  The second compatibility condition in \Cref{def:semidistrim} tells us that $\kappa_{L_0}(j)=\beta(m)=m_0\wedge m=m$. Since $j\leq m_0\wedge x$, we can use induction on the size of the lattice to see that $j\in\U_{L_0}((m_0\wedge x)\wedge m)=\U_{L_0}(x\wedge m)$. This means that there exists an element $y\in L_0$ covering $x\wedge m$ such that the cover relation $x\wedge m\lessdot y$ is labeled by $j$ in $L_0$. Then $j\leq y$.  
\end{proof}

\section{Semidistrim Lattices}
\label{sec:semidistrim}

An \defn{independent set} of a (directed or undirected) graph $G$ is a subset $I$ of the vertex set of $G$ such that no two vertices of $I$ are adjacent. Let $\I(G)$ denote the collection of independent sets of $G$. The extra condition we impose on compatibly dismantlable lattices to obtain semidistrim lattices is that the downward label sets and upward label sets of all of the elements are independent sets in the Galois graph.

\begin{definition}
A lattice $L$ is \defn{semidistrim} if it is compatibly dismantlable and $\D_L(x),\U_L(x)\in\I(G_L)$ for all $x \in L$.
\label{def:semidistrim2}
\end{definition}

Note that there are compatibly dismantlable lattices that are not semidistrim; \Cref{fig:reduced} shows an example.  We remark that semidistrim lattices are distinct from congruence normal (and hence congruence uniform) lattices.

The next theorem, most of which we already proved in \Cref{subsec:semidistributive_trim_compatibly}, justifies the name \emph{semidistrim}. 

\begin{theorem}
Semidistributive lattices are semidistrim, and trim lattices are semidistrim.
\end{theorem}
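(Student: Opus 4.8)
The plan is to combine the results already established for semidistributive and trim lattices, since the hard structural work has been done in \Cref{subsec:semidistributive_trim_compatibly}. Recall that \Cref{def:semidistrim2} requires two things of a lattice $L$: that it be compatibly dismantlable, and that $\D_L(x), \U_L(x) \in \I(G_L)$ for every $x \in L$. The first requirement is precisely \Cref{thm:semidistributive_semidistrim} (for semidistributive lattices) and \Cref{thm:trim_semidistrim} (for trim lattices), so it remains only to verify the independence condition in each case.

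For the trim case, I would invoke the fact cited in the proof of \Cref{thm:trim_semidistrim}, namely that the Galois graph $G_L$ of a trim lattice is acyclic (\cite[Theorem 2.4]{thomas2019rowmotion}). When $G_L$ is acyclic, I would argue that the label sets are automatically independent. Suppose $j, j'$ are distinct labels of down-covers of $x$, say $j = j_{yx}$ and $j' = j_{y'x}$. Using \Cref{prop:covers}, each such label $j_{yx}$ satisfies $j_{yx} \leq x$ and $\kappa_L(j_{yx}) \geq y$, hence $\kappa_L(j_{yx}) \not\geq x$ while still being an element with $j_{yx}\le x$. The key point is that for two distinct down-labels $j,j'$ of the same element $x$, one has $j \leq \kappa_L(j')$ and $j' \leq \kappa_L(j)$, so neither edge $j \to j'$ nor $j' \to j$ is present; this is exactly the statement that $\{j,j'\}$ contains no edge of $G_L$. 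I expect the cleanest route is to show directly that the label set of any interval $[y \wedge y', x]$ forces comparability of the labels with the paired meet-irreducibles, using the characterization of the Galois graph edges together with acyclicity to rule out edges within a single down-label or up-label set.

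For the semidistributive case, I would use the canonical join and meet representations discussed in \Cref{sec:semidistributive}. In a join-semidistributive lattice, each element $x$ has a canonical join representation $x = \bigvee A$, and it is a standard fact (from \cite[Theorem 3.1]{barnard19canonical} and the theory of canonical join representations) that this set $A$ is exactly the downward label set $\D_L(x)$ and that $A$ forms an independent set — a \emph{canonical join representation} is precisely a set whose elements are pairwise incomparable in the appropriate sense, which translates to independence in $G_L$. Dually, the canonical meet representation yields that $\U_L(x)$ is independent. Since a semidistributive lattice is both join- and meet-semidistributive, both conditions hold.

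The main obstacle I anticipate is making the translation between the combinatorial notion of independence in $G_L$ and the order-theoretic content of the canonical representations airtight, i.e.\ verifying that ``no edge $j \to j'$ in $G_L$'' corresponds exactly to the irredundancy and minimality built into the canonical join representation. The edges of $G_L$ are defined via the condition $j \not\leq \kappa_L(j')$, so I would need to check that two distinct elements of a canonical join representation $A$ of $x$ satisfy $j \leq \kappa_L(j')$ for all $j, j' \in A$ with $j \neq j'$. This should follow from the defining property of $\kappa_L$ on a semidistributive lattice (where $\kappa_L(j')$ is the unique maximal element $z$ with $j'_* = j' \wedge z$) together with the irredundancy of the representation, but pinning down this comparison is the crux of the verification. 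Once this comparison is established in both families, the theorem follows immediately from \Cref{def:semidistrim2}.
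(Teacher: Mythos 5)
Your skeleton is the same as the paper's: both halves reduce to the compatibly-dismantlable results (\Cref{thm:semidistributive_semidistrim} and \Cref{thm:trim_semidistrim}) plus independence of the label sets, and your semidistributive half is essentially the paper's proof, which simply cites \cite{barnard19canonical} for the fact that canonical join and meet representations are independent in $G_L$ --- that citation covers exactly the ``translation'' you worry about. If you want it self-contained, the comparison you flag as the crux is short: by \Cref{prop:covers}, $\kappa_L(j')\wedge x = y'$ for the down-cover $y'\lessdot x$ labeled by $j'$, so $j\le\kappa_L(j')$ is equivalent to $j\le y'$; and if $j\not\le y'$ then $j\vee y'=x$, whence the minimality defining the canonical joinand $j'$ (the unique least $w$ with $w\vee y'=x$, which exists by join-semidistributivity) forces $j\ge j'$, contradicting the irredundancy of $x=\bigvee\D_L(x)$.

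The trim half, however, has a genuine gap. Acyclicity of $G_L$ does not make label sets ``automatically independent'': independence of $\D_L(x)$ forbids \emph{any} edge between two of its members, while acyclicity only forbids directed cycles, so it rules out at most a $2$-cycle $j\to j'\to j$ between two down-labels, never a single edge $j\to j'$. Your ``key point'' --- that distinct down-labels $j,j'$ of $x$ satisfy $j\le\kappa_L(j')$ and $j'\le\kappa_L(j)$ --- \emph{is} the independence statement to be proved, and your proposal asserts it rather than derives it; the sentence about the interval $[y\wedge y',x]$ is a gesture, not an argument. Concretely, by \Cref{prop:covers} the claim $j\le\kappa_L(j')$ amounts to saying the label of one down-cover of $x$ lies below every \emph{other} down-cover of $x$; this is true in trim lattices but is a substantive theorem --- it is \cite[Corollary~5.6]{thomas2019rowmotion}, which is exactly what the paper cites at this point --- and its proof uses trim-specific structure (the left-modular chain and the dismantling), not acyclicity. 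The irredundancy argument that rescues the semidistributive case is unavailable here, since trim lattices need not be join-semidistributive and the set $\{w\in L: w\vee y'=x\}$ can have several minimal elements. As written, then, the trim half does not go through; replacing the acyclicity heuristic with the citation to \cite[Corollary~5.6]{thomas2019rowmotion} repairs it and recovers the paper's proof.
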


\begin{proof}
We already know that semidistributive lattices and trim lattices are compatibly dismantlable by \Cref{thm:semidistributive_semidistrim} and \Cref{thm:trim_semidistrim}. If $x$ is an element in a semidistributive lattice $L$, then $\D_L(x)$ is its canonical join representation, and $\U_L(x)$ is its canonical meet representation. It is known that these sets are independent sets in the Galois graph~\cite{barnard19canonical}.  If $x$ is an element in a trim lattice $L$, then $\D_L(x)$ and $\U_L(x)$ are independent sets in the Galois graph by~\cite[Corollary 5.6]{thomas2019rowmotion}. 
\end{proof}

\begin{lemma}\label{lem:decomposition_intervals_semidistrim}
If $L$ is a semidistrim lattice with dismantling pair $(j_0,m_0)$, then the intervals $L_0=[\hat 0,m_0]$ and $L^0=[j_0,\hat 1]$ are semidistrim. 
\end{lemma}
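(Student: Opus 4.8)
The plan is to show that the two interval lattices $L_0=[\hat 0,m_0]$ and $L^0=[j_0,\hat 1]$ are compatibly dismantlable (which is immediate from \Cref{def:semidistrim}, since a dismantling pair is by definition chosen so that both halves are compatibly dismantlable) and then to verify the extra independence condition from \Cref{def:semidistrim2}: that for every $x\in L_0$ the sets $\D_{L_0}(x),\U_{L_0}(x)$ are independent in $G_{L_0}$, and similarly for $L^0$. The compatible-dismantlability is already recorded in the definition, so the entire content of the lemma lies in transferring the independence of label sets from $L$ down to the two intervals.

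The key tool is the combination of \Cref{prop:galois}, which identifies $G_{L_0}$ and $G_{L^0}$ with honest induced subgraphs of $G_L$, and \Cref{cor:label_sets_in_intervals}, which describes exactly how the label sets of an element change when we pass to an interval. Concretely, I would argue for $L_0$ as follows. Take $x\in L_0$. By \Cref{cor:label_sets_in_intervals} we have $\D_{L_0}(x)=\D_L(x)$ and $\U_{L_0}(x)=\U_L(x)\setminus\{j_0\}$. By \Cref{prop:galois}, $G_{L_0}$ is the induced subgraph of $G_L$ on $\J_L\setminus(\{j_0\}\cup\In(j_0))$, so adjacency in $G_{L_0}$ is exactly the restriction of adjacency in $G_L$ to the common vertices. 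Since $L$ is semidistrim, $\D_L(x)$ and $\U_L(x)$ are independent in $G_L$; an independent set of $G_L$ that happens to lie inside the vertex set of an induced subgraph remains independent in that subgraph, and an independent set stays independent after deleting a vertex. Hence $\D_{L_0}(x)$ and $\U_{L_0}(x)$ are independent in $G_{L_0}$. The argument for $L^0$ is dual: by \Cref{cor:label_sets_in_intervals} the bijection $\alpha$ carries $\D_L(x)\setminus\{j_0\}$ to $\D_{L^0}(x)$ and $\U_L(x)$ to $\U_{L^0}(x)$, and by \Cref{prop:galois} this same $\alpha$ realizes $G_{L^0}$ as the induced subgraph of $G_L$ on $\J_L\setminus(\{j_0\}\cup\Out(j_0))$, so $\alpha$ is a graph isomorphism onto an induced subgraph and therefore preserves independence in both directions.

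The only subtle point, and the one I would be most careful about, is making sure the vertex sets line up correctly: for $L_0$ the label sets $\D_L(x),\U_L(x)\setminus\{j_0\}$ must genuinely sit inside $\J_L\setminus(\{j_0\}\cup\In(j_0))$, and for $L^0$ the sets $\D_L(x)\setminus\{j_0\},\U_L(x)$ must lie in the domain of $\alpha$, namely $\{j\in\J_L:j_0\leq\kappa_L(j)\}=\J_L\setminus(\{j_0\}\cup\Out(j_0))$. These containments follow from the descriptions in \Cref{cor:label_sets_in_intervals} together with the fact that $j_0$ is explicitly removed where needed, but they should be stated so that the appeal to ``independence passes to induced subgraphs'' is valid. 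Once the vertex bookkeeping is in place, no further computation is required, since independence is a purely combinatorial property preserved by induced-subgraph restriction and by the graph isomorphism $\alpha$. Thus I expect the main (and only real) obstacle to be this verification that the relevant label sets live on the correct vertex sets so that \Cref{prop:galois} applies verbatim.
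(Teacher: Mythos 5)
Your proposal is correct and takes essentially the same route as the paper's proof: compatible dismantlability of $L_0$ and $L^0$ is read off directly from \Cref{def:semidistrim}, and independence of the label sets is then transferred from $G_L$ to the intervals by combining \Cref{prop:galois} (the Galois graphs of the intervals are induced subgraphs of $G_L$, via $\alpha$ in the case of $L^0$) with \Cref{cor:label_sets_in_intervals}. The vertex-set bookkeeping you single out is indeed the only point to check, and it is automatic: $\D_{L_0}(x),\U_{L_0}(x)$ and $\D_{L^0}(y),\U_{L^0}(y)$ consist by definition of join-irreducibles of the respective interval, hence of vertices of its Galois graph, so the containments supplied by \Cref{cor:label_sets_in_intervals} suffice exactly as you argue.
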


\begin{proof}
We know that these intervals are compatibly dismantlable by \Cref{def:semidistrim}. \Cref{prop:galois} tells us that $G_{L_0}$ is an induced subgraph of $G_L$ and that $G_{L^0}$ is isomorphic to an induced subgraph of $G_L$, where the isomorphism is the restriction of $\alpha^{-1}$ to the vertex set of $G_{L^0}$. Suppose $x\in L_0$ and $y\in L^0$; we need to show that $\D_{L_0}(x),\U_{L_0}(x)\in\I(G_{L_0})$ and that $\D_{L^0}(y),\U_{L^0}(y)\in\I(G_{L^0})$. Since $L$ is semidistrim, all of the sets $\D_L(x),\U_L(x),\D_L(y),\U_L(y)$ are independent sets in $G_L$. \Cref{cor:label_sets_in_intervals} tells us that $\D_{L_0}(x)$ and $\U_{L_0}(x)$ are subsets of $\D_L(x)$ and $\U_L(x)$, respectively, so they are independent sets in $G_{L_0}$. The same proposition tells us that $\alpha^{-1}(\D_{L^0}(y))\subseteq\D_L(y)$ and $\alpha^{-1}(\U_{L^0}(y))\subseteq\U_L(y)$. This shows that $\D_{L^0}(y)$ and $\U_{L^0}(y)$ are independent sets in $G_{L^0}$.
\end{proof}

The next theorem will allow us to define rowmotion on semidistrim lattices in~\Cref{sec:pop_and_row}.
\begin{theorem}
\label{thm:independent}
Let $L$ be a semidistrim lattice. The maps $\D_L\colon L\to\I(G_L)$ and $\U_L\colon L\to\I(G_L)$ are bijections.
\end{theorem}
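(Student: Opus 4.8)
The plan is to prove that $\D_L$ and $\U_L$ are bijections by induction on $|L|$, using the dismantling decomposition $L = L_0 \sqcup L^0$ together with the structural results already established. By duality (the argument for $\U_L$ being the mirror of that for $\D_L$), it suffices to treat $\D_L$. First I would observe that \Cref{thm:labels} already does most of the work for injectivity: since $x = \bigvee \D_L(x)$ for every $x$, the value of $\D_L(x)$ determines $x$, so $\D_L$ is injective. Moreover, by the definition of a semidistrim lattice, $\D_L(x) \in \I(G_L)$ for every $x$, so $\D_L$ genuinely maps $L$ into $\I(G_L)$. Thus the entire content of the theorem reduces to surjectivity: every independent set of $G_L$ arises as $\D_L(x)$ for some $x \in L$.

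For surjectivity I would argue by induction. The base case $|L| = 1$ is trivial, since both $L$ and $\I(G_L)$ are singletons. For the inductive step, fix a dismantling pair $(j_0, m_0)$, so that $L = L_0 \sqcup L^0$ with $L_0 = [\hat 0, m_0]$ and $L^0 = [j_0, \hat 1]$. By \Cref{lem:decomposition_intervals_semidistrim}, both $L_0$ and $L^0$ are semidistrim, so by induction the maps $\D_{L_0}$ and $\D_{L^0}$ are bijections onto $\I(G_{L_0})$ and $\I(G_{L^0})$ respectively. \Cref{prop:galois} identifies $G_{L_0}$ with the induced subgraph of $G_L$ on $\J_L \setminus (\{j_0\} \cup \In(j_0))$ and identifies $G_{L^0}$ (via $\alpha$) with the induced subgraph on $\J_L \setminus (\{j_0\} \cup \Out(j_0))$. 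The strategy is then to take an arbitrary $I \in \I(G_L)$ and split into cases according to whether $j_0 \in I$ and whether $I$ meets $\In(j_0)$ or $\Out(j_0)$, using these case distinctions to locate $I$ as a downward label set coming from one of the two sub-lattices.

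Concretely, the key cases are as follows. If $j_0 \notin I$ and $I \cap \In(j_0) = \emptyset$, then $I$ is an independent set of $G_{L_0}$, so by induction $I = \D_{L_0}(x)$ for some $x \in L_0$; by \Cref{cor:label_sets_in_intervals}, $\D_{L_0}(x) = \D_L(x)$, giving $I = \D_L(x)$. If $j_0 \in I$, then since $I$ is independent it cannot meet $\Out(j_0) \cup \In(j_0)$; removing $j_0$ leaves an independent set of $G_{L^0}$, which by induction equals $\D_{L^0}(x)$ for some $x \in L^0$, and \Cref{cor:label_sets_in_intervals} (which gives a bijection $\D_L(x)\setminus\{j_0\} \to \D_{L^0}(x)$ when $x \geq j_0$) lets me conclude $I = \D_L(x)$, provided I verify that $j_0 \in \D_L(x)$ in this situation. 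The remaining case, $j_0 \notin I$ but $I \cap \In(j_0) \neq \emptyset$, is the delicate one: here $I$ is not an independent set of $G_{L_0}$ (it may contain in-neighbors of $j_0$ that are excluded from $L_0$'s vertex set) nor does it contain $j_0$ to be handled by $L^0$.

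The main obstacle I anticipate is precisely this last case, where I must show that an independent set containing an in-neighbor of $j_0$ but not $j_0$ itself still arises as some $\D_L(x)$. The natural candidate is $x = \bigvee I$, and the real work is to verify that $\D_L(\bigvee I)$ equals $I$ exactly — that no spurious labels appear and none are missing. Here I expect to need \Cref{lem:meets_are_labeled}, which guarantees that each $j \leq x$ of the form $j = \kappa_L^{-1}(m)$ appears as a label of some cover below a suitable element, together with the independence hypothesis to rule out extra labels. The cleanest route may be to prove directly, for any $I \in \I(G_L)$, that setting $x = \bigvee I$ yields $\D_L(x) = I$: the inclusion $\D_L(x) \subseteq$ (elements forced by $I$) follows from independence and the tight-orthogonal-pair structure, while the reverse inclusion follows from \Cref{lem:meets_are_labeled} applied to each $j \in I$. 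Reconciling this uniform argument with the inductive case analysis — ensuring the two approaches agree on which element $x$ realizes $I$ — is where the bulk of the care will be needed.
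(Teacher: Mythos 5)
Your skeleton (injectivity via \Cref{thm:labels}, induction on $|L|$ through the dismantling pair, and the case where $I$ avoids $\In(j_0)\cup\{j_0\}$) coincides with the paper's proof. But you stall exactly where the paper's argument has its one real idea, and that idea never appears in your write-up: \Cref{prop:no_prob}. Since $j_0$ is join-prime, every vertex of $\In(j_0)$ is adjacent to every vertex of $\Out(j_0)$; hence if $I$ meets $\In(j_0)\cup\{j_0\}$ at all --- whether because $j_0\in I$ or because $I$ contains an in-neighbor of $j_0$ --- independence forces $I\cap\Out(j_0)=\emptyset$. So your ``delicate'' third case is not delicate: $\alpha(I\setminus\{j_0\})$ lies in $\I(G_{L^0})$ and the $L^0$ route absorbs it, producing $y\in L^0$ with $\D_L(y)\setminus\{j_0\}=I\setminus\{j_0\}$ via \Cref{cor:label_sets_in_intervals}. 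Your flagged verification (``provided I verify that $j_0\in\D_L(x)$'') is then settled by a dichotomy you have all the tools for: if $I=\D_L(y)\sqcup\{j_0\}$, then $\D_L(y)$ is an independent set avoiding $\In(j_0)\cup\{j_0\}$, so your first case realizes it by some $z\in L_0$, contradicting injectivity of $\D_L$ since $y\in L^0$; if $\D_L(y)=I\sqcup\{j_0\}$, then $\D_L(y)$ contains $j_0$ together with an in-neighbor of $j_0$, contradicting its independence. As written, both this verification and the third case are open, so the inductive plan does not close.

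Your fallback --- show directly that $x=\bigvee I$ satisfies $\D_L(x)=I$ --- is sound and, once carried out, gives a genuinely different and shorter proof than the paper's: it handles all three of your cases uniformly with no induction. But not as you describe it: invoking ``the tight-orthogonal-pair structure'' would be circular, since the paper proves \Cref{thm:tops} \emph{using} \Cref{thm:independent}. Fortunately it is not needed. For $I\subseteq\D_L(x)$: given $j\in I$, independence of $I$ gives $j'\leq\kappa_L(j)$ for every $j'\in I\setminus\{j\}$, so $\kappa_L(j)\geq\bigvee(I\setminus\{j\})$, and \Cref{lem:meets_are_labeled} applied to $\bigvee(I\setminus\{j\})$ yields $j\in\D_L\bigl(\bigvee(I\setminus\{j\})\vee j\bigr)=\D_L(x)$. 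For $\D_L(x)\subseteq I$: if $j'\in\D_L(x)\setminus I$, then since $\D_L(x)$ is independent (this is precisely where semidistrimness enters) and $I\subseteq\D_L(x)$, every $j\in I$ satisfies $j\leq\kappa_L(j')$, whence $x=\bigvee I\leq\kappa_L(j')$; but $j'\leq x$, giving $j'\leq\kappa_L(j')$, which is impossible for a pairing. So the uniform argument you gesture at does work and requires no reconciliation with the case analysis --- it replaces it --- but in the proposal it remains a plan, with its one cited tool pointing at a circular dependency.
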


\begin{proof}
The result is trivial if $|L|=1$, so we may assume $|L|\geq 2$ and proceed by induction on $|L|$. Let us prove that $\D_L$ is bijective; the proof that $\U_L$ is bijective follows from a completely analogous dual argument. We know that $\D_L$ is injective by \Cref{thm:labels}, so it suffices to show that it is surjective. Let $(j_0,m_0)$ be a dismantling pair for $L$, and let $L_0=[\hat 0,m_0]$ and $L^0=[j_0,\hat 1]$. The intervals $L_0$ and $L^0$ are semidistrim by \Cref{lem:decomposition_intervals_semidistrim}.

Fix $I\in\I(G_L)$. If $I\cap(\In(j_0)\cup\{j_0\})=\emptyset$, then \Cref{prop:galois} implies that $I$ is an independent set of $G_{L_0}$, so it follows by induction that there exists $z\in L_0$ such that $\D_{L_0}(z)=I$. But $\D_{L_0}(z)=\D_L(z)$ by \Cref{cor:label_sets_in_intervals}, so $I$ is in the image of $\D_L$. 

We may now assume that $I\cap(\In(j_0)\cup\{j_0\})\neq\emptyset$. If $j_0\in I$, then $I\cap\Out(j_0)=\emptyset$ because $I$ is an independent set. \Cref{prop:no_prob} tells us that every vertex in $\In(j_0)$ is adjacent to every vertex in $\Out(j_0)$; therefore, if $I$ contains an element of $\In(j_0)$, then $I\cap\Out(j_0)=\emptyset$. In either case, $I\cap\Out(j_0)=\emptyset$. According to \Cref{prop:galois}, $\alpha(I\setminus\{j_0\})\in \I(G_{L^0})$, so we can use induction on the size of $L$ to see that there exists $y\in L^0$ with $\D_{L^0}(y)=\alpha(I\setminus\{j_0\})$. But \Cref{cor:label_sets_in_intervals} tells us that $\alpha^{-1}(\D_{L^0}(y))=\D_L(y)\setminus\{j_0\}$, so $\D_L(y)\setminus\{j_0\}=I\setminus\{j_0\}$. We will prove that $\D_L(y)=I$, which will show that $I$ is in the image of $\D_L$, as desired. So assume by way of contradiction that $\D_L(y)\neq I$. This implies that either $I=\D_L(y)\sqcup\{j_0\}$ or $\D_L(y)=I\sqcup\{j_0\}$. 

If $I=\D_L(y)\sqcup\{j_0\}$, then $\D_L(y)$ is an independent set in $G_L$ that does not intersect $\In(j_0)\cup\{j_0\}$, so we know from an earlier case of this proof that $\D_L(y)=\D_L(z)$ for some $z\in L_0$. However, this contradicts the fact that $\D_L$ is injective (since $y\in L^0$ and $z\in L_0$). 

If $\D_L(y)=I\sqcup\{j_0\}$, then since we have assumed $I\cap(\In(j_0)\cup\{j_0\})\neq\emptyset$, the set $I$ must contain an element of $\In(j_0)$. However, this contradicts the fact that $\D_L(y)$ is an independent set in $G_L$. 
\end{proof}

\begin{remark}
Semidistrim lattices do not form the most general class of lattices for which the map $\I(G_L) \to L$ given by $I \mapsto \bigvee I$ is a bijection.  \Cref{fig:not_defining} gives an example of an overlapping---but not compatibly dismantlable---lattice with this property. 
\end{remark}

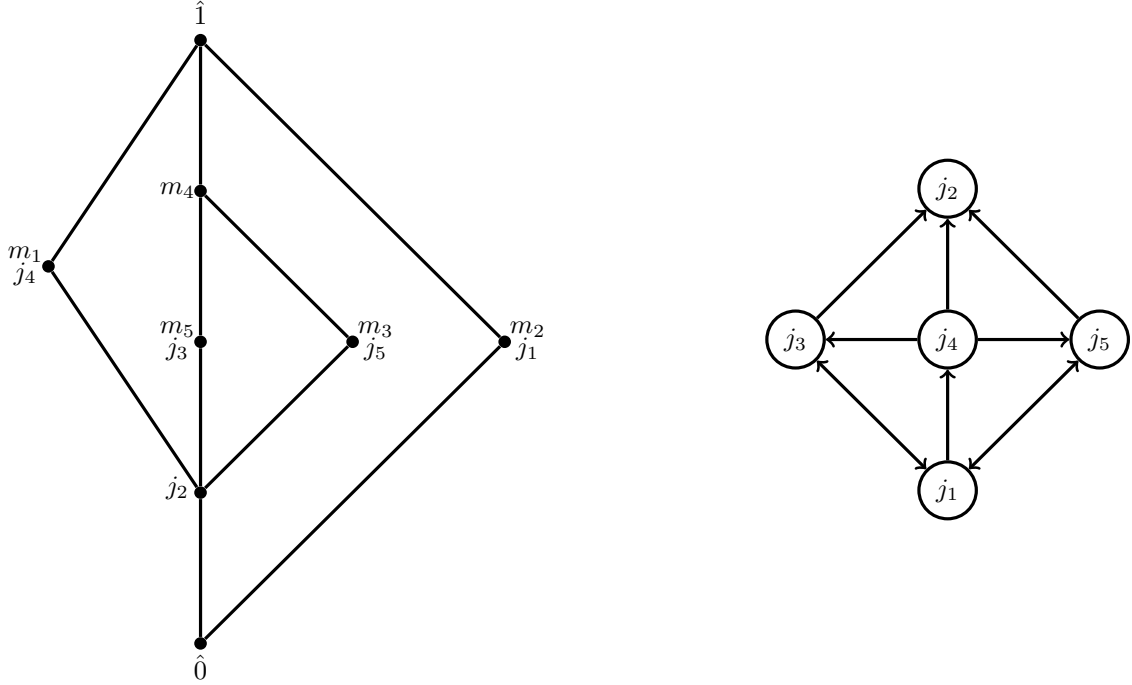
\begin{figure}[htbp]
\begin{center}
\raisebox{-.5\height}{\scalebox{1}{\begin{tikzpicture}[scale=2]
\node[shape=circle,fill=black,scale=0.5,label={[xshift=0ex, yshift=-4.5ex]$\hat{0}$}] (0) at (0,0) {};
\node[shape=circle,fill=black,scale=0.5,label={[xshift=-2ex, yshift=-2ex]$j_2$}] (1) at (0,1) {};
\node[shape=circle,fill=black,scale=0.5,label={[xshift=-2ex, yshift=-1ex]$m_1$},label={[xshift=-2ex, yshift=-3ex]$j_4$}] (2) at (-1,2.5) {};
\node[shape=circle,fill=black,scale=0.5,label={[xshift=-2ex, yshift=-3ex]$j_3$},label={[xshift=-2ex, yshift=-1ex]$m_5$}] (3) at (0,2) {};
\node[shape=circle,fill=black,scale=0.5,label={[xshift=2ex, yshift=-3ex]$j_5$},label={[xshift=2ex, yshift=-1ex]$m_3$}] (4) at (1,2) {};
\node[shape=circle,fill=black,scale=0.5,label={[xshift=2ex, yshift=-1ex]$m_2$},label={[xshift=2ex, yshift=-3ex]$j_1$}] (5) at (2,2) {};
\node[shape=circle,fill=black,scale=0.5,label={[xshift=-2ex, yshift=-2ex]$m_4$}] (6) at (0,3) {};
\node[shape=circle,fill=black,scale=0.5,label={[xshift=0ex, yshift=0ex]$\hat{1}$}] (7) at (0,4) {};
\draw[very thick] (0) to (1) to (3) to (6) to (7) to (5) to (0);
\draw[very thick] (1) to (2) to (7);
\draw[very thick] (1) to (4) to (6);
\end{tikzpicture}}}
\hspace{1in}
\raisebox{-.5\height}{\scalebox{1}{\begin{tikzpicture}[scale=2]
\node[very thick,shape=circle,fill=white,draw=black] (0) at (0,1) {$j_2$};
\node[very thick,shape=circle,fill=white,draw=black] (1) at (0,-1) {$j_1$};
\node[very thick,shape=circle,fill=white,draw=black] (2) at (1,0) {$j_5$};
\node[very thick,shape=circle,fill=white,draw=black] (3) at (0,0) {$j_4$};
\node[very thick,shape=circle,fill=white,draw=black] (4) at (-1,0) {$j_3$};
\draw[very thick,->] (3) to (0);
\draw[very thick,->] (1) to (3);
\draw[very thick,->] (3) to (4);
\draw[very thick,->] (3) to (2);
\draw[very thick,<->] (4) to (1);
\draw[very thick,->] (2) to (0);
\draw[very thick,->] (4) to (0);
\draw[very thick,<->] (2) to (1);
\end{tikzpicture}}}
\end{center}

\caption{{\it Left:} An overlapping lattice that is not compatibly dismantlable; the map $\I(G_L) \to L$ given by $I \mapsto \bigvee I$ is a bijection.  {\it Right:} The corresponding Galois graph.}
\label{fig:not_defining}
\end{figure}

If $L$ is semidistrim and $x\in L$, then the sets $\D_L(x)$ and $\U_L(x)$ are independent sets in $G_L$ by definition. The next theorem tells us more precisely how these sets fit together within the Galois graph. 

Suppose $G$ is a directed graph. An \defn{orthogonal pair} of $G$ is a pair $(X,Y)$ such that $X$ and $Y$ are disjoint independent sets of $G$ and such that there does not exist an edge of the form $j\to j'$ with $j\in X$ and $j'\in Y$. An orthogonal pair $(X,Y)$ is called \defn{tight} if the following additional conditions hold: 
\begin{itemize}
    \item If $j$ is a vertex of $G$ that is not in $X\cup Y$, then the pairs $(X\cup\{j\},Y)$ and $(X,Y\cup\{j\})$ are not orthogonal pairs. 
    \item If $j\to j'$ is an edge in $G$ such that $j\not\in X\cup Y$ and $j'\in X$, then the pair $((X\setminus\{j'\})\cup\{j\},Y)$ is not an orthogonal pair.
    \item If $j'\to j$ is an edge in $G$ such that $j\not\in X\cup Y$ and $j'\in Y$, then the pair $(X,(Y\setminus\{j'\})\cup\{j\})$ is not an orthogonal pair.
\end{itemize}

\begin{theorem}\label{thm:tops}
Let $L$ be a semidistrim lattice. For every $x\in L$, the pair $(\D_L(x),\U_L(x))$ is a tight orthogonal pair of $G_L$. 
\end{theorem}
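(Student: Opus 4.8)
The plan is to settle the three ingredients of an orthogonal pair first (disjointness, independence, absence of forward edges) and then the three tightness conditions, leaning throughout on \Cref{thm:independent} (that $\D_L$ and $\U_L$ are bijections onto $\I(G_L)$) together with the identities $x=\bigvee\D_L(x)=\bigwedge\kappa_L(\U_L(x))$ from \Cref{thm:labels}. For orthogonality itself, \Cref{prop:covers} shows that every down-label $j\in\D_L(x)$ satisfies $j\le x$ while every up-label $j'\in\U_L(x)$ satisfies $\kappa_L(j')\ge x$; thus $\D_L(x)\subseteq J_L(x)$ and $\U_L(x)\subseteq M_L(x)$, which are disjoint, so $\D_L(x)\cap\U_L(x)=\emptyset$. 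Both sets are independent by the definition of a semidistrim lattice. Finally, for $j\in\D_L(x)$ and $j'\in\U_L(x)$ we have $j\le x\le\kappa_L(j')$, so $j\le\kappa_L(j')$ and there is no edge $j\to j'$; hence $(\D_L(x),\U_L(x))$ is an orthogonal pair.

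For the first tightness condition, fix $j\notin\D_L(x)\cup\U_L(x)$ and suppose $(\D_L(x)\cup\{j\},\U_L(x))$ were orthogonal. Then $\D_L(x)\cup\{j\}$ is independent, so surjectivity of $\D_L$ gives $z\in L$ with $\D_L(z)=\D_L(x)\cup\{j\}$, and \Cref{thm:labels} forces $z=\bigvee\D_L(z)=x\vee j$. If $j\le x$ then $z=x$, so $\D_L(z)=\D_L(x)$, contradicting $j\notin\D_L(x)$. Otherwise $z>x$, and choosing a cover $x\lessdot w\le z$ with up-label $p=j_{xw}\in\U_L(x)$, \Cref{prop:covers} gives $w\wedge\kappa_L(p)=x$, so $\kappa_L(p)\not\ge w$ and hence $\kappa_L(p)\not\ge z$; since $x\le\kappa_L(p)$ this forces $j\not\le\kappa_L(p)$, i.e.\ an edge $j\to p$ into $\U_L(x)$, a contradiction. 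The statement that $(\D_L(x),\U_L(x)\cup\{j\})$ is not orthogonal follows from the dual argument using $\U_L$ and $x=\bigwedge\kappa_L(\U_L(x))$.

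The remaining two (edge-swap) conditions are where I expect the main obstacle to lie. Consider condition (2): an edge $j\to j'$ with $j\notin\D_L(x)\cup\U_L(x)$ and $j'\in\D_L(x)$, and suppose $X'=(\D_L(x)\setminus\{j'\})\cup\{j\}$ gave an orthogonal pair $(X',\U_L(x))$. The absence of edges $j\to j''$ with $j''\in\U_L(x)$ together with \Cref{thm:labels} forces $j\le\bigwedge\kappa_L(\U_L(x))=x$; writing $z=\bigvee X'$, independence of $X'$ and \Cref{thm:independent} give $\D_L(z)=X'$, and one checks that $z<x$, that $z\vee j'=x$, and that $j'\not\le z$. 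The difficulty is to convert this configuration into a forbidden edge. The natural route is to pick a cover $z\lessdot w\le x$, apply the already-established orthogonality of $(\D_L(z),\U_L(z))$ to its up-label $p=j_{zw}$, and show $p\notin\D_L(x)\cup\U_L(x)$, so that the first tightness condition applied to $p$ against $(\D_L(x),\U_L(x))$ produces an edge whose endpoints contradict the orthogonality of $(X',\U_L(x))$. Controlling the cover structure of the interval $[z,x]$ — pinning down exactly which labels $p$ can arise — is the crux, and here I expect to need \Cref{lem:meets_are_labeled}. Condition (3) is then handled by the dual argument, interchanging the roles of $\D_L,\U_L$ and of joins and meets.

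Should the delicate cover analysis above prove resistant, I would replace it by an induction on $|L|$ through a dismantling pair $(j_0,m_0)$, exactly as in the proof of \Cref{thm:independent}. There \Cref{prop:galois} identifies $G_{L_0}$ and $G_{L^0}$ with induced subgraphs of $G_L$, while \Cref{cor:label_sets_in_intervals} relates $\D_L(x),\U_L(x)$ to $\D_{L_0},\U_{L_0}$ (when $x\le m_0$) or to $\D_{L^0},\U_{L^0}$ via $\alpha$ (when $x\ge j_0$). One would then verify that a tight orthogonal pair of a subgraph lifts to a tight orthogonal pair of $G_L$, checking separately the edges incident to the join-prime vertex $j_0$ and using \Cref{prop:no_prob} to control how $\In(j_0)$ and $\Out(j_0)$ interact; this localizes the only genuinely new verifications to the vertex $j_0$ and makes all three tightness conditions fall out of the inductive hypothesis.
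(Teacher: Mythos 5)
Your handling of orthogonality and of the first tightness condition is sound: the argument via the element $z$ with $\D_L(z)=\D_L(x)\cup\{j\}$, $z=x\vee j$, and an up-label $p=j_{xw}$ of a cover $x\lessdot w\leq z$ is correct, though more laborious than needed (one can argue directly that if both $\In(j)\cap\D_L(x)$ and $\Out(j)\cap\U_L(x)$ were empty, then \Cref{thm:labels} would force $j\leq x\leq\kappa_L(j)$, which is absurd). The genuine gap is in the edge-swap conditions, which are the heart of the theorem, and you acknowledge as much: you set up the configuration correctly ($j\leq x$, $z=\bigvee X'<x$ with $\D_L(z)=X'$, $z\vee j'=x$, $j'\not\leq z$) but never convert it into a contradiction. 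Worse, the mechanism you outline is doubtful on its face: applying the first tightness condition to the up-label $p=j_{zw}$ of a cover $z\lessdot w\leq x$ yields an edge with only one endpoint constrained to lie in $\D_L(x)$ or $\U_L(x)$, and since $p$ itself need not belong to $X'$, such an edge does not violate the orthogonality of $(X',\U_L(x))$. Your fallback---induction through a dismantling pair with case analysis at $j_0$---is likewise left entirely unexecuted, and neither \Cref{lem:meets_are_labeled} nor any induction is actually needed.

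What closes the argument is a cover \emph{below} $x$, not above $z$: choose $y$ with $z\leq y\lessdot x$ and consider the label $j_{yx}\in\D_L(x)$, which satisfies $z\leq y\leq\kappa_L(j_{yx})$. If $j_{yx}=j'$, then since $j\in X'=\D_L(z)$ gives $j\leq z$, we get $j\leq z\leq\kappa_L(j')$, contradicting the hypothesized edge $j\to j'$. If $j_{yx}\neq j'$, then $j_{yx}\in\D_L(x)\setminus\{j'\}\subseteq\D_L(z)$, so $j_{yx}\leq z\leq\kappa_L(j_{yx})$, which is impossible. This two-case dichotomy replaces your ``delicate cover analysis'' entirely, and condition (3) then follows by the dual argument, exactly as you anticipated.
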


\begin{proof}
Fix $x\in L$. If $j\in\D_L(x)$ and $j'\in \U_L(x)$, then $j\leq x\leq\kappa_L(j')$, so there is no edge $j\to j'$ in $G_L$. This shows that $(\D_L(x),\U_L(x))$ is an orthogonal pair. We need to show that this pair is tight. 

First, suppose $j$ is a vertex of $G_L$ that is not in $\D_L(x)\cup\U_L(x)$. To prove that $(\D_L(x)\cup\{j\},\U_L(x))$ and $(\D_L(x),\U_L(x)\cup\{j\})$ are not orthogonal pairs, it suffices to show that one of the sets $\In(j)\cap\D_L(x)$ or $\Out(j)\cap\U_L(x)$ is nonempty. Suppose instead that both of these sets are empty. Since $\Out(j)\cap\U_L(x)=\emptyset$, we have $j\leq\kappa_L(j')$ for all $j'\in \U_L(x)$, so it follows from \Cref{thm:labels} that $j\leq x$. Since $\In(j)\cap\D_L(x)=\emptyset$, we have $j'\leq\kappa_L(j)$ for all $j'\in \D_L(x)$, so it follows from \Cref{thm:labels} that $x\leq\kappa_L(j)$. Putting these together, we find that $j\leq x\leq\kappa_L(j)$, which is impossible. This proves that $(\D_L(x)\cup\{j\},\U_L(x))$ and $(\D_L(x),\U_L(x)\cup\{j\})$ are not orthogonal pairs. 

Now suppose there is an edge $j\to j'$ in $G_L$ with $j\not\in\D_L(x)\cup\U_L(x)$ and $j'\in\D_L(x)$. Our goal is to show that $((\D_L(x)\setminus\{j'\})\cup\{j\},\U_L(x))$ is not an orthogonal pair. Suppose instead that this pair is orthogonal. Then $(\D_L(x)\setminus\{j'\})\cup\{j\}$ is an independent set in $G_L$, so \Cref{thm:independent} tells us that there exists $x'\in L$ with $\D_L(x')=(\D_L(x)\setminus\{j'\})\cup\{j\}$. Furthermore, we have $\Out(j)\cap\U_L(x)=\emptyset$, so $j\leq\kappa_L(j'')$ for all $j''\in\U_L(x)$. By \Cref{thm:labels}, this implies that $j\leq x$. We also know that $j''\leq x$ for all $j''\in\D_L(x)$, so \Cref{thm:labels} tells us that $x'=\bigvee((\D_L(x)\setminus\{j'\})\cup\{j\})\leq x$. Because $\D_L(x)\neq\D_L(x')$, we must have $x'<x$. Let $y$ be an element such that $x'\leq y\lessdot x$, and consider the label $j_{yx}$ of the cover relation $y\lessdot x$. We have $x'\leq y\leq\kappa_L(j_{yx})$. If $j_{yx}=j'$, then $j\leq x'\leq\kappa_L(j_{yx})=\kappa_L(j')$, contradicting the assumption that the edge $j\to j'$ appears in $G_L$. On the other hand, if $j_{yx}$ is not $j'$, then $j_{yx}\in\D_L(x)\setminus\{j'\}\subseteq\D_L(x')$, so $j_{yx}\leq x'\leq\kappa_L(j_{yx})$, which is impossible. 

Finally, we must show that if there is an edge $j'\to j$ in $G_L$ such that $j\not\in \D_L(x)\cup \U_L(x)$ and $j'\in \U_L(x)$, then $(\D_L(x),(\U_L(x)\setminus\{j'\})\cup\{j\})$ is not an orthogonal pair. This follows from an argument that is dual to the one used in the previous paragraph. 
\end{proof}

\begin{remark}
Given a semidistrim lattice $L$, \Cref{thm:tops} yields a map $x\mapsto (\D_L(x),\U_L(x))$ from $L$ to the set of tight orthogonal pairs of $G_L$. \Cref{thm:labels} implies that this map is injective.
Thomas and the second author \cite{thomas2019independence} proved that if $L$ is trim, then this map is actually a bijection. However, for arbitrary semidistrim lattices, the map can fail to be surjective. An example is given in~\Cref{fig:tops}.
\end{remark}

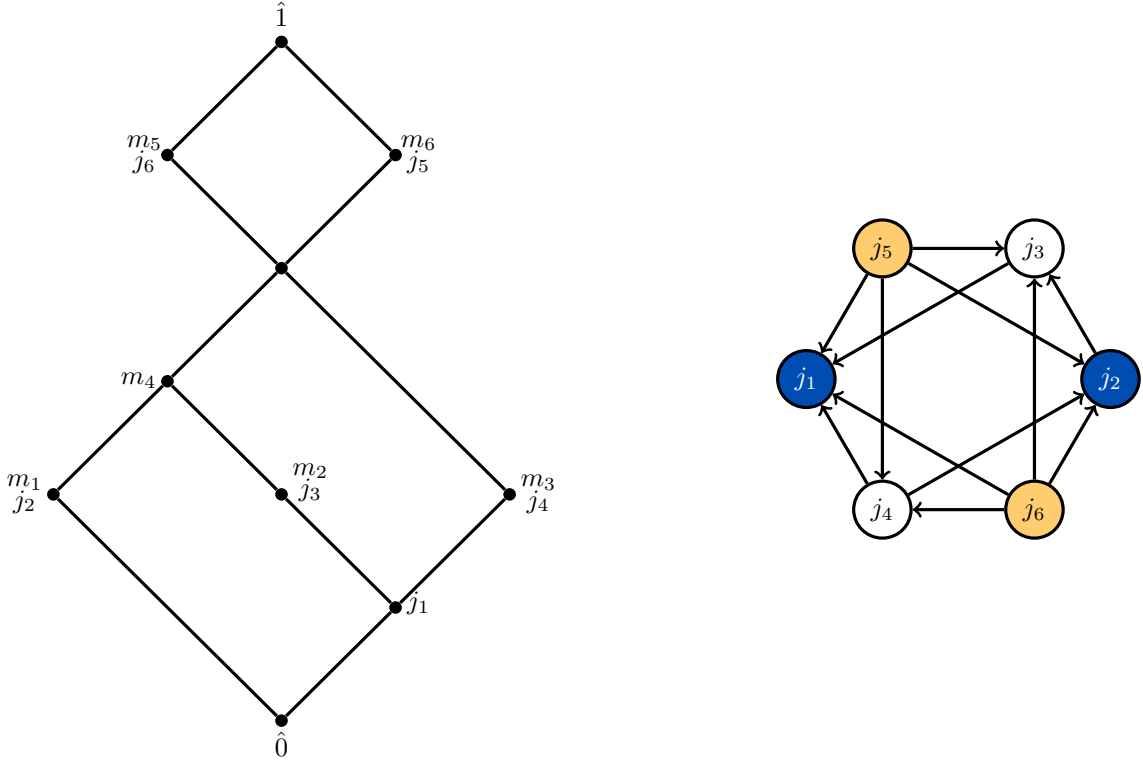
\begin{figure}[htbp]
\begin{center}
\raisebox{-.5\height}{\scalebox{1}{\begin{tikzpicture}[scale=1.5]
\node[shape=circle,fill=black,scale=0.5,label={[xshift=0ex, yshift=-4.5ex]$\hat{0}$}] (0) at (0,0) {};
\node[shape=circle,fill=black,scale=0.5,label={[xshift=2ex, yshift=-2ex]$j_1$}] (j1) at (1,1) {};
\node[shape=circle,fill=black,scale=0.5,label={[xshift=-2.5ex, yshift=-2ex]$m_4$}] (m4) at (-1,3) {};

\node[shape=circle,fill=black,scale=0.5,label={[xshift=-2.5ex, yshift=-1ex]$m_1$},label={[xshift=-2.5ex, yshift=-3ex]$j_2$}] (j2) at (-2,2) {};
\node[shape=circle,fill=black,scale=0.5,label={[xshift=2.5ex, yshift=-2ex]$j_3$},label={[xshift=2.5ex, yshift=0ex]$m_2$}] (j3) at (0,2) {};
\node[shape=circle,fill=black,scale=0.5,label={[xshift=2.5ex, yshift=-3ex]$j_4$},label={[xshift=2.5ex, yshift=-1ex]$m_3$}] (j4) at (2,2) {};
\node[shape=circle,fill=black,scale=0.5] (11) at (0,4) {};

\node[shape=circle,fill=black,scale=0.5,label={[xshift=-2ex, yshift=-3ex]$j_6$},label={[xshift=-2ex, yshift=-1ex]$m_5$}] (j6) at (-1,5) {};
\node[shape=circle,fill=black,scale=0.5,label={[xshift=2ex, yshift=-3ex]$j_5$},label={[xshift=2ex, yshift=-1ex]$m_6$}] (j5) at (1,5) {};
\node[shape=circle,fill=black,scale=0.5,label={[xshift=0ex, yshift=0ex]$\hat{1}$}] (1) at (0,6) {};
\draw[very thick] (0) to (j1) to (j4) to (11) to (j6) to (1) to (j5) to (11) to (m4) to (j2) to (0);
\draw[very thick] (j1) to (j3) to (m4);
\end{tikzpicture}}}
\hspace{1in}
\raisebox{-.5\height}{\scalebox{1}{\begin{tikzpicture}[scale=2]
\node[shape=circle,draw=black,fill=join,very thick] (1) at (-1,0) {\textcolor{white}{$j_1$}};
\node[shape=circle,draw=black,fill=join,very thick] (2) at (1,0) {\textcolor{white}{$j_2$}};
\node[shape=circle,draw=black,very thick] (3) at (.5,.866) {$j_3$};
\node[shape=circle,draw=black,very thick] (4) at (-.5,-.866) {$j_4$};
\node[shape=circle,draw=black,fill=meet,very thick] (5) at (-.5,.866) {$j_5$};
\node[shape=circle,draw=black,fill=meet,very thick] (6) at (.5,-.866) {$j_6$};
\draw[->,very thick] (4) to (1);
\draw[->,very thick] (3) to (1);
\draw[->,very thick] (5) to (1);
\draw[->,very thick] (6) to (1);
\draw[->,very thick] (6) to (2);
\draw[->,very thick] (5) to (2);
\draw[->,very thick] (4) to (2);
\draw[->,very thick] (2) to (3);
\draw[->,very thick] (6) to (3);
\draw[->,very thick] (5) to (3);
\draw[->,very thick] (6) to (4);
\draw[->,very thick] (5) to (4);
\end{tikzpicture}}}
\end{center}

\caption{{\it Left:} A semidistrim lattice $L$.  {\it Right:} The corresponding Galois graph, with a tight orthogonal pair $(X,Y)$ that is not $(\D(x),\U(x))$ for any element $x \in L$ (the elements of $X$ are shaded blue, while the elements of $Y$ are shaded yellow).}
\label{fig:tops}
\end{figure}

\begin{remark}
Even though semidistributive lattices and trim lattices have join-prime atoms and meet-prime coatoms (not necessarily corresponding to each other under the pairing), the same is not generally true of semidistrim lattices.  A counterexample is given in~\Cref{fig:noatoms}.
\end{remark}

\begin{figure}[htbp]
\begin{center}
\raisebox{-.5\height}{\scalebox{1}{\begin{tikzpicture}[scale=1]
\node[shape=circle,fill=black, scale=0.5] (1) at (0,0) {};
\node[shape=circle,fill=black, scale=0.5] (2) at (-1,1) {};
\node[shape=circle,fill=black, scale=0.5] (3) at (1,1) {};
\node[shape=circle,fill=black, scale=0.5] (4) at (-2,2) {};
\node[shape=circle,fill=black, scale=0.5] (5) at (-1,2) {};
\node[shape=circle,fill=black, scale=0.5] (6) at (1,2) {};
\node[shape=circle,fill=black, scale=0.5] (7) at (2,2) {};
\node[shape=circle,fill=black, scale=0.5] (8) at (0,3) {};
\node[shape=circle,fill=black, scale=0.5] (9) at (0,4) {};
\node[shape=circle,fill=black, scale=0.5] (10) at (.5,4.5) {};
\node[shape=circle,fill=black, scale=0.5] (11) at (0,9.5) {};
\node[shape=circle,fill=black, scale=0.5] (12) at (-1,8.5) {};
\node[shape=circle,fill=black, scale=0.5] (13) at (1,8.5) {};
\node[shape=circle,fill=black, scale=0.5] (14) at (-2,7.5) {};
\node[shape=circle,fill=black, scale=0.5] (15) at (-1,7.5) {};
\node[shape=circle,fill=black, scale=0.5] (16) at (1,7.5) {};
\node[shape=circle,fill=black, scale=0.5] (17) at (2,7.5) {};
\node[shape=circle,fill=black, scale=0.5] (18) at (0,6.5) {};
\node[shape=circle,fill=black, scale=0.5] (19) at (0,5.5) {};
\node[shape=circle,fill=black, scale=0.5] (20) at (.5,5) {};
\draw[very thick] (1) to (2) to (4) to (9) to (10) to (7) to (3) to (1);
\draw[very thick] (2) to (5) to (8) to (9);
\draw[very thick] (3) to (6) to (8);
\draw[very thick] (11) to (12) to (14) to (19) to (20) to (17) to (13) to (11);
\draw[very thick] (12) to (15) to (18) to (19);
\draw[very thick] (13) to (16) to (18);
\draw[very thick] (10) to (20);
\end{tikzpicture}}}
\end{center}
\caption{A semidistrim lattice with no join-prime atom or meet-prime coatom (in contrast to semidistributive and trim lattices).}
\label{fig:noatoms}
\end{figure}
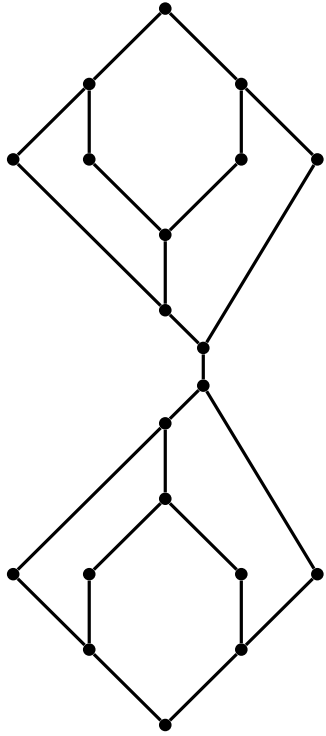

\section{Products and Intervals of Semidistrim Lattices}\label{sec:products_intervals}

The goal of this section is to show that products of semidistrim lattices are semidistrim and that intervals in semidistrim lattices are semidistrim. The proof of the statement about products is more straightforward, so we will begin with that. 

\subsection{Products}

Recall that if $P$ and $P'$ are two posets, then their \defn{product} is the poset $P\times P'$ whose underlying set is the Cartesian product of $P$ and $P'$, where $(x,x')\leq (y,y')$ if and only if $x\leq y$ in $P$ and $x'\leq y'$ in $P'$. We will actually show that the classes of uniquely paired lattices, compatibly dismantlable lattices, and semidistrim lattices are each closed under taking products. 

In what follows, $L$ and $L'$ are lattices. The product $L\times L'$ is also a lattice, and its meet and join operations are given by $(x,y)\wedge(x',y')=(x\wedge y,x'\wedge y')$ and $(x,y)\vee(x',y')=(x\vee y,x'\vee y')$. We write $\hat 0$ and $\hat 1$ (respectively, $\hat 0'$ and $\hat 1'$) for the minimum and maximum elements of $L$ (respectively, $L'$). It is straightforward to check that 
\begin{equation}\label{Eq1}\J_{L\times L'}=(\J_L\times\{\hat 0'\})\sqcup(\{\hat 0\}\times\J_{L'})\quad\text{and}\quad\M_{L\times L'}=(\M_L\times\{\hat 1'\})\sqcup(\{\hat 1\}\times\M_{L'}).
\end{equation}

\begin{proposition}\label{prop:product_uniquely}
If $L$ and $L'$ are uniquely paired, then $L\times L'$ is uniquely paired. 
\end{proposition}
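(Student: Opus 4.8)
The plan is to build the unique pairing of $L\times L'$ directly from the unique pairings $\kappa_L$ and $\kappa_{L'}$ of the factors, using the product description of irreducibles in \eqref{Eq1}. First I would record how covers behave on the relevant irreducibles: for $j\in\J_L$ the element $(j,\hat 0')$ covers exactly $(j_*,\hat 0')$, and for $m\in\M_L$ the element $(m,\hat 1')$ is covered exactly by $(m^*,\hat 1')$; symmetric statements hold for the irreducibles coming from $L'$. With this in hand, I define the candidate map $\kappa$ by $\kappa(j,\hat 0')=(\kappa_L(j),\hat 1')$ and $\kappa(\hat 0,j')=(\hat 1,\kappa_{L'}(j'))$, which is a bijection $\J_{L\times L'}\to\M_{L\times L'}$ by \eqref{Eq1}.

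To see that $\kappa$ is a pairing, I would verify the three conditions of \Cref{lem:paired_check} coordinatewise. For instance, taking $j=(j_1,\hat 0')$ and $m=\kappa(j)=(\kappa_L(j_1),\hat 1')$, the inequalities $m\geq j_*$ and $m^*\geq j$ reduce (using the cover computations above together with $\hat 1'\geq\hat 0'$) to $\kappa_L(j_1)\geq (j_1)_*$ and $(\kappa_L(j_1))^*\geq j_1$, both of which hold because $\kappa_L$ is a pairing; and $m\not\geq j$ reduces to $\kappa_L(j_1)\not\geq j_1$, again valid for the pairing $\kappa_L$. The irreducibles coming from $L'$ are handled identically. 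This establishes existence.

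The heart of the argument is uniqueness, and the key observation is that a pairing cannot mix the two blocks. Suppose $\kappa$ is any pairing and consider $j=(j_1,\hat 0')$ with $j_1\in\J_L$. If $\kappa(j)$ were of the form $(\hat 1,m')$ with $m'\in\M_{L'}$, then $(\hat 1,m')\geq(j_1,\hat 0')$ holds automatically, contradicting the basic fact (noted after \Cref{def:uniquely_paired}) that $j\not\leq\kappa(j)$ for every pairing. Hence $\kappa$ sends $\J_L\times\{\hat 0'\}$ into $\M_L\times\{\hat 1'\}$, and symmetrically it sends $\{\hat 0\}\times\J_{L'}$ into $\{\hat 1\}\times\M_{L'}$; since $L$ and $L'$ are uniquely paired we have $|\J_L|=|\M_L|$ and $|\J_{L'}|=|\M_{L'}|$, so $\kappa$ restricts to a bijection on each block.

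Finally, writing $\kappa(j_1,\hat 0')=(\tilde\kappa(j_1),\hat 1')$ defines a bijection $\tilde\kappa\colon\J_L\to\M_L$, and running the coordinatewise reduction from the existence step in reverse shows that the three conditions of \Cref{lem:paired_check} for $\kappa$ force the same three conditions for $\tilde\kappa$; thus $\tilde\kappa$ is a pairing of $L$, whence $\tilde\kappa=\kappa_L$ by unique pairedness. The analogous argument on the other block yields $\kappa_{L'}$, so $\kappa$ agrees with the candidate map and the pairing is unique. I expect the only delicate point to be the block-preservation step, since everything else is a routine coordinatewise translation; the essential input there is precisely that any element of $\{\hat 1\}\times\M_{L'}$ dominates every element of $\J_L\times\{\hat 0'\}$, which is exactly what rules out a cross-pairing.
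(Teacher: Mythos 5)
Your proof is correct and follows essentially the same route as the paper's: the same candidate pairing $\kappa(j,\hat 0')=(\kappa_L(j),\hat 1')$, $\kappa(\hat 0,j')=(\hat 1,\kappa_{L'}(j'))$, and the same key uniqueness step, namely that $j\not\leq\kappa(j)$ forces any pairing to preserve the two blocks of the decomposition \eqref{Eq1}, after which unique pairedness of the factors pins down the restrictions. The only difference is one of presentation: the paper leaves the coordinatewise verifications as routine checks, whereas you spell them out via \Cref{lem:paired_check} and the cover computations $(j,\hat 0')_*=(j_*,\hat 0')$ and $(m,\hat 1')^*=(m^*,\hat 1')$.
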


\begin{proof}
Suppose $\lambda\colon\J_{L\times L'}\to\M_{L\times L'}$ is a pairing. If $j\in\J_L$, then since $(j,\hat 0')\not\leq\lambda(j,\hat 0')$, it follows from \eqref{Eq1} that $\lambda(j,\hat 0')=(\mu(j),\hat 1')$ for some $\mu(j)\in\M_L$. Similarly, if $j'\in\J_{L'}$, then $\lambda(\hat 0,j')=(\hat 1,\mu'(j'))$ for some $\mu'(j')\in\M_{L'}$. It is straightforward to check that the resulting maps $\mu\colon\J_L\to\M_L$ and $\mu'\colon\J_{L'}\to\M_{L'}$ are pairings on $L$ and $L'$, respectively. Since $L$ and $L'$ are uniquely paired, we have $\mu=\kappa_L$ and $\mu'=\kappa_{L'}$. This shows that if $L\times L'$ is paired, then it is uniquely paired. On the other hand, one can readily check that the map $\kappa_{L\times L'}\colon\J_{L\times L'}\to\M_{L\times L'}$ given by $\kappa_{L\times L'}(j,\hat 0')=(\kappa_L(j),\hat 1')$ and $\kappa_{L\times L'}(\hat 0,j')=(\hat 1,\kappa_{L'}(j'))$ is indeed a pairing on $L\times L'$. 
\end{proof}

\begin{proposition}\label{prop:product_compatibly}
If $L$ and $L'$ are compatibly dismantlable, then $L\times L'$ is compatibly dismantlable. 
\end{proposition}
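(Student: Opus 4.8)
The plan is to argue by induction on $|L\times L'|=|L|\cdot|L'|$. The base case is $|L\times L'|=1$, where both factors are trivial and $L\times L'$ is vacuously compatibly dismantlable. For the inductive step, note that compatible dismantlability is a lattice-isomorphism invariant and that $L\times L'\cong L'\times L$, so we may assume without loss of generality that $|L|\geq 2$. Then $L$ admits a dismantling pair $(j_0,m_0)$, giving $L=L_0\sqcup L^0$ with $L_0=[\hat 0,m_0]$ and $L^0=[j_0,\hat 1]$. Since $j_0$ is join-irreducible and $m_0$ is meet-irreducible, we have $j_0\neq\hat 0$ and $m_0\neq\hat 1$, so $|L_0|<|L|$ and $|L^0|<|L|$. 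The candidate dismantling pair for the product will be $\bigl((j_0,\hat 0'),(m_0,\hat 1')\bigr)$. First I would check that this is a prime pair: computing the two intervals gives $[(\hat 0,\hat 0'),(m_0,\hat 1')]=L_0\times L'$ and $[(j_0,\hat 0'),(\hat 1,\hat 1')]=L^0\times L'$, and since $L=L_0\sqcup L^0$ as sets we get $L\times L'=(L_0\times L')\sqcup(L^0\times L')$; \Cref{prop:join_decomp} then certifies that $(j_0,\hat 0')$ is join-prime and $(m_0,\hat 1')$ is meet-prime. Because $|L_0\times L'|,|L^0\times L'|<|L\times L'|$, the inductive hypothesis gives that both intervals are compatibly dismantlable.

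It remains to verify the two compatibility conditions of \Cref{def:semidistrim}. Using \eqref{Eq1} to describe $\J_{L\times L'}=(\J_L\times\{\hat 0'\})\sqcup(\{\hat 0\}\times\J_{L'})$ together with the explicit product pairing from \Cref{prop:product_uniquely}, I would identify the domain of the required bijection $\alpha_{L\times L'}$. For a join-irreducible of the form $(j,\hat 0')$ the condition $(j_0,\hat 0')\leq\kappa_{L\times L'}(j,\hat 0')=(\kappa_L(j),\hat 1')$ reduces to $j_0\leq\kappa_L(j)$, while for a join-irreducible of the form $(\hat 0,j')$ the condition $(j_0,\hat 0')\leq(\hat 1,\kappa_{L'}(j'))$ holds automatically. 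Thus the domain is $\{(j,\hat 0'):j_0\leq\kappa_L(j)\}\sqcup(\{\hat 0\}\times\J_{L'})$. The map $\alpha_{L\times L'}(\mathbf j)=(j_0,\hat 0')\vee\mathbf j$ sends $(j,\hat 0')\mapsto(j_0\vee j,\hat 0')=(\alpha(j),\hat 0')$ and $(\hat 0,j')\mapsto(j_0,j')$, which, by \eqref{Eq1} applied to $L^0\times L'$ (whose first factor has minimum $j_0$), is exactly a bijection onto $\J_{L^0\times L'}=(\J_{L^0}\times\{\hat 0'\})\sqcup(\{j_0\}\times\J_{L'})$; here I use that $\alpha(j)=j_0\vee j\in\J_{L^0}$ from the compatibility condition already known for $L$.

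For the pairing compatibility I would compute both sides directly. In the first case, $\kappa_{L^0\times L'}(\alpha(j),\hat 0')=(\kappa_{L^0}(\alpha(j)),\hat 1')=(\kappa_L(j),\hat 1')=\kappa_{L\times L'}(j,\hat 0')$, where the middle equality is the $\alpha$-compatibility for $L$. In the second case, $\kappa_{L^0\times L'}(j_0,j')=(\hat 1,\kappa_{L'}(j'))=\kappa_{L\times L'}(\hat 0,j')$. This establishes the first compatibility condition, and the second condition (involving $\beta_{L\times L'}(\mathbf m)=(m_0,\hat 1')\wedge\mathbf m$ and the meet-irreducibles) follows from a completely analogous dual computation using the $\beta$-compatibility of $L$. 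I expect the main obstacle to be purely organizational rather than conceptual: the difficulty lies in correctly tracking join- and meet-irreducibles and the pairing across the product via \eqref{Eq1} and \Cref{prop:product_uniquely}, and in particular in observing that the irreducibles coming from the $L'$-factor always satisfy the relevant inequality with $j_0$ or $m_0$ (since $\hat 0\leq j_0\leq\hat 1$ and $\hat 0'\leq\kappa_{L'}(j')$), so that they are routed into the $\{j_0\}\times\J_{L'}$ and $\{m_0\}\times\M_{L'}$ parts and their pairing values are transported unchanged.
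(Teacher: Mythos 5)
Your proposal is correct and takes essentially the same route as the paper's proof: you use the same candidate dismantling pair $\bigl((j_0,\hat 0'),(m_0,\hat 1')\bigr)$, the same identification of the domain $\{(a,b)\in\J_{L\times L'}:(j_0,\hat 0')\leq\kappa_{L\times L'}(a,b)\}=\{(j,\hat 0'):j_0\leq\kappa_L(j)\}\sqcup(\{\hat 0\}\times\J_{L'})$ via \eqref{Eq1} and the product pairing of \Cref{prop:product_uniquely}, and the same map $\alpha_{L\times L'}(\mathbf j)=(j_0,\hat 0')\vee\mathbf j$ with the same transported-pairing verification. The only difference is that you spell out what the paper compresses into ``one readily checks''---the explicit induction on $|L|\cdot|L'|$ (which the paper leaves implicit when invoking compatible dismantlability of the intervals $L_0\times L'$ and $L^0\times L'$), the prime-pair verification via \Cref{prop:join_decomp}, and the $|L|=1$ edge case handled by symmetry---all of which are correct elaborations rather than deviations.
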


\begin{proof}
\Cref{prop:product_uniquely} tells us that $L\times L'$ is uniquely paired, and the proof shows how to compute $\kappa_{L\times L'}$ in terms of $\kappa_L$ and $\kappa_{L'}$. Let $(j_0,m_0)$ and $(j_0',m_0')$ be dismantling pairs for $L$ and $L'$, respectively. One readily checks that $((j_0,\hat 0'),(m_0,\hat 1'))$ is a dismantling pair for $L\times L'$. Using the easily-verified decomposition \[\{(a,b)\in \J_{L\times L'}:(j_0,\hat 0')\leq \kappa_{L\times L'}(a,b)\}=\{(j,\hat 0'):j_0\leq\kappa_L(j)\}\sqcup(\{\hat 0\}\times\J_{L'}),\] one can check that the map $\alpha\colon\{(a,b)\in \J_{L\times L'}:(j_0,\hat 0')\leq \kappa_{L\times L'}(a,b)\}\to\J_{[(j_0,\hat 0'),(\hat 1,\hat 1')]}$ given by $\alpha(a,b)=(j_0,\hat 0')\vee(a,b)$ is a bijection such that $\kappa_{[(j_0,\hat 0'),(\hat 1,\hat 1')]}(\alpha(a,b))=\kappa_{L\times L'}(a,b)$ for all $(a,b)\in\J_{L\times L'}$ with $(j_0,\hat 0')\leq \kappa_{L\times L'}(a,b)$. In other words, the first compatibility condition in \Cref{def:semidistrim} holds for $L\times L'$. The second compatibility condition also holds similarly. 
\end{proof}

\begin{theorem}\label{thm:product_semidistrim}
If $L$ and $L'$ are semidistrim, then $L\times L'$ is semidistrim. 
\end{theorem}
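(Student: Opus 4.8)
The plan is to lean on \Cref{prop:product_compatibly}, which already establishes that $L\times L'$ is compatibly dismantlable and records $\kappa_{L\times L'}$ in terms of $\kappa_L$ and $\kappa_{L'}$. By \Cref{def:semidistrim2}, the only thing left to check is that $\D_{L\times L'}(x,x')$ and $\U_{L\times L'}(x,x')$ are independent sets of $G_{L\times L'}$ for every $(x,x')\in L\times L'$. I would prove this by describing the Galois graph and the label sets of the product completely explicitly in terms of those of the two factors.

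First I would compute $G_{L\times L'}$. Its vertex set is $\J_{L\times L'}=(\J_L\times\{\hat 0'\})\sqcup(\{\hat 0\}\times\J_{L'})$ by \eqref{Eq1}, and the pairing sends $(j,\hat 0')\mapsto(\kappa_L(j),\hat 1')$ and $(\hat 0,j')\mapsto(\hat 1,\kappa_{L'}(j'))$. A direct check of the edge condition $v\not\leq\kappa_{L\times L'}(w)$ then shows three things: the edges among the vertices $\J_L\times\{\hat 0'\}$ are exactly the edges of $G_L$; the edges among $\{\hat 0\}\times\J_{L'}$ are exactly those of $G_{L'}$; and there are no edges between the two parts, since $(j,\hat 0')\leq(\hat 1,\kappa_{L'}(k'))$ and $(\hat 0,k')\leq(\kappa_L(j),\hat 1')$ hold automatically. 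Thus $G_{L\times L'}$ is the disjoint union of $G_L$ and $G_{L'}$, and its independent sets are precisely the sets $I\sqcup I'$ with $I\in\I(G_L)$ and $I'\in\I(G_{L'})$.

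Next I would compute the label sets. From \eqref{Eq1} one obtains $J_{L\times L'}(x,x')=(J_L(x)\times\{\hat 0'\})\sqcup(\{\hat 0\}\times J_{L'}(x'))$ and $M_{L\times L'}(x,x')=(M_L(x)\times\{\hat 0'\})\sqcup(\{\hat 0\}\times M_{L'}(x'))$. The covers of $L\times L'$ are of two types, namely $(x,x')\lessdot(y,x')$ with $x\lessdot y$ in $L$ and $(x,x')\lessdot(x,y')$ with $x'\lessdot y'$ in $L'$. Intersecting the relevant $M$ and $J$ sets and using that $M_L(x)\cap J_L(x)=\emptyset=M_{L'}(x')\cap J_{L'}(x')$, I find that the first cover is labeled $(j_{xy},\hat 0')$ and the second is labeled $(\hat 0,j_{x'y'})$. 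Collecting these labels over all down-covers (respectively up-covers) of $(x,x')$ yields $\D_{L\times L'}(x,x')=(\D_L(x)\times\{\hat 0'\})\sqcup(\{\hat 0\}\times\D_{L'}(x'))$ and the analogous identity for $\U_{L\times L'}$.

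With these two descriptions in hand, the conclusion is immediate: since $L$ and $L'$ are semidistrim we have $\D_L(x)\in\I(G_L)$ and $\D_{L'}(x')\in\I(G_{L'})$, so their disjoint union $\D_{L\times L'}(x,x')$ is an independent set of the disjoint-union graph $G_{L\times L'}$, and likewise for the upward label sets. Hence $L\times L'$ satisfies the independence condition and is semidistrim. I do not anticipate a serious obstacle: the whole argument is bookkeeping on top of the product formulas in \eqref{Eq1} and \Cref{prop:product_compatibly}. The one point deserving care is the verification that $G_{L\times L'}$ has no cross-edges, together with the vanishing of the cross-terms $M_L(x)\cap J_L(x)$ and $M_{L'}(x')\cap J_{L'}(x')$; these ensure that each cover carries a single label lying entirely within one factor, after which independence of the label sets follows directly from the hypothesis on $L$ and $L'$.
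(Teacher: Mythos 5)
Your proposal is correct and follows essentially the same route as the paper's proof: invoke \Cref{prop:product_compatibly} for compatible dismantlability, identify $G_{L\times L'}$ with the disjoint union $G_L\sqcup G_{L'}$, observe that covers in the product come from covers in a single factor with labels $(j_{xy},\hat 0')$ or $(\hat 0,j_{x'y'})$, and conclude that $\D_{L\times L'}$ and $\U_{L\times L'}$ decompose as disjoint unions of independent sets. Your explicit verification of the cross-term vanishing via $M_L(x)\cap J_L(x)=\emptyset$ fills in details the paper leaves as ``straightforward,'' but the argument is the same.
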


\begin{proof}
\Cref{prop:product_compatibly} tells us that $L\times L'$ is compatibly dismantlable. Using \eqref{Eq1} and the description of $\kappa_{L\times L'}$ given in the proof of \Cref{prop:product_uniquely}, it is straightforward to show that the Galois graph $G_{L\times L'}$ is naturally isomorphic to $G_L\sqcup G_{L'}$, the disjoint union of $G_L$ and $G_{L'}$ (with no vertices in $G_L$ adjacent to any vertices in $G_{L'}$). More precisely, the isomorphism sends $(j,\hat 0')\in\J_L\times\{\hat 0'\}$ to $j$ and sends $(\hat 0,j')\in\{\hat 0\}\times\J_{L'}$ to $j'$. 

Consider $(y,y')\in L\times L'$. The elements covered by $(y,y')$ in $L\times L'$ are precisely the elements of the form $(x,y')$ with $x\lessdot y$ in $L$ or of the form $(y,x')$ with $x'\lessdot y'$ in $L'$. Moreover, the label of $(x,y')\lessdot (y,y')$ is $(j_{xy},\hat 0')$, while the label of $(y,x')\lessdot (y,y')$ is $(\hat 0,j_{x'y'})$. It follows that under the aforementioned isomorphism between $G_{L\times L'}$ and $G_L\sqcup G_{L'}$, the set $\D_{L\times L'}(y,y')$ corresponds to $\D_L(y)\sqcup\D_{L'}(y')$. Since $\D_L(y)\in\I(G_L)$ and $\D_{L'}(y')\in\I(G_{L'})$, we have $\D_{L\times L'}(y,y')\in\I(G_{L\times L'})$. A similar argument shows that $\U_{L\times L'}(y,y')\in\I(G_{L\times L'})$.  
\end{proof}

\subsection{Intervals}\label{subsec:intervals}
We now show that intervals in semidistrim lattices are again semidistrim.  The strategy is to prove that lower intervals of semidistrim lattices are again semidistrim---the result will then follow because the dual of a semidistrim lattice is again semidistrim.

\begin{lemma}
Let $L$ be a semidistrim lattice, and let $\x\in L$. Let $L_v=[\hat 0,v]$. For $j\in\mathcal J_{L_\x}$, let $\kappa^{\wedge \x}(j)=\kappa_L(j)\wedge \x$. Then $\kappa^{\wedge \x}\colon\J_{L_\x}\to\M_{L_\x}$ is a pairing on $L_\x$.
\label{lem:same_num}
\end{lemma}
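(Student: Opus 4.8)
The plan is to verify the three conditions of \Cref{lem:paired_check} for the map $\kappa^{\wedge v}$ on the lattice $L_v=[\hat 0,v]$, after first checking that $\kappa^{\wedge v}$ is a well-defined bijection $\J_{L_v}\to\M_{L_v}$. I would begin by recording two elementary structural facts about the lower interval: since $L_v$ is an order ideal, its join-irreducibles are exactly $\J_{L_v}=\{j\in\J_L:j\leq v\}$, and a cover relation $a\lessdot b$ occurs in $L_v$ if and only if it occurs in $L$ and $b\leq v$. In particular, for $j\in\J_{L_v}$ the element $j_*$ agrees in $L$ and in $L_v$, and every cover in $L_v$ carries a label $j_{ab}\in\J_L$ (with $j_{ab}\leq b\leq v$, so $j_{ab}\in\J_{L_v}$), because $L$ is semidistrim and hence overlapping.

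The crux of the argument, and the step I expect to be the main obstacle, is showing that $\kappa^{\wedge v}$ is well defined, i.e.\ that $m\wedge v\in\M_{L_v}$ whenever $j\in\J_{L_v}$ and $m=\kappa_L(j)$. I would prove the stronger claim that
\[
\U_L(m\wedge v)\cap\{j'\in\J_L:j'\leq v\}=\{j\}.
\]
That $j$ lies in this set follows from \Cref{lem:meets_are_labeled} with $x=v$: since $j\leq v$ we get $j\in\U_L(v\wedge m)=\U_L(m\wedge v)$. For uniqueness, suppose $j'\neq j$ also lies in the set. Because $L$ is semidistrim, $\U_L(m\wedge v)$ is an independent set of $G_L$, so there is no edge $j'\to j$; as $j'\neq j$ this forces $j'\leq\kappa_L(j)=m$, and with $j'\leq v$ we obtain $j'\leq m\wedge v$. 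But $j'\in\U_L(m\wedge v)\subseteq M_L(m\wedge v)$, and $M_L(m\wedge v)\cap J_L(m\wedge v)=\emptyset$, so $j'\not\leq m\wedge v$, a contradiction. Since the covers of $m\wedge v$ inside $L_v$ correspond bijectively, via their labels, to the elements of $\U_L(m\wedge v)$ lying below $v$, the displayed equality shows $m\wedge v$ has a unique cover in $L_v$, namely $(m\wedge v)\vee j$; hence $m\wedge v\in\M_{L_v}$ and its $L_v$-cover is $\geq j$.

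With this in hand the remaining steps are routine. The three conditions of \Cref{lem:paired_check} hold: $m\wedge v\geq j_*$ (as $m\geq j_*$ and $j_*\leq j\leq v$), the unique $L_v$-cover $(m\wedge v)\vee j$ is $\geq j$, and $m\wedge v\not\geq j$ (since $m\not\geq j$). For injectivity I would invoke the displayed equation again: if $\kappa_L(j_1)\wedge v=\kappa_L(j_2)\wedge v=m'$, then $j_1,j_2$ both lie in the singleton $\U_L(m')\cap\{j'\leq v\}$, so $j_1=j_2$. For surjectivity, take $m'\in\M_{L_v}$ with unique $L_v$-cover $y$; then $m'\lessdot y$ is also a cover of $L$, so it carries a label $j:=j_{m'y}\in\J_{L_v}$, and writing $m=\kappa_L(j)$, \Cref{prop:covers} gives $m\wedge y=m'$, whence $m\wedge v\geq m'$. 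If $m\wedge v>m'$, then $m\wedge v\geq y$ (as $y$ is the unique $L_v$-cover of $m'$), forcing $m\wedge y=y\neq m'$, a contradiction; thus $\kappa^{\wedge v}(j)=m\wedge v=m'$. This establishes that $\kappa^{\wedge v}$ is a bijection satisfying the hypotheses of \Cref{lem:paired_check}, and therefore a pairing on $L_v$.
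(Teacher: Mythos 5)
Your proof is correct, and it streamlines the paper's argument in a way worth noting. The paper proves this lemma by choosing a dismantling pair $(j_0,m_0)$ and splitting into cases: when $v\in[\hat 0,m_0]$ it invokes \Cref{lem:decomposition_intervals_semidistrim} and the second compatibility condition of \Cref{def:semidistrim} to write $\kappa^{\wedge v}(j)=\kappa_{L_0}(j)\wedge v$ and then inducts on lattice size, reserving the direct argument for the case $v\in[j_0,\hat 1]$. Your proof runs that direct argument uniformly for all $v$, with no induction and no reference to the dismantling pair, and an inspection of the paper's $L^0$ case confirms nothing there actually uses $v\geq j_0$ --- so the case split is dispensable. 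The core mechanics are the same: membership $j\in\U_L(\kappa_L(j)\wedge v)$ via \Cref{lem:meets_are_labeled}, uniqueness of the cover of $\kappa_L(j)\wedge v$ inside $L_v$ via independence of $\U_L(\kappa_L(j)\wedge v)$ in $G_L$ (your contrapositive arrangement, deducing $j'\leq\kappa_L(j)$ from the absence of the edge $j'\to j$ and then contradicting $j'\in M_L(\kappa_L(j)\wedge v)$, is logically identical to the paper's derivation of a forbidden edge), surjectivity via the label of the unique $L_v$-cover of $m'$ together with \Cref{prop:covers}, and verification through \Cref{lem:paired_check}. Your packaging of well-definedness and injectivity into the single stronger statement $\U_L(m\wedge v)\cap\{j'\in\J_L:j'\leq v\}=\{j\}$ is a nice economy: the paper recovers injectivity separately by reading $j$ off as the label of the cover. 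What the paper's inductive structure buys is consistency with the surrounding proofs (\Cref{lem:paired}, \Cref{lem:intervalsrowmotable} genuinely need the decomposition); what your version buys is a shorter, self-contained proof of this particular lemma.
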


\begin{proof}
Let $(j_0,m_0)$ be a dismantling pair for $L$, and let $L_0=[\hat 0,m_0]$ and $L^0=[j_0,\hat 1]$. First, suppose $\x\in L_0$. \Cref{lem:decomposition_intervals_semidistrim} tells us that $L_0$ is semidistrim, and the second compatibility condition in \Cref{def:semidistrim} states that $\kappa_{L_0}(j)=m_0\wedge \kappa_L(j)$ for all $j\in\J_{L}$ with $j\leq m_0$. In particular, for $j\in\J_{L_\x}$, we have $\kappa^{\wedge \x}(j)=\kappa_L(j)\wedge v=\kappa_{L_0}(j)\wedge v$. Since $L_v$ is a lower interval in $L_0$, we can use induction on the size of the lattice to see that $\kappa^{\wedge v}$ is a pairing on $L_v$.  

Now assume $\x\in L^0$. Let $j\in\J_{L_v}$. By \Cref{lem:meets_are_labeled}, we have $j \in \U_{L}(\kappa^{\wedge\x}(j))$. In other words, there is some cover relation $\kappa^{\wedge\x}(j)\lessdot y$ with label $j$. Note that $y=\kappa^{\wedge\x}(j)\vee j\leq \x$. Suppose by way of contradiction that there is some element $z\in L_v$ with $z\neq y$ and $\kappa^{\wedge\x}(j)\lessdot z$. Let $j'$ be the label of the cover $\kappa^{\wedge\x}(j)\lessdot z$. Then $j'\not\leq \kappa^{\wedge\x}(j)=\kappa_L(j)\wedge\x$. We have $j'\leq \x$, so $j'\not\leq \kappa_L(j)$. This means that $j'\to j$ is an edge in $G_L$, which contradicts the fact that $\U_L(\kappa^{\wedge\x}(j))$ is an independent set in $G_L$ (because $L$ is semidistrim). This shows that no such element $z$ exists, which implies that $y$ is the only element of $L_v$ that covers $\kappa^{\wedge\x}(j)$. Hence, $\kappa^{\wedge \x}(j)\in\M_{L_\x}$. This shows that $\kappa^{\wedge\x}$ maps $\J_{L_\x}$ to $\M_{L_\x}$. We have also seen that $\kappa^{\wedge\x}$ is injective because we can recover $j$ as the label of the cover $\kappa^{\wedge\x}\lessdot y$. 

To see that $\kappa^{\wedge\x}$ is surjective, consider some $m\in\M_{L_\x}$. Let $m^*$ be the unique element of $L_v$ that covers $m$. Consider the label $j_{mm^*}$ of the cover $m\lessdot m^*$. Then $\kappa^{\wedge \x}(j_{mm^*})=\kappa_L(j_{mm^*})\wedge\x\geq m$. If we had $\kappa^{\wedge\x}(j_{mm^*})\geq m^*$, then we would have $j_{mm^*}\leq m^*\leq\kappa^{\wedge\x}(j_{mm^*})\leq\kappa_L(j_{mm^*})$, which is impossible. Therefore, $\kappa^{\wedge\x}(j_{mm^*})=m$. 

We now know that $\kappa^{\wedge\x}$ is a bijection, so we need to check that it is a pairing. To do this, we will use \Cref{lem:paired_check}. Fix $j'\in\J_{L_\x}$, and let $m'=\kappa^{\wedge\x}(j')$. We need to show that $m'\geq j'_*$, $(m')^*\geq j'$, and $m'\not\geq j'$. We saw above that $j'$ is the label of the cover $m'\lessdot (m')^*$, so $(m')^*=m'\vee j'$. This proves that $(m')^*\geq j'$ and $m'\not\geq j'$. Since $\kappa_L$ is a pairing, we have $\kappa_L(j')\geq j'_*$. We also know that $\x\geq j'_*$, so $m'=\kappa_L(j')\wedge \x\geq j'_*$. 
\end{proof}

Our next step is to show that the pairing $\kappa^{\wedge\x}$ defined in \Cref{lem:same_num} is the only pairing on $L_\x$.

\begin{lemma}
Let $L$ be a semidistrim lattice, and fix $\x\in L$. The interval $L_\x= [\hat{0},\x]$ is uniquely paired; its unique pairing is the map $\kappa^{\wedge\x}\colon\mathcal J_{L_\x}\to\mathcal M_{L_\x}$ defined by $\kappa^{\wedge\x}(j)=\kappa_L(j)\wedge \x$. 
\label{lem:paired}
\end{lemma}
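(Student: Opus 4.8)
The existence of the pairing $\kappa^{\wedge v}$ was established in \Cref{lem:same_num}, so only uniqueness remains, and the plan is to prove it by induction on $|L|$ (the case $|L|=1$ being trivial). Fix a dismantling pair $(j_0,m_0)$ for $L$ and write $L_0=[\least,m_0]$ and $L^0=[j_0,\grtst]$; by \Cref{lem:decomposition_intervals_semidistrim} these are semidistrim, and each is strictly smaller than $L$ because $L=L_0\sqcup L^0$. Since every element of $L$ lies in exactly one of $L_0,L^0$, it suffices to treat the two cases $v\in L_0$ and $v\in L^0$.

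When $v\in L_0$, the interval $L_v$ is a lower interval of the smaller semidistrim lattice $L_0$, so the induction hypothesis applied to $L_0$ says that $L_v$ is uniquely paired, with pairing $j\mapsto\kappa_{L_0}(j)\wedge v$. The second compatibility condition in \Cref{def:semidistrim} gives $\kappa_{L_0}(j)=m_0\wedge\kappa_L(j)$ for $j\leq m_0$, and since $v\leq m_0$ this yields $\kappa_{L_0}(j)\wedge v=\kappa_L(j)\wedge v=\kappa^{\wedge v}(j)$. Hence $\kappa^{\wedge v}$ is the unique pairing, as desired.

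Now suppose $v\in L^0$, i.e.\ $v\geq j_0$. Since $j_0$ is join-prime in $L$ we have $L=[\least,m_0]\sqcup[j_0,\grtst]$, and intersecting with $L_v$ produces the disjoint decomposition $L_v=[\least,m_0\wedge v]\sqcup[j_0,v]$; thus $(j_0,m_0\wedge v)$ is a prime pair for $L_v$. Consequently, for \emph{any} pairing $\kappa'$ on $L_v$, \Cref{prop:pairing} forces $\kappa'(j_0)=m_0\wedge v=\kappa^{\wedge v}(j_0)$. The two pieces $A=[\least,m_0\wedge v]$ and $B=[j_0,v]$ (whose bottom element is $j_0$) are lower intervals of the smaller semidistrim lattices $L_0$ and $L^0$ respectively, so by the induction hypothesis each is uniquely paired; moreover, using the compatibility maps $\alpha$ and $\beta$ of \Cref{def:semidistrim} one checks that these unique pairings agree with $\kappa^{\wedge v}$ (on $B$ under the identification $\alpha(j)=j_0\vee j$). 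The remaining task is to show that an arbitrary pairing $\kappa'$ of $L_v$ respects this decomposition—that it carries the join-irreducibles lying in $A$ to meet-irreducibles of $A$ and those lying in $B$ to meet-irreducibles of $B$—for then $\kappa'$ restricts to pairings of $A$ and of $B$, and the induction hypothesis identifies both restrictions with $\kappa^{\wedge v}$.

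To analyze $\kappa'$ I would use the description of $\M_{L_v}$ coming from the proof of \Cref{lem:same_num}: every meet-irreducible of $L_v$ has the form $\kappa_L(i)\wedge v$, where $i$ is recovered as the $\kappa_L$-label of its unique cover in $L_v$. Given $j\in\J_{L_v}$ with $j\neq j_0$ and $m=\kappa'(j)$, the unique cover of $m$ in $L_v$ is $m^*=m\vee j$; if this cover crossed the decomposition, i.e.\ $m\leq m_0$ and $m^*\geq j_0$, then \Cref{cor:down_labels} would label it by $j_0$, giving $m=\kappa_L(j_0)\wedge v=m_0\wedge v=\kappa'(j_0)$ and hence $j=j_0$ by injectivity of $\kappa'$---a contradiction. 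Thus for $j\neq j_0$ the cover $m\lessdot m^*$ lies entirely within $A$ or entirely within $B$, so $m=\kappa'(j)\in\M_A$ or $m\in\M_B$. The heart of the matter, and the step I expect to be the main obstacle, is to show that $\kappa'(j)\in\M_A$ exactly when $j\in A$, ruling out a generator of $A$ being paired across into $B$ (and dually). This cannot be done by a direct maximality argument, because $L_v$ need not be meet-semidistributive: the set $\{z\in L_v:z\wedge j=j_*\}$ may have several maximal elements, so $\kappa'(j)$ is not a priori forced to lie below $\kappa_L(j)$. Overcoming this should require the full semidistrim hypothesis---in particular the independence of the label sets in $G_L$, together with \Cref{prop:no_double_edges} (no double edge at the join-prime vertex $j_0$) and \Cref{prop:no_prob}---after which a counting argument promotes the resulting inclusion $\kappa'(\J_A)\subseteq\M_A$ to an equality and the induction closes.
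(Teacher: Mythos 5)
Your Case $v\in L_0$ is fine, and your crossing-cover observation (for $j\neq j_0$, the unique cover of $\kappa'(j)$ in $L_v$ cannot straddle $A=[\hat 0,m_0\wedge v]$ and $B=[j_0,v]$, else \Cref{cor:down_labels} and \Cref{prop:pairing} force $j=j_0$) is correct. But the step you defer as ``the heart of the matter''---that any pairing $\kappa'$ on $L_v$ carries join-irreducibles lying in $A$ into $\M_A$ and those lying in $B$ into $\M_B$---is not merely the hard step: it is \emph{false}, and it already fails for the pairing $\kappa^{\wedge v}$ you are trying to single out. Take $L$ to be the Boolean lattice $2^{\{1,2,3\}}$ (distributive, hence semidistrim), with $j_0=\{1\}$, $m_0=\{2,3\}$, and $v=\{1,2\}\in L^0$, so $L_v$ is a square, $A=\{\emptyset,\{2\}\}$, $B=\{\{1\},\{1,2\}\}$. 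The join-irreducible $\{2\}$ lies in $A$, yet $\kappa^{\wedge v}(\{2\})=\kappa_L(\{2\})\wedge v=\{1,3\}\wedge\{1,2\}=\{1\}\in\M_B$. (Your cover argument is not contradicted: the cover $\{1\}\lessdot\{1,2\}$ lies entirely in $B$; what fails is the alignment of $j$ with the side containing $\kappa'(j)$.) Such cross-pairs are not pathologies but are structurally forced: every $j\in\J_{L_v}$ with $j\leq m_0\wedge v$ and $j_0\leq\kappa_L(j)$ is paired across the decomposition, and these $j$ are exactly the ones realizing the bijection $\alpha$ of \Cref{def:semidistrim}. A second, independent obstruction is that even where $\kappa'$ pairs within $B$, its restriction to $\J_{L_v}\cap B$ is not a pairing on $B$: the join-irreducibles of $B$ are the elements $j_0\vee j$ with $j\in\J_{L_v}$, $j_0\leq\kappa_{L_v}(j)$, many of which lie in $A$, and $j_0\vee j$ is typically not join-irreducible in $L_v$ at all. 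So the inductive scheme ``restrict $\kappa'$ to $A$ and $B$ and invoke uniqueness there'' cannot close, no matter how the deferred step is attacked.

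The paper's uniqueness proof takes a genuinely different route that sidesteps the decomposition entirely: given an arbitrary pairing $\lambda$ on $L_v$, set $\sigma=(\kappa^{\wedge v})^{-1}\circ\lambda$ and splice $\lambda$ into the ambient lattice by defining $\widetilde\kappa\colon\J_L\to\M_L$ with $\widetilde\kappa(j)=\kappa_L(j)$ for $j\notin L_v$ and $\widetilde\kappa(j)=\kappa_L(\sigma(j))$ for $j\in L_v$. Using $\widetilde\kappa(j)\wedge v=\lambda(j)$ and the criterion of \Cref{lem:paired_check} (the only delicate point being $\widetilde\kappa(j)^*\geq j$, which is checked via the unique cover of $\lambda(j)$ in $L_v$), one verifies that $\widetilde\kappa$ is a pairing on $L$; since $L$ is uniquely paired, $\widetilde\kappa=\kappa_L$, which forces $\sigma=\mathrm{id}$ and hence $\lambda=\kappa^{\wedge v}$. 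This global-splicing trick, which converts uniqueness on the interval into uniqueness on $L$ itself, is the missing idea in your proposal.
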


\begin{proof}
We know by \Cref{lem:same_num} that $\kappa^{\wedge\x}$ is a pairing on $L_\x$, so we just need to show that it is the 
unique pairing. Let $\lambda\colon\mathcal J_{L_\x}\to\mathcal M_{L_\x}$ be a pairing on $L_\x$, and define $\sigma\colon\mathcal J_{L_\x}\to\mathcal J_{L_\x}$ by 
$\sigma=(\kappa^{\wedge\x})^{-1}\circ\lambda$. Our goal is to show that $\lambda=\kappa^{\wedge\x}$ or, equivalently, that $\sigma$ is the identity map on $\mathcal J_{L_\x}$. Define a map $\widetilde\kappa\colon\mathcal J_L\to\mathcal M_L$ by \[
\widetilde\kappa(j) = \begin{cases}
\kappa_L(j) & \text{ if } j \not\in L_\x \\
\kappa_L(\sigma(j)) & \text{ if } j\in L_{\x}.
\end{cases}\]   
We will prove that $\widetilde\kappa$ is a pairing on $L$. Since $L$ is uniquely paired, this will imply that $\widetilde\kappa=\kappa_L$. It will then follow that $\kappa_L(j)=\kappa_L(\sigma(j))$ for all $j\in\mathcal J_{L_\x}$. Because $\kappa_L$ is injective, this will imply that $\sigma$ is the identity, as desired. 

It is straightforward to check that $\widetilde\kappa$ is a bijection. Now fix $j\in\mathcal J_L$, and let $m=\widetilde\kappa(j)$. According to \Cref{lem:paired_check}, we need to check that $m\geq j_*$, $m^*\geq j$, and $m\not\geq j$. If $j\not\in L_\x$, then $m=\kappa_L(j)$, so these conditions follow from the fact that $\kappa_L$ is a pairing on $L$. Thus, we may assume $j\in L_\x$. 

Note that \[m\wedge \x=\widetilde\kappa(j)\wedge \x=\kappa_L((\kappa^{\wedge\x})^{-1}(\lambda(j)))\wedge \x=\kappa^{\wedge\x}((\kappa^{\wedge\x})^{-1}(\lambda(j)))=\lambda(j).\] We have \[m\wedge j=m\wedge(\x\wedge j)=(m\wedge \x)\wedge j=\lambda(j)\wedge j=j_*,\] where the last equality is due to the fact that $\lambda$ is a pairing on $L_\x$. This proves that $m\geq j_*$ and $m\not\geq j$. 

We are left to show that $m^*\geq j$. As above, note that $m\wedge \x =\lambda(j)$. Because $\lambda$ is a pairing on $\M_{L_v}$, the element $\lambda(j)$ is meet-irreducible in $L_v$. Let $y$ be the unique element of $L_\x$ that covers $\lambda(j)$. Since $\lambda(j)=\kappa^{\wedge\x}(\sigma(j))$ and $\kappa^{\wedge\x}$ is a pairing on $L_\x$, we have $y=\lambda(j)\vee\sigma(j)$. Since $m=\kappa_L(\sigma(j))$ and $\kappa_L$ is a pairing on $L$, we have $m^*\geq \sigma(j)$. We also have $m^*\gtrdot m\geq m\wedge \x=\lambda(j)$, so $m^*\geq \lambda(j)\vee\sigma(j)=y$. Since $\lambda$ is a pairing on $L_\x$ and $y$ is the unique element of $L_\x$ covering $\lambda(j)$, we must have $y\geq j$. Thus, $m^*\geq y\geq j$, as desired.  
\end{proof}

We now know that the interval $L_\x=[\hat 0,v]$ of $L$ is uniquely paired, so we will usually write $\kappa_{L_\x}$ instead of $\kappa^{\wedge\x}$ for its unique pairing. One should keep in mind that $\kappa_{L_v}$ is described explicitly in terms of $\kappa_L$ by the formula $\kappa_{L_v}(j)=\kappa_L(j)\wedge v$.

\begin{lemma}\label{lem:intervalsrowmotable}
If $L$ is semidistrim and $\x\in L$, then the interval $L_\x= [\hat{0},\x]$ is compatibly dismantlable.
\end{lemma}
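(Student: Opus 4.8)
The plan is to induct on the size of the ambient semidistrim lattice $L$, using a dismantling pair of $L$ to either reduce directly to a smaller semidistrim lattice or to exhibit an explicit dismantling pair for $L_v$. If $|L_v|=1$ (that is, $v=\hat 0$) the claim is trivial, so assume $|L_v|\geq 2$. Fix a dismantling pair $(j_0,m_0)$ for $L$ and set $L_0=[\hat 0,m_0]$ and $L^0=[j_0,\hat 1]$; by \Cref{lem:decomposition_intervals_semidistrim} both are semidistrim, and since $j_0\neq\hat 0$ and $m_0\neq\hat 1$ both are strictly smaller than $L$. Since $v$ lies in exactly one part of the disjoint decomposition $L=L_0\sqcup L^0$, there are two cases. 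If $v\leq m_0$, then $L_v$ is a lower interval of the semidistrim lattice $L_0$, so the inductive hypothesis immediately gives that $L_v$ is compatibly dismantlable.

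The interesting case is $v\geq j_0$, where I claim $(j_0,v\wedge m_0)$ is a dismantling pair for $L_v$. Intersecting the disjoint decomposition $L=[\hat 0,m_0]\sqcup[j_0,\hat 1]$ with the downward-closed principal ideal $[\hat 0,v]$ yields $L_v=[\hat 0,v\wedge m_0]\sqcup[j_0,v]$, and by \Cref{prop:join_decomp} this exhibits $(j_0,v\wedge m_0)$ as a prime pair for $L_v$. Here $[\hat 0,v\wedge m_0]$ is a lower interval of $L_0$ (since $v\wedge m_0\leq m_0$) and $[j_0,v]$ is a lower interval of $L^0$; both ambient lattices are semidistrim and smaller than $L$, so by induction both pieces are compatibly dismantlable.

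It remains to verify the two compatibility conditions of \Cref{def:semidistrim} for $(j_0,v\wedge m_0)$, and this is where the bulk of the work lies. The strategy is to reduce every check to the explicit meet-formula for pairings on lower intervals from \Cref{lem:paired}: the pairing of $L_v$ is $j\mapsto\kappa_L(j)\wedge v$, that of the lower interval $[j_0,v]$ of $L^0$ is $j'\mapsto\kappa_{L^0}(j')\wedge v$, and that of the lower interval $[\hat 0,v\wedge m_0]$ of $L_0$ is $j\mapsto\kappa_{L_0}(j)\wedge(v\wedge m_0)$. Combining these with the compatibility data already built into $L$'s own dismantling, namely $\kappa_{L^0}(j_0\vee j)=\kappa_L(j)$ and $\kappa_{L_0}(j)=m_0\wedge\kappa_L(j)$ from \Cref{def:semidistrim} (cf.\ the proof of \Cref{prop:galois}), the first condition reduces to the identity $\kappa_{L^0}(j_0\vee j)\wedge v=\kappa_L(j)\wedge v$ and the second to $(m_0\wedge\kappa_L(j))\wedge(v\wedge m_0)=(v\wedge m_0)\wedge(\kappa_L(j)\wedge v)$, each of which collapses to $m_0\wedge\kappa_L(j)\wedge v$ by elementary manipulations. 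The bijectivity of $\alpha(j)=j_0\vee j$ and $\beta(m)=(v\wedge m_0)\wedge m$ then follows by restricting the already-known bijections for $L$'s dismantling to the irreducibles lying below $v$, using that $\J_{L_v}=\{j\in\J_L:j\leq v\}$ because $[\hat 0,v]$ is downward closed.

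The main obstacle is precisely this final bookkeeping: one must keep straight three different pairings (on $L_v$, on $[j_0,v]$, and on $[\hat 0,v\wedge m_0]$), identify their join- and meet-irreducibles as the appropriate subsets of $\J_L$ and $\M_L$, and confirm that $\alpha$ and $\beta$ transport one pairing to the next. I expect no genuinely new lattice-theoretic input beyond induction; everything hinges on the formula $\kappa_{L_v}(j)=\kappa_L(j)\wedge v$ from \Cref{lem:paired}, which turns each compatibility requirement into a short meet computation.
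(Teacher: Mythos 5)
Your proposal is correct and follows essentially the same route as the paper's proof: induct on the size of the ambient lattice, split on whether $v\leq m_0$ or $v\geq j_0$, take $(j_0,\, m_0\wedge v)$ as the dismantling pair for $L_v$ in the second case (with $[\hat 0, m_0\wedge v]$ and $[j_0,v]$ handled inductively as lower intervals of the semidistrim lattices $L_0$ and $L^0$), and verify both compatibility conditions by collapsing everything to the formula $\kappa_{L_v}(j)=\kappa_L(j)\wedge v$ from \Cref{lem:paired}. The only point you compress is the bijectivity of $\beta$, where---since $\M_{L_v}\not\subseteq\M_L$, so it is not literally a restriction of the bijection for $L$---the paper instead exhibits $\beta_{L_v}$ as the composite $\kappa_{(L_v)_0}\circ\kappa_{L_v}^{-1}$; your meet identity $(m_0\wedge\kappa_L(j))\wedge(v\wedge m_0)=m_0\wedge\kappa_L(j)\wedge v$ is exactly what makes that work, so this is a presentational gap rather than a mathematical one.
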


\begin{proof}
We know by \Cref{lem:paired} that $L_\x$ is uniquely paired with pairing $\kappa_{L_\x}\colon\mathcal J_{L_\x}\to\mathcal M_{L_\x}$ given by $\kappa_{L_\x}(j)=\kappa_L(j)\wedge \x$. Let $(j_0,m_0)$ be a dismantling pair for $L$, and let $L_0=[\hat 0,m_0]$ and $L^0=[j_0,\hat 1]$. According to \Cref{lem:decomposition_intervals_semidistrim}, the intervals $L_0$ and $L^0$ are semidistrim. Therefore, if $\x\in L_0$, then the desired result follows by induction on the size of $L$. 

We may now assume $\x\in L^0$. Note that $j_0$ is also join-prime in $L^0$. Let $m_0^\x=m_0\wedge \x$ so that $(j_0,m_0^\x)$ is a dismantling pair for $L_\x$. Since $L_0$ and $L^0$ are semidistrim, they have unique pairings $\kappa_{L_0}\colon\mathcal J_{L_0}\to\mathcal M_{L_0}$ and $\kappa_{L^0}\colon\mathcal J_{L^0}\to\mathcal M_{L^0}$. By induction, the intervals $(L_\x)_0=[\hat{0},m_0^v]\subseteq L_0$ and $(L_\x)^0=[j_0,\x]\subseteq L^0$ are compatibly dismantlable. Moreover, it follows from \Cref{lem:paired} that the unique pairings on these intervals are the bijections $\kappa_{(L_\x)_0}\colon\mathcal J_{(L_\x)_0}\to\mathcal M_{(L_\x)_0}$ and $\kappa_{(L_\x)^0}\colon\mathcal J_{(L_\x)^0}\to\mathcal M_{(L_\x)^0}$ defined by $\kappa_{(L_\x)_0}(j)=\kappa_{L_0}(j)\wedge m_0^\x$ and $\kappa_{(L_\x)^0}(j)=\kappa_{L^0}(j)\wedge \x$.

Because $L$ is compatibly dismantlable, there is a bijection $\alpha_L\colon\{j\in\J_L:j_0\leq\kappa_L(j)\}\to\J_{L^0}$ given by $\alpha_L(j)=j_0\vee j$. We want to show that there is a bijection $\alpha_{L_\x}\colon\{j \in \J_{L_\x}:j_0\leq \kappa_{L_\x}(j)\}\to\J_{(L_\x)^0}$ given by $\alpha_{L_\x}(j)=j_0\vee j$.  To this end, suppose $j$ is a join-irreducible element of $L_\x$ with $j_0\leq \kappa_{L_\x}(j)$. Then we have $j\in\J_L$ and $j_0\leq \kappa_L(j)$, so $j_0 \vee j=\alpha_L(j)\in\J_{L^0}$. Because $j_0 \vee j \leq \x$, the element $j_0\vee j$ is actually in $\J_{(L_\x)^0}$.  This shows that $\alpha_{L_\x}$ does actually map $\{j \in \J_{L_\x}:j_0\leq \kappa_{L_\x}(j)\}$ to $\J_{(L_\x)^0}$. Noting that \[\{ j \in \J_{L_\x} : j_0\leq \kappa_{L_\x}(j)\}= \{j \in \J_L : j_0\leq \kappa_L(j)\}\cap L_\x,\] we see that the injectivity of $\alpha_{L_\x}$ follows immediately from the injectivity of $\alpha_L$. 

To see that $\alpha_{L_\x}$ is surjective, let us choose a join-irreducible element $j'$ of $\J_{(L_\x)^0}$. Then $j'$ is join-irreducible in $L^0$, so it is of the form $\alpha_L(j'')$ for some $j'' \in \J_L$ with $j_0\leq \kappa_L(j'')$. But then $j''\leq j'\leq \x$, so $j''\in\{j \in \J_L : j_0\leq \kappa_L(j)\}\cap L_\x=\{j \in \J_{L_\x} : j_0\leq \kappa_{L_\x}(j)\}$.

We also need to check that if $j\in\mathcal J_{L_\x}$ satisfies $j_0\leq \kappa_{L_\x}(j)$, then $\kappa_{(L_\x)^0}(\alpha_{L_\x}(j))=\kappa_{L_\x}(j)$. We know by \Cref{def:semidistrim} that $\kappa_{L^0}(\alpha_L(j))=\kappa_L(j)$. Hence, \[\kappa_{(L_\x)^0}(\alpha_{L_\x}(j))=\kappa_{L^0}(\alpha_{L_\x}(j))\wedge \x=\kappa_{L^0}(\alpha_{L}(j))\wedge \x=\kappa_L(j)\wedge \x=\kappa_{L_\x}(j).\]

To finish the proof, we need to show that there is a bijection $\beta_{L_\x}\colon\{ m \in \M_{L_\x} : \kappa_{L_\x}^{-1}(m) \leq m_0^\x\}\to\mathcal M_{(L_\x)_0}$ given by $\beta_{L_\x}(m)=m_0^\x\wedge m$ such that $\kappa_{(L_\x)_0}^{-1}(\beta_{L_\x}(m))=\kappa^{-1}_{L_\x}(m)$ for all $m\in\mathcal M_{L_\x}$ with $\kappa_{L_\x}^{-1}(m)\leq m_0^\x$. Choose $m\in\mathcal M_{L_\x}$ with
$\kappa_{L_\x}^{-1}(m)\leq m_0^\x$, and let $j=\kappa_{L_\x}^{-1}(m)$. Then $m = \kappa_L(j) \wedge \x$, so 
\begin{equation}\label{eq:colin1}
m_0^\x\wedge m = (m_0 \wedge \x)\wedge (\kappa_L(j) \wedge \x) = m_0^\x\wedge \kappa_L(j)=\kappa_{(L_\x)_0}(j)=\kappa_{(L_\x)_0}(\kappa_{L_\x}^{-1}(m)),
\end{equation} where the equality $m_0^\x\wedge \kappa_L(j)=\kappa_{(L_\x)_0}(j)$ follows from the description of the unique pairing $\kappa_{(L_\x)_0}$ on the interval $(L_\x)_0=[\hat 0,m_0^\x]$ given by \Cref{lem:paired}. This proves that $m_0^\x\wedge m\in\mathcal M_{(L_\x)_0}$ and that $\kappa_{(L_\x)_0}^{-1}(m_0^\x\wedge m)=\kappa_{L_\x}^{-1}(m)$. This yields the map $\beta_{L_\x}$; we still need to prove that this map is a bijection. 

The map $\beta_{L_\x}$ is injective because we can use \eqref{eq:colin1} to see that $m=\kappa_{L_\x}(\kappa_{(L_v)_0}^{-1}(\beta_{L_\x}(m)))$ for every $m\in\M_{L_\x}$ with $\kappa_{L_\x}^{-1}(m)\leq m_0^\x$. To prove that $\beta_{L_\x}$ is surjective, consider $m'\in\M_{(L_\x)_0}$, and let $m=\kappa_{L_\x}(\kappa_{(L_\x)_0}^{-1}(m'))$. Then $m\in\M_{L_\x}$, and $\kappa_{L_\x}^{-1}(m)\leq m_0^\x$. According to \eqref{eq:colin1}, we have $\beta_{L_\x}(m)=m_0^\x\wedge m=\kappa_{(L_\x)_0}(\kappa_\x^{-1}(m))=m'$, as desired.
\end{proof}

\begin{lemma}
If $L$ is a semidistrim lattice and $\x\in L$, then the Galois graph $G_{L_\x}$ of the interval $L_\x = [\hat{0},\x]$ is the induced subgraph of $G_L$ with vertex set $\mathcal J_{L_\x}$.
\label{lem:galois_restriction}
\end{lemma}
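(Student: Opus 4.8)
The plan is to verify that $G_{L_\x}$ and the induced subgraph of $G_L$ on $\J_{L_\x}$ agree on both their vertex sets and their edge sets, exploiting the explicit description of the pairing $\kappa_{L_\x}$ furnished by \Cref{lem:paired}.

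First I would pin down the vertex set. Since $L_\x=[\hat 0,\x]$ is a lower interval, any element $j\leq \x$ covers the same elements in $L_\x$ as it does in $L$: indeed, if $z\lessdot j$ in $L$ then $z<j\leq\x$ forces $z\in L_\x$. Consequently $j$ is join-irreducible in $L_\x$ if and only if it is join-irreducible in $L$ and lies below $\x$; that is, $\J_{L_\x}=\{j\in\J_L:j\leq\x\}=J_L(\x)$. This identifies the vertex set of $G_{L_\x}$ with the subset $J_L(\x)$ of the vertex set $\J_L$ of $G_L$.

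Next I would compare edges. By \Cref{lem:paired}, $L_\x$ is uniquely paired with $\kappa_{L_\x}(j)=\kappa_L(j)\wedge\x$. Fix distinct $j,j'\in\J_{L_\x}$. There is an edge $j\to j'$ in $G_{L_\x}$ precisely when $j\not\leq\kappa_{L_\x}(j')=\kappa_L(j')\wedge\x$, while there is an edge $j\to j'$ in $G_L$ precisely when $j\not\leq\kappa_L(j')$. Because $j\in\J_{L_\x}$ gives $j\leq\x$, the inequality $j\leq\kappa_L(j')\wedge\x$ holds if and only if $j\leq\kappa_L(j')$; hence the two edge conditions coincide, and $G_{L_\x}$ is exactly the induced subgraph of $G_L$ on $\J_{L_\x}$.

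There is no serious obstacle here, since the substantive content was already packaged into \Cref{lem:paired}. The only point requiring a moment's care is the vertex-set identification, i.e.\ the standard observation that the join-irreducibles of a lower interval $[\hat 0,\x]$ are precisely the join-irreducibles of $L$ lying weakly below $\x$; once this and the formula $\kappa_{L_\x}(j)=\kappa_L(j)\wedge\x$ are in hand, the edge comparison is immediate from $j\leq\x$.
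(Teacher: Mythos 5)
Your proof is correct and takes essentially the same route as the paper's: both reduce the edge comparison to the formula $\kappa_{L_\x}(j')=\kappa_L(j')\wedge\x$ from \Cref{lem:paired}, together with the observation that for $j\in\J_{L_\x}$ we have $j\leq\x$, so $j\leq\kappa_L(j')\wedge\x$ if and only if $j\leq\kappa_L(j')$. The only differences are cosmetic: you spell out the standard identification $\J_{L_\x}=\{j\in\J_L:j\leq\x\}$, which the paper leaves implicit, and you correctly cite the hypothesis $j\leq\x$ at the key step (the paper's wording refers to $j'\leq\x$, a harmless slip).
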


\begin{proof}
Consider $j,j' \in \J_{L_\x}$. We must show that $j\to j'$ in $G_L$ if and only if $j\to j'$ in $G_{L_\x}$. This is equivalent to showing that $j\leq \kappa_L(j')$ if and only if $j\leq \kappa_{L_\x}(j')$. We know by \Cref{lem:paired} that $\kappa_{L_\x}(j')=\kappa_L(j')\wedge\x$. Since $j'\leq \x$, the desired result follows.  
\end{proof}

\begin{theorem}
\label{thm:intervals}
Intervals in semidistrim lattices are semidistrim.  
\end{theorem}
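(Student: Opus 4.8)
The plan is to follow the strategy announced at the start of this subsection: first prove that every \emph{lower} interval $L_v=[\hat 0,v]$ of a semidistrim lattice $L$ is semidistrim, and then deduce the general case by duality. By \Cref{lem:intervalsrowmotable}, the lower interval $L_v$ is already known to be compatibly dismantlable, and by \Cref{lem:galois_restriction} its Galois graph $G_{L_v}$ is exactly the induced subgraph of $G_L$ on the vertex set $\J_{L_v}$. Consequently, the only remaining ingredient in the definition of semidistrim is to check that $\D_{L_v}(x),\U_{L_v}(x)\in\I(G_{L_v})$ for every $x\in L_v$.

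The key step is to compare the cover labels of $L_v$ with those of $L$. Since $L_v$ is downward closed, the cover relations at or below a given $x\le v$ are literally the same in $L_v$ and in $L$, and one checks that $\J_{L_v}=\{j\in\J_L:j\le v\}$; in particular $J_L(x)\subseteq\J_{L_v}$ whenever $x\le v$. Using the explicit formula $\kappa_{L_v}(j)=\kappa_L(j)\wedge v$ from \Cref{lem:paired} together with $y\le v$, one sees that $M_{L_v}(y)=M_L(y)\cap\J_{L_v}$ for each $y\le v$. Feeding these two identifications into the characterization of the label $j_{yx}$ (the unique element of $M_L(y)\cap J_L(x)$ in \Cref{def:overlapping}), I would show that the label of a cover $y\lessdot x$ computed in $L_v$ agrees with the one computed in $L$, since $M_{L_v}(y)\cap J_{L_v}(x)=\bigl(M_L(y)\cap\J_{L_v}\bigr)\cap J_L(x)=M_L(y)\cap J_L(x)$ (the constraint $\J_{L_v}$ is absorbed because $J_L(x)\subseteq\J_{L_v}$). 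This yields $\D_{L_v}(x)=\D_L(x)$; and because an up-cover $x\lessdot y$ survives in $L_v$ exactly when $y\le v$, the same label computation yields $\U_{L_v}(x)\subseteq\U_L(x)$.

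With the labels matched, the independence of $\D_{L_v}(x)$ and $\U_{L_v}(x)$ is immediate: both are contained in the independent sets $\D_L(x),\U_L(x)\in\I(G_L)$ guaranteed by the hypothesis that $L$ is semidistrim, and independence is inherited by the induced subgraph $G_{L_v}$. Combined with compatible dismantlability, this proves that $L_v$ is semidistrim. To finish, I observe that an arbitrary interval $[u,v]$ is an \emph{upper} interval of the semidistrim lattice $L_v=[\hat 0,v]$; applying the lower-interval result just proved to the dual lattice $L_v^{*}$, and using that the dual of a semidistrim lattice is again semidistrim, shows that upper intervals of semidistrim lattices are semidistrim, whence $[u,v]$ is semidistrim.

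The main obstacle is the label-matching computation of the second paragraph: all of the structural scaffolding—compatible dismantlability via \Cref{lem:intervalsrowmotable} and the identification of the Galois graph via \Cref{lem:galois_restriction}—has already been supplied, so the real content is verifying that capping at $v$ neither alters the label of a surviving cover relation nor produces a label outside $\J_{L_v}$. This is precisely where the explicit pairing formula $\kappa_{L_v}(j)=\kappa_L(j)\wedge v$ and the containment $J_L(x)\subseteq\J_{L_v}$ do the essential work.
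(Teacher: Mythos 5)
Your proposal is correct and follows essentially the same route as the paper's proof: lower intervals are handled via \Cref{lem:intervalsrowmotable} and \Cref{lem:galois_restriction} together with the observation that $\D_{L_v}(x)$ and $\U_{L_v}(x)$ sit inside the independent sets $\D_L(x)$ and $\U_L(x)$, and the general interval $[u,v]$ is then obtained by self-duality of the semidistrim property (you dualize after passing to $[\hat 0,v]$, the paper dualizes first to get $[u,\hat 1]$ --- a cosmetic difference). Your explicit label-matching computation via $\kappa_{L_v}(j)=\kappa_L(j)\wedge v$ and $J_L(x)\subseteq \J_{L_v}$ correctly fills in a step the paper states without detail.
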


\begin{proof} 
We first argue that if $\x\in L$, then the interval $L_\x=[\hat{0},\x]$ is semidistrim. We know by \Cref{lem:intervalsrowmotable} that $L_\x$ is compatibly dismantlable. Now suppose $y\in L_\x$. We know by \Cref{lem:galois_restriction} that the Galois graph $G_{L_\x}$ of $L_\x$ is an induced subgraph of $G_L$. The sets $\D_{L_\x}(y)$ and $\U_{L_\x}(y)$ are the intersections of $\D_L(y)$ and $\U_L(y)$, respectively, with the vertex set of $G_{L_\x}$. Since $\D_L(y)$ and $\U_L(y)$ are independent sets in $G_L$, the sets $\D_{L_\x}(y)$ and $\U_{L_\x}(y)$ must be independent sets in $G_{L_\x}$. Hence, $L_\x$ is semidistrim.

Now fix an interval $[u,v]$ in $L$. It is immediate from the definition of a semidistrim lattice that a lattice is semidistrim if and only if its dual is semidistrim, so it follows from the preceding paragraph that $[u,\hat{1}]$ is semidistrim. We can now apply the preceding paragraph again to see that the interval $[u,v]$ of the semidistrim lattice $[u,\hat{1}]$ is semidistrim. 
\end{proof}

Suppose $L$ is a compatibly dismantlable lattice with dismantling pair $(j_0,m_0)$. By \Cref{def:semidistrim}, the unique pairings $\kappa_{L_0}$ and $\kappa_{L^0}$ on the intervals $L_0=[\hat 0,m_0]$ and $L^0=[j_0,\hat 1]$ are compatible with the pairing $\kappa_L$ via the maps $\alpha$ and $\beta$. If we additionally assume that $L$ is semidistrim, then the preceding theorem tells us that every interval in $L$ is semidistrim; the next corollary tells us that the pairings on the intervals of $L$ are compatible with $\kappa_L$. 

\begin{corollary}\label{cor:intervals_compatible}
Let $L$ be a semidistrim lattice, and let $[u,v]$ be an interval in $L$. There are bijections $\alpha_{u,v}\colon J_L(v)\cap M_L(u)\to\J_{[u,v]}$ and $\beta_{u,v}\colon\kappa_L(J_L(v)\cap M_L(u))\to\M_{[u,v]}$ given by $\alpha_{u,v}(j)=u\vee j$ and $\beta_{u,v}(m)=v\wedge m$, respectively. Moreover, $\kappa_{[u,v]}(\alpha_{u,v}(j))=\beta_{u,v}(\kappa_L(j))$ for all $j\in J_L(v)\cap M_L(u)$.
\end{corollary}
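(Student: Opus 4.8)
The plan is to realize $[u,v]$ as a lower interval inside the upper interval $K=[u,\hat 1]$, thereby reducing the entire statement to \Cref{lem:paired} together with its dual. By \Cref{thm:intervals}, $K$ is semidistrim, and $[u,v]$ is precisely the lower interval $[\hat 0_K,v]$ of $K$ (where $\hat 0_K=u$). So I would first describe the join-irreducibles and the pairing of $K$, then apply \Cref{lem:paired} to the semidistrim lattice $K$ at the element $v\in K$, and finally translate the resulting formulas back into $L$. The point of routing through $K$ is that the join-irreducibles of a filter such as $[u,\hat 1]$ are not simply the join-irreducibles of $L$ lying above $u$; they must be accessed through the pairing, which is exactly what the dual of \Cref{lem:paired} provides.

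For the first step, I would apply \Cref{lem:paired} to $L^*$ at $u$, using that the dual of a semidistrim lattice is semidistrim (as recorded in the proof of \Cref{thm:intervals}) and that the unique pairing of $L^*$ is $\kappa_L^{-1}$, with meets in $L^*$ equal to joins in $L$. This yields a description of $K=[u,\hat 1]$: its meet-irreducibles are exactly the $m\in\M_L$ with $m\geq u$, the map $j\mapsto u\vee j$ is a bijection from $M_L(u)$ onto $\J_K$, and $\kappa_K(u\vee j)=\kappa_L(j)$ for every $j\in M_L(u)$ (equivalently, $\kappa_K^{-1}(m)=\kappa_L^{-1}(m)\vee u$ for $m\in\M_K$).

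Next I would apply \Cref{lem:paired} to $K$ at $v$. This tells me that $[u,v]=[\hat 0_K,v]$ is uniquely paired with $\kappa_{[u,v]}(J)=\kappa_K(J)\wedge v$ for $J\in\J_{[u,v]}$, and that $\J_{[u,v]}=\{J\in\J_K:J\leq v\}$. Since $u\leq v$, an element $J=u\vee j$ of $\J_K$ satisfies $J\leq v$ if and only if $j\leq v$, i.e. if and only if $j\in J_L(v)\cap M_L(u)$. Hence the bijection $j\mapsto u\vee j$ from the previous step restricts to a bijection $\alpha_{u,v}\colon J_L(v)\cap M_L(u)\to\J_{[u,v]}$ given by $\alpha_{u,v}(j)=u\vee j$, which is the first assertion; moreover $\kappa_{[u,v]}(\alpha_{u,v}(j))=\kappa_K(u\vee j)\wedge v=\kappa_L(j)\wedge v$.

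This last identity immediately gives the compatibility $\kappa_{[u,v]}(\alpha_{u,v}(j))=\kappa_L(j)\wedge v=v\wedge\kappa_L(j)=\beta_{u,v}(\kappa_L(j))$, which is the third assertion and also shows that $\beta_{u,v}(\kappa_L(j))=v\wedge\kappa_L(j)$ lands in $\M_{[u,v]}$. Finally, the relation $\beta_{u,v}\circ\kappa_L=\kappa_{[u,v]}\circ\alpha_{u,v}$ on $J_L(v)\cap M_L(u)$ exhibits $\beta_{u,v}$ as the composite of the bijection $\kappa_{[u,v]}\circ\alpha_{u,v}$ with the inverse of the injective restriction of $\kappa_L$, so $\beta_{u,v}$ is a bijection onto $\M_{[u,v]}$, giving the second assertion. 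I do not expect a genuine obstacle here: all the hard lattice-theoretic work—namely that lower intervals are uniquely paired with the explicit formula $\kappa_{L_v}(j)=\kappa_L(j)\wedge v$—is already carried out in \Cref{lem:paired}. The only real care needed is the bookkeeping of correctly stating the dual of \Cref{lem:paired} and tracking the identifications of join- and meet-irreducibles across the two nested intervals.
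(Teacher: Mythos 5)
Your proposal is correct and follows essentially the same route as the paper, whose proof is exactly the combination you describe: the case $u=\hat 0$ is \Cref{lem:paired}, the case $v=\hat 1$ is its dual, and the general case is obtained by viewing $[u,v]$ as a lower interval in $[u,\hat 1]$. You have merely filled in the bookkeeping the paper leaves implicit (the pairing of $L^*$ being $\kappa_L^{-1}$, the identification $\J_K=\{u\vee j: j\in M_L(u)\}$ with $\kappa_K(u\vee j)=\kappa_L(j)$, and the bijectivity of $\beta_{u,v}$ via the composite $\kappa_{[u,v]}\circ\alpha_{u,v}\circ\kappa_L^{-1}$), all of which checks out.
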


\begin{proof}
If $u=\hat 0$, then this follows from \Cref{lem:paired}. There is a natural dual version of \Cref{lem:paired} that yields the desired result when $v=\hat 1$. The general result then follows from combining these two cases (for example, by viewing $[u,v]$ as a lower interval in $[u,\hat 1]$). 
\end{proof}

Note that it follows from the preceding corollary that every prime pair in a semidistrim lattice is a dismantling pair. 

To end this section, let us record how the Galois graph and the edge labels of an interval in a semidistrim lattice relate to those of the entire lattice. 

\begin{corollary}\label{cor:galois_label_intervals}
Let $[u,v]$ be an interval in a semidistrim lattice $L$. The bijection $\alpha_{u,v}\colon J_L(v)\cap M_L(u)\to\J_{[u,v]}$ given by $\alpha_{u,v}(j)=u\vee j$ is an isomorphism from an induced subgraph of the Galois graph $G_L$ to the Galois graph $G_{[u,v]}$.  If $u\leq x\lessdot y\leq v$ and $j_{xy}$ is the label of the cover relation $x\lessdot y$ in $L$, then $\alpha_{u,v}(j_{xy})$ is the label of the same cover relation in $[u,v]$.  
\end{corollary}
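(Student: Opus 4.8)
The plan is to reduce everything to \Cref{cor:intervals_compatible}, which already supplies the bijection $\alpha_{u,v}\colon J_L(v)\cap M_L(u)\to\J_{[u,v]}$ together with the explicit formulas $\alpha_{u,v}(j)=u\vee j$ and $\kappa_{[u,v]}(\alpha_{u,v}(j))=v\wedge\kappa_L(j)$. Granting these, the two vertex sets already correspond under $\alpha_{u,v}$, so the first assertion reduces to checking that $\alpha_{u,v}$ both preserves and reflects edges, and the second assertion becomes a short membership verification using that $[u,v]$ is overlapping.

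For the edge condition, fix distinct $j,j'\in J_L(v)\cap M_L(u)$. By definition there is an edge $j\to j'$ in $G_L$ exactly when $j\not\leq\kappa_L(j')$, while there is an edge $\alpha_{u,v}(j)\to\alpha_{u,v}(j')$ in $G_{[u,v]}$ exactly when $u\vee j\not\leq v\wedge\kappa_L(j')$ (here I use the two formulas above, and the injectivity of $\alpha_{u,v}$ to identify $\alpha_{u,v}(j)\neq\alpha_{u,v}(j')$ with $j\neq j'$). Thus the entire first claim follows from the single equivalence
\[
u\vee j\leq v\wedge\kappa_L(j')\quad\Longleftrightarrow\quad j\leq\kappa_L(j').
\]
The forward direction is immediate, since $j\leq u\vee j\leq v\wedge\kappa_L(j')\leq\kappa_L(j')$. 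For the reverse, I would invoke the two membership conditions defining $J_L(v)\cap M_L(u)$: from $j\leq v$ and $u\leq v$ we get $u\vee j\leq v$; from $u\leq\kappa_L(j')$ (which is exactly $j'\in M_L(u)$) together with the hypothesis $j\leq\kappa_L(j')$ we get $u\vee j\leq\kappa_L(j')$; combining these yields $u\vee j\leq v\wedge\kappa_L(j')$.

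For the label statement, suppose $u\leq x\lessdot y\leq v$ with label $j_{xy}$ in $L$, so that $j_{xy}$ is the unique element of $M_L(x)\cap J_L(y)$. Then $j_{xy}\leq y\leq v$ and $\kappa_L(j_{xy})\geq x\geq u$, which places $j_{xy}$ in $J_L(v)\cap M_L(u)$, so $\alpha_{u,v}(j_{xy})$ is defined. Since $[u,v]$ is semidistrim by \Cref{thm:intervals}, it is in particular overlapping, and hence the label of $x\lessdot y$ in $[u,v]$ is the unique join-irreducible $\jmath$ of $[u,v]$ with $\jmath\leq y$ and $\kappa_{[u,v]}(\jmath)\geq x$. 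It therefore suffices to check that $\alpha_{u,v}(j_{xy})$ has these two properties: from $u\leq x\leq y$ and $j_{xy}\leq y$ we obtain $\alpha_{u,v}(j_{xy})=u\vee j_{xy}\leq y$, and from $x\leq v$ and $x\leq\kappa_L(j_{xy})$ we obtain $\kappa_{[u,v]}(\alpha_{u,v}(j_{xy}))=v\wedge\kappa_L(j_{xy})\geq x$. Uniqueness then forces $\alpha_{u,v}(j_{xy})$ to be the label of $x\lessdot y$ in $[u,v]$.

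The reassuring feature is that there is essentially no obstacle left to clear: all of the structural difficulty has already been absorbed into \Cref{cor:intervals_compatible} (and the results feeding it, notably \Cref{lem:paired} and \Cref{thm:intervals}). The only point that genuinely demands care is the reverse direction of the displayed edge equivalence, where one must actually use \emph{both} defining constraints $j\in J_L(v)$ (giving $j\leq v$) and $j'\in M_L(u)$ (giving $u\leq\kappa_L(j')$); without the constraint $u\leq\kappa_L(j')$ the join $u\vee j$ could in principle exceed $v\wedge\kappa_L(j')$ even when $j\leq\kappa_L(j')$, so this is precisely where the restriction to the vertex set $J_L(v)\cap M_L(u)$ does the real work.
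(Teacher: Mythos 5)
Your proposal is correct and follows essentially the same route as the paper: both reduce the edge condition to the equivalence $j\leq\kappa_L(j')\Leftrightarrow u\vee j\leq v\wedge\kappa_L(j')$ using the formulas $\alpha_{u,v}(j)=u\vee j$ and $\kappa_{[u,v]}(\alpha_{u,v}(j'))=v\wedge\kappa_L(j')$ from \Cref{cor:intervals_compatible}, and both verify the label statement by checking that $\alpha_{u,v}(j_{xy})\leq y$ and $\kappa_{[u,v]}(\alpha_{u,v}(j_{xy}))\geq x$, with uniqueness supplied by the overlapping property. Your write-up is, if anything, slightly more explicit than the paper's (spelling out which membership hypothesis is used where, and citing \Cref{thm:intervals} for the uniqueness of the label), but there is no substantive difference in approach.
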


\begin{proof}
Suppose $j,j'\in J_L(v)\cap M_L(u)$. This means that $j,j'\leq v$ and $\kappa_L(j),\kappa_L(j')\geq u$. We must show that $j\to j'$ in $G_L$ if and only if $\alpha_{u,v}(j)\to\alpha_{u,v}(j')$ in $G_{[u,v]}$. Equivalently, we must show that $j\leq\kappa_L(j')$ if and only if $\alpha_{u,v}(j)\leq \kappa_{[u,v]}(\alpha_{u,v}(j'))$. We have $\alpha_{u,v}(j)=u\vee j$ by definition, and we have $\kappa_{[u,v]}(\alpha_{u,v}(j'))=\beta_{u,v}(\kappa_L(j'))=v\wedge\kappa_L(j')$ by \Cref{cor:intervals_compatible}. The assumptions $j\leq v$ and $\kappa_L(j')\geq u$ imply that $j\leq\kappa_L(j')$ if and only if $u \vee j\leq v\wedge\kappa_L(j')$. 

To prove the last statement, note that $j_{xy}\leq y$ and $\kappa_L(j_{xy})\geq x$ by \Cref{def:overlapping}. Since $y\geq u$, this implies that $\alpha_{u,v}(j_{xy})=u\vee j_{xy}\leq y$. Also, since $x\leq v$, we have (by \Cref{cor:intervals_compatible}) $\kappa_{[u,v]}(\alpha_{u,v}(j_{xy}))=v\wedge\kappa_L(j_{xy})\geq x$. Hence, $\alpha_{u,v}(j_{xy})$ is the label of $x\lessdot y$ in $[u,v]$. 
\end{proof}

\section{Poset Topology}\label{sec:crosscut}
The \defn{crosscut complex} of a lattice $L$ is the abstract simplicial complex whose faces are the sets $A$ of atoms of $L$ such that $\bigvee A\neq \hat 1$. We say $L$ is \defn{crosscut simplicial} if for all $u,v\in L$ with $u\leq v$, the crosscut complex of the interval $[u,v]$ contains all proper subsets of the set of atoms of $[u,v]$ as faces. It is known \cite[Theorem~10.8]{bjornertopological} that the order complex of a lattice is homotopy equivalent to its crosscut complex; it follows that if $L$ is crosscut simplicial, then every interval in $L$ has an order complex that is either contractible or homotopy equivalent to a sphere. McConville \cite{mcconville2017crosscut} proved that semidistributive lattices are crosscut simplicial; in fact, Barnard \cite{barnard19canonical} showed that a lattice is semidistributive if and only if it is join-semidistributive and crosscut simplicial. In his initial work on trim lattices, Thomas \cite{thomas2006analogue} proved that the order complex of a trim lattice must be contractible or homotopy equivalent to a sphere. We generalize these results to semidistrim lattices in the following theorem. 

\begin{theorem}
Semidistrim lattices are crosscut simplicial. 
\end{theorem}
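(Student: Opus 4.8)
The plan is to reduce the statement to a self-contained claim about a single semidistrim lattice and then invoke the (already established) fact that intervals of semidistrim lattices are semidistrim. To prove $L$ is crosscut simplicial I must show that for every interval $[u,v]$, each proper subset $A$ of the atoms of $[u,v]$ satisfies $\bigvee A\neq v$. By \Cref{thm:intervals}, the interval $[u,v]$ is itself a semidistrim lattice, with bottom $u$, top $v$, and its own pairing $\kappa_{[u,v]}$. It therefore suffices to prove the following claim for an arbitrary semidistrim lattice $L$: if $A$ is a set of atoms of $L$ with $\bigvee A=\hat 1$, then $A$ is the set of \emph{all} atoms of $L$. Applying this claim to each interval $[u,v]$ shows that no proper subset of the atoms of $[u,v]$ can join to $v$, which is exactly the assertion of crosscut simpliciality.

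The heart of the argument is the observation that the atoms of $L$ form an independent set of the Galois graph $G_L$. Indeed, each atom $a$ is join-irreducible with $a_*=\hat 0$, and the only join-irreducible element lying weakly below $a$ is $a$ itself; hence the label of the cover $\hat 0\lessdot a$ is $j_{\hat 0 a}=a$, and so $\U_L(\hat 0)$ is precisely the set of atoms of $L$. Since $L$ is semidistrim, $\U_L(\hat 0)\in\I(G_L)$, so no two distinct atoms are adjacent in $G_L$. Unwinding the definition of an edge, for distinct atoms $a,b$ the absence of the edge $a\to b$ forces $a\leq\kappa_L(b)$.

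With this comparability in hand the claim follows immediately. Suppose $A$ is a set of atoms with $\bigvee A=\hat 1$, and let $b$ be any atom with $b\notin A$. Every $a\in A$ is an atom distinct from $b$, so $a\leq\kappa_L(b)$, and therefore $\bigvee A\leq\kappa_L(b)$. But $\kappa_L(b)\in\M_L$ is meet-irreducible and hence strictly below $\hat 1$ (as $\hat 1$ is covered by nothing and so is not meet-irreducible), contradicting $\bigvee A=\hat 1$. Thus no atom can lie outside $A$, which proves the claim and hence the theorem.

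I do not expect a genuine obstacle in this section, provided the machinery of the previous sections is available: the real work was carried out in \Cref{sec:products_intervals}, where intervals of semidistrim lattices were shown to be semidistrim. The only two points requiring care here are the correct identification of $\U_L(\hat 0)$ with the set of atoms (via the computation $j_{\hat 0a}=a$) and the translation of the independence of $\U_L(\hat0)$ into the order relation $a\leq\kappa_L(b)$ for distinct atoms.
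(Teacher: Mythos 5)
Your proof is correct, and while it shares the paper's two framing steps---reducing to a single lattice via \Cref{thm:intervals} and identifying $\U_L(\hat 0)$ with the atom set so that semidistrimness makes the atoms independent in $G_L$---the key step is genuinely different and more elementary. The paper finishes by invoking the bijection $\D_L\colon L\to\I(G_L)$ (\Cref{thm:independent}): since a proper subset $A$ of the atoms is again independent, there are distinct elements $x,x'$ with $\D_L(x)=\U_L(\hat 0)$ and $\D_L(x')=A$, and then \Cref{thm:labels} gives $\bigvee A=x'<x\leq\hat 1$. You instead unwind independence directly into the order relation: absence of the edge $a\to b$ for distinct atoms means $a\leq\kappa_L(b)$, so if $b$ is an atom outside $A$ then $\bigvee A\leq\kappa_L(b)<\hat 1$, the strict inequality holding because a meet-irreducible is covered by exactly one element and hence is not $\hat 1$. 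Your route buys self-containedness: it needs only the definition of the Galois graph, the independence of $\U_L(\hat 0)$, and basic facts about meet-irreducibles, avoiding the surjectivity half of \Cref{thm:independent} entirely (injectivity via \Cref{thm:labels} is also not needed); in particular it would apply to any class of overlapping uniquely paired lattices closed under intervals in which $\U(\hat 0)$ is independent, even without the full bijection $L\cong\I(G_L)$. The paper's route buys brevity given machinery already in place, and illustrates how the bijection with independent sets controls joins of label sets. All the small steps you flag check out: the label computation $j_{\hat 0 a}=a$ is right since $M_L(\hat 0)=\J_L$ and $J_L(a)=\{a\}$ for an atom $a$, and the degenerate cases ($A=\emptyset$, or $|[u,v]|=1$ where the atom set has no proper subsets) cause no trouble.
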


\begin{proof}
Let $L$ be a semidistrim lattice. Because intervals of semidistrim lattices are semidistrim (\Cref{thm:intervals}), it is enough to show that every proper subset of the set of atoms of $L$ is a face of the crosscut complex of $L$. Note that $\U_L(\hat 0)$ is the set of atoms of $L$. Let $A$ be a proper subset of $\U_L(\hat 0)$. By the definition of a semidistrim lattice, the set $\U_L(\hat 0)$ is an independent set in $G_L$. Hence, $A$ is also an independent set in $G_L$. It follows from \Cref{thm:independent} that there exist distinct $x,x'\in L$ such that $\D_L(x)=\U_L(\hat 0)$ and $\D_L(x')=A$. According to \Cref{thm:labels}, we have $x=\bigvee \U_L(\hat 0)$ and $x'=\bigvee A$. Then $x'<x$, so $\bigvee A<\hat 1$.  
\end{proof}

\begin{corollary}
The order complex of a semidistrim lattice is either contractible or homotopy equivalent to a sphere. 
\end{corollary}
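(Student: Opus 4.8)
The plan is to deduce this directly from the preceding theorem together with the homotopy equivalence between the order complex and the crosscut complex recorded at the start of \Cref{sec:crosscut}. Since a semidistrim lattice $L$ is crosscut simplicial, applying the defining condition to the interval $[\hat 0,\hat 1]=L$ shows that every proper subset of the atom set $\U_L(\hat 0)$ is a face of the crosscut complex of $L$. I would then invoke the elementary observation that a simplicial complex on a vertex set $V$ whose faces include every proper subset of $V$ is either the full simplex on $V$ (when $V$ itself is a face) or the boundary of that simplex (when $V$ is not a face).

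Accordingly, I would split into two cases depending on whether $\bigvee\U_L(\hat 0)=\hat 1$. If $\bigvee\U_L(\hat 0)<\hat 1$, then $\U_L(\hat 0)$ is itself a face, so the crosscut complex of $L$ is the full simplex on the atoms and is therefore contractible. If instead $\bigvee\U_L(\hat 0)=\hat 1$, then $\U_L(\hat 0)$ is not a face, so the crosscut complex is the boundary of that simplex, which is homeomorphic to a sphere. Finally, by Bj\"orner's theorem (\cite[Theorem~10.8]{bjornertopological}), the order complex of $L$ is homotopy equivalent to its crosscut complex; hence it is contractible in the first case and homotopy equivalent to a sphere in the second.

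There is essentially no obstacle here: all of the substance lies in the preceding theorem and in Bj\"orner's homotopy equivalence, both of which are available. The only point requiring any care is the elementary combinatorial-topological fact that a complex containing all proper subsets of its vertex set as faces must be a simplex or the boundary of a simplex, and this is immediate from the definition of a simplicial complex. In fact, the consequence for intervals was already noted at the start of \Cref{sec:crosscut}, so the corollary may alternatively be regarded as the special case of that remark applied to $L=[\hat 0,\hat 1]$.
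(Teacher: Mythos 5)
Your proof is correct and matches the paper's intended argument exactly: the paper states this corollary without a separate proof because it follows immediately from the theorem that semidistrim lattices are crosscut simplicial together with Bj\"orner's crosscut theorem, precisely as noted in the opening remark of \Cref{sec:crosscut}. Your case analysis on whether $\bigvee\U_L(\hat 0)=\hat 1$ (full simplex versus boundary of a simplex) just makes explicit the standard deduction the paper leaves implicit.
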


\section{Pop-Stack Sorting and Rowmotion}
\label{sec:pop_and_row}

\subsection{Relationships between pop-stack sorting and rowmotion}
Let $L$ be a lattice. Following \cite{defant21meeting}, we define the \defn{pop-stack sorting operator} $\popdown_L\colon L\to L$ and the \defn{dual pop-stack sorting} operator $\popup_L\colon L\to L$ by 
\[
\popdown_L(x)=x\wedge\bigwedge\{y\in L:y\lessdot x\}\quad\text{and}\quad\popup_L(x)=x\vee\bigvee\{y\in L:x\lessdot y\}.
\] In particular, $\popdown_L(\hat 0)=\hat 0$, and $\popup_L(\hat 1)=\hat 1$. 
As mentioned in the introduction, $\pop_L$ coincides with the classical pop-stack sorting map (see \cite{ClaessonPop, Elder}) when $L$ is the right weak order on the symmetric group $S_n$. Pop-stack sorting operators on lattices were introduced in \cite{defant2021stack, defant21meeting} as generalizations of the pop-stack sorting map.

Now suppose $L$ is semidistrim. \Cref{thm:independent} tells us that the maps $\D_L\colon L\to\I(G_L)$ and $\U_L\colon L\to\I(G_L)$ are bijections. We define \defn{rowmotion} to be the bijection $\row_L\colon L\to L$ defined by declaring $\row_L(x)$ to be the unique element of $L$ that satisfies $\U_L(\row(x))=\D_L(x)$.
This definition of rowmotion extends the rowmotion operators on distributive, semidistributive, and trim lattices considered recently by several authors~\cite{thomas2019rowmotion,thomas2019independence,barnard19canonical,striker2018rowmotion}.

Our goal in this section is to show that pop-stack sorting, dual pop-stack sorting, and rowmotion are closely related. While discussing the connections among these operators, we will be led to questions and results that are new even for distributive lattices. 

\begin{theorem}
Let $L$ be a semidistrim lattice. For $x\in L$, we have 
\begin{align*}\row_L(x) = \bigwedge\kappa_L(\D_L(x)) &\text{ and } \row_L^{-1}(x) = \bigvee \U_L(x); \\ 
\popdown_L(x) = x \wedge \bigwedge\kappa_L(\D_L(x)) &\text{ and } \popup_L(x) = x \vee \bigvee\U_L(x).\end{align*}  In particular, \[\pop_L(x) = x \wedge \row_L(x) \text{ and } \popup_L(x) = x \vee \row_L^{-1}(x).\]
\label{thm:pop_and_row}
\end{theorem}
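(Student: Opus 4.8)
The plan is to reduce everything to the definition of rowmotion together with \Cref{thm:labels} and \Cref{prop:covers}; almost no new ideas are needed, as each formula unwinds directly from these. First I would establish the two rowmotion formulas. By definition, $y=\row_L(x)$ is the unique element characterized by $\U_L(y)=\D_L(x)$. Applying the identity $y=\bigwedge\kappa_L(\U_L(y))$ from \Cref{thm:labels} immediately gives $\row_L(x)=\bigwedge\kappa_L(\D_L(x))$. Dually, writing $z=\row_L^{-1}(x)$, the defining relation $\row_L(z)=x$ reads $\U_L(x)=\D_L(z)$, so combining it with the identity $z=\bigvee\D_L(z)$ from \Cref{thm:labels} yields $\row_L^{-1}(x)=\bigvee\U_L(x)$.

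Next I would handle the pop-stack operators by rewriting each cover of $x$ in terms of its label. For a down-cover $w\lessdot x$ with label $j_{wx}\in\D_L(x)$, \Cref{prop:covers} gives $w=x\wedge\kappa_L(j_{wx})$; since $\D_L(x)=\{j_{wx}:w\lessdot x\}$ and meets are idempotent, taking the meet over all down-covers produces $\bigwedge\{w\in L:w\lessdot x\}=x\wedge\bigwedge\kappa_L(\D_L(x))$, whence $\popdown_L(x)=x\wedge\bigwedge\kappa_L(\D_L(x))$. Symmetrically, for an up-cover $x\lessdot y$ with label $j_{xy}\in\U_L(x)$, \Cref{prop:covers} gives $y=x\vee j_{xy}$, so $\bigvee\{y\in L:x\lessdot y\}=x\vee\bigvee\U_L(x)$ and hence $\popup_L(x)=x\vee\bigvee\U_L(x)$. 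Substituting the two rowmotion formulas just proved then gives the final identities $\pop_L(x)=x\wedge\row_L(x)$ and $\popup_L(x)=x\vee\row_L^{-1}(x)$.

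The argument is essentially a bookkeeping exercise, so I do not anticipate a genuine obstacle; the one point that needs a little care is the boundary behaviour. When $x=\hat 0$ there are no down-covers, so $\D_L(\hat 0)=\emptyset$ and the relevant meet is the empty meet $\hat 1$; one checks that the formulas then correctly reproduce $\popdown_L(\hat 0)=\hat 0$ and $\row_L(\hat 0)=\hat 1$, and the dual situation holds at $\hat 1$. I would also remark that the step identifying $\bigwedge\{w:w\lessdot x\}$ with $x\wedge\bigwedge\kappa_L(\D_L(x))$ implicitly uses that $w\mapsto j_{wx}$ is a bijection from the down-covers of $x$ onto $\D_L(x)$, which is immediate from \Cref{prop:covers} since the down-cover $w=x\wedge\kappa_L(j_{wx})$ is recovered from its label.
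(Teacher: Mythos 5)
Your proof is correct and follows essentially the same route as the paper's: the rowmotion formulas are deduced from the defining relation $\U_L(\row_L(x))=\D_L(x)$ together with the identities of \Cref{thm:labels}, and the pop-stack formulas come from rewriting each down-cover as $x\wedge\kappa_L(j_{wx})$ and each up-cover as $x\vee j_{xy}$ via \Cref{prop:covers} and taking the meet/join. Your added boundary checks at $\hat 0$ and $\hat 1$ and the remark on the cover-to-label bijection are harmless but not needed (duplicate labels would not affect a meet over a set, and the empty meet/join conventions are built into the displayed formulas).
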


\begin{proof}
The statements about rowmotion follow from~\Cref{thm:labels}. For the statements about pop-stack sorting and dual pop-stack sorting, we compute \[\popdown_L(x)=x\wedge\bigwedge\{y\in L:y\lessdot x\}=x\wedge\bigwedge_{y\lessdot x}(x\wedge \kappa_L(j_{xy}))=x\wedge\bigwedge\kappa_L(\D_L(x))\] and \[\popup_L(x)=x\vee\bigvee\{y\in L:x\lessdot y\}=x\vee\bigvee_{x\lessdot y}(x\vee j_{xy})=x\vee\bigvee\U_L(x). \qedhere\] 
\end{proof}

The identity $\popdown_L(x)=x\wedge\row_L(x)$ in the preceding theorem gives our first glance at the connection between pop-stack sorting and rowmotion. However, there might be several elements $z$ such that $\popdown_L(x)=x\wedge z$, so we would like to know that there is something special about $\row_L(x)$. We will see that $\row_L(x)$ is actually a maximal element of the set of all $z$ satisfying $\popdown_L(x)=x\wedge z$. First, we need the following lemma. Recall from \Cref{thm:intervals} that intervals in semidistrim lattices are semidistrim.

\begin{lemma}\label{prop:row_lower_interval}
Let $L$ be a semidistrim lattice. Consider $v\in L$, and let $L_v=[\hat{0},v]$. For every $x\in L_v$, we have $\row_{L_v}(x)=v\wedge \row_L(x)$. 
\end{lemma}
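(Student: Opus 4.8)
The plan is to prove the identity $\row_{L_v}(x) = v \wedge \row_L(x)$ by working entirely through the labeling description of rowmotion and exploiting the compatibility of the Galois graph and the edge labels of $L_v$ with those of $L$. The key is that $\row_L$ is characterized by the equation $\U_L(\row_L(x)) = \D_L(x)$, together with the bijectivity of $\U_L$ and $\D_L$ from \Cref{thm:independent}. So I will first reduce the problem to a statement purely about label sets: it suffices to show that $\U_{L_v}(v \wedge \row_L(x)) = \D_{L_v}(x)$, since the left side equals $\D_{L_v}(x)$ exactly when $v \wedge \row_L(x)$ is the element $\row_{L_v}(x)$.

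First I would translate both sides into statements inside $G_L$. By \Cref{lem:galois_restriction}, $G_{L_v}$ is the induced subgraph of $G_L$ on $\mathcal J_{L_v}$, and by the discussion following \Cref{lem:paired} the pairing satisfies $\kappa_{L_v}(j) = \kappa_L(j) \wedge v$. For $x \in L_v$, the set $\D_{L_v}(x)$ is the intersection of $\D_L(x)$ with $\mathcal J_{L_v}$ (as noted in the proof of \Cref{thm:intervals}); but since $x \leq v$, every label $j_{yx}$ of a down-cover already satisfies $j_{yx} \leq x \leq v$, so in fact $\D_{L_v}(x) = \D_L(x)$. Thus the target becomes $\U_{L_v}(v \wedge \row_L(x)) = \D_L(x)$.

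The heart of the argument is then to compute $\U_{L_v}(v \wedge \row_L(x))$. Writing $r = \row_L(x)$, so that $\U_L(r) = \D_L(x)$, I want to show that passing to the interval $L_v = [\hat 0, v]$ and intersecting $r$ with $v$ recovers exactly the same label set. The natural tool is \Cref{lem:meets_are_labeled}: for each $j \in \U_L(r) = \D_L(x)$, we have $j \leq r$ (since $j \in \D_L(x)$ means $j \leq x \leq r$ is not automatic — rather $j \leq x$), and more usefully, since $j \in \U_L(r)$ we know $j$ is the label of some cover $r \lessdot y$, giving $j \not\leq r$ but $j \leq \kappa_L(j)^*$-type relations; I would instead use that $j \in \D_L(x)$ gives $j \leq x \leq v$, so $j \in \mathcal J_{L_v}$, and that $\kappa_L(j) \geq r$ — wait, the correct relation is $\kappa_L(j) \geq $ the bottom of the cover. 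Let me instead route through \Cref{lem:meets_are_labeled} applied in $L_v$: for $j \in \D_L(x) = \U_L(r)$, one has $j \leq \kappa_L(j)^*$ and the cover $r \lessdot r \vee j$ has label $j$; intersecting with $v$, I claim $v \wedge r \lessdot (v \wedge r) \vee j$ is a cover of $L_v$ with the same label $j$, using $\kappa_{L_v}(j) = \kappa_L(j) \wedge v \geq (v \wedge r)$. This shows $\D_L(x) \subseteq \U_{L_v}(v \wedge r)$. For the reverse inclusion I would show $\U_{L_v}(v \wedge r)$ cannot be strictly larger: by \Cref{thm:independent} applied to $L_v$, $\U_{L_v}$ is injective into $\I(G_{L_v})$, and \Cref{thm:labels} gives $v \wedge r = \bigwedge \kappa_{L_v}(\U_{L_v}(v \wedge r))$, so a larger independent set would force a strictly smaller meet, contradicting that $\D_L(x)$ already meets down to $v \wedge r$.

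The main obstacle I anticipate is precisely this reverse inclusion — verifying that no \emph{extra} up-labels appear when one restricts to the interval. The element $v \wedge r$ may have up-covers inside $L_v$ that are not visible as up-covers of $r$ in $L$, and controlling these requires the tight-orthogonality structure of \Cref{thm:tops} or a careful uniqueness argument via \Cref{thm:labels} and \Cref{thm:independent}. I would therefore organize the clean proof around the bijection $\U_{L_v}$ and the explicit meet formula $x = \bigwedge \kappa_{L_v}(\U_{L_v}(x))$ for elements of $L_v$: once I verify $\bigwedge \kappa_{L_v}(\D_L(x)) = v \wedge r$ directly (using $\kappa_{L_v}(j) = \kappa_L(j) \wedge v$ and the fact that $r = \bigwedge \kappa_L(\D_L(x))$ from \Cref{thm:pop_and_row}), injectivity of $\U_{L_v}$ forces $\U_{L_v}(v \wedge r) = \D_L(x) = \D_{L_v}(x)$, which is exactly the defining equation for $\row_{L_v}(x) = v \wedge r$. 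This meet computation is short, since it only uses distributivity of $\wedge$ over the finite meet $\bigwedge_{j \in \D_L(x)}(\kappa_L(j) \wedge v) = v \wedge \bigwedge_{j} \kappa_L(j) = v \wedge r$, and sidesteps the delicate cover-by-cover bookkeeping.
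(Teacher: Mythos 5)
Your final argument is correct and is essentially the paper's own proof: both rest on the formula $\row(x)=\bigwedge\kappa(\D(x))$ from \Cref{thm:pop_and_row} applied inside $L_v$ (which is semidistrim by \Cref{thm:intervals}), together with $\kappa_{L_v}(j)=\kappa_L(j)\wedge v$ from \Cref{lem:same_num} and $\D_{L_v}(x)=\D_L(x)$, followed by the one-line meet computation $\bigwedge_{j\in\D_L(x)}(\kappa_L(j)\wedge v)=v\wedge\bigwedge_{j\in\D_L(x)}\kappa_L(j)=v\wedge\row_L(x)$. The cover-by-cover detour through \Cref{lem:meets_are_labeled} that you begin and then discard is indeed unnecessary; the clean route you settle on is exactly the paper's.
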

\begin{proof}
We know by \Cref{lem:same_num} that $\kappa_{L_v}(j)=\kappa_L(j)\wedge v$ for all $j\in\J_{L_v}$. Now consider $x\in L_v$. We have $\D_{L_v}(x)=\D_L(x)$ by \Cref{cor:galois_label_intervals}. Using \Cref{thm:pop_and_row}, we find that \[\row_{L_v}(x)=\bigwedge\kappa_{L_v}(\D_L(x))=\bigwedge_{j\in\D_L(x)}(\kappa_L(j)\wedge v)=v\wedge\bigwedge_{j\in\D_L(x)}\kappa_L(j)=v\wedge\row_L(x). \qedhere\]
\end{proof}

Our next theorem tightens the relationship between pop-stack sorting and rowmotion beyond~\Cref{thm:pop_and_row}; it is new and nontrivial even for semidistributive and trim lattices. 

\begin{theorem}
\label{thm:row_max}
Let $L$ be a semidistrim lattice. If $x\in L$, then $\row_L(x)$ is a maximal element of the set $\{z\in L:\pop_L(x)= x\wedge z\}$, and $\row_L^{-1}(x)$ is a minimal element of the set $\{z\in L:\popup_L(x)= x\vee z\}$.
\end{theorem}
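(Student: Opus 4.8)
The plan is to prove the first statement directly and to obtain the second by the dual argument. Throughout, write $r=\row_L(x)$ and $p=\pop_L(x)$, and set $S=\{z\in L:p=x\wedge z\}$. By \Cref{thm:pop_and_row} we have $p=x\wedge r$, so $r\in S$, and only maximality needs to be established. The first key observation I would make is that, since $L$ is finite, $r$ fails to be maximal in $S$ precisely when some element covering $r$ lies in $S$: if $z\in S$ with $z>r$, choose $z'$ with $r\lessdot z'\leq z$; then $x\wedge r\leq x\wedge z'\leq x\wedge z=p=x\wedge r$ forces $x\wedge z'=p$, so $z'\in S$. Hence it suffices to show that \emph{no} cover of $r$ belongs to $S$.

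The heart of the argument is then a short computation using a single join-irreducible. Fix a cover $r\lessdot z'$ and let $j=j_{rz'}\in\U_L(r)$ be its label. By the definition of rowmotion, $\U_L(r)=\D_L(x)$, so $j\in\D_L(x)$; in particular $j\leq x$ because $\D_L(x)\subseteq J_L(x)$ (see \Cref{def:overlapping}). Now \Cref{prop:covers} gives $z'=r\vee j$ and $r=z'\wedge\kappa_L(j)$, the latter forcing $\kappa_L(j)\geq r$. Since $j\not\leq\kappa_L(j)$ for every join-irreducible $j$, we cannot have $j\leq r$ (otherwise $j\leq r\leq\kappa_L(j)$), so in particular $j\not\leq x\wedge r$. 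On the other hand $j\leq x$ and $j\leq z'$, so $j\leq x\wedge z'$. Therefore $x\wedge z'\neq x\wedge r$, and since $z'>r$ yields $x\wedge z'\geq x\wedge r$, we conclude $x\wedge z'>x\wedge r=p$. Thus $z'\notin S$, which proves that $r$ is maximal in $S$.

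For the second statement I would run the order-dual of this argument, using the identities $\row_L^{-1}(x)=\bigvee\U_L(x)$ and $\popup_L(x)=x\vee\row_L^{-1}(x)$ from \Cref{thm:pop_and_row} together with the self-dual \Cref{prop:covers}. Concretely, with $r'=\row_L^{-1}(x)$ one has $\D_L(r')=\U_L(x)$ (from $\row_L(r')=x$), so the label $j'$ of any cover $z'\lessdot r'$ satisfies $\kappa_L(j')\geq x$ and, by \Cref{prop:covers}, $\kappa_L(j')\geq z'$ but $\kappa_L(j')\not\geq r'$; this witnesses $x\vee z'<x\vee r'=\popup_L(x)$, so no $z'$ covered by $r'$ lies in $\{z:\popup_L(x)=x\vee z\}$. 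The only real content of the proof is the reduction to covers of $r$, combined with the fact that the label of such a cover lies in $\U_L(r)=\D_L(x)$ and so is simultaneously $\leq x$ and $\not\leq r$. I expect the main obstacle to be purely expository, namely making that reduction to a single cover relation airtight; once it is in place, everything follows immediately from \Cref{prop:covers} and the defining identities of \Cref{thm:pop_and_row}.
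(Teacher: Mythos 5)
Your proof is correct, and it takes a genuinely different---and considerably shorter---route than the paper's. The paper proves \Cref{thm:row_max} by induction on $|L|$, splitting along a dismantling pair $(j_0,m_0)$ into three cases according to whether $x$ and $\row_L(x)$ lie in $L_0=[\hat 0,m_0]$ or $L^0=[j_0,\hat 1]$, and leaning on \Cref{prop:row_lower_interval} together with the interval-compatibility results of \Cref{sec:products_intervals}. You instead argue locally and globally at once: the reduction of maximality to covers $z'$ of $r=\row_L(x)$ is sound (finiteness gives $r\lessdot z'\leq z$, and $x\wedge r\leq x\wedge z'\leq x\wedge z$ squeezes $z'$ into $S=\{z\in L:\pop_L(x)=x\wedge z\}$), and then the label $j=j_{rz'}$ lies in $\U_L(r)=\D_L(x)$ by the very definition of rowmotion, so $j\leq x$ and $j\leq z'$, while $j\in M_L(r)$ forces $j\not\leq r$ since $M_L(r)\cap J_L(r)=\emptyset$; hence $x\wedge z'\geq j$ but $x\wedge r\not\geq j$, so $z'\notin S$. (One can phrase your contradiction even more sharply: if $z'\in S$ then $j\leq x\wedge z'=\pop_L(x)=x\wedge r\leq r$, contradicting $j\not\leq r$---this is essentially the computation buried inside Case 1 of the paper's proof, and your observation is that it works globally, with no induction and no case analysis.) Your explicit dual argument for the minimality of $\row_L^{-1}(x)$ is likewise correct; the paper simply invokes duality there. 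As for what each approach buys: the paper's induction matches the style of the surrounding arguments and exercises its interval machinery, whereas your proof is self-contained, avoids \Cref{prop:row_lower_interval} entirely, and isolates the minimal inputs---\Cref{thm:pop_and_row}, \Cref{prop:covers}, the disjointness $M_L(r)\cap J_L(r)=\emptyset$, and the bijectivity of the label maps (\Cref{thm:independent}) needed only so that $\row_L$ is defined; the independence of the label sets in $G_L$ plays no role once rowmotion exists.
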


\begin{proof}
We prove that $\row_L(x)\in\max\{z\in L:\pop_L(x)= x\wedge z\}$; the second statement follows from this one by taking duals. Let $(j_0,m_0)$ be a dismantling pair for $L$, and let $L_0=[\hat{0},m_0]$ and $L^0=[j_0,\hat{1}]$. We consider three cases. 

\medskip 

\noindent{\bf Case 1}. Assume $x\in L_0$. Note that $\pop_L(x)=\pop_{L_0}(x)$. It follows by induction on the size of the lattice that $\row_{L_0}(x)\in\max\{z\in L_0:\pop_L(x)=x\wedge z\}$. Suppose by way of contradiction that there exists $w\in L$ satisfying $w>\row_L(x)$ and $\pop_L(x)=x\wedge w$. Let $y\in L$ be such that $\row_L(x)\lessdot y\leq w$. Then \[\pop_L(x)=x\wedge\row_L(x)\leq x\wedge y\leq x\wedge w=\pop_L(x),\] so $\pop_L(x)=x\wedge y$. We have $x\wedge (y\wedge m_0)=(x\wedge m_0)\wedge y=x\wedge y=\pop_L(x)$. Applying \Cref{prop:row_lower_interval} with $v=m_0$, we find that $\row_{L_0}(x)=\row_L(x)\wedge m_0\leq y\wedge m_0$. We have seen that $\row_{L_0}(x)$ is maximal in the set $\{z\in L_0:\pop_L(x)=x\wedge z\}$ and that $y\wedge m_0$ is in this set, so $y\wedge m_0=\row_{L_0}(x)$. Let $j$ be the join-irreducible element of $L$ that labels the cover relation $\row_L(x)\lessdot y$. Then $j\leq y$, and $j\not\leq \row_L(x)$. Now $j\in\U_L(\row_L(x))=\D_L(x)$, so $j\leq x\leq m_0$. This shows that $j\leq y\wedge m_0=\row_{L_0}(x)=\row_L(x)\wedge m_0\leq\row_L(x)$, which is our desired contradiction. 

\medskip 

\noindent {\bf Case 2}. Now assume $x\in L^0$ and $\row_L(x)\in L^0$. 
For each $j\in\D_L(x)=\U_L(\row_L(x))$, we have $\kappa_L(j)\geq\row_L(x)\geq j_0$. In particular, $j_0\not\in\D_L(x)$ because $\kappa_L(j_0)\not\geq j_0$.
\Cref{cor:label_sets_in_intervals} tells us that the map $\alpha$ given by $\alpha(j)=j_0\vee j$ gives a bijection from $\D_L(x)$ to $\D_{L^0}(x)$. Furthermore, \Cref{def:semidistrim} ensures that $\kappa_{L^0}(\alpha(j))=\kappa_L(j)$ for all $j\in\D_L(x)$. Hence, by \Cref{thm:pop_and_row}, \[\row_{L_0}(x)=\bigwedge\kappa_{L^0}(\D_{L^0}(x))=\bigwedge\kappa_{L^0}(\alpha(\D_L(x)))=\bigwedge\kappa_L(\D_L(x))=\row_L(x).\] Also, note that $\pop_L(x)=x\wedge \row_L(x)\in L^0$. This means that all of the elements covered by $x$ in $L$ are in $L^0$, so $\pop_{L^0}(x)=\pop_L(x)$. Consequently, $\{z\in L:\pop_L(x)=x\wedge z\}=\{z\in L^0:\pop_{L^0}(x)=x\wedge z\}$. We know by induction that $\row_{L^0}(x)$ is a maximal element of $\{z\in L^0:\pop_{L^0}(x)=x\wedge z\}$. We have seen that $\row_{L^0}(x)=\row_L(x)$, so $\row_L(x)$ is a maximal element of $\{z\in L:\pop_L(x)=x\wedge z\}$. 

\medskip 

\noindent {\bf Case 3}. Finally, assume $x\in L^0$ and $\row_L(x)\in L_0$. Then $\pop_L(x)=x\wedge\row_L(x)\in L_0$, so there must be an element $w\in L_0$ that is covered by $x$. \Cref{cor:down_labels} tells us that this element $w$ is unique and that the label of the cover relation $w\lessdot x$ is $j_0$. Because $L$ is semidistrim, the set $\D_L(x)$ is an independent set in the Galois graph $G_L$. Therefore, $\D_L(x)\setminus\{j_0\}$ is also an independent set in $G_L$. \Cref{thm:independent} guarantees that there exists a unique $x'\in L$ such that $\D_L(x')=\D_L(x)\setminus\{j_0\}$. Then $x'\neq x$, and \Cref{thm:labels} implies that $x=x'\vee j_0$. Hence, $x'\in L_0$. Appealing to Case 1, we find that $\row_{L_0}(x')\in\max\{z\in L_0:\pop_{L_0}(x')=x'\wedge z\}$. Suppose by way of contradiction that $\row_L(x)\not\in\max\{z\in L:\pop_L(x)=x\wedge z\}$. Then there exists $y\in L$ satisfying $\row_L(x)\lessdot y$ and $\pop_L(x)=x\wedge y$. We will prove that $\row_{L_0}(x')\lessdot y$ and $\pop_L(x')=x'\wedge y$, which will contradict the fact that $\row_{L_0}(x')\in\max\{z\in L_0:\pop_{L_0}(x')=x'\wedge z\}$.

Since $x'\in L_0$, we can apply \Cref{prop:row_lower_interval} with $v=m_0$ to find that $\row_{L_0}(x')=\row_L(x')\wedge m_0$. Appealing to \Cref{thm:pop_and_row} yields \[\row_{L_0}(x')=m_0\wedge \row_L(x')=m_0\wedge\bigwedge\kappa_L(\D_L(x'))=\kappa_L(j_0)\wedge\bigwedge\kappa_L(\D_L(x)\setminus\{j_0\})=\bigwedge\kappa_L(\D_L(x))\] \[=\row_L(x).\] It follows that $\row_{L_0}(x')\lessdot y$. Finally, since $x'<x$, we can use \Cref{thm:pop_and_row} again to find that \[x'\wedge y=x'\wedge x\wedge y=x'\wedge\pop_L(x)=x'\wedge x\wedge\bigwedge\kappa_L(\D_L(x))=x'\wedge\bigwedge\kappa_L(\D_L(x))\] \[=x'\wedge\kappa_L(j_0)\wedge\bigwedge\kappa_L(\D_L(x'))=(x'\wedge m_0)\wedge\bigwedge\kappa_L(\D_L(x'))=x'\wedge\bigwedge\kappa_L(\D_L(x'))=\pop_L(x'). \qedhere\]
\end{proof}

If $L$ is a meet-semidistributive lattice and $a,b\in L$ are such that $a\leq b$, then the set $\{z\in L:a=b\wedge z\}$ has a unique maximal element. Therefore, if $L$ is both meet-semidistributive and semidistrim, then \Cref{thm:row_max} tells us that for every $x \in L$, $\row_L(x)$ is the unique maximal element of $\{z \in L:  \pop_L(x)=x\wedge z\}$. This is interesting because it provides a natural way to extend the definition of rowmotion to arbitrary meet-semidistributive lattices that might not be semidistrim. More precisely, if $L$ is a meet-semidistributive lattice, then we define \defn{rowmotion} to be the operator $\row_L\colon L\to L$ such that for every $x\in L$, the element $\row_L(x)$ is the unique maximal element of $\{z \in L:  \pop_L(x)=x\wedge z\}$. Let us remark that this rowmotion operator is \emph{not} necessarily bijective in general (see \Cref{fig:meet_semi}). In fact, we have the following proposition. 

\begin{proposition}\label{prop:bijective_meet_semi}
A lattice $L$ is semidistributive if and only if it is meet-semidistributive and the rowmotion operator $\row_L\colon L\to L$ is bijective. 
\end{proposition}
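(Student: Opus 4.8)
The plan is to prove the two implications separately. The forward direction will follow almost immediately from results already in hand, while the backward direction rests on a single clean observation: on join-irreducible elements, the meet-semidistributive rowmotion operator recovers the map $j\mapsto \M_L(j)$.

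\medskip

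\emph{Forward direction.} Suppose $L$ is semidistributive. Then $L$ is meet-semidistributive by definition, so $\row_L$ is defined; and $L$ is semidistrim, so the semidistrim rowmotion operator is a bijection by \Cref{thm:independent}. The only thing to check is that the two notions of rowmotion agree. Since $L$ is meet-semidistributive, the set $\{z\in L:\pop_L(x)=x\wedge z\}$ has a unique maximal element, and \Cref{thm:row_max} identifies the semidistrim $\row_L(x)$ as a maximal, hence \emph{the} maximum, element of this set. Thus the meet-semidistributive $\row_L$ coincides with the bijective semidistrim $\row_L$, so it is bijective.

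\medskip

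\emph{Backward direction.} Assume $L$ is meet-semidistributive and $\row_L$ is bijective; I will show $L$ is join-semidistributive (and hence semidistributive). For each $j\in\J_L$ write $\kappa(j)$ for the unique element of $\M_L(j)$, which is well defined by meet-semidistributivity and lies in $\M_L$ by \Cref{lem:contained_in_J}. The first step is to evaluate $\row_L$ on a join-irreducible $j$: since $\pop_L(j)=j\wedge j_*=j_*$, we get $\row_L(j)=\max\{z:j\wedge z=j_*\}=\kappa(j)$. Hence $\row_L$ restricts to $\kappa$ on $\J_L$, and bijectivity of $\row_L$ forces $\kappa$ to be injective.

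The second step is to deduce from injectivity of $\kappa$ that $|\J_L(m)|=1$ for every $m\in\M_L$, which is precisely the characterization of join-semidistributivity recalled in \Cref{sec:semidistributive}. Fix $m$ and take any $j\in\J_L(m)$, which is join-irreducible by \Cref{lem:contained_in_J}. Because $j$ is minimal with $m\vee j=m^*$, one checks that $j_*\leq m$ (otherwise $m\vee j_*=m^*$ with $j_*<j$ contradicts minimality), that $j\not\leq m$, and that $j\leq m^*$; together these give $j\wedge m=j_*$, so $m\in\{z:j\wedge z=j_*\}$ and therefore $m\leq\kappa(j)$. If $\kappa(j)>m$, then since $m$ is meet-irreducible every element strictly above $m$ lies above $m^*$, so $j\leq m^*\leq\kappa(j)$; but $j\wedge\kappa(j)=j_*\neq j$ means $j\not\leq\kappa(j)$, a contradiction. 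Hence $\kappa(j)=m$ for \emph{every} $j\in\J_L(m)$, and injectivity of $\kappa$ forces $\J_L(m)$ to be a singleton.

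\medskip

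The step I expect to require the most care is this last one: a priori $\J_L(m)$ could contain several join-irreducibles whose $\kappa$-images are distinct and all exceed $m$, which would not immediately violate injectivity. The key that dissolves this obstacle is the rigidity $j\not\leq\kappa(j)$ (immediate from $j\wedge\kappa(j)=j_*$) combined with the meet-irreducibility of $m$: these together force every such $\kappa$-image down to $m$ itself, after which injectivity of $\kappa$ does all the remaining work. The forward direction's only subtlety, namely that the meet-semidistributive and semidistrim definitions of rowmotion coincide, is supplied directly by \Cref{thm:row_max}.
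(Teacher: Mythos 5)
Your proposal is correct and follows essentially the same route as the paper's proof: both directions match, with the forward direction reducing to \Cref{thm:row_max} plus the bijectivity of semidistrim rowmotion, and the backward direction evaluating $\row_L$ on join-irreducibles to show $\row_L(j)=m$ for every $j\in\J_L(m)$ and then invoking injectivity. The only cosmetic difference is that the paper verifies $m\in\M_L(j)$ directly via $m\wedge j=j_*$ and $m^*\wedge j=j$, whereas you first deduce $m\leq\kappa(j)$ and then rule out strict inequality using $j\not\leq\kappa(j)$ and the meet-irreducibility of $m$ --- the same maximality check, rearranged.
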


\begin{proof}
We know already that rowmotion on a semidistributive lattice is bijective, so we only need to prove one direction of the proposition. Suppose $L$ is a meet-semidistributive lattice such that $\row_L$ is bijective. We want to show that $L$ is join-semidistributive. By \cite[Theorem~2.56]{freese95free}, it suffices to show that $|\J_L(m)|=1$ for every $m\in\mathcal M_L$. Suppose $m\in \M_L$ and $j\in\J_L(m)$. Then $m^*=m\vee j\neq m\vee j_*$. This forces us to have $m\geq j_*$ and $m\not\geq j$, so $m\wedge j=j_*$. Furthermore, $m^*\wedge j=j\neq j_*$. It follows that $m\in\M_L(j)=\max\{z\in L:\pop_L(j)=j\wedge z\}$. Hence, $m=\row_L(j)$. This proves that the only element of $\J_L(m)$ is $\row^{-1}(m)$. 
\end{proof}

\Cref{prop:bijective_meet_semi} implies that a lattice is semidistributive if and only if it is meet-semidistributive and semidistrim. 

\begin{figure}[htbp]
\begin{center}
\raisebox{-.5\height}{\scalebox{1}{\begin{tikzpicture}[scale=1]
\node[shape=circle,fill=black,scale=0.5,label={[xshift=0ex, yshift=-4.5ex]$\hat{0}$}] (0) at (0,0) {};
\node[shape=circle,fill=black,scale=0.5,label={[xshift=-2ex, yshift=-2ex]$d$}] (d) at (-1,1) {};
\node[shape=circle,fill=black,scale=0.5,label={[xshift=2ex, yshift=-2ex]$e$}] (e) at (1,1) {};
\node[shape=circle,fill=black,scale=0.5,label={[xshift=-2ex, yshift=-2ex]$a$}] (a) at (-2,2) {};
\node[shape=circle,fill=black,scale=0.5,label={[xshift=-2ex, yshift=-2ex]$b$}] (b) at (0,2) {};
\node[shape=circle,fill=black,scale=0.5,label={[xshift=2ex, yshift=-2ex]$c$}] (c) at (2,2) {};
\node[shape=circle,fill=black,scale=0.5,label={[xshift=0ex, yshift=1ex]$\hat{1}$}] (1) at (0,3) {};
\draw[very thick] (0) to (d) to (a) to (1) to (b) to (d);
\draw[very thick] (1) to (c) to (e) to (0);
\draw[very thick] (b) to (e);
\end{tikzpicture}}}
\hspace{1in}
\raisebox{-.5\height}{\scalebox{1}{\begin{tikzpicture}[scale=1]
\node (0) at (0,0) {$\hat{0}$};
\node (d) at (2,3) {$d$};
\node (e) at (-2,3) {$e$};
\node (a) at (-1,2) {$a$};
\node (b) at (0,1) {$b$};
\node (c) at (1,2) {$c$};
\node (1) at (0,-1) {$\hat{1}$};
\draw[very thick,->] (e) to (a);
\draw[very thick,->]  (a) to (b);
\draw[very thick,->] (b) to (0);
\draw[very thick,->] (d) to (c);
\draw[very thick,->]  (c) to (b);
\draw[very thick,->, bend left]  (0) to (1);
\draw[very thick,->, bend left]  (1) to (0);
\end{tikzpicture}}}
\end{center}
\caption{{\it Left:} A meet-semidistributive lattice $L$ that is not semidistrim.  {\it Right: } The action of rowmotion, defined by the condition $\max\{z\in L : \pop_L(x)=x\wedge z\}=\{\row_L(x)\}$.}
\label{fig:meet_semi}
\end{figure}
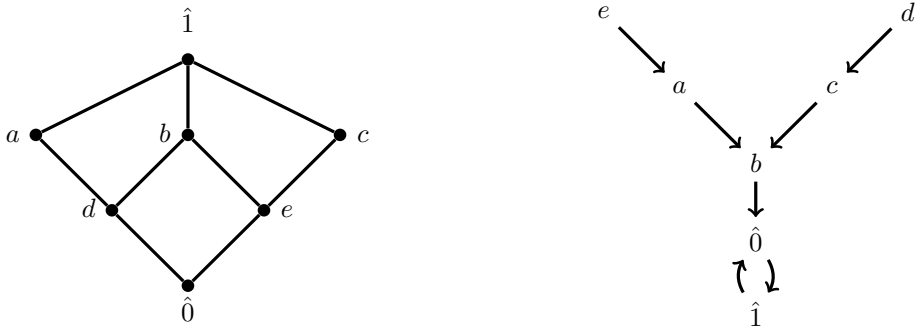

Let us now turn back to semidistrim lattices. The next proposition will be crucial for establishing further connections among pop-stack sorting, dual pop-stack sorting, and rowmotion. Although the proof of this proposition is quite short, let us stress that this is only because we already did most of the heavy lifting in \Cref{subsec:intervals} when we proved that the class of semidistrim lattices is closed under taking intervals. 

\begin{proposition}\label{prop:pop_label_containment}
If $L$ is a semidistrim lattice and $x \in L$, then \[\D_L(x) \subseteq \U_L(\pop_L(x))\quad\text{and}\quad\U_L(x) \subseteq \D_L(\popup_L(x)).\]
\end{proposition}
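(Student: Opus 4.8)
The plan is to prove the first containment $\D_L(x)\subseteq\U_L(\pop_L(x))$; the second containment $\U_L(x)\subseteq\D_L(\popup_L(x))$ then follows by passing to the dual lattice, since a lattice is semidistrim if and only if its dual is (as noted in the proof of \Cref{thm:intervals}) and the pairs $\D/\U$ and $\pop/\popup$ interchange under dualization. If $x=\hat 0$ then $\D_L(x)=\emptyset$ and there is nothing to prove, so I assume $x>\hat 0$. Write $p=\pop_L(x)$. Since $x$ has at least one lower cover, \Cref{thm:pop_and_row} gives $p=x\wedge\bigwedge\{y:y\lessdot x\}=\bigwedge\{y:y\lessdot x\}<x$. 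The key move is to work inside the interval $I=[p,x]$, which is semidistrim by \Cref{thm:intervals}.

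Inside $I$, I would first observe that the coatoms of $I$ are exactly the lower covers of $x$ in $L$ (a cover relation between two elements of the interval is the same whether computed in $L$ or in $I$), and that all of these lie above $p$ because $p$ is their meet. Hence the meet of the coatoms of $I$ equals $p$, the bottom element $\hat 0_I$ of $I$; that is, $\pop_I(\hat 1_I)=p=\hat 0_I$, where $\hat 1_I=x$. Feeding this into \Cref{thm:pop_and_row} applied to the semidistrim lattice $I$ gives $\hat 0_I=\pop_I(\hat 1_I)=\hat 1_I\wedge\row_I(\hat 1_I)=\row_I(\hat 1_I)$, so $\row_I(\hat 1_I)=\hat 0_I$. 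By the definition of rowmotion, which declares $\U_I(\row_I(y))=\D_I(y)$, this yields the clean identity $\U_I(\hat 0_I)=\D_I(\hat 1_I)$.

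It then remains to translate this identity back to $L$ using \Cref{cor:galois_label_intervals}, whose isomorphism $\alpha_{p,x}(j)=p\vee j$ carries labels in $L$ to labels in $I$. On one side, label-compatibility in that corollary gives $\alpha_{p,x}(\D_L(x))=\D_I(\hat 1_I)$. On the other side, the atoms of $I$ are the elements $z=p\vee j$ for covers $p\lessdot z\le x$, and unwinding the corollary identifies $\U_I(\hat 0_I)$ with $\alpha_{p,x}(\{j\in\U_L(p):j\le x\})$. Since $\alpha_{p,x}$ is a bijection from $J_L(x)\cap M_L(p)$ onto $\J_{I}$, and both $\D_L(x)$ and $\{j\in\U_L(p):j\le x\}$ lie in this domain, I can cancel $\alpha_{p,x}$ from $\alpha_{p,x}(\D_L(x))=\alpha_{p,x}(\{j\in\U_L(p):j\le x\})$ to conclude $\D_L(x)=\{j\in\U_L(p):j\le x\}\subseteq\U_L(p)=\U_L(\pop_L(x))$, which is in fact slightly stronger than what is asked.

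The step I expect to require the most care is verifying that $\D_L(x)$ and $\{j\in\U_L(p):j\le x\}$ genuinely lie in the domain $J_L(x)\cap M_L(p)$ of $\alpha_{p,x}$, so that the corollary applies and injectivity can be invoked. The containment $\D_L(x)\subseteq J_L(x)$ is immediate, and $\D_L(x)\subseteq M_L(p)$ holds because for $j\in\D_L(x)$ the element $\kappa_L(j)$ is one of the meetands in $p=x\wedge\bigwedge\kappa_L(\D_L(x))$, forcing $\kappa_L(j)\ge p$; the analogous check for the second set uses only $\U_L(p)\subseteq M_L(p)$ together with the imposed bound $j\le x$. Everything else is bookkeeping with the interval label correspondence already established in \Cref{subsec:intervals}, which is precisely why the argument is short.
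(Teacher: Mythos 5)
Your proof is correct, and it is a genuine streamlining of the paper's argument rather than a copy of it. The paper proves the first containment by induction on $|L|$: setting $u=\pop_L(x)$, it transfers labels into the semidistrim interval $[u,x]$ via \Cref{cor:intervals_compatible,cor:galois_label_intervals}, invokes the induction hypothesis when $[u,x]$ is proper, and handles the base case $[u,x]=L$ (i.e.\ $x=\hat 1$, $u=\hat 0$) by the computation $\row_L(\hat 1)=\hat 1\wedge\row_L(\hat 1)=\pop_L(\hat 1)$, whence $\D_L(\hat 1)=\U_L(\pop_L(\hat 1))$ by the definition of rowmotion. You observed, in effect, that this induction always terminates in one step: since $\pop_I(\hat 1_I)=\hat 0_I$ in $I=[\pop_L(x),x]$, the base-case computation can be run directly inside $I$, giving $\row_I(\hat 1_I)=\hat 0_I$ and hence the \emph{equality} $\U_I(\hat 0_I)=\D_I(\hat 1_I)$, which you then pull back through the bijection $\alpha_{p,x}$. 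Your bookkeeping is sound: the coatoms of $I$ are exactly the lower covers of $x$ in $L$ (each lies above their meet $p$, and covers within an interval agree with covers in $L$); the identification $\U_I(\hat 0_I)=\alpha_{p,x}(\{j\in\U_L(p):j\leq x\})$ uses $z=p\vee j_{pz}$ from \Cref{prop:covers} to see that $z\leq x$ if and only if $j_{pz}\leq x$; and your domain checks for $\alpha_{p,x}$ are exactly what injectivity requires. What your route buys is twofold: it eliminates the induction entirely, and it proves the sharper statement $\D_L(x)=\{j\in\U_L(\pop_L(x)):j\leq x\}$, of which the proposition's containment is an immediate weakening.
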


\begin{proof}
The second containment follows from the first by considering the dual lattice, so we will only prove the first containment. The proof is by induction on $|L|$. Let $u=\pop_L(x)$. Let $\alpha_{u,x}\colon J_L(x)\cap M_L(u)\to\J_{[u,x]}$ be the bijection from \Cref{cor:intervals_compatible}. Since every element covered by $x$ is in $[u,x]$, it follows from \Cref{cor:galois_label_intervals} that $\D_{[u,x]}(x)=\alpha_{u,x}(\D_L(x))$ and that $\U_{[u,x]}(x)\subseteq\alpha_{u,x}(\U_L(u))$. The interval $[u,x]$ is semidistrim by \Cref{thm:intervals}. If $[u,x]$ is a proper interval of $L$, then we can use induction to see that $\D_{[u,x]}(x)\subseteq \U_{[u,x]}(u)$. In this case, we have $\D_L(x)=\alpha_{u,x}^{-1}(\D_{[u,x]}(x))\subseteq\alpha_{u,x}^{-1}(\U_{[u,x]}(u))\subseteq\U_L(u)$, as desired. Now suppose $[u,x]=L$. This means that $x=\hat 1$ and $u=\pop_L(\hat 1)=\hat 0$. Then $\row_L(\hat{1}) = \row_L(\hat{1}) \wedge \hat{1} = \pop_L(\hat{1})$, so $\D_L(\hat{1}) =\U_L(\row_L(\hat 1))= \U_L(\pop_L(\hat{1}))$ by the definition of rowmotion.  
\end{proof}

\subsection{Popping pairs}\label{subsec:popping}
We are now in a position to discuss deeper connections among $\row_L$, $\pop_L$, and $\popup_L$. Let us begin with a fairly innocent question about rowmotion on a semidistrim lattice $L$. How many times does rowmotion on $L$ ``go down''? More precisely, how many elements $x\in L$ have the property that $\row_L(x)\leq x$? This question appears to be new even when $L$ is distributive. We will see that the answer is intimately related to $\pop_L$ and $\popup_L$, as well as independent dominating sets in the Galois graph $G_L$. Before giving more details, let us consider a natural process where we alternately apply $\pop_L$ and $\popup_L$. 

Begin with some element $z\in L$. Let $x_1=\popdown_L(z)$, $y_1=\popup_L(x_1)$, $x_2=\popdown_L(y_1)$, $y_2=\popup_L(x_2)$, and so on. In general, $y_i=\popup_L(x_i)$, and $x_{i+1}=\pop_L(y_i)$. It follows from \Cref{prop:pop_label_containment} that $\D_L(z)\subseteq\U_L(x_1)\subseteq\D_L(y_1)\subseteq\U_L(x_2)\subseteq\D_L(y_2)\subseteq\cdots$. Appealing to \Cref{thm:labels}, we find that we have a chain 
\begin{equation}\label{eq:chain}
    \cdots\leq x_2\leq x_1\leq z\leq y_1\leq y_2\leq\cdots.
\end{equation}
This chain is obviously finite because $L$ is finite. Therefore, there exists a positive integer $k$ such that $\popup_L(x_k)=y_k$ and $\pop_L(y_k)=x_k$. This motivates the following definition.

\begin{definition}\label{popping_pair}
Let $L$ be a semidistrim lattice, and let $x,y\in L$.  We say that $(x,y)$ is a \defn{popping pair} if $\popup_L(x)=y$ and $\popdown_L(y)=x$. 
\end{definition}

The next lemma shows how popping pairs relate to our question about rowmotion going down. 

\begin{lemma}\label{lem:easy_popping_pairs}
Let $L$ be a semidistrim lattice, and let $x,y\in L$. Then $(x,y)$ is a popping pair if and only if $x \leq y$ and $\U_L(x)=\D_L(y)$, and this occurs if and only if $x\leq y$ and $\row_L(y)=x$. 
\end{lemma}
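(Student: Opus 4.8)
The plan is to prove the two stated equivalences by unwinding the definitions and invoking the key identities from \Cref{thm:pop_and_row}, together with the bijectivity of $\D_L$ and $\U_L$ from \Cref{thm:independent}. The statement has two biconditionals chained together, so I would establish each one separately, and the labels-determine-elements principle from \Cref{thm:labels} will be the workhorse throughout.

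\begin{proof}
First I would prove that $(x,y)$ is a popping pair if and only if $x\le y$ and $\U_L(x)=\D_L(y)$. For the forward direction, assume $(x,y)$ is a popping pair, so $\popup_L(x)=y$ and $\pop_L(y)=x$. Since $\popup_L(x)=x\vee\bigvee\U_L(x)\ge x$, we immediately get $x\le y$. By \Cref{prop:pop_label_containment}, applied to $x$, we have $\U_L(x)\subseteq\D_L(\popup_L(x))=\D_L(y)$; applied to $y$ (using the first containment of that proposition), we have $\D_L(y)\subseteq\U_L(\pop_L(y))=\U_L(x)$. Combining these two containments yields $\U_L(x)=\D_L(y)$. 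Conversely, suppose $x\le y$ and $\U_L(x)=\D_L(y)$. Using \Cref{thm:pop_and_row}, I compute
\[
\popup_L(x)=x\vee\bigvee\U_L(x)=x\vee\bigvee\D_L(y)=x\vee y=y,
\]
where $\bigvee\D_L(y)=y$ is \Cref{thm:labels} and the last equality uses $x\le y$. A dual computation gives $\pop_L(y)=y\wedge\bigwedge\kappa_L(\U_L(y))$; but here I instead use $\pop_L(y)=y\wedge\bigwedge\kappa_L(\D_L(y))$ from \Cref{thm:pop_and_row} and the fact that $\D_L(y)=\U_L(x)$, combined with $x=\bigwedge\kappa_L(\U_L(x))$ from \Cref{thm:labels}, to conclude $\pop_L(y)=y\wedge x=x$. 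Hence $(x,y)$ is a popping pair.

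Next I would prove that $x\le y$ and $\U_L(x)=\D_L(y)$ holds if and only if $x\le y$ and $\row_L(y)=x$. This is essentially the definition of rowmotion: by \Cref{thm:pop_and_row} (or directly by the defining property $\U_L(\row_L(z))=\D_L(z)$), the element $\row_L(y)$ is the unique element whose upward label set equals $\D_L(y)$. Therefore $\U_L(x)=\D_L(y)$ is equivalent to $\U_L(x)=\U_L(\row_L(y))$, and since $\U_L$ is a bijection by \Cref{thm:independent}, this is equivalent to $x=\row_L(y)$. Adjoining the common hypothesis $x\le y$ to both sides completes this second equivalence, and chaining the two biconditionals finishes the proof.
\end{proof}

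**The main obstacle** I anticipate is nothing deep but rather bookkeeping: making sure the double containment in the forward direction is applied correctly, since \Cref{prop:pop_label_containment} states two containments ($\D_L(x)\subseteq\U_L(\pop_L(x))$ and $\U_L(x)\subseteq\D_L(\popup_L(x))$) and I must apply the right one to the right element to close the loop into an equality. The rest is a direct translation between the label-set language and the lattice operations via \Cref{thm:labels} and \Cref{thm:pop_and_row}, with the final equivalence being immediate from the definition of rowmotion and the injectivity of $\U_L$.
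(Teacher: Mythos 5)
Your proof is correct and follows essentially the same route as the paper's: the forward direction via the two containments of \Cref{prop:pop_label_containment} closing into an equality, the converse via the computations $y=x\vee\bigvee\U_L(x)=\popup_L(x)$ and $x=y\wedge\bigwedge\kappa_L(\D_L(y))=\pop_L(y)$ using \Cref{thm:labels} and \Cref{thm:pop_and_row}, and the second equivalence directly from the definition of rowmotion. One small note: the formula $\pop_L(y)=y\wedge\bigwedge\kappa_L(\U_L(y))$ you mention in passing is not the dual identity and is false in general, but since you immediately discard it in favor of the correct $\pop_L(y)=y\wedge\bigwedge\kappa_L(\D_L(y))$, the argument is unaffected.
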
 

\begin{proof}
It is immediate from the definition of rowmotion that $\U_L(x)=\D_L(y)$ if and only if $\row_L(y)=x$. If $(x,y)$ is a popping pair, then certainly $x\leq y$, and we have $\U_L(x)\subseteq\D_L(\popup_L(x))=\D_L(y)\subseteq\U_L(\popdown_L(y))=\U_L(x)$ by \Cref{prop:pop_label_containment}. Hence, $\U_L(x)=\D_L(y)$.

Now suppose $x\leq y$ and $\U_L(x)=\D_L(y)$. Then $y=x\vee y=x\vee \bigvee\D_L(y)=x\vee \bigvee\U_L(x)=\popup_L(x)$, where we have used \Cref{thm:labels} and \Cref{thm:pop_and_row}. Similarly, $x=y\wedge x=y\wedge\bigwedge\kappa_L(\U_L(x))=y\wedge\bigwedge\kappa_L(\D_L(y))=\pop_L(y)$. 
\end{proof}

In general, when one is faced with a set $X$ and a noninvertible operator $f\colon X\to X$, it is natural to ask for a description or enumeration of the image of $f$. For example, the structural and enumerative properties of the image of the classical pop-stack sorting map on $S_n$ were studied in~\cite{Asinowski2, ClaessonPop2}. The next proposition connects the images of $\pop_L$ and $\popup_L$ with popping pairs. 

\begin{proposition}\label{prop:popping_pairs_images}
Let $L$ be a semidistrim lattice, and let $z\in L$. Then $z$ is in the image of $\pop_L$ if and only if there exists $y\in L$ such that $(z,y)$ is a popping pair. Similarly, $z$ is in the image of $\popup_L$ if and only if there exists $x\in L$ such that $(x,z)$ is a popping pair.  
\end{proposition}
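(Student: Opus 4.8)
The plan is to dispose of the easy directions first and then collapse the whole problem to a single inequality. In each biconditional the implication ``popping pair $\Rightarrow$ image'' is immediate from \Cref{popping_pair}: if $(z,y)$ is a popping pair then $\popdown_L(y)=z$, so $z$ lies in the image of $\popdown_L$; symmetrically, if $(x,z)$ is a popping pair then $\popup_L(x)=z$ exhibits $z$ in the image of $\popup_L$. Moreover I would prove only the first statement in full, since the second follows by passing to the dual lattice $L^*$: dualizing interchanges $\popdown_L$ with $\popup_L$, swaps $\D_L$ with $\U_L$, and sends a popping pair $(x,y)$ of $L$ to the popping pair $(y,x)$ of $L^*$.

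For the substantive direction, suppose $z=\popdown_L(w)$ for some $w\in L$; I must produce a $y$ with $(z,y)$ a popping pair. The first move is to note, via \Cref{lem:easy_popping_pairs}, that such a $y$ exists if and only if $z\le\row_L^{-1}(z)$. Indeed, a partner $y$ must satisfy $\row_L(y)=z$, and since $\row_L$ is a bijection this forces $y=\row_L^{-1}(z)$; for this $y$ the condition $\row_L(y)=z$ holds automatically, so $(z,y)$ is a popping pair precisely when the remaining requirement $z\le y$ holds. The task therefore reduces to proving the inequality $z\le\row_L^{-1}(z)$.

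This inequality falls out of the label-set containment already in hand. Since $z=\popdown_L(w)$ we have $z\le w$, and \Cref{prop:pop_label_containment} gives $\D_L(w)\subseteq\U_L(\popdown_L(w))=\U_L(z)$. Combining \Cref{thm:labels}, which gives $w=\bigvee\D_L(w)$, with \Cref{thm:pop_and_row}, which gives $\row_L^{-1}(z)=\bigvee\U_L(z)$, together with the monotonicity of $\bigvee$ under inclusion, I obtain the chain $z\le w=\bigvee\D_L(w)\le\bigvee\U_L(z)=\row_L^{-1}(z)$. \Cref{lem:easy_popping_pairs} then certifies that $(z,\row_L^{-1}(z))$ is a popping pair, completing the forward direction.

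The argument is short precisely because the hard analytic work has already been done, in \Cref{prop:pop_label_containment} and in the closure of semidistrim lattices under intervals. The one conceptual step to get right --- and the only place where genuine thought is required --- is recognizing that the candidate partner of $z$ is forced to be $\row_L^{-1}(z)$ (which equals $\popup_L(z)$ once $z\le\row_L^{-1}(z)$ is known), so that the whole question collapses to the single comparison $z\le\row_L^{-1}(z)$; the chain $z\le w\le\row_L^{-1}(z)$ is then exactly what the containment $\D_L(w)\subseteq\U_L(z)$ was built to deliver. A pleasant byproduct is that the partner $y$ is unique, namely $y=\row_L^{-1}(z)=\popup_L(z)$.
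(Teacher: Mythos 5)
Your proof is correct and is essentially the paper's own argument: both handle the easy direction from the definition, dualize for the second statement, and for the substantive direction take $y=\row_L^{-1}(z)=\bigvee\U_L(z)$ and deduce $z\leq w=\bigvee\D_L(w)\leq\bigvee\U_L(z)=y$ from \Cref{prop:pop_label_containment}, \Cref{thm:labels}, and \Cref{thm:pop_and_row}, concluding via \Cref{lem:easy_popping_pairs}. Your added observation that the partner is forced to be unique (equal to $\popup_L(z)$) is a correct small refinement, consistent with \Cref{rem:stabilize}, but the route is the same.
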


\begin{proof}
We prove only the first statement since the second statement is dual to it. Certainly if there exists $y\in L$ such that $(z,y)$ is a popping pair, then $z=\pop_L(y)$ is in the image of $\pop_L$. 

To prove the converse, suppose $z$ is in the image of $\pop_L$, say $z=\pop_L(x)$. Let $y=\row_L^{-1}(z)$. Then $y=\bigvee\U_L(z)$ by \Cref{thm:pop_and_row}. We know from \Cref{thm:labels} that $x=\bigvee\D_L(x)$. Then \Cref{prop:pop_label_containment} tells us that $\D_L(x)\subseteq \U_L(z)$, so $z\leq x=\bigvee\D_L(x)\leq y$. This shows that $z\leq y$ and $z=\row_L(y)$, so we deduce from \Cref{lem:easy_popping_pairs} that $(z,y)$ is a popping pair. 
\end{proof}

\begin{remark}\label{rem:stabilize}
\Cref{prop:popping_pairs_images} implies that when we constructed the chain $\cdots\leq x_2\leq x_1\leq z\leq y_1\leq y_2\leq\cdots$ in \eqref{eq:chain}, the process actually stabilized after at most two steps. In other words, \[x_1=x_2=x_3=\cdots\quad\text{and}\quad y_1=y_2=y_3=\cdots. \qedhere\]
\end{remark}

\begin{corollary}\label{cor:equalities}
If $L$ is a semidistrim lattice, then \[|\{x\in L:\row_L(x)\leq x\}|=|\pop_L(L)|=|\popup_L(L)|.\]
\end{corollary}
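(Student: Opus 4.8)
The plan is to use the set $P$ of popping pairs of $L$ as an intermediary, exploiting the two coordinate projections. The crucial structural point is that a popping pair is recoverable from either one of its coordinates alone: if $(x,y)\in P$, then by definition $y=\popup_L(x)$ and $x=\pop_L(y)$. Consequently the projections $\pi_1\colon(x,y)\mapsto x$ and $\pi_2\colon(x,y)\mapsto y$ are each injective on $P$. This determinacy is what will convert the surjectivity statements already available into genuine equalities of cardinalities.

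Next I would identify the images of these projections using \Cref{prop:popping_pairs_images}. That proposition says precisely that $z$ is the first coordinate of some popping pair if and only if $z\in\pop_L(L)$, and that $z$ is the second coordinate of some popping pair if and only if $z\in\popup_L(L)$. Combined with injectivity, $\pi_1$ is a bijection from $P$ onto $\pop_L(L)$ and $\pi_2$ is a bijection from $P$ onto $\popup_L(L)$. These two bijections immediately give $|\pop_L(L)|=|P|=|\popup_L(L)|$.

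It then remains to match $\popup_L(L)$ with $\{x\in L:\row_L(x)\leq x\}$, for which I would invoke \Cref{lem:easy_popping_pairs}. An element $y$ is the second coordinate of a popping pair $(x,y)$ exactly when $x\leq y$ and $\row_L(y)=x$; such an $x$ exists if and only if $\row_L(y)\leq y$, in which case one simply takes $x=\row_L(y)$. Hence the set of second coordinates of popping pairs is exactly $\{y\in L:\row_L(y)\leq y\}$. Comparing with the previous paragraph, this shows $\popup_L(L)=\{y\in L:\row_L(y)\leq y\}$ as sets, and therefore $|\{x\in L:\row_L(x)\leq x\}|=|\popup_L(L)|=|\pop_L(L)|$, which is the claim.

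Because the substantive content has already been established in \Cref{lem:easy_popping_pairs} and \Cref{prop:popping_pairs_images}, I do not expect a genuine obstacle here; the corollary is a formal bookkeeping of bijections through $P$. The one step demanding attention is the injectivity of each projection, i.e.\ that a popping pair is determined by a single coordinate. This is immediate from \Cref{popping_pair}, but it is the linchpin that upgrades the two ``image'' statements into the stated numerical equalities, so I would state it explicitly rather than leave it implicit.
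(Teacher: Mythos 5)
Your proposal is correct and follows the paper's proof exactly: the paper likewise deduces from \Cref{lem:easy_popping_pairs} and \Cref{prop:popping_pairs_images} that each of the three quantities equals the number of popping pairs. You merely make explicit the injectivity of the coordinate projections, which the paper leaves implicit; this is a fair point of emphasis but not a different argument.
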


\begin{proof}
It follows from \Cref{lem:easy_popping_pairs} and \Cref{prop:popping_pairs_images} that each of the three quantities involved in the statement of the corollary is equal to the number of popping pairs of $L$.
\end{proof}

\begin{remark}
The equality $|\pop_L(L)|=|\popup_L(L)|$, which holds when $L$ is semidistrim, is interesting in its own right and is not a simple consequence of the definitions of $\pop_L$ and $\popup_L$. Indeed, this equality \emph{fails} for many lattices. \Cref{fig:oct7} shows a lattice $L$ such that $|\pop_L(L)|=2$ and $|\popup_L(L)|=1$. 
\end{remark}

\begin{figure}[htbp]
\begin{center}
\raisebox{-.5\height}{\scalebox{1}{\begin{tikzpicture}[scale=1]
\node[shape=circle,fill=black,scale=0.5,label={[xshift=0ex, yshift=-4.5ex]$\hat{0}$}] (0) at (0,0) {};
\node[shape=circle,fill=black,scale=0.5,label={[xshift=-2ex, yshift=-2ex]$d$}] (e) at (0,1) {};
\node[shape=circle,fill=black,scale=0.5,label={[xshift=-2ex, yshift=-2ex]$a$}] (a) at (-2,2) {};
\node[shape=circle,fill=black,scale=0.5,label={[xshift=-2ex, yshift=-2ex]$b$}] (b) at (0,2) {};
\node[shape=circle,fill=black,scale=0.5,label={[xshift=2ex, yshift=-2ex]$c$}] (c) at (2,2) {};
\node[shape=circle,fill=black,scale=0.5,label={[xshift=0ex, yshift=1ex]$\hat{1}$}] (1) at (0,3) {};
\draw[very thick] (0) to (a) to (1) to (b);
\draw[very thick] (1) to (c) to (e) to (0);
\draw[very thick] (b) to (e);
\end{tikzpicture}}}
\end{center}
\caption{A non-semidistrim lattice $L$ with $2=|\pop_L(L)|\neq |\popup_L(L)|=1$.}
\label{fig:oct7}
\end{figure}

We now want to relate the quantities appearing in \Cref{cor:equalities} with a notion from graph theory. A set $I$ of vertices in an undirected graph $G$ is called \defn{dominating} if every vertex of $G$ is either in $I$ or is adjacent to a vertex in $I$.  An \defn{independent dominating set} of $G$ is an independent set of $G$ that is dominating. We write $\I^\mathrm{dom}(G)$ for the collection of independent dominating sets of $G$. See~\cite{goddard2013independent} for a survey of independent dominating sets. We are going to show that each of the quantities in \Cref{cor:equalities} is equal to the number of independent dominating sets in the undirected version of the Galois graph $G_L$. First, we need a lemma.

\begin{lemma}\label{lem:below_row} 
Let $L$ be semidistrim lattice, and let $x \in L$. If $j \in \U_L(\pop_L(x)) \setminus \D_L(x)$, then $j \leq \row_L(x)$. 
\end{lemma}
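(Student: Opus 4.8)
The plan is to reduce the inequality $j \leq \row_L(x)$ to a statement about non-adjacency in the Galois graph and then invoke the independence of an appropriate upward label set. Writing $u = \pop_L(x)$, \Cref{thm:pop_and_row} gives the formula $\row_L(x) = \bigwedge \kappa_L(\D_L(x))$, so it suffices to prove that $j \leq \kappa_L(j')$ for every $j' \in \D_L(x)$. By the definition of the Galois graph $G_L$, this is exactly the assertion that there is no edge $j \to j'$ in $G_L$ for any $j' \in \D_L(x)$ (using $j \neq j'$, which holds because $j \notin \D_L(x)$).

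To establish the absence of these edges, I would first place both $j$ and all of $\D_L(x)$ inside a single upward label set. \Cref{prop:pop_label_containment} yields $\D_L(x) \subseteq \U_L(\pop_L(x)) = \U_L(u)$, and the hypothesis gives $j \in \U_L(u)$. Thus $j$ and every $j' \in \D_L(x)$ are vertices lying in the common set $\U_L(u)$.

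Since $L$ is semidistrim, the upward label set $\U_L(u)$ is an independent set of $G_L$ by \Cref{def:semidistrim2}. Hence no two distinct vertices of $\U_L(u)$ are adjacent; in particular, for each $j' \in \D_L(x)$ (which is distinct from $j$), there is no edge $j \to j'$, forcing $j \leq \kappa_L(j')$. Taking the meet over all $j' \in \D_L(x)$ then gives $j \leq \bigwedge\kappa_L(\D_L(x)) = \row_L(x)$, as desired.

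There is essentially no hard step here: the content is already packaged in \Cref{prop:pop_label_containment} (whose proof relied on the interval theory of \Cref{subsec:intervals}) and in the defining independence condition for semidistrim lattices. The only point requiring care is the translation between the lattice-theoretic inequality $j \leq \kappa_L(j')$ and the combinatorial condition of non-adjacency in $G_L$, together with the observation that $j \neq j'$ is guaranteed by the hypothesis $j \notin \D_L(x)$.
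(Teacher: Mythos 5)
Your proof is correct and follows essentially the same route as the paper's: both place $\D_L(x)$ inside $\U_L(\pop_L(x))$ via \Cref{prop:pop_label_containment}, use the independence of $\U_L(\pop_L(x))$ in $G_L$ to get $j \leq \kappa_L(j')$ for all $j' \in \D_L(x)$ (with $j \neq j'$ since $j \notin \D_L(x)$), and conclude via $\row_L(x) = \bigwedge \kappa_L(\D_L(x))$. The only cosmetic difference is that the paper derives this last identity from \Cref{thm:labels} and the definition of rowmotion, whereas you cite it directly from \Cref{thm:pop_and_row}, which is equally valid.
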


\begin{proof} 
Fix $j\in\U_L(\pop_L(x)) \setminus \D_L(x)$. Because $L$ is semidistrim, $\U_L(\popdown_L(x))$ is an independent set in $G_L$. This implies that $j \leq \kappa_L(j')$ for every $j' \in \U_L(\popdown_L(x))$ with $j'\neq j$. Since $\D_L(x) \subseteq \U_L(\popdown_L(x))$, this implies that $j \leq \kappa_L(j')$ for all $j'\in D_L(x)$. Therefore, \[j \leq \bigwedge\kappa_L(\D_L(x)) = \bigwedge\kappa_L(\U_L(\row_L(x))) = \row_L(x),\] where the last equality follows from \Cref{thm:labels}. 
\end{proof}

Given a directed simple graph $G$, let us write $\widetilde G$ for the undirected simple graph obtained by forgetting the orientations of the edges in $G$.

\begin{theorem}\label{thm:ind_dom} 
Let $L$ be a semidistrim lattice, and let $x \in L$. Then $\row_L(x) \leq x$ if and only if $\D_L(x)$ is an independent dominating set of $\widetilde{G}_L$. Therefore,
\[|\{ x \in L : \row_L(x) \leq x\}|=|\pop_L(L)|=|\popup_L(L)|=|\I^\mathrm{dom}(\widetilde{G}_L)|.\]
\end{theorem}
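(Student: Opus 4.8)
The plan is to obtain the chain of equalities from the biconditional together with facts already in hand. Corollary~\ref{cor:equalities} already gives $|\{x\in L:\row_L(x)\le x\}|=|\pop_L(L)|=|\popup_L(L)|$, so the only new task is to attach $|\I^{\mathrm{dom}}(\widetilde{G}_L)|$. For this I would invoke Theorem~\ref{thm:independent}, which makes $\D_L\colon L\to\I(G_L)$ a bijection; since independence of a vertex set ignores edge orientations we have $\I(G_L)=\I(\widetilde{G}_L)$, and $\I^{\mathrm{dom}}(\widetilde{G}_L)$ is exactly the subcollection of dominating members. Granting the biconditional, $\D_L$ restricts to a bijection from $\{x:\row_L(x)\le x\}$ onto $\I^{\mathrm{dom}}(\widetilde{G}_L)$ (note $\D_L(x)$ is automatically independent because $L$ is semidistrim, so the only issue is whether it is dominating), which yields the last cardinality equality. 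Hence the real content is the statement that $\row_L(x)\le x$ if and only if $\D_L(x)$ is dominating.

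The first step toward the biconditional is to translate ``$\D_L(x)$ is not dominating'' into lattice language. By definition this means there is a vertex $j\notin\D_L(x)$ that is adjacent in $\widetilde{G}_L$ to no element of $\D_L(x)$; unwinding the edge relation of $G_L$, this says that for every $j'\in\D_L(x)$ we have both $j\le\kappa_L(j')$ (no edge $j\to j'$) and $j'\le\kappa_L(j)$ (no edge $j'\to j$). By Theorem~\ref{thm:pop_and_row} the first family of inequalities is equivalent to $j\le\bigwedge\kappa_L(\D_L(x))=\row_L(x)$, and by Theorem~\ref{thm:labels} the second is equivalent to $\bigvee\D_L(x)=x\le\kappa_L(j)$. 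Thus $\D_L(x)$ fails to be dominating precisely when there exists $j\notin\D_L(x)$ with $j\le\row_L(x)$ and $x\le\kappa_L(j)$.

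I would then prove each direction by contraposition. For ``$\row_L(x)\le x\Rightarrow\D_L(x)$ dominating'', suppose $\D_L(x)$ is not dominating and take a witness $j$ as above. If $\row_L(x)\le x$, then $j\le\row_L(x)\le x\le\kappa_L(j)$, contradicting the general fact $j\not\le\kappa_L(j)$ (equivalently, $M_L(x)\cap J_L(x)=\emptyset$); hence $\row_L(x)\not\le x$. For the converse, suppose $\row_L(x)\not\le x$. Then $\pop_L(x)=x\wedge\row_L(x)<\row_L(x)$, so $\pop_L(x)\ne\row_L(x)$ and therefore $\U_L(\pop_L(x))\ne\U_L(\row_L(x))=\D_L(x)$; since Proposition~\ref{prop:pop_label_containment} gives $\D_L(x)\subseteq\U_L(\pop_L(x))$, there is some $j\in\U_L(\pop_L(x))\setminus\D_L(x)$. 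Lemma~\ref{lem:below_row} yields $j\le\row_L(x)$, and because $\U_L(\pop_L(x))$ is an independent set of $G_L$ containing $j$ and every $j'\in\D_L(x)$, the absence of an edge $j'\to j$ forces $j'\le\kappa_L(j)$ for all such $j'$, whence $x=\bigvee\D_L(x)\le\kappa_L(j)$. Thus $j$ witnesses that $\D_L(x)$ is not dominating.

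I expect the converse direction to be the only genuine obstacle, and within it the two crucial moves are: (i) recognizing that $\row_L(x)\not\le x$ forces the inclusion $\D_L(x)\subseteq\U_L(\pop_L(x))$ to be strict, which manufactures the candidate witness $j$; and (ii) using the independence of $\U_L(\pop_L(x))$ to upgrade that containment into the inequality $x\le\kappa_L(j)$. Both rest on machinery already established (Proposition~\ref{prop:pop_label_containment} and Lemma~\ref{lem:below_row}), so once the combinatorial description of domination is set up, the remaining argument is short and symmetric.
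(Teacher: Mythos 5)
Your proposal is correct, and its hard direction takes a genuinely different route from the paper's. The framing is identical in both: reduce the cardinality chain to \Cref{cor:equalities} plus the biconditional (using that $\D_L$ is a bijection onto $\I(G_L)=\I(\widetilde G_L)$), translate non-domination of $\D_L(x)$ into the existence of $j\notin\D_L(x)$ with $j\leq\row_L(x)$ and $x\leq\kappa_L(j)$ via \Cref{thm:labels,thm:pop_and_row}, and dispatch the direction ``$\row_L(x)\leq x$ implies dominating'' by the same one-line contradiction $j\leq\row_L(x)\leq x\leq\kappa_L(j)$. For the converse, the paper produces the witness $j$ \emph{constructively}: it sets $v=\popup_L(\pop_L(x))$, invokes \Cref{prop:popping_pairs_images} and \Cref{lem:easy_popping_pairs} to see that $(\pop_L(x),v)$ is a popping pair with $\row_L(v)=\pop_L(x)$ and $x\leq v$, deduces $x\neq v$ from $\row_L(x)\not\leq x$, picks a cover $x\leq y\lessdot v$, and takes $j=j_{yv}$, reading off $\kappa_L(j)\geq y\geq x$ directly from the edge label before applying \Cref{lem:below_row}. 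You instead observe that $\row_L(x)\not\leq x$ forces $\pop_L(x)=x\wedge\row_L(x)\neq\row_L(x)$, so by injectivity of $\U_L$ (\Cref{thm:labels}) and $\U_L(\row_L(x))=\D_L(x)$, the containment $\D_L(x)\subseteq\U_L(\pop_L(x))$ of \Cref{prop:pop_label_containment} is strict, which yields the witness $j$ non-constructively; you then extract $x\leq\kappa_L(j)$ from the independence of $\U_L(\pop_L(x))$, mirroring the paper's own proof of \Cref{lem:below_row} (which both arguments use for $j\leq\row_L(x)$). Each step checks out: in particular, since $j\notin\D_L(x)$ guarantees $j\neq j'$ for all $j'\in\D_L(x)$, the absence of edges $j'\to j$ really does give $j'\leq\kappa_L(j)$ and hence $x=\bigvee\D_L(x)\leq\kappa_L(j)$. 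Your route is shorter and bypasses the popping-pair machinery entirely within this proof; the paper's route buys an explicit witness (the label of a concrete cover relation below $\popup_L(\pop_L(x))$) and ties the theorem into the popping-pair framework of the section, at the cost of more moving parts.
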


\begin{proof}
The chain of equalities follows from the first statement and \Cref{cor:equalities}, so we will focus on proving the first statement. Fix $x\in L$. Since $L$ is semidistrim, $\D_L(x)$ is an independent set of $\widetilde{G}_L$. Therefore, we just need to show that $\D_L(x)$ is dominating if and only if $\row_L(x)\leq x$. 

Suppose $j\in\J_L\setminus\D_L(x)$. Saying that $j$ is not adjacent to any element of $\D_L(x)$ is equivalent to saying that $j \leq \kappa_L(j')$ and $j' \leq \kappa_L(j)$ for every $j' \in \D_L(x)$. By \Cref{thm:labels} and \Cref{thm:pop_and_row}, this is equivalent to saying $j \leq \row_L(x)$ and $\kappa_L(j) \geq x$.

Suppose $\row_L(x) \leq x$. If $j\in\J_L\setminus\D_L(x)$ is not adjacent to any element of $\D_L(x)$, then $j \leq \row_L(x) \leq x \leq \kappa_L(j)$, which is impossible. Hence, $\D_L(x)$ is an independent dominating set.

Now suppose $\row_L(x) \not\leq x$. We want to show that $\D_L(x)$ is not a dominating set, which is equivalent to showing that there is a join-irreducible element $j\in\J_L\setminus\D_L(x)$ with $j \leq \row_L(x)$ and $\kappa_L(j) \geq x$. Let $v=\popup_L(\popdown_L(x))$. Since $\pop_L(x)$ is in the image of $\pop_L$, it follows from \Cref{prop:popping_pairs_images} that $(\popdown_L(x),v)$ is a popping pair. \Cref{prop:pop_label_containment} tells us that $\D_L(x)\subseteq\U_L(\pop_L(x))\subseteq\D_L(v)$, so $x\leq v$ by \Cref{thm:labels}. According to \Cref{lem:easy_popping_pairs}, we have $\U_L(\popdown_L(x))=\D_L(v)$ and $\row_L(v)=\pop_L(x)\leq x$. Since we have assumed $\row_L(x)\not\leq x$, this forces $x\neq v$. Consequently, there exists an element $y$ such that $x \leq y \lessdot v$. Let $j=j_{yv}$ be the label of the cover relation $y \lessdot v$. Then $j \in \D_L(v)= \U_L(\popdown_L(x))$. Note that $\kappa_L(j) \geq y \geq x$. This implies that $j \not\leq x$. It follows that $j \not\in \D_L(x)$, so we can use \Cref{lem:below_row} to see that $j \leq \row_L(x)$.
\end{proof}

\Cref{thm:ind_dom} further motivates us to ask about the sizes of the images of pop-stack sorting operators on specific interesting semidistrim lattices. This analysis was carried out in \cite{Asinowski2, ClaessonPop2} when $L$ is the weak order on $S_n$ and in \cite{Sapounakis} when $L$ is the lattice of order ideals of a type $A$ root poset. In \Cref{sec:further}, we list some conjectures about these images for other specific semidistrim lattices. 

\begin{remark}
There is very little known about the behavior of rowmotion on the weak order of $S_n$. Combining \Cref{thm:ind_dom} with the results of \cite{Asinowski2, ClaessonPop2} yields some information by telling us about the number of times rowmotion goes down. 
\end{remark}

\section{Shards}
\label{sec:shards}
Let $\mathcal{H}$ be a real simplicial hyperplane arrangement with fixed base region $B$; its poset of regions is a semidistributive lattice.  In~\cite{reading2011noncrossing}, Reading defined a geometric analogue of join-irreducible elements called \emph{shards}; these are certain connected components of hyperplanes, and there is a natural bijection between shards and join-irreducible elements of the poset of regions.  

The construction proceeds as follows.  A hyperplane $H$ in a subarrangement  of $\mathcal{H}$ is called \defn{basic} if $H$ bounds the region of the subarrangement containing $B$.  Given $H,H' \in \mathcal{H}$, form the rank-two subarrangement $\mathcal{H}(H,H')$ consisting of all hyperplanes containing the intersection $H \cap H'$, and remove from each nonbasic hyperplane of $\mathcal{H}(H,H')$ the points contained in the basic hyperplanes.  The closures of the resulting connected components are \defn{shards}, and for a region $R$, we write $\D(R)$ for the set of shards that bound $R$ and separate it from $B$.    Regions turn out to be in bijection with intersections of shards---a region $R$ corresponds to the intersection $\bigcap \D(R)$.  Because the join-irreducible elements are those regions with $|\D(R)|=1$, join-irreducible elements are in bijection with shards themselves.  We refer to~\cite{reading2016lattice} for further details.

We now indicate an analogy that carries over to semidistrim lattices.  Recall that atoms of a semidistributive lattice are join-prime, so that basic hyperplanes are necessarily join-prime elements.  Having fixed a region $R$, we observe that the subarrangement of $\mathcal{H}$ whose basic hyperplanes correspond to the shards in $\D(R)$ consists of those hyperplanes separating $\popdown(R)$ from $R$, since $\popdown(R)$ corresponds to the region of the subarrangement containing the base region $B$.  This motivates the following definition.

\begin{definition}
For $L$ a finite lattice, define the \defn{face} determined by an element $b \in L$ to be the interval $\face(b)=[\popdown_L(b),b]$.
\label{def:face}
\end{definition}

The following proposition strengthens the analogy between join-prime elements of semidistrim lattices and basic hyperplanes.

\begin{proposition}\label{prop:faces}
Let $L$ be a semidistrim lattice.  Let $j\in\J_L$ be join-prime, and let $b\in L$. If $j$ appears as a label of a cover relation in $\face(b)$, then it appears in $\D_L(b)$ and $\U_L(\pop_L(b))$.
\end{proposition}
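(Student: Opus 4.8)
The plan is to reduce both conclusions to the single assertion $j\in\D_L(b)$ and then to exploit the prime-pair splitting attached to the join-prime element $j$. The reduction is immediate: \Cref{prop:pop_label_containment} gives $\D_L(b)\subseteq\U_L(\pop_L(b))$, so once $j\in\D_L(b)$ is established, the membership $j\in\U_L(\pop_L(b))$ follows automatically. Thus it suffices to prove $j\in\D_L(b)$.

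First I would record the relevant decomposition. Since $j$ is join-prime, \Cref{prop:join_decomp} yields a meet-prime element $m_0$ with $L=[\hat 0,m_0]\sqcup[j,\hat 1]$, and \Cref{prop:pairing} identifies $m_0=\kappa_L(j)$. Next I would unpack the hypothesis that $j$ labels a cover relation $x\lessdot y$ inside $\face(b)=[\pop_L(b),b]$. By \Cref{prop:covers} we have $j\leq y$ and $x=y\wedge\kappa_L(j)=y\wedge m_0\leq m_0$. Because $y\leq b$, the first of these gives $b\geq j$, so $b$ lies in the upper block $[j,\hat 1]$; because $x$ lies in $\face(b)$, the relation $\pop_L(b)\leq x\leq m_0$ places $\pop_L(b)$ in the lower block $[\hat 0,m_0]$.

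The heart of the argument is to produce a down-cover of $b$ lying below $m_0$. Suppose, to the contrary, that every down-cover $z\lessdot b$ satisfies $z\geq j$. Then $\pop_L(b)=\bigwedge\{z\in L:z\lessdot b\}\geq j$, which would place $\pop_L(b)$ in $[j,\hat 1]$; as this block is disjoint from $[\hat 0,m_0]$, this contradicts $\pop_L(b)\leq m_0$. Hence some down-cover $z_0\lessdot b$ satisfies $z_0\not\geq j$, and the partition $L=[\hat 0,m_0]\sqcup[j,\hat 1]$ forces $z_0\leq m_0$. Finally, the label of $z_0\lessdot b$ is the unique element of $M_L(z_0)\cap J_L(b)$ because $L$ is overlapping; since $j\leq b$ and $\kappa_L(j)=m_0\geq z_0$ show that $j$ belongs to this intersection, the label must be $j$, so $j\in\D_L(b)$, as desired.

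The main obstacle is exactly this last existence step: a join-prime label may appear on a cover lying deep inside $\face(b)$ without obviously appearing on a down-cover of $b$ itself. The idea that resolves it is that $\pop_L(b)$, being the meet of all down-covers of $b$, cannot be pushed up into the block $[j,\hat 1]$---it lies below $m_0$ because the given cover forces $x$, and hence $\pop_L(b)$, below $m_0$---so not every down-cover of $b$ can lie above $j$.
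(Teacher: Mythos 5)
Your proof is correct and follows essentially the same route as the paper's: identify $m_0=\kappa_L(j)$ via \Cref{prop:pairing}, use the cover labeled $j$ to place $b\geq j$ and $\pop_L(b)\leq m_0$, deduce from the prime-pair partition that some down-cover of $b$ fails to lie above $j$ and hence lies below $m_0$, conclude that this down-cover is labeled $j$ by uniqueness of labels, and finish with $\D_L(b)\subseteq\U_L(\pop_L(b))$ from \Cref{prop:pop_label_containment}. The only cosmetic difference is that you phrase the key existence step as a contradiction from ``all down-covers lie above $j$'' while the paper argues directly from $j\not\leq\pop_L(b)$; these are the same observation.
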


\begin{proof}
Let $m$ be the meet-prime element such that $(j,m)$ is a prime pair. Then $m=\kappa_L(j)$ by \Cref{prop:pairing}. Suppose $j$ is the label of a cover relation $y \lessdot z$, where $\pop_L(b)\leq y\lessdot z\leq b$.  Then $y \leq m$, and $z\geq j$. This implies that $b \geq  j$ and that $\pop_L(b) \leq m$. Hence, $j\not\leq \pop_L(b)$. Since $\pop_L(b)$ is the meet of the elements covered by $b$, there must be some element $w$ covered by $b$ such that $j\not\leq w$. Because $(j,m)$ is a prime pair, we have $w\leq m$. The edge $w \lessdot b$ has label $j$, so $j\in\D_L(b)$. Finally, $\D_L(b)\subseteq\U_L(\pop_L(b))$ by~\Cref{prop:pop_label_containment}. 
\end{proof}

It would be interesting if the converse to~\Cref{prop:faces} held.  If $b$ is an element of a semidistrim lattice $L$, then the interval $\face(b)$ is a semidistrim lattice by \Cref{thm:intervals}. It follows that join-prime elements of the lattice $\face(b)$ appear as labels in $\D_L(b)$ and $\U_L(\pop_L(b))$ (we are identifying labels of edges in $\face(b)$ with the labels of the corresponding edges in $L$ via \Cref{cor:galois_label_intervals}). When $L$ is the poset of regions of a simplicial hyperplane arrangement $\mathcal H$, this corresponds to the fact that $\pop_L(R)$ is bounded by the basic hyperplanes in the subarrangement of $\mathcal{H}$ consisting of those hyperplanes containing $\bigcap \D(R)$.  We address further analogies between semidistrim lattices and shards in~\Cref{sec:core_label}.

\section{Further Directions}\label{sec:further}

In this section, we collect several open problems that we hope will stimulate further development of the theory of semidistrim lattices. 

\subsection{Properties of semidistrim lattices}
Birkhoff's representation theorem characterizes the Galois graphs of finite distributive lattices as the (directed) comparability graphs of finite posets. The recent article \cite{reading2019fundamental} characterizes the Galois graphs of finite semidistributive lattices, while a graph-theoretic description of the Galois graphs of trim lattices was given in~\cite[Theorem 4.11]{thomas2019rowmotion}. It would be interesting and useful to have a characterization of the Galois graphs of semidistrim lattices. 

In \Cref{sec:products_intervals}, we proved that products and intervals of semidistrim lattices are semidistrim, generalizing the corresponding statements for semidistributive and trim lattices.  While sublattices of semidistributive lattices are again semidistributive, the same is not true for sublattices of trim lattices.  \Cref{fig:not_sublattice} gives an example showing that sublattices of semidistrim lattices are not necessarily semidistrim.  It would be interesting to have other lattice operations that preserve the family of semidistrim lattices.  In particular, we have the following question that, if answered affirmatively, would generalize the corresponding statement for trim lattices proven in~\cite[Lemma 3.10]{thomas2019rowmotion}. 

\begin{question}
Is every quotient of a semidistrim lattice necessarily semidistrim? 
\end{question}

\begin{figure}[htbp]
\begin{center}
\raisebox{-.5\height}{\scalebox{1}{\begin{tikzpicture}[scale=1]
\node[shape=circle,fill=red, scale=0.5] (1) at (0,0) {};
\node[shape=circle,fill=black, scale=0.5] (2) at (-1,1) {};
\node[shape=circle,fill=red, scale=0.5] (3) at (0,1) {};
\node[shape=circle,fill=red, scale=0.5] (4) at (1,1) {};
\node[shape=circle,fill=red, scale=0.5] (5) at (-1,4) {};
\node[shape=circle,fill=black, scale=0.5] (6) at (.5,2) {};
\node[shape=circle,fill=red, scale=0.5] (7) at (1.5,2) {};
\node[shape=circle,fill=red, scale=0.5] (8) at (0,4) {};
\node[shape=circle,fill=black, scale=0.5] (9) at (.5,3) {};
\node[shape=circle,fill=black, scale=0.5] (10) at (1,4) {};
\node[shape=circle,fill=red, scale=0.5] (11) at (0,5) {};
\draw[very thick] (1) to (2) to (5) to (11) to (8) to (3) to (1);
\draw[very thick] (1) to (4) to (6) to (8);
\draw[very thick] (6) to (9) to (10) to (11);
\draw[very thick] (4) to (7) to (10);
\draw[very thick] (3) to (5);
\draw[very thick] (2) to (9);
\end{tikzpicture}}}
\end{center}
\caption{A sublattice of a semidistrim lattice that is not semidistrim (indicated in red).  This also serves as an example of a trim lattice with a non-trim sublattice (compare with~\cite[Theorem 3]{thomas2006analogue}).}
\label{fig:not_sublattice}
\end{figure}
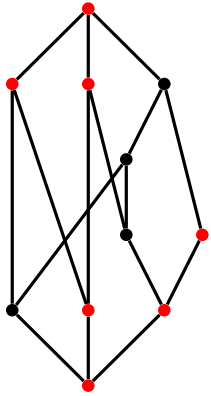

In \Cref{sec:pop_and_row}, we showed how the pop-stack sorting operator can be used to define rowmotion on a meet-semidistributive lattice $L$ that need not be semidistrim; namely, for $x\in L$, we defined $\row_L(x)$ to be the unique maximal element of $\{z\in L:\pop_L(x)=x\wedge z\}$. We saw in \Cref{prop:bijective_meet_semi} that this rowmotion operator is noninvertible whenever $L$ is meet-semidistributive but not semidistributive. Virtually all reasonable questions that one might wish to ask about these operators are open. For example, it would be interesting to describe the periodic points or the image of rowmotion on such a lattice. What can be said about the maximum number of preimages that an element can have under rowmotion? Perhaps there are interesting families of meet-semidistributive lattices where these noninvertible rowmotion operators have desirable properties.  

\subsection{Enumeration}
\label{sec:enumeration}
The image of the classical pop-stack sorting map on $S_n$, which coincides with $\pop_L$ when $L$ is the right weak order on $S_n$, was studied in~\cite{Asinowski2, ClaessonPop2}. \Cref{thm:ind_dom} motivates the investigation of the image of $\pop_L$ for other nice families of semidistrim lattices $L$. It is also natural to refine the enumeration by considering the generating function \[\popnone(L;q) = \sum_{b\in\pop_L(L)}q^{|\U_L(b)|}=\sum_{b\in\popup_L(L)}q^{|\D_L(b)|},\] where the second equality follows from \Cref{lem:easy_popping_pairs} and \Cref{prop:popping_pairs_images}. It follows from \Cref{thm:ind_dom} that $\popnone(L;q)$ enumerates the independent dominating sets in $\widetilde G_L$ according to cardinality.  We write $[q^i]\popnone(L;q)$ for the coefficient of $q^i$ in $\popnone(L;q)$.

When $L=\mathrm{Weak}(A_{n-1})$ is the right weak order on $S_n$, $\popnone(L;q)$ enumerates permutations in the image of the pop-stack sorting map according to the number of ascending runs, which is one of the primary focuses of the article \cite{Asinowski2}.  For example, Proposition 5 of that article proves that \[[q^{n-2}]\popnone(\mathrm{Weak}(A_{n-1});q)=2^n-2n.\]

The article \cite{Sapounakis} considers a certain \emph{filling} operator on Dyck paths, which is equivalent to the dual pop-stack sorting operator on the lattice of order ideals of a type $A$ root poset. Therefore, it follows from \cite[Proposition~4.3]{Sapounakis} that the sizes of the images of dual pop-stack sorting operators (equivalently, the images of pop-stack sorting operators) on lattices of order ideals of type $A$ root posets are given by the OEIS sequence \href{https://oeis.org/A086581}{A086581}: \[\popnone(J(\Phi^+_{A_n});1) = \sum_{k=0}^n \frac{1}{k+1}\binom{2k}{k} \binom{n+k}{3k}.\]

In the same spirit as the above results, we make the following conjectures, with accompanying data given in~\Cref{fig:weak,fig:tamari,fig:bipartite_catalan}.  Write $\mathrm{Weak}(W)$ for the right weak order on a finite Coxeter group $W$.  Write $\mathrm{Tamari}(W)$ for a Cambrian lattice corresponding to a Coxeter element obtained by taking the product of simple reflections in some linear orientation of the Dynkin diagram (defined only when $W$ has a Dynkin diagram that is a path).  Write $\mathrm{Camb}_\mathrm{bi}(W)$ for a Cambrian lattice arising from a bipartite Coxeter element.  Write $J(\Phi^+_W)$ for the distributive lattice of order ideals in the positive root poset of type $W$.

\begin{conjecture} The following equalities hold:
\begin{align*}
[q^{n-1}]\popnone(\mathrm{Weak}(B_n);q)&=3^n - 2n - 1 \\
\popnone(\mathrm{Tamari}(A_n);q)&=\sum_{k=0}^n \frac{1}{k+1}\binom{2k}{k} \binom{n}{2k} q^{n-k}  & \text{(\href{https://oeis.org/A0055151}{A0055151})} \\
\popnone(\mathrm{Tamari}(B_n);q)&=\sum_{k=0}^{\lfloor \frac{n+1}{2} \rfloor} \binom{n-1}{k}\binom{n+1-k}{k} q^{n-k} & \text{see (\href{https://oeis.org/A025566}{A025566})} \\
\popnone(\mathrm{Camb}_\mathrm{bi}(A_n);q)&=
\sum_{k=1}^{\lfloor\frac{(n+3)}{2}\rfloor} \frac{k (-1)^{k-1}}{n-k+3} \sum_{j=0}^{n-k+3}  \binom{j}{n-j+3} \binom{n-k+3}{j} q^{j-2} & \text{see (\href{https://oeis.org/A089372}{A089372})} \\
\popnone(J(\Phi^+_{A_n});q)&=\sum_{k=0}^n \frac{1}{k+1} \sum_{j=0}^{n-k+1} \binom{k+1}{j-1}\binom{k+1}{j} \binom{n-j+1}{n-k-j+1} q^{k+1} & \text{(\href{https://oeis.org/A145904}{A145904})}\\
\popnone(J(\Phi^+_{B_n});q)&=(-q)^n+\sum_{k=0}^n \sum_{j=1}^{k} \binom{k+1}{j}\binom{n-k-1}{j-1}\binom{n-j}{n-k}. & \text{(\href{https://oeis.org/A103881}{A103881})}
\end{align*}

\label{conj:enumerations}
\end{conjecture}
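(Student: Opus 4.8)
The single unifying tool here is \Cref{thm:ind_dom}, which identifies $\popnone(L;q)$ with the generating function $\sum_{I\in\I^{\mathrm{dom}}(\widetilde G_L)}q^{|I|}$ counting independent dominating sets of the undirected Galois graph by cardinality. Since an independent set is dominating precisely when it is inclusion-maximal, these are exactly the maximal independent sets of $\widetilde G_L$, which by \Cref{thm:ind_dom} are the sets $\D_L(b)$ for the elements $b$ with $\row_L(b)\leq b$. Thus for each family the plan is twofold: first describe the Galois graph $\widetilde G_L$ explicitly, and then compute its maximal-independent-set generating polynomial and match it to the claimed formula. Because counting maximal independent sets is $\#\mathsf{P}$-hard for general graphs, every case must exploit the special structure of the Galois graph in question; there is no single mechanism that resolves all six identities at once.

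I would dispatch the distributive cases $J(\Phi^+_{A_n})$ and $J(\Phi^+_{B_n})$ first. By Birkhoff's theorem the Galois graph of a distributive lattice $J(P)$ is the comparability graph of $P$, so a maximal independent set of $\widetilde G_{J(P)}$ is exactly a maximal antichain of $P$. Hence $\popnone(J(\Phi^+_W);q)$ enumerates maximal antichains of the positive root poset by size. For type $A_n$ this poset is the triangular staircase, and for type $B_n$ its doubled analogue; in both the antichains can be scanned diagonal by diagonal, so I would build a transfer matrix encoding the maximality condition and read off the generating function, then confirm it agrees with the stated hypergeometric expressions (the sequences A145904 and A103881). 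This step is essentially bookkeeping once the poset shape is fixed.

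For the Cambrian and Tamari families I would pass to Reading's arc model (see \cite{reading2011noncrossing,reading2019fundamental}): the join-irreducibles of a Cambrian lattice are indexed by noncrossing arcs, and the Galois graph records the forcing relation among them, so that independent sets are noncrossing arc diagrams (recovering \Cref{thm:independent}). The dominating (maximality) condition selects the saturated diagrams, and for a linear Coxeter element ($\mathrm{Tamari}(A_n)$, $\mathrm{Tamari}(B_n)$) versus a bipartite one ($\mathrm{Camb}_\mathrm{bi}(A_n)$) only the underlying orientation of the arcs changes. I would therefore translate \emph{maximal noncrossing arc diagram} into a lattice-path condition and extract each generating function by a kernel-method or Lagrange-inversion computation, matching the Catalan-type sum for $\mathrm{Tamari}(A_n)$ and the sequences A025566 and A089372 for the other two.

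The hardest case is $\mathrm{Weak}(B_n)$, and I expect it to be the main obstacle. Here $\widetilde G_L$ is the much larger arc graph of the full type-$B$ weak order, whose maximal independent sets do not submit to a clean transfer matrix; indeed the single type-$A$ coefficient $2^n-2n$ already required the delicate run-structure analysis of \cite{Asinowski2}. My plan would be to classify the signed permutations $b$ with $\row_{\mathrm{Weak}(B_n)}(b)\leq b$ directly through \Cref{thm:ind_dom}, adapting the ascending-run arguments of \cite{Asinowski2} to the hyperoctahedral setting, and to show that those with $|\U_L(b)|=n-1$ number exactly $3^n-2n-1$. The remaining closed forms become comparatively mechanical once the corresponding Galois graphs and their maximal-antichain or maximal-arc-diagram reformulations are in place.
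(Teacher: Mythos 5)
Your reduction is sound as far as it goes: by \Cref{thm:ind_dom}, $\popnone(L;q)$ enumerates independent dominating sets of $\widetilde G_L$ by cardinality, and for an undirected graph these are exactly the maximal independent sets; in the distributive cases $J(\Phi^+_W)$ the Galois graph is the comparability graph of $\Phi^+_W$ (as the paper notes in its discussion of Birkhoff's theorem), so the identities become statements about maximal antichains of root posets counted by size. This framing agrees with how the paper itself arrives at the statement. But there is a fundamental mismatch with your assignment: \Cref{conj:enumerations} is a \emph{conjecture} in the paper, supported only by the computational data in the accompanying tables, and the authors explicitly record that the cases $\mathrm{Tamari}(A_n)$, $\mathrm{Tamari}(B_n)$, $\mathrm{Camb}_\mathrm{bi}(A_n)$, and $J(\Phi^+_{B_n})$ are open even after specializing $q=1$. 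There is no proof in the paper to recover, and your proposal does not supply one either.

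Concretely, every step in your plan that would constitute the actual proof is deferred: ``build a transfer matrix \dots then confirm it agrees,'' ``extract each generating function by a kernel-method or Lagrange-inversion computation,'' ``adapting the ascending-run arguments of \cite{Asinowski2} to the hyperoctahedral setting.'' These deferrals are precisely the open content. Calling the distributive cases ``essentially bookkeeping'' is where the argument would fail if pressed: a transfer-matrix scan of the staircase poset produces a rational or determinantal expression for the maximal-antichain polynomial, and proving that it equals the stated double sums (the sequences A145904 and A103881, including the curious $(-q)^n$ correction term in type $B_n$) is a nontrivial identity that nobody has verified beyond finite data. Similarly, for the Cambrian cases you would first need a worked-out description of the Galois graph of a Cambrian lattice in the arc model, a precise characterization of the \emph{dominating} (saturated) noncrossing arc diagrams, and then the generating-function match --- none of which is outlined beyond intent. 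Your instinct about where the difficulty lies is partly off as well: you single out $\mathrm{Weak}(B_n)$ as the main obstacle, but that case asks only for a single coefficient, whereas the full polynomial identities for the Tamari, bipartite Cambrian, and $J(\Phi^+_{B_n})$ families are equally unresolved. In short, the proposal is a reasonable research program consistent with the paper's setup, but it is not a proof, and the paper contains no proof to compare it against.
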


The parts of \Cref{conj:enumerations} concerning the lattices $\text{Tamari}(A_n)$, $\text{Tamari}(B_n)$, $\text{Camb}_{\text{bi}}(A_n)$, and $J(\Phi_{B_n}^+)$ are open even when we specialize $q=1$. 

\renewcommand{\arraystretch}{1.2}

\begin{figure}[htbp]
\[\begin{array}{|c|c|c|c|} \hline
\text{Type} & |W| &  \popnone(\mathrm{Weak}(W);1) & \popnone(\mathrm{Weak}(W);q) \\ \hline
A_2 & 6  & 3  & q^2 + 2q\\
A_3  &24  & 11 & q^3 + 8q^2 + 2q\\
A_4 &120 & 49 & q^4 + 22q^3 + 26q^2 \\
A_5 & 720 & 263 & q^5 + 52q^4 + 168q^3 + 42q^2\\
A_6 & 5040 & 1653 & q^6 + 114q^5 + 804q^4 + 692q^3 + 42q^2\\
A_7 & 40320 & 11877 & q^7 + 240q^6 + 3270q^5 + 6500q^4 + 1866q^3\\
\cdots & \cdots & \cdots & \cdots \\
A_n & n! & \href{https://oeis.org/A307030}{A307030} & \text{unknown} \\ \hline
B_2 & 8 & 5 & q^2 + 4q \\
B_3 & 48  & 27 & q^3 + 20q^2 + 6q \\
B_4 & 384 & 191 & q^4 + 72q^3 + 118q^2 \\
B_5 & 3840 & 1719  & q^5 + 232q^4 + 1136q^3 + 350q^2\\
B_6 & 46080 & 18585 & q^6 + 716q^5 + 8236q^4 + 9032q^3 + 600q^2 \\
B_7 & 645120 & 233857 & q^7 + 2172q^6 + 51666q^5 + 135092q^4 + 44926q^3 \\
\cdots & \cdots & \cdots & \cdots \\
B_n & 2^n n! & \text{unknown} & \text{unknown} \\ \hline
D_3 & 24& 11 & q^3 + 8q^2 + 2q \\
D_4 & 192 & 81 & q^4 + 40q^3 + 30q^2 + 10q \\
D_5 & 1920 & 693 &  q^5 + 152q^4 + 400q^3 + 140q^2 \\
D_6 & 23040 & 7421 & q^6 + 524q^5 + 3692q^4 + 2650q^3 + 554q^2 \\
D_7 & 322560 & 93611 & q^7 + 1724q^6 + 27410q^5 + 46162q^4 + 16606q^3 + 1708q^2 \\
\cdots & \cdots & \cdots & \cdots \\
D_n & 2^{n-1}n! & \text{unknown}& \text{unknown} \\ \hline
E_6 & 51840 & 15971 & q^6 + 1266q^5 + 7510q^4 + 7194q^3 \\ \hline
F_4 & 1152 & 551 & q^4 + 232q^3 + 318q^2 \\ \hline
I_2(m) & 2m & 2m-3 & q^2+(2m-4)q \\ \hline
H_3 & 120 & 75 & q^3 + 56q^2 + 18q \\ \hline
H_4 & 14400 & 7919 & q^4 + 2632q^3 + 5286q^2 \\ \hline
\end{array}\]
\caption{Some data for $\popnone(\mathrm{Weak}(W);q)$, where $\mathrm{Weak}(W)$ is the weak order on the finite Coxeter group $W$ (we did not compute the data for $E_7$ and $E_8$). 
As stated in \Cref{conj:enumerations}, the coefficient of $q^{n-1}$ in $\popnone(\mathrm{Weak}(B_n);q)$ appears to be $ 3^n - 2n - 1$.}
\label{fig:weak}
\end{figure}

\begin{figure}[htbp]
\[\begin{array}{|c|c|c|c|} \hline
\text{Type} & \mathrm{Cat}(W) & \popnone(\mathrm{Tamari}(W);1) & \popnone(\mathrm{Tamari}(W);q) \\ \hline
A_1 & 2 & 1 & q   \\
A_2 & 5 & 2 & q^2 + q  \\
A_3 & 14 & 4 & q^3 + 3q^2 \\ 
A_4 & 42 & 9 & q^4 + 6q^3 + 2q^2 \\
A_5 & 132 & 21 & q^5 + 10q^4 + 10q^3 \\
A_6 & 429 & 51 & q^6 + 15q^5 + 30q^4 + 5q^3 \\
\cdots & \cdots & \cdots & \cdots \\
A_n & \href{https://oeis.org/A000108}{A000108} &  \href{https://oeis.org/A001006}{A001006} \text{ (conj.)} & \href{https://oeis.org/A0055151}{A0055151} \text{ (conj.)}\\ \hline
B_2 & 6 & 3 & q^2 + 2q  \\
B_3 & 20 & 8 & q^3 + 6q^2 + 2q \\ 
B_4 & 70 & 22 & q^4 + 12q^3 + 9q^2\\
B_5 & 252 & 61 & q^5 + 20q^4 + 36q^3 + 4q^2 \\
B_6 & 924 & 171 & q^6 + 30q^5 + 100q^4 + 40q^3 \\
\cdots & \cdots & \cdots & \cdots \\
B_n & \href{https://oeis.org/A000984}{A000984} &  \href{https://oeis.org/A025566}{A025566} \text{ (conj.)} & \sum_{k=0}^{\lfloor \frac{n+1}{2} \rfloor} \binom{n-1}{k}\binom{n+1-k}{k} q^{n-k} \text{ (conj.)} \\\hline
\end{array}\]
\caption{Some data supporting \Cref{conj:enumerations} for $\popnone(\mathrm{Tamari}(W);q)$ for Cambrian lattices coming from linearly-oriented Coxeter elements of types $A$ and $B$.}
\label{fig:tamari}
\end{figure}

\begin{sidewaysfigure}[htbp]
\[\begin{array}{|c|c|c|c|c|c|} \hline
\text{Type} & \mathrm{Cat}(W) & \popnone(\mathrm{Camb}_\mathrm{bi}(W);1) & \popnone(\mathrm{Camb}_\mathrm{bi}(W);q) & \popnone(J(\Phi^+_W);1) & \popnone(J(\Phi^+_W);q) \\ \hline
A_1 & 2 & 1 & q & 1 & q \\
A_2 & 5 & 2 & q^2 + q & 2 & q^2 + q\\
A_3 & 14 & 5 & q^3 + 3q^2+q & 5 & q^3 + 3q^2 + q \\ 
A_4 & 42 & 12 & q^4 + 6q^3 + 5q^2 & 13& q^4 + 6q^3 + 5q^2 + q\\
A_5 & 132 & 29 & q^5 + 10q^4 + 16q^3 + 2q^2 & 35 & q^5 + 10q^4 + 16q^3 + 7q^2 + q\\
A_6 & 429 & 72 & q^6 + 15q^5 + 40q^4 + 16q^3 & 97 & q^6 + 15q^5 + 40q^4 + 31q^3 + 9q^2 + q\\
\cdots & \cdots & \cdots & \cdots & \cdots & \cdots \\
A_n & \href{https://oeis.org/A000108}{A000108} & \href{https://oeis.org/A089372}{A089372} \text{ (conj.)} &  \text{ see \Cref{conj:enumerations}} & \substack{\href{https://oeis.org/A082582}{A082582} \\ \href{https://oeis.org/A086581}{A086581} \\ \href{https://oeis.org/A025242}{A025242}} & \href{https://oeis.org/A145904}{A145904}  \text{ (conj.)}\\ \hline
B_2 & 6 & 3 & q^2 + 2q & 3 & q^2 + 2q \\
B_3 & 20 & 9 & q^3 + 6q^2 + 2q & 9 & q^3 + 6q^2 + 2q\\ 
B_4 & 70 & 25 & q^4 + 12q^3 + 12q^2& 27 & q^4 + 12q^3 + 12q^2 + 2q\\
B_5 & 252 & 69 & q^5 + 20q^4 + 42q^3 + 6q^2 & 83 & q^5 + 20q^4 + 42q^3 + 18q^2 + 2q\\
B_6 & 924 & 193 & q^6 + 30q^5 + 110q^4 + 52q^3 & 259 & q^6 + 30q^5 + 110q^4 + 92q^3 + 24q^2 + 2q\\
\cdots & \cdots & \cdots & \cdots & \cdots & \cdots \\
B_n & \href{https://oeis.org/A000984}{A000984} &  \text{unknown} & \text{unknown} & \href{https://oeis.org/A171155}{A171155}  \text{ (conj.)} &  \href{https://oeis.org/A103881}{A103881}  \text{ (conj.)}\\\hline
D_3 & 14 & 5 & q^3 + 3q^2 + q & 5 & q^3 + 3q^2 + q, \\ 
D_4 & 50 & 17 & q^4 + 8q^3 + 6q^2 + 2q& 17 & q^4 + 8q^3 + 6q^2 + 2q\\
D_5 & 182 & 47 & q^5 + 15q^4 + 23q^3 + 8q^2 & 53 & q^5 + 15q^4 + 24q^3 + 11q^2 + 2q\\
D_6 & 672 & 131 & q^6 + 24q^5 + 67q^4 + 33q^3 + 6q^2 & 167 & q^6 + 24q^5 + 70q^4 + 54q^3 + 16q^2 + 2q\\
\cdots & \cdots & \cdots & \cdots & \cdots & \cdots \\
D_n & \href{https://oeis.org/A051924}{A051924} & \text{unknown} & \text{unknown} & \text{unknown} &  \text{unknown}\\\hline
E_6 & 833 & 164 & q^6 + 30q^5 + 82q^4 + 51q^3 & 210 & q^6 + 30q^5 + 89q^4 + 69q^3 + 18q^2 + 3q  \\ \hline
E_7 & 4160 & 624 & q^7 + 56q^6 + 260q^5 + 261q^4 + 46q^3 & 912 & q^7 + 56q^6 + 285q^5 + 367q^4 + 166q^3 + 33q^2 + 4q \\ \hline
E_8 & 25080 & 2853 & q^8 + 112q^7 + 840q^6 + 1344q^5 + 556q^4 & 4787 & q^8 + 112q^7 + 926q^6 + 1880q^5 + 1367q^4 + 428q^3 + 67q^2 + 6q \\ \hline
F_4 & 105 & 40 & q^4 + 20q^3 + 19q^2 & 44 & q^4 + 20q^3 + 19q^2 + 4q \\ \hline
I_2(m) & m+2 & m-1 & q^2 + (m-2)q & m-1 & q^2+(m-2)q \\ \hline
H_3 & 32 & 17 & q^3+12q^2+4q & 17 & q^3+12q^2+4q  \\ \hline
H_4 & 280 & 125 & q^4+56q^3+68q^2 & & \\ \hline
\end{array}\]
\caption{Some data supporting \Cref{conj:enumerations} for $\popnone(\mathrm{Camb}_{\mathrm{bi}}(W);q)$ and $\popnone(J(\Phi^+_W);q)$, where $\mathrm{Camb}_{\mathrm{bi}}(W)$ is a Cambrian lattice for a bipartite Coxeter element and $J(\Phi^+_W)$ is the distributive lattice of order ideals of the positive root poset of type $W$ (nonnesting partitions).  For the noncrystallographic types $I_2(m)$ and $H_3$, Armstrong's root posets are used for the nonnesting partitions~\cite[Figure 5.15]{armstrong2009generalized}.}
\label{fig:bipartite_catalan}
\end{sidewaysfigure}

\subsection{Uniquely completely paired lattices}
Let $L$ be a lattice. Note that if $j\in \J_L$ and $m\in \M_L$, then we have $\popdown_L(j)=j_*$ and $\popup_L(m)=m^*$, so the sets $\M_L(j)$ and $\J_L(m)$ defined in~\Cref{sec:uniquely_paired_lattices} satisfy $\M_L(j)=\max\{z\in L:\popdown_L(j)=j\wedge z\}$ and $\J_L(m)=\min\{z \in L: \popup_L(j)=j\vee z\}$.   It is reasonable to extend the notion of \emph{uniquely paired} in~\Cref{def:uniquely_paired} to every element of $L$ as follows.

\begin{definition} A lattice $L$ is \defn{completely uniquely paired} if there is a unique bijection $\lambda: L \to L$ so that $\lambda(x) \in \max\{z\in L:\popdown_L(x)=x\wedge z\}$ and $\lambda^{-1}(x) \in \min\{z \in L: \popup_L(x)=x\vee z\}$ for every $x\in L$.
\end{definition}

By \Cref{thm:row_max}, if $L$ is both semidistrim and completely uniquely paired, then the bijection $\lambda$ must be rowmotion---and so it makes sense to use the name \defn{rowmotion} for the bijection $\lambda$ in a completely uniquely paired lattice.  While completely uniquely paired lattices have some desired properties, they generally lack others common to semidistributive and trim lattices---for example, not all completely uniquely paired lattices have join-prime elements, which are a major tool for dealing with semidistrim lattices.  \Cref{fig:unique_complete} illustrates a completely uniquely paired lattice that does not have a join-prime or meet-prime element. While not every completely uniquely paired lattice is semidistrim, we expect the converse should hold. 

\begin{conjecture}
Every semidistrim lattice is completely uniquely paired. 
\end{conjecture}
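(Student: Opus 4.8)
The plan is to prove that rowmotion $\row_L$ is the unique bijection witnessing complete unique pairing. Existence is essentially already in hand: \Cref{thm:row_max} gives $\row_L(x)\in\max\{z\in L:\popdown_L(x)=x\wedge z\}$ and $\row_L^{-1}(x)\in\min\{z\in L:\popup_L(x)=x\vee z\}$ for every $x$, while \Cref{thm:independent} guarantees that $\row_L$ is a bijection. Thus $\row_L$ is a valid choice of the map $\lambda$, and the entire content reduces to uniqueness: if $\lambda\colon L\to L$ is any bijection with $\lambda(x)\in\max\{z:\popdown_L(x)=x\wedge z\}$ and $\lambda^{-1}(x)\in\min\{z:\popup_L(x)=x\vee z\}$ for all $x$, then $\lambda=\row_L$.

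First I would recast the two defining conditions as a single statement about pairs. Reindexing the second condition by substituting $\lambda(x)$ for $x$ shows that, for every $x$, the pair $(x,\lambda(x))$ satisfies
\[x\wedge\lambda(x)=\popdown_L(x),\qquad x\vee\lambda(x)=\popup_L(\lambda(x)),\]
with $\lambda(x)$ maximal for the first equation and $x$ minimal for the second. Using \Cref{thm:pop_and_row} one checks that $(x,\row_L(x))$ satisfies exactly the same two equations, since $\popup_L(\row_L(x))=\row_L(x)\vee\bigvee\U_L(\row_L(x))=\row_L(x)\vee\bigvee\D_L(x)=\row_L(x)\vee x$. Consequently, both $\lambda$ and $\row_L$ are perfect matchings inside the bipartite graph $P$ on $L\sqcup L$ whose edges are the doubly-extremal compatible pairs $(a,b)$ (those with $b$ maximal for $a\wedge b=\popdown_L(a)$ and $a$ minimal for $a\vee b=\popup_L(b)$), and the conjecture becomes the assertion that $P$ admits a unique perfect matching. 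Note that if $L$ were meet-semidistributive (equivalently, by \Cref{prop:bijective_meet_semi}, semidistributive), then $\{z:\popdown_L(x)=x\wedge z\}$ would have a unique maximal element and uniqueness would be immediate; the whole difficulty is precisely that, for a general semidistrim lattice, this set can have several maximal elements.

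My main line of attack would be induction on $|L|$ through the dismantling decomposition $L=L_0\sqcup L^0$ with $L_0=[\hat{0},m_0]$ and $L^0=[j_0,\hat{1}]$ for a dismantling pair $(j_0,m_0)$; by \Cref{thm:intervals} together with the inductive hypothesis, $\row_{L_0}$ and $\row_{L^0}$ are the unique pop-compatible bijections on $L_0$ and $L^0$. The key preparatory step is to record, exactly as in the proof of \Cref{thm:row_max}, how $\popdown_L$, $\popup_L$, and $\row_L$ interact with the decomposition: for $x\in L_0$ one has $\popdown_L(x)=\popdown_{L_0}(x)$ and $\row_{L_0}(x)=m_0\wedge\row_L(x)$ by \Cref{prop:row_lower_interval}, while for $x\in L^0$ the behavior splits according to whether $j_0\in\D_L(x)$, the case $j_0\in\D_L(x)$ being exactly the one where $\row_L$ crosses from $L^0$ into $L_0$ along the unique cover labeled $j_0$ supplied by \Cref{cor:down_labels}. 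I would then try to show that any pop-compatible $\lambda$ is forced to respect this three-part structure, so that its restrictions are controlled by $\row_{L_0}$ and $\row_{L^0}$ and can be reassembled into $\row_L$.

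The hard part, and the reason this remains only a conjecture, is twofold. Because $L$ need not be meet- or join-semidistributive, the pointwise extremality conditions genuinely fail to pin down $\lambda(x)$, so one cannot avoid invoking global bijectivity to rule out alternative matchings of $P$; making bijectivity actually bite is the crux. Within the induction, the obstacle concentrates on the cross-over elements $\{x\in L^0:j_0\in\D_L(x)\}$, where a pop-compatible $\lambda$ could a priori send an element of $L^0$ into $L_0$ along a choice different from $\row_L$'s, and where $\lambda$ need not restrict to a bijection of either $L_0$ or $L^0$, so the inductive stitching is delicate. A natural alternative to this stitching is a rigidity argument: set $\sigma=\row_L^{-1}\circ\lambda$ and attempt to derive a contradiction from any nontrivial cycle of $\sigma$ by exhibiting a monovariant along the cycle. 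However, since semidistrim lattices are not graded there is no rank function available to serve as such a monovariant, and locating the correct one—perhaps assembled from the statistics $|\D_L(\cdot)|$ or from the independent-dominating-set count of \Cref{thm:ind_dom}—is itself the central open difficulty.
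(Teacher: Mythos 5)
You should know at the outset that the paper does not prove this statement: it appears there as an open conjecture, accompanied only by the remark (via \Cref{thm:row_max}) that \emph{if} $L$ is completely uniquely paired, then the unique bijection $\lambda$ must be rowmotion. Everything your proposal actually asserts is correct. Existence of a valid $\lambda$ (namely $\row_L$) does follow from \Cref{thm:row_max} together with the bijectivity of $\row_L$ guaranteed by \Cref{thm:independent}; your reindexing of the second condition at $\lambda(x)$ is legitimate (it uses the bijectivity of $\lambda$, as it must); and your verification that $(x,\row_L(x))$ satisfies both extremality conditions, via $\popup_L(\row_L(x))=\row_L(x)\vee\bigvee\U_L(\row_L(x))=\row_L(x)\vee\bigvee\D_L(x)=\row_L(x)\vee x$, is correct by \Cref{thm:pop_and_row}, \Cref{thm:labels}, and the definition of rowmotion. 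The reformulation as uniqueness of a perfect matching in the bipartite graph of doubly-extremal pairs is a faithful restatement of the conjecture, and your observation that meet-semidistributivity would force the conclusion pointwise (via \Cref{prop:bijective_meet_semi} and the remark following it) correctly isolates where the difficulty lives.

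Nevertheless there is a genuine gap, and you name it yourself: the uniqueness half, which is the entire content of the conjecture, is never argued. The inductive scheme through a dismantling pair $(j_0,m_0)$ stalls exactly where you say it does. A pop-compatible $\lambda$ need not restrict to bijections of $L_0=[\hat 0,m_0]$ and $L^0=[j_0,\hat 1]$, so the inductive hypothesis (uniqueness on the two parts) has nothing to act on, and you supply no mechanism forcing $\lambda$ to respect the decomposition at the cross-over elements $x\in L^0$ with $j_0\in\D_L(x)$, where \Cref{cor:down_labels} and \Cref{prop:row_lower_interval} describe $\row_L$ but say nothing about an arbitrary $\lambda$. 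The alternative rigidity argument via $\sigma=\row_L^{-1}\circ\lambda$ likewise lacks its monovariant, and since semidistrim lattices are not graded, no off-the-shelf statistic is available; your candidates ($|\D_L(\cdot)|$, or counts from \Cref{thm:ind_dom}) are not shown to be monotone along a cycle of $\sigma$. In short, what you have is a correct proof of existence, a well-posed matching reformulation, and a plausible but unexecuted program for uniqueness. This matches the state of the paper, where the statement is left open, so you have not fallen short of a known argument --- there is none --- but you also have not proved the conjecture, and your writeup should be presented as a strategy rather than a proof.
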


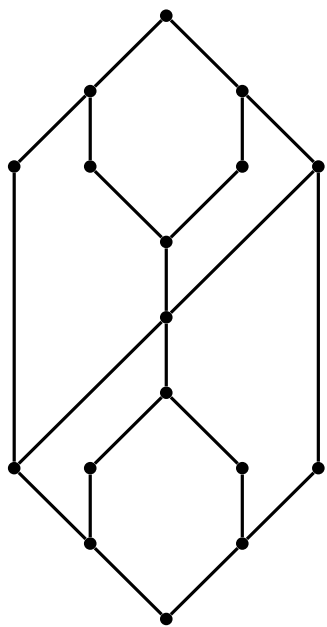
\begin{figure}[htbp]
\begin{center}
\raisebox{-.5\height}{\scalebox{1}{\begin{tikzpicture}[scale=1]
\node[shape=circle,fill=black, scale=0.5] (1) at (0,0) {};
\node[shape=circle,fill=black, scale=0.5] (2) at (-1,1) {};
\node[shape=circle,fill=black, scale=0.5] (3) at (1,1) {};
\node[shape=circle,fill=black, scale=0.5] (4) at (-2,2) {};
\node[shape=circle,fill=black, scale=0.5] (5) at (-1,2) {};
\node[shape=circle,fill=black, scale=0.5] (6) at (1,2) {};
\node[shape=circle,fill=black, scale=0.5] (7) at (2,2) {};
\node[shape=circle,fill=black, scale=0.5] (8) at (0,3) {};
\node[shape=circle,fill=black, scale=0.5] (9) at (0,4) {};
\node[shape=circle,fill=black, scale=0.5] (11) at (-2,6) {};
\node[shape=circle,fill=black, scale=0.5] (12) at (-1,6) {};
\node[shape=circle,fill=black, scale=0.5] (13) at (0,5) {};
\node[shape=circle,fill=black, scale=0.5] (14) at (1,6) {};
\node[shape=circle,fill=black, scale=0.5] (15) at (2,6) {};
\node[shape=circle,fill=black, scale=0.5] (16) at (-1,7) {};
\node[shape=circle,fill=black, scale=0.5] (17) at (1,7) {};
\node[shape=circle,fill=black, scale=0.5] (18) at (0,8) {};
\draw[very thick] (1) to (2) to (4) to (9) to (15) to (7) to (3) to (1);
\draw[very thick] (2) to (5) to (8) to (9);
\draw[very thick] (3) to (6) to (8);
\draw[very thick] (4) to (11) to (16) to (18) to (17) to (15);
\draw[very thick] (16) to (12) to (13) to (14) to (17);
\draw[very thick] (13) to (9);
\end{tikzpicture}}}
\end{center}
\caption{A uniquely completely paired lattice with no join-prime or meet-prime elements.  See also~\Cref{fig:noatoms}.}
\label{fig:unique_complete}
\end{figure}

\subsection{Core label orders}
\label{sec:core_label}
In this section, we closely follow the ideas of~\cite{MuhleCore} in the setting of our semidistrim lattices.  For $L$ a semidistrim lattice and $b\in L$, write \[\popshard(b)=J_L(b)\cap M_L(\pop_L(b)),\] so that $\popshard(b)$ is the set of join-irreducibles that label the cover relations in the interval $\face(b)$ (by \Cref{cor:intervals_compatible} and \Cref{cor:galois_label_intervals}).  It is reasonable to think of the set $\popshard(b)$ as an extension of the edge labeling to faces, since when $j \in \J_L$ with the cover $j_* \lessdot j$ labeled by $j$ itself, we recover $\shard(j)=\{j\}$.  Similarly, we define \[\rowshard(b)=J_L(b)\cap M_L(\row_L(b)).\] By~\Cref{prop:pop_label_containment}, we have that $\rowshard(b) \subseteq \popshard(b)$.  

\begin{question}
For which semidistrim lattices $L$ do we have $\rowshard(b) = \popshard(b)$ for all $b \in L$? Does this hold when $L$ is the lattice of regions of a real simplicial hyperplane arrangement?  
\end{question}

Define the \defn{pop-core label order} to be the partial order $\preceq_{\mathsf{Pop}}$ on $L$ given by $x \preceq_{\mathsf{Pop}} y$ if and only if $\popshard(x) \subseteq \popshard(y)$. This definition extends M\"uhle's definition of the \emph{core label order} \cite{MuhleCore}, which was formulated as a generalization of Reading's \emph{shard intersection order} \cite{reading2011noncrossing}. Define the \defn{row-core label order} by $x \preceq_\row y$ if and only if $\rowshard(x) \subseteq \rowshard(y)$.  In general, we do not have that $\preceq_{\mathsf{Pop}}=\preceq_\row$---\cite[Figure 7]{MuhleCore} gives an example for which $\preceq_{\mathsf{Pop}}$ is not a lattice but $\preceq_\row$ is a lattice.

It would be interesting to see what properties one can say about the row-core label orders of semidistrim lattices. In particular, we have the following question. 

\begin{question}\label{quest:core_meet_semilattice}
When is the pop-core or row-core label order on a semidistrim lattice a meet-semilattice?
\end{question}

In order to approach the preceding question, it could be useful to understand graph-theoretically what the sets $\popshard(b)$ and/or $\rowshard(b)$ are when we view them as subsets of the vertex set of the Galois graph. For example, when $L$ is distributive, the pop-core label order and the row-core label order are equal to each other, and each is a meet-semilattice because \[\{\popshard(b):b\in L\}=\{\rowshard(b):b\in L\}=\I(G_L).\]

\section*{Acknowledgements}
N.W. was partially supported by a Simons Foundation Collaboration Grant. C.D. was supported by a Fannie and John Hertz Foundation Fellowship and an NSF Graduate Research Fellowship (grant number DGE-1656466).  We thank Henri M\"uhle for useful correspondence.  This work benefited from computations in \texttt{Sage}~\cite{sagemath} and the combinatorics features developed by the \texttt{Sage-Combinat} community~\cite{Sage-Combinat}.

\bibliographystyle{amsalpha}
\bibliography{literature}

\end{document}